\DeclareMathOperator{\DHod}{\slashed{D}}
\DeclareMathOperator{\RGamma}{R \Gamma}
\DeclareMathOperator{\WCart}{WCart}
\DeclareMathOperator{\SSet}{\mathcal{S}}
\DeclareSymbolFontAlphabet{\mathbb}{AMSb}
\DeclareSymbolFontAlphabet{\mathbbl}{bbold}
\newcommand{\Prism}{\mathbbl{\Delta}}
\DeclareMathOperator{\Z}{\mathbf{Z}}
\DeclareMathOperator{\F}{\mathbf{F}}
\DeclareMathOperator{\id}{id}
\DeclareMathOperator{\Fun}{Fun}
\DeclareMathOperator{\calC}{\mathcal{C}}
\DeclareMathOperator{\calD}{\mathcal{D}}
\DeclareMathOperator{\Poly}{Poly}
\DeclareMathOperator{\CAlg}{CAlg}
\DeclareMathOperator{\dCAlg}{{\delta}\!\CAlg}
\DeclareMathOperator{\dPoly}{{\delta}\!\Poly}
\DeclareMathOperator{\anim}{an}
\newcommand{\APCref}[1]{APC.\ref*{APC-#1}\,}
\setlist[enumerate]{itemsep=2pt,parsep=2pt,before={\parskip=2pt}}
\newcommand{\cosimp}[3]{\xymatrix@R=50pt@C=50pt@1{#1 \ar@<.4ex>[r] \ar@<-.4ex>[r] & {\ }#2 \ar@<0.8ex>[r] \ar[r] \ar@<-.8ex>[r] & {\ } #3 \ar@<1.2ex>[r] \ar@<.4ex>[r] \ar@<-.4ex>[r] \ar@<-1.2ex>[r] & \cdots }}
\newcommand{\colim}{\mathop{\mathrm{colim}}}
\newcommand{\adjunction}[4]{\xymatrix@R=50pt@C=50pt@1{#1{\ } \ar@<0.3ex>[r]^-{ {\scriptstyle #2}} & {\ } #3 \ar@<0.3ex>[l]^{ {\scriptstyle #4}}}}
\begin{document}

\newtheorem{theorem}{Theorem}[section]
\newtheorem*{theorem*}{Theorem}
\newtheorem*{definition*}{Definition}
\newtheorem{proposition}[theorem]{Proposition}
\newtheorem{lemma}[theorem]{Lemma}
\newtheorem{corollary}[theorem]{Corollary}
\newtheorem{conjecture}[theorem]{Conjecture}

\theoremstyle{definition}
\newtheorem{variant}[theorem]{Variant}
\newtheorem{definition}[theorem]{Definition}
\newtheorem{question}[theorem]{Question}
\newtheorem{remark}[theorem]{Remark}
\newtheorem{warning}[theorem]{Warning}
\newtheorem{example}[theorem]{Example}
\newtheorem{notation}[theorem]{Notation}
\newtheorem{convention}[theorem]{Convention}
\newtheorem{construction}[theorem]{Construction}
\newtheorem{claim}[theorem]{Claim}
\newtheorem{assumption}[theorem]{Assumption}

\newcommand{\Qc}{q-\mathrm{crys}}

\newcommand{\Shv}{\mathrm{Shv}}
\newcommand{\et}{{\acute{e}t}}
\newcommand{\crys}{\mathrm{crys}}
\renewcommand{\inf}{\mathrm{inf}}
\newcommand{\Hom}{\mathrm{Hom}}
\newcommand{\Sch}{\mathrm{Sch}}
\newcommand{\Spf}{\mathrm{Spf}}
\newcommand{\Spa}{\mathrm{Spa}}
\newcommand{\Spec}{\mathrm{Spec}}
\newcommand{\perf}{\mathrm{perf}}
\newcommand{\QSyn}{\mathrm{QSyn}}
\newcommand{\perfd}{\mathrm{perfd}}
\newcommand{\arc}{{\rm arc}}

\newcommand{\rad}{\mathrm{rad}}

\newcommand{\psh}{\mathrm{PShv}}
\newcommand{\scr}{\mathrm{sCAlg}}
\newcommand{\HT}{\mathrm{HT}}
\newcommand{\dR}{\mathrm{dR}}
\newcommand{\LdR}{\mathrm{LdR}}

\setcounter{tocdepth}{1}

\newcommand{\comment}[1]{\textcolor{red}{\footnotesize #1}}

\title{The prismatization of $p$-adic formal schemes}
\author{Bhargav Bhatt}
\author{Jacob Lurie}
\maketitle
\begin{abstract}
In this note, we introduce and study the Cartier--Witt stack $\WCart_X$ attached to a $p$-adic formal scheme $X$ as well as some variants. In particular, we reinterpret the notion of prismatic crystals on $X$ and their cohomology in terms of quasicoherent sheaf theory on $\WCart_X$ in favorable situations.
\end{abstract}

\tableofcontents

This document is a postscript to \cite{BhattLurieAPC}. In particular, the introduction below should be read in conjunction with that of \cite{BhattLurieAPC}. Moreover, this document is a preliminary version, and we hope to revisit and expand on the exposition in the future. In particular, the definition and basic properties of the key object of this study in this paper --- the prismatization $\WCart_X$ of a bounded $p$-adic formal scheme $X$ --- rely critically on the notion of mapping spaces provided by derived $p$-adic formal geometry, and some of our arguments rely on a working theory of derived formal $\delta$-schemes; neither of these theories has been systematically documented in the literature yet as far as we know.

References to \cite{BhattLurieAPC} in this paper have the form APC.x.y.z. 

\section{Introduction}

In \S \APCref{sec:WCart}, we constructed the Cartier--Witt stack $\WCart$ as a functor on $p$-nilpotent rings; one of the essential features of this construction was the equivalence between quasi-coherent complexes on $\WCart$ and crystals of $(p,I)$-complete complexes on the absolute prismatic site $\mathrm{Spf}(\mathbf{Z}_p)_\Prism$ of $\mathrm{Spf}(\mathbf{Z}_p)$ (Proposition~\APCref{proposition:DWCart-prism-description}), allowing us to simultaneously localize the study of such crystals on the stack $\WCart$ and also to understand the stack $\WCart$ via prisms. In this paper, our goal is to extend this picture to bounded $p$-adic formal schemes $X$: we shall attach a stack $\WCart_X$ to such a formal scheme $X$ with the property that the quasi-coherent sheaves on $\WCart_X$ correspond to prismatic crystals under mild assumptions, and that this correspondence is compatible with derived pushforward under mild assumptions. The format of such a theory is also explained in \cite[\S 1.1]{drinfeld-prismatic}.

More precisely, in \S \ref{ss:AbsPrism}, we introduce the Cartier--Witt stack $\WCart_X$ (as a stack on $p$-nilpotent rings) for a bounded  $p$-adic formal scheme $X$. Its construction is functorial in $X$, and its  definition relies on a small amount of derived algebraic geometry; the relevant animated rings are best understood in terms of animated prisms, which are introduced in \S \ref{ss:AnimPrism} (building on a theory of animated $\delta$-rings we discuss in Appendix \ref{ss:CAlgDeltaAnim}). When $X=\mathrm{Spf}(\mathbf{Z}_p)$, we have $\WCart_X = \WCart$, essentially by definition; on the other hand, if $X=\mathrm{Spf}(R)$ for a perfectoid ring $R$, we have $\WCart_X = \mathrm{Spf}(A)$, where $(A,I)$ is the perfect prism attached to $R$ (Example~\ref{PerfectoidPrismatize}). In fact, in general, the Cartier--Witt $\WCart_X$ is closely related to the absolute prismatic site $X_\Prism$ of $X$ (Construction~\ref{cons:pointprismatization}); using this connection for $X=\mathbf{G}_m$, in \S \ref{ss:PrismLogStack}, we reinterpret the results on the prismatic logarithm from \S \APCref{section:twist-and-log} in terms of a group scheme $G_{\WCart}$ constructed from $\WCart_{\mathbf{G}_m}$.

One can study the stack $\WCart_X$ via the map $\WCart_X \to \WCart$ coming from functoriality. As $\WCart$ can be probed using prisms via Construction~\APCref{construction:point-of-prismatic-stack}, we are therefore led to study the stacks $\WCart_X \times_{\WCart,\rho_A} \mathrm{Spf}(A)$, where $(A,I)$ is a bounded prism, and $\rho_A:\mathrm{Spf}(A) \to \WCart$ is the map from Construction~\APCref{construction:point-of-prismatic-stack}. These fibre products turn out to depend only on the $\overline{A}$-scheme $X_{\overline{A}}$. Thus, in \S \ref{ss:RelPrism}, we study relative Cartier--Witt stacks $\WCart_{Y/A}$ attached to a bounded prism $(A,I)$ and a bounded  $p$-adic formal $\overline{A}$-scheme $Y$. These stacks geometrize the study of relative prismatic cohomology: under certain syntomicity assumptions on $Y/\overline{A}$, quasi-coherent complexes on $\WCart_{Y/A}$ identify with crystals of $(p,I)$-complete complexes on the relative prismatic site $(Y/A)_\Prism$, and thus the $\mathcal{O}$-cohomology $\RGamma(\WCart_{Y/A}, \mathcal{O})$ computes the site-theoretic relative prismatic cohomology $\RGamma^{\mathrm{site}}_\Prism(Y/A)$ (Theorems~\ref{CrystalsCartWitt} and \ref{RelativePrismaticCohPrismatizeNonDer}, and Remark~\ref{PrismatizeClassical}).

\begin{remark}
Given a bounded prism $(A,I)$ and a $p$-completely smooth $\overline{A}$-algebra $R$, one of the key foundational results on prismatic cohomology is the Hodge-Tate comparison \cite{prisms}, which gives an isomorphism $H^*(\overline{\Prism}_{R/A}) \simeq \wedge^* \Omega^1_{R/\overline{A}}\{-1\}$ of graded commutative $R$-algebras. From the stacky perspective, this isomorphism has a natural explanation. Namely, the Hodge-Tate stack $\WCart^{\mathrm{HT}}_{\mathrm{Spf}(R)/A} \to \mathrm{Spf}(R)$ is a gerbe banded by the flat $R$-group scheme $T_{\mathrm{Spf}(R)/\overline{A}}\{1\}^\sharp$ given by the PD-hull of the $0$ section in the Breuil-Kisin twisted tangent bundle $T_{\mathrm{Spf}(R)/\overline{A}}\{1\}$; moreover, this gerbe is split by the choice of $(p,I)$-completely smooth $\delta$-$A$-algebra $\widetilde{R}$ lifting $R$ (Proposition~\ref{RelativeHT}). This description implies the Hodge-Tate comparison by linear algebra (Remark~\ref{HTDerivedPrismatize}). Moreover, the proof that $\WCart^{\mathrm{HT}}_{\mathrm{Spf}(R)/A} \to \mathrm{Spf}(R)$ is gerbe is itself quite conceptually straightforward from derived deformation theory. In contrast, the proof of the Hodge-Tate comparison in \cite{prisms} involved an  artificial (in hindsight) reduction to the Cartier isomorphism via the crystalline comparison theorem.
\end{remark}

To obtain stronger results as well as a better understanding, it turns out to be quite convenient to extend the preceding constructions more fully to derived algebraic geometry. Thus, given a bounded prism $(A,I)$ and a  derived $p$-adic formal scheme $Y/\overline{A}$, we construct a stack $\WCart_{Y/A}$ on derived affine schemes over $\mathrm{Spf}(A)$ in \S \ref{ss:DerRelPrism}; if $Y$ is classical and we restrict the stack to classical affines, we recover the construction discussed above. However, even when $Y$ is classical, the derived stack $\WCart_{Y/A}$ is not necessarily classical (Warning~\ref{Warn:DerNotClass}), and this is in fact a feature: the coherent cohomology $\RGamma(\WCart_{Y/A}, \mathcal{O})$ agrees with the derived prismatic cohomology $\RGamma_\Prism(Y/A)$ under quite mild assumptions on $Y/A$ (Theorem~\ref{PrismaticCohPrismatization} (1)). 

The results discussed in the previous paragraph concern the relative theory. In \S \ref{ss:DerAbsPrism}, we record their absolute counterparts. More precisely, to any  derived $p$-adic formal scheme $X$, we attach a stack on  $p$-nilpotent animated rings called the the Cartier--Witt stack $\WCart_X$ (Definition~\ref{DerCartWittAbs}); if the derived formal scheme $X$ is in fact a classical bounded $p$-adic formal scheme $X^{cl}$, then the restriction of $\WCart_X$ to discrete $p$-nilpotent rings agrees with the stack $\WCart_{X^{cl}}$ studied in \S \ref{ss:AbsPrism}, so one can regard $\WCart_X$ as a natural extension of $\WCart_{X^{cl}}$ to derived algebraic geometry.  If one further imposes syntomicity assumptions, then $\WCart_X$ is classical (Corollary~\ref{AbsWCartClassical}),  the theory of quasi-coherent sheaves on $\WCart_X$ is equivalent to the theory of crystals on the absolute prismatic site of $X$ (Proposition~\ref{QSynAbsCrys}), and this equivalence is compatible with pushforwards under relatively mild assumptions (Proposition~\ref{AbsPushWCart}).

\begin{remark}
Most of the results in \cite{BhattLurieAPC} apply either to bounded $p$-adic formal schemes or all $p$-complete animated rings. From the perspective of derived formal algebraic geometry, one can explain this somewhat curious juxtaposition: the construction that regards a $p$-complete discrete ring $R$ with bounded $p$-power torsion as a $p$-complete animated ring  extends naturally to a fully faithful embedding of bounded $p$-adic formal schemes into derived $p$-adic formal schemes that preserves and reflects the notion of open immersions. On the other hand, without the boundedness constraints, the notion of open immersions in classical formal geometry is somewhat pathological due to poor behaviour of classical completions; in particular, it may disagree with its derived counterpart. 
\end{remark}

Recall that one may heuristically view the prismatic formalism as an attempt at capturing geometry ``over $\mathbf{F}_1$''. Under this heuristic, the absolute Hodge--Tate cohomology $\overline{\Prism}_R$ of a ring $R$ is a variant of its  Hodge cohomology ``over $\mathbf{F}_1$''. In conjunction with the heuristic that regular rings ought to be smooth ``over $\mathbf{F}_1$'', this suggests that the absolute Hodge--Tate stack $\WCart_X^{\mathrm{HT}}  \subset \WCart_X$ should be well-behaved when $X$ is regular. We verify some concrete predictions of this reasoning in \S \ref{ss:HTReg}, and end with some precise questions \S \ref{ss:HTRegQ}.

\begin{warning}
We have adopted to use the same notation for the Cartier--Witt stack in two situations:
\begin{enumerate}
\item If $X$ is a  bounded $p$-adic formal scheme, then $\WCart_X$ denotes a presheaf on $p$-nilpotent rings; this is the subject of \S \ref{ss:AbsPrism} and \S \ref{ss:PrismLogStack}. 
 \item If $Y$ is a  derived $p$-adic formal scheme, then $\WCart_Y$ denotes a presheaf on $p$-nilpotent animated rings; this is the subject of \S \ref{ss:DerAbsPrism}. 
 \end{enumerate}
 These notations are compatible in the following sense: given $X$ as in (1), if $Y$ denotes the corresponding derived formal scheme, then the restriction on $\WCart_Y$ to $p$-nilpotent  rings agrees  with $\WCart_Y$. On the other hand, with the same choice of $X$ and $Y$, it is {\em not} always true that $\WCart_Y$ agrees with the natural extension of $\WCart_X$ to a presheaf on $p$-nilpotent animated rings, even after sheafification (Warning~\ref{Warn:DerNotClassAbs}): such agreement typically only holds under syntomicity assumptions. In case of disagreement, we view $\WCart_Y$ as the more fundamental object (e.g., it is closely related to derived prismatic cohomology via Corollary~\ref{AbsPushWCartZ}). Similar remarks apply in the relative case as well; the analog of (1) is the subject of \S \ref{ss:RelPrism} and \S \ref{ss:CompWCartPrism}, while the analog of (2) is studied in \S \ref{ss:DerRelPrism}. 
\end{warning}

\newpage
\section{Animated prisms}
\label{ss:AnimPrism}

In this section, we extend the notion of prisms and their basic properties to the animated setting; we shall eventually use this in \S \ref{ss:DerAbsPrism} to extend the notion of Cartier--Witt divisors from Definition \APCref{definition:generalized-Cartier-divisor} to the animated setting. One flexibility offered by the notion of animated prisms over that of classical prisms is that one can base change such a structure along essentially arbitrary maps of animated $\delta$-rings (Remark~\ref{AnimPrismBC}).  

The starting point is the observation that the notion of generalized Cartier divisors works quite well in the animated setting; in particular, there is a reasonable notion of a ``quotient'' of an animated ring $A$ by an invertible ``ideal''. We refer to \cite[\S 3]{KhanRydhVirtual} for a more thorough discussion.

\begin{construction}[Generalized Cartier divisors on animated rings]
\label{GenCartDivAnim}
Given an animated ring $A$, the following $\infty$-categories are equivalent:
\begin{enumerate}
\item The $\infty$-category of maps $I \xrightarrow{\alpha} A$ in $D(A)$ with $I$ being an invertible $A$-module; we refer to such objects as {\em generalized invertible ideals in $A$}.
\item The $\infty$-category of maps $\mathrm{Spec}(A) \to [\mathbf{A}^1/\mathbf{G}_m]$ of derived stacks.
\item The $\infty$-category of maps $A \to \overline{A}$ of animated rings whose fibre $I$ is an invertible $A$-module; we refer to such objects as {\em generalized Cartier divisors on $A$ (or on $\mathrm{Spec}(A)$)}.
\end{enumerate}
We  sketch the construction of the equivalences. The equivalence of (1) and (2) is essentially the functor of points description of $[\mathbf{A}^1/\mathbf{G}_m]$ in derived algebraic geometry. To go from (2) to (3), we  take $\mathrm{Spec}(\overline{A}) \to \mathrm{Spec}(A)$ to be the pullback of the ``origin'' $B\mathbf{G}_m \to [\mathbf{A}^1/\mathbf{G}_m]$ along the given map $\mathrm{Spec}(A) \to [\mathbf{A}^1/\mathbf{G}_m]$. To go from (3) to (1), we take the fibre $I \to A$ of the map $A \to \overline{A}$ in $D(A)$. 

For future reference, we observe that this notion has an obvious functoriality in $A$: given a map $A \to B$ of animated rings and a generalized Cartier divisor $A \to \overline{A}$, the base change $B \to B \otimes_A^L \overline{A}$ is a generalized Cartier divisor on $B$. 

We also remark that if a generalized Cartier divisor $A \to \overline{A}$ corresponds to a generalized invertible ideal $I \to A$ under the equivalence of (1) and (3), then $I \to A$ is the fibre of $A \to \overline{A}$ of $D(A)$. For this reason, we shall also often denote a generalized Cartier divisor as $A \to \overline{A}=A/I$ with the understanding that the corresponding object in (1) is given by $I \to A$.

We shall write $\mathrm{GCart}$ for the $\infty$-category of all generalized Cartier divisors $A \to \overline{A}$, defined as the full subcategory of $\mathrm{Fun}(\Delta^1,\CAlg^{\anim})$ of the $\infty$-category of arrows in $\CAlg^{\anim}$ defined by (3) above.
\end{construction}

\begin{warning}
Given a generalized Cartier divisor $A \to A/I$ where $A$ is discrete, the animated ring $A/I$ need not be discrete. In fact, $A/I$ is discrete exactly when $I \to A$ is an injective map of invertible $A$-modules; in this case, the closed subscheme $\mathrm{Spec}(\overline{A}) \subset \mathrm{Spec}(A)$ is an effective Cartier divisor in the usual sense.
\end{warning}

\begin{notation}
Given a generalized Cartier divisor $A \to \overline{A}=A/I$ and an animated $A$-algebra $B$, we make the following definitions:
\begin{itemize}
\item We say that $B$ is {\em $I$-complete} if it is complete with respect to finitely generated ideal of $\pi_0(B)$ generated by the image of $\pi_0(I) \to \pi_0(A) \to \pi_0(B)$. Similarly, we say that $B$ is $(p,I)$-complete if it is additionally also $p$-complete. 
\item We write $B \to \overline{B}=B/I_B$ for the generalized Cartier divisor on $B$ defined by base change; thus, we also write $(I_B \to B) := (I \otimes_A^L B \to B)$ for the generalized invertible ideal on $B$ defined by base change from $(I \to A)$.
\end{itemize}
\end{notation}

\begin{definition}[Animated prisms]
\label{DefAnimPrism}
\begin{enumerate}
\item An {\em animated prism} is given by an animated $\delta$-ring $A$ (Definition~\ref{definition:animated-delta-ring}) equipped with a generalized Cartier divisor $A \to \overline{A} = A/I$ on $A$ (Construction~\ref{GenCartDivAnim}) satisfying the following:
\begin{enumerate}
\item $A$ is $(p,I)$-complete.

\item Given a perfect field $k$ of characteristic $p$ and an animated $\delta$-map $A \to W(k)$ such that the corresponding map $A \to k$ of animated rings annihilates $I \to A$, we have $W(k) \otimes_A^L \overline{A} \simeq k$ as $W(k)$-algebras; note that any such identification is unique.
\end{enumerate}
\end{enumerate}

The description in (1) naturally defines an $\infty$-category of animated prisms as a full subcategory of the fibre product of
\[ \mathrm{GCart} \xrightarrow{(A \to \overline{A}) \mapsto A} \CAlg^{\anim} \xleftarrow{\text{forget}} \delta \CAlg^{\anim}. \]
Concretely, this yields the following notion of morphisms:

\begin{enumerate}[resume]
\item A morphism $(A \to \overline{A}) \to (B \to \overline{B})$ of animated prisms is given by a morphism of $f:A \to B$ of animated $\delta$-rings and an animated $A$-algebra map $\overline{A} \to \overline{B}$ (where $\overline{B}$ is viewed as an animated $A$-algebra via $f$). 

\item An animated prism $A \to A/I$ is called:
\begin{itemize}
\item {\em orientable} if $I \simeq A$ abstractly as $A$-modules.
\item {\em perfect} if $A$ is a perfect $\delta$-ring, i.e., $\phi:A \to A$ is an isomorphism.
\end{itemize}

\end{enumerate}
\end{definition}

\begin{notation}
Given an animated prism $A \to \overline{A}$, we shall write $\mathrm{Spf}(\overline{A}) \to \mathrm{Spf}(A)$ for the closed immersion of derived formal schemes obtained from $\mathrm{Spec}(\overline{A}) \to \mathrm{Spec}(A)$ by endowing both animated rings with the $(p,I)$-adic topologies, i.e., $\mathrm{Spf}(A) = \mathrm{Spec}(A) \times_{[\mathbf{A}^1/\mathbf{G}_m]} [\widehat{\mathbf{A}^1}/\mathbf{G}_m]$, where $\widehat{\mathbf{A}^1}$ is the formal completion of $\mathbf{A}^1$ (over $\mathbf{Z}$) at the closed point $\mathrm{Spec}(\mathbf{F}_p) \xrightarrow{\text{origin}} \mathbf{A}^1$.
\end{notation}

There is an evident forgetful functor from the $\infty$-category of animated prisms to the $\infty$-category of generalized Cartier divisors. The most basic examples of animated prisms are the following:

\begin{example}
\label{BasicAnimPrism}
Let $k$ be a perfect field of characteristic $p$; endow $W(k)$ with its unique $\delta$-structure. Then there is a unique (up to unique isomorphism) animated prism structure on $W(k)$, and it is given by the map $W(k) \to W(k)/p = k$. Moreover, this animated prism is perfect. 
\end{example}

We shall see in Corollary~\ref{PrismAnimPrismFF} below that if $(A,I)$ is a prism in the classical sense, then $A \to A/I$ is an animated prism, which justifies the name ``animated prism''. 

\begin{remark}[Detecting the prismatic condition on $\pi_0(-)$]
Fix an animated $\delta$-ring $A$. Recall that $\pi_0(A)$ is inherits a $\delta$-structure and $A \to \pi_0(A)$ is the universal map from $A$ to a discrete $\delta$-ring (Remark~\ref{AnimDeltapi0}). Given a generalized Cartier divisor $A \to A/I$ such that $A$ is $(p,I)$-complete, the map $A \to A/I$ is an animated prism if and only if its base change $\pi_0(A) \to \pi_0(A)/I_{\pi_0(A)}$ is an animated prism: indeed, the completeness condition on $A$ passes to $\pi_0(A)$, and condition (b) in Definition~\ref{DefAnimPrism} (1) can be checked after base to $\pi_0(A)$. 
\end{remark}

\begin{remark}[Base changing animated prism structures along $\delta$-maps]
\label{AnimPrismBC}
Fix an animated prism  $A \to A/I$. Given an animated $\delta$-$A$-algebra $B$ which is $(p,I)$-complete, the base change $B \to B/I_B$ of $A \to A/I$ is an animated prism. In particular, the map $\pi_0(A) \to \pi_0(A) / I_{\pi_0(I)} \simeq \pi_0(A)/\pi_0(I)$ (where the quotient is understood in the derived sense) is an animated prism. 

Note that even when $A \to A/I$ is a bounded prism (via the fully faithful embedding in Corollary~\ref{PrismAnimPrismFF}) and $B$ is a discrete $(p,I)$-complete $\delta$-$A$-algebra with bounded $p^\infty$-torsion, the base change $B \to B/I_B$ is typically only an animated prism (i.e., it need not lie in the image of the fully faithful embedding in Corollary~\ref{PrismAnimPrismFF}). 
\end{remark}

\begin{remark}[Rigidity of maps between animated prisms]
\label{AnimPrismStrict}
Given a map $(A \to A/I) \to (B \to B/J)$ of animated prisms, the induced animated $B$-algebra map $B/I_B \to B/J$ is an isomorphism; consequently, the induced map $(I_B \to B) \to (J \to B)$ of generalized invertible ideals in $B$ is also an isomorphism. To check this, since both $B/I_B$ and $B/J$ are perfect $B$-modules, it suffices to check isomorphy after base change to the perfected residue field $k$ at a closed point of $\mathrm{Spec}(B)$. But the structure map $B \to k$ to any such field annihilates $J$ (and thus $I_B$) by the $J$-completeness of $B$. Condition (2) in Definition~\ref{DefAnimPrism} then implies that $B/I_B \to B/J$ already becomes an isomorphism after base change along $B \to W(k)$, and thus also along $B \to k$.
\end{remark}

\begin{corollary}
The $\infty$-category of animated prisms over a fixed animated prism $(A \to A/I)$ identifies with the $\infty$-category of $(p,I)$-complete animated $\delta$-$A$-algebras via the forgetful functor.
\end{corollary}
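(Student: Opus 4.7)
The plan is to exhibit an inverse to the forgetful functor via the base change construction. Concretely, send a $(p,I)$-complete animated $\delta$-$A$-algebra $B$ to the animated prism $(B \to B/I_B)$, where $B/I_B := B \otimes_A^L \overline{A}$, endowed with the structure of a prism over $(A \to \overline{A})$ coming from the $\delta$-map $A \to B$ together with the canonical map $\overline{A} \to \overline{A} \otimes_A^L B \simeq B/I_B$. Remark~\ref{AnimPrismBC} guarantees that $(B \to B/I_B)$ is indeed an animated prism; call the resulting functor $F$. By construction, $F$ is a section of the forgetful functor on underlying $\delta$-$A$-algebras.

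For essential surjectivity of the forgetful functor, let $(B \to B/J)$ be an animated prism over $(A \to \overline{A})$. Applying Remark~\ref{AnimPrismStrict} to the structure morphism $(A \to \overline{A}) \to (B \to B/J)$ produces a canonical isomorphism $B/I_B \xrightarrow{\sim} B/J$ of generalized Cartier divisors on $B$. Thus $(B \to B/J)$ is canonically identified with $F(B)$ as animated prisms over $(A \to \overline{A})$, so the forgetful functor is essentially surjective.

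For fully faithfulness, a morphism $(B \to B/J) \to (B' \to B'/J')$ of animated prisms over $(A \to \overline{A})$ consists of a $\delta$-$A$-algebra map $f \colon B \to B'$ together with a compatible map $B/J \to B'/J'$ of animated $B$-algebras. Transporting across the canonical isomorphisms $B/J \simeq B/I_B$ and $B'/J' \simeq B'/I_{B'}$ from the previous step, the latter datum becomes a map $B/I_B \to B'/I_{B'}$ of animated $B$-algebras compatible with $f$; but such a map is forced to be the base change of $f$ along $A \to \overline{A}$, hence is uniquely determined by $f$. Consequently the mapping space between animated prisms over $(A \to \overline{A})$ is canonically identified with the mapping space of $\delta$-$A$-algebras between the underlying rings, which gives fully faithfulness. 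The only nontrivial input is Remark~\ref{AnimPrismStrict}, which provides the rigidity needed to pin down the generalized Cartier divisor on $B$ from the prism structure on $(A \to \overline{A})$; once that is in hand, both essential surjectivity and fully faithfulness are formal.
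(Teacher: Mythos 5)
Your proof is correct and follows exactly the paper's route: the paper's entire proof is ``Combine Remark~\ref{AnimPrismBC} and Remark~\ref{AnimPrismStrict}'', and you have simply unpacked that combination, using base change to produce the section and rigidity to get essential surjectivity and (via the pushout description of $B/I_B \simeq B\otimes_A^L \overline{A}$) full faithfulness.
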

\begin{proof}
Combine Remark~\ref{AnimPrismBC} and Remark~\ref{AnimPrismStrict}
\end{proof}

\begin{example}[Cartier-Witt divisors and animated prism structures on $W(R)$]
\label{CartWitttoAnim}
Let $R$ be a discrete $p$-nilpotent ring, and regard $W(R)$ as a $\delta$-ring with its standard $\delta$-structure. Given a Cartier-Witt divisor $\alpha:I \to W(R)$ on $R$, the corresponding generalized Cartier divisor $W(R) \to W(R)/I$ is an animated prism: the completeness conditions are automatic by the definition of a Cartier-Witt divisor, and the rest follows by base change (Remark~\ref{AnimPrismBC}) using Example~\ref{BasicAnimPrism}.   Conversely, for any animated prism $W(R) \to W(R)/I$, the corresponding generalized invertible ideal $\alpha:I \to W(R)$ is a Cartier-Witt divisor on $R$ exactly when the composite map $I \to W(R) \to R$ factors over the nilradical of $R$: this is true by definition of a Cartier-Witt divisor.
\end{example}

\begin{lemma}
\label{AnimPrismGlobalCrit}
Fix a discrete $\delta$-ring $A$ and a generalized Cartier divisor $A \to \overline{A} = A/I$.  Assume that $A$ is $(p,I)$-complete. Then $A \to \overline{A}$ gives an animated prism structure on $A$ if and only if $p \in (\alpha(I), \phi(\alpha(I)))$ in $A$.
\end{lemma}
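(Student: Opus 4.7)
The plan is to reduce to the orientable case, where $I = A$ as an $A$-module and $\alpha$ is multiplication by an element $d \in A$, and then to prove the equivalence via a Witt-vector $\delta$-computation. Since $I$ is locally free of rank $1$, I can trivialize it Zariski-locally on $\Spec(A)$; both conditions in the lemma are of a local nature (ideal-membership is Zariski local, and the pointwise Witt-vector condition factors through residue-field maps sitting over points of $\Spec(A)$), so after passing to $p$-adic completions of the localized pieces (to retain the $\delta$-structure) it suffices to treat the orientable case.

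For the forward direction ($p \in (d, \phi(d)) \Rightarrow$ animated prism), I would take a $\delta$-map $\psi : A \to W(k)$ with $\bar\psi := \psi \bmod p$ annihilating $d$ and show that $\tilde d := \psi(d)$ generates $pW(k)$ (so that the cofiber of $\alpha_{W(k)}$, which is multiplication by $\tilde d$, is $k$). Applying $\psi$ to the relation $p = ad + b\phi(d)$ in $A$, and using that $\phi_{W(k)}$ is a ring automorphism fixing $p$ (hence preserving the $p$-adic valuation on $W(k)$), both summands in $p = \psi(a)\tilde d + \psi(b)\phi_{W(k)}(\tilde d)$ lie in $p^{v_p(\tilde d)}W(k)$; this forces $v_p(\tilde d) \leq 1$, which combined with $\tilde d \in pW(k)$ yields $v_p(\tilde d) = 1$.

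For the converse (animated prism $\Rightarrow$ global), the key computation is that for $\psi(d) = pu$ with $u \in W(k)$,
\[
\psi(\delta(d)) \;=\; \delta(pu) \;=\; \phi_{W(k)}(u) - p^{p-1} u^p \;\equiv\; \bar u^p \pmod{p},
\]
using that $\phi_{W(k)}$ reduces modulo $p$ to the Frobenius on the perfect field $k$. Hence the animated-prism condition (``$\tilde d$ generates $pW(k)$'', equivalently $\bar u \neq 0$) is equivalent to $\bar\psi(\delta(d)) \neq 0$ in $k$. Since every ring map $\bar\psi : A \to k$ to a perfect field of characteristic $p$ lifts uniquely to a $\delta$-map $A \to W(k)$ via the adjunction between perfection and Witt vectors, the animated-prism condition is equivalent to: no prime of $A$ contains all of $p$, $d$, and $\delta(d)$; i.e., $(p, d, \delta(d)) = A$. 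Writing $1 = rp + sd + t\delta(d)$ for some $r, s, t \in A$, multiplying by $p$, and using the identity $p\delta(d) = \phi(d) - d^p$, I obtain
\[
p(1 - rp) \;=\; d(sp - t d^{p-1}) + t\phi(d) \;\in\; (d, \phi(d));
\]
since $A$ is $p$-adically complete, $1 - rp$ is a unit in $A$, so $p \in (d, \phi(d))$.

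The main obstacle is the $\delta$-theoretic computation in $W(k)$ that translates the pointwise Witt-vector condition on $\psi(d)$ into the algebraic statement $(p, d, \delta(d)) = A$; once this bridge is in place, both directions are routine. The forward direction uses only basic valuation properties of Witt vectors for perfect $k$, and the converse reduces to manipulating the identity $p\delta(d) = \phi(d) - d^p$ together with the fact that $1 + pA \subseteq A^\times$ by $p$-adic completeness.
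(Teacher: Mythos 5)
Your argument is correct in substance, and the forward direction is essentially the paper's (the paper does not trivialize $I$ there, instead arguing that the map $I_{W(k)} \to pW(k)$ of invertible $W(k)$-modules is surjective because ideals of $W(k)$ are $\phi$-stable, but this is the same valuation computation you perform). The converse, however, ends differently. The paper passes to an ind-Zariski localization $A \to B$ of $\delta$-rings with $I_B$ free and with $p$ and $\alpha(I)$ in the Jacobson radical, proves that $\delta(d)$ is a unit of $B$ by checking at residue fields of closed points (via the same computation $\delta(pu) = \phi(u) - p^{p-1}u^p \equiv \bar{u}^p \pmod p$ that you use), and then cites \cite[Lemma 3.1, $(3) \Leftrightarrow (1)$]{prisms} to convert ``$\delta(d)$ is a unit'' into ``$p \in (d,\phi(d))$''. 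You instead translate the animated-prism condition into the global statement $(p, d, \delta(d)) = A$ and extract $p \in (d, \phi(d))$ by hand from $1 = rp + sd + t\delta(d)$ together with $p\delta(d) = \phi(d) - d^p$ and $1 - rp \in A^\times$. This is a genuine gain in self-containedness: it re-proves the needed implication of \cite[Lemma 3.1]{prisms} without the hypothesis that $d$ lies in the Jacobson radical, at the modest cost of requiring a trivialization of $I$ before the final manipulation.

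One step needs tightening: the reduction via ``$p$-adic completions of the localized pieces.'' In the converse direction you end up knowing $p \in (d_i, \phi(d_i))$ in each $\widehat{A_{f_i}}$, and since $A_{f_i} \to \widehat{A_{f_i}}$ need not be faithfully flat without Noetherian hypotheses, this does not formally descend to $p \in (\alpha(I), \phi(\alpha(I)))$ in $A$. The fix is cheap and does not require completing at all: localize at a maximal ideal $\mathfrak{m}$ of $A$ (which necessarily contains $p$ by $(p,I)$-completeness, so the $\delta$-structure extends uniquely to $A_{\mathfrak{m}}$ and $1 - rp$ remains a unit there), run your argument in $A_{\mathfrak{m}}$, and conclude using the fact that $p \in J$ if and only if $p \in JA_{\mathfrak{m}}$ for every maximal ideal $\mathfrak{m}$. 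This is in effect what the paper's ind-Zariski localization accomplishes in one step.
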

\begin{proof}
Under the given hypotheses, we must check that requiring condition (2) in Definition~\ref{DefAnimPrism} is equivalent to requiring $p \in (\alpha(I), \phi(\alpha(I)))$ in $A$.

Assume that $p \in (\alpha(I),\phi(\alpha(I))) \subset A$. Fix a perfect field $k$ of characteristic $p$ and a map $A \to k$ annihilating $\alpha:I \to A$; lift this map to a unique $\delta$-map $A \to W(k)$. The base change $\alpha_k:I_k \to k$ is $0$ by assumption, so the base change $I_{W(k)} \to W(k)$ must factor (necessarily uniquely) over $pW(k) \hookrightarrow W(k)$. It will suffice to prove that the resulting map $I_{W(k)} \to pW(k)$ is an isomorphism of $W(k)$-modules. In fact, as this is a map of invertible $W(k)$-modules, it suffices to prove surjectivity. Writing $IW(k) \subset W(k)$ for the image of $\alpha_{W(k)}:I_{W(k)} \to W(k)$, it is enough to check $p \in IW(k)$. But any ideal in $W(k)$ is stable under $\phi$, so the containment $p \in IW(k)$ follows from the assumption $p \in (\alpha(I),\phi(\alpha(I))) \subset A$ and the $\phi$-equivariance of $A \to W(k)$. 

Conversely, assume condition (2) in Definition~\ref{DefAnimPrism} holds; we shall check $p \in (\alpha(I),\phi(\alpha(I))) \subset A$. As in \cite[proof of $(2) \Rightarrow (3)$ in Lemma 3.1]{prisms}, choose an ind-Zariski localization $A \to B$ of $\delta$-rings such that $I \otimes_A B \simeq B$ (as $B$-modules) and such that $p$ and $\alpha(I)$ lie in the Jacobson radical of $B$. Write $d \in \alpha(I)B$ for the image of a fixed generator of $I_B$ under $\alpha_B:I_B \to B$. We claim that $\delta(d)$ is a unit; granting this, the lemma will follow from \cite[Lemma 3.1, $(3) \Leftrightarrow (1)$]{prisms}. Now the condition of being a unit for an element of a commutative ring can be detected by passage to residue fields at closed points.  As $p$ and $d$ lie in the Jacobson radical of $B$,  it suffices to check the following: for any perfect field $k$ of characteristic $p$ and any map $B \to k$ annihilating $d$ and corresponding to a $\delta$-map $B \to W(k)$, the image of  $\delta(d)$ in $W(k)$ is a unit. It suffices to show that the image of $d$ in $W(k)$ has $p$-adic valuation $1$.  But the ideal $dW(k) \subset W(k)$ generated by the image of $d$ identifies with the image of $\alpha_{W(k)}:I_{W(k)} \to W(k)$ by construction; as $I \to A \to B \to W(k) \to k$ is $0$, we may apply the assumption in (2) to conclude that $dW(k) = pW(k)$, as wanted.
\end{proof}

\begin{lemma}
\label{PrismsToAnimPrism}
Let $A$ be a discrete $\delta$-ring. 
\begin{enumerate}
\item For any prism structure $(A,I)$ on $A$, the map $A \to \overline{A} := A/I$ defines an animated prism with corresponding generalized invertible ideal given by the inclusion $I \subset A$, 
\item Given an animated prism structure $A \to \overline{A}$ on $A$ whose generalized invertible ideal $\alpha:I \to A$ is an injective map of $A$-modules, the pair $(A,I)$ is a prism.
\end{enumerate}
\end{lemma}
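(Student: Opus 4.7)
The plan is to prove each part by unwinding definitions and invoking Lemma~\ref{AnimPrismGlobalCrit} for the prismatic condition. In both directions the data on the two sides — a classical invertible ideal $I \subset A$ versus an injective map $\alpha : I \to A$ of invertible $A$-modules — correspond under the evident dictionary: a classical invertible ideal gives an injection $\iota: I \hookrightarrow A$ of invertible $A$-modules whose cofibre in $D(A)$ coincides with the ordinary quotient $A/I$ (since injectivity kills $\Tor_1$), and conversely an injective $\alpha : I \to A$ of invertible $A$-modules produces the invertible ideal $\alpha(I) \subset A$.

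For (1), starting from a classical prism $(A,I)$, I would first observe that the inclusion $\iota : I \hookrightarrow A$ is an injective map of invertible $A$-modules, so $A \to A/I$ is a generalized Cartier divisor on $A$ in the sense of Construction~\ref{GenCartDivAnim}, with associated generalized invertible ideal $\iota$. The defining properties of a classical prism supply exactly the two remaining hypotheses of Lemma~\ref{AnimPrismGlobalCrit}: the $(p,I)$-completeness of $A$, and $p \in (\iota(I), \phi(\iota(I))) = I + \phi(I)$. Applying the lemma yields the desired animated prism structure on $A$.

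For (2), I would reverse the dictionary. Given an animated prism structure $A \to \overline{A}$ on the discrete $\delta$-ring $A$ whose associated $\alpha : I \to A$ is an injective map of $A$-modules, set $J := \alpha(I) \subset A$. Since $\alpha$ is an injection from an invertible $A$-module, it induces an isomorphism $I \xrightarrow{\sim} J$, so $J$ is an invertible ideal of $A$ in the classical submodule sense, and $\overline{A} = \cofib(\alpha) = A/J$ as discrete rings. The $I$-completeness assumption from the Notation after Construction~\ref{GenCartDivAnim} unwinds — in this discrete and injective setting — to completeness with respect to the image of $\pi_0(I) \to \pi_0(A) = A$, which is literally $J$; hence $A$ is $(p,J)$-complete. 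The condition $p \in J + \phi(J)$ then follows directly from Lemma~\ref{AnimPrismGlobalCrit}, verifying the final axiom of a classical prism.

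The proof is essentially bookkeeping, and I do not anticipate a genuine obstacle: the substantive content has been absorbed into Lemma~\ref{AnimPrismGlobalCrit}. The one point to double-check carefully is that the injectivity hypothesis on $\alpha$ is exactly what both (a) makes the derived cofibre $\overline{A}$ discrete and equal to the classical quotient $A/J$, and (b) promotes the generalized invertible ideal $\alpha$ to an honest invertible ideal of $A$; once these two observations are cleanly separated, the equivalence of data on the two sides is transparent.
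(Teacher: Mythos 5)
Your proposal is correct and matches the paper's proof, which simply cites the ``if'' direction of Lemma~\ref{AnimPrismGlobalCrit} for (1) and the ``only if'' direction for (2); your write-up just makes the dictionary between classical invertible ideals and injective generalized invertible ideals explicit. No issues.
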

\begin{proof}
Part (1) follows from the ``if'' direction of Lemma~\ref{AnimPrismGlobalCrit}, while part (2) follows from the ``only if'' direction of the same.
\end{proof}

\begin{corollary}[Prisms as animated prisms]
\label{PrismAnimPrismFF}
The construction in Lemma~\ref{PrismsToAnimPrism} (1) gives a fully faithful embedding of the category of prisms $(A,I)$ into the $\infty$-category of animated prisms $A \to \overline{A}$; its essential image  is exactly those animated prisms $A \to \overline{A}$ where $A$ and $\overline{A}$ are both discrete.
\end{corollary}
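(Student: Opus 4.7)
The plan is to handle the essential image statement and fully faithfulness separately; both rely on the observation that, once $A$ and $\overline{A}$ are assumed discrete, the animated prism data reduces to classical data.

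For the essential image, the containment of classical prisms in the appropriate subcategory is immediate from Lemma~\ref{PrismsToAnimPrism}(1) together with the fact that $A/I$ is discrete whenever $I \hookrightarrow A$ is the honest inclusion of an invertible ideal. For the converse, suppose $A \to \overline{A}$ is an animated prism with both $A$ and $\overline{A}$ discrete, and let $\alpha : I \to A$ denote the associated generalized invertible ideal. I would contemplate the fibre sequence $I \to A \to \overline{A}$ in $D(A)$. Since $I$ is invertible and $A$ is discrete, a short line-bundle argument (comparing $\pi_*$ of $I$ with those of $A$ via invertibility) shows that $I$ is concentrated in degree $0$; the long exact sequence of homotopy groups, combined with $\pi_1(\overline{A}) = 0$, then forces $\alpha$ to be injective. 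Hence $I \hookrightarrow A$ is an honest invertible ideal, and Lemma~\ref{PrismsToAnimPrism}(2) provides a classical prism $(A, I)$ in the essential image that maps to $A \to \overline{A}$ under the functor.

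For fully faithfulness, I would exploit the description of the $\infty$-category of animated prisms as a full subcategory of $\delta\CAlg^{\anim} \times_{\CAlg^{\anim}} \mathrm{GCart}$. Given classical prisms $(A, I)$ and $(B, J)$, the animated-prism mapping space between them decomposes as a fibre product of mapping spaces in the two factors, taken over the mapping space in $\CAlg^{\anim}$. I would invoke the full faithfulness of discrete $\delta$-rings inside animated $\delta$-rings (Appendix~\ref{ss:CAlgDeltaAnim}), together with that of classical commutative rings inside animated ones, to conclude that each of the three relevant mapping spaces is a discrete set in our situation. Unwinding definitions, the data reduces to a pair $(f, \phi)$ with $f : A \to B$ a $\delta$-ring map and $\phi : A/I \to B/J$ an $A$-algebra map completing the natural commutative square; in the discrete setting, such a $\phi$ exists if and only if $f(I) \subset J$ and is then uniquely determined. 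Thus the mapping space coincides with the classical set of prism morphisms, as required.

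The main subtlety lies in the compatibility of the fully faithful embeddings with the fibre-product description of animated prisms, together with the correct invocation of Appendix~\ref{ss:CAlgDeltaAnim} to obtain discreteness of the $\delta$-mapping spaces; everything else amounts to routine homotopical bookkeeping.
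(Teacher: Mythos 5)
Your proof is correct and follows the same route the paper takes, which simply declares the corollary ``immediate from Lemma~\ref{PrismsToAnimPrism}'': your essential-image argument (discreteness of $\overline{A}$ forces injectivity of $\alpha\colon I\to A$ via the long exact sequence of the fibre sequence $I \to A \to \overline{A}$, then apply part (2) of the lemma) and your full-faithfulness argument (decompose the mapping space via the fibre-product description of animated prisms and use that discrete $\delta$-rings and discrete commutative rings sit fully faithfully inside their animated counterparts) are exactly the details the paper leaves implicit. No gaps.
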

\begin{proof}
This follows immediately from Lemma~\ref{PrismsToAnimPrism}.
\end{proof}

\begin{lemma}
\label{AnimPrismTrivializeIp}
Let $A \to A/I$ be an animated prism. Then the invertible $A$-modules $\phi^* I$ and $I^p$ are isomorphic to $A$.
\end{lemma}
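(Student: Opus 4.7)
My plan is to argue in three steps: reduce to the case where $A$ is a discrete $\delta$-ring, identify $\phi^{*}I$ with $I^{\otimes p}$ via Frobenius considerations, and finally prove one of them is trivial.

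First I would reduce to $A$ discrete. Invertible modules over an animated ring $A$ are classified, via $L \mapsto \pi_{0}(L)$, by invertible $\pi_{0}(A)$-modules, and triviality is preserved and reflected by this equivalence. Since the base change of the animated prism structure along $A \to \pi_{0}(A)$ is again an animated prism (Remark~\ref{AnimPrismBC}), I may assume $A$ is a discrete $\delta$-ring satisfying $p \in (\alpha(I), \phi(\alpha(I)))$ by Lemma~\ref{AnimPrismGlobalCrit}.

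Next I would show $\phi^{*}I \cong I^{\otimes p}$ as invertible $A$-modules. The key observation is that $\phi$ is a lift of Frobenius, so on the $\mathbf{F}_p$-algebra $A/p$ there are canonical $A/p$-linear isomorphisms
\[
\phi^{*}I \otimes_{A} A/p \;\cong\; (I \otimes_{A} A/p)^{\otimes p} \;\cong\; I^{\otimes p} \otimes_{A} A/p,
\]
using that Frobenius pullback agrees with $p$-fold tensor power on an $\mathbf{F}_p$-algebra. Since $A$ is $(p, I)$-complete and hence $p$-complete, $p$ lies in the Jacobson radical, and Nakayama's lemma implies that an invertible $A$-module is trivial if and only if its reduction modulo $p$ is trivial. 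Applying this to $\phi^{*}I \otimes_{A} (I^{\otimes p})^{-1}$ yields $\phi^{*}I \cong I^{\otimes p}$, reducing the two assertions of the lemma to the single claim $I^{\otimes p} \cong A$.

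For this remaining claim I would exploit local structure. Zariski-locally on $\Spec(A)$, trivialize $I \cong A \cdot e$ with $\alpha(e) = d \in A$; the prism condition then reads $p \in (d, \phi(d))$. Expanding $\phi(d) = d^{p} + p\delta(d)$, one shows (after appropriate further localization, as in the classical analysis of \cite{prisms}) that $\delta(d)$ becomes a unit, so $d$ is a distinguished element. In particular $d$ is a non-zero-divisor and $d^{p}$ is a local generator of $I^{\otimes p}$. On overlaps, the transition units $u_{ij}$ for $I$ satisfy $\phi(u_{ij}) = u_{ij}^{p} + p\delta(u_{ij})$, so the cocycle $\{u_{ij}^{p}\}$ representing $[I^{\otimes p}]$ agrees modulo $p$ with the cocycle $\{\phi(u_{ij})\}$ representing $[\phi^{*}I]$; combined with the isomorphism $\phi^{*}I \cong I^{\otimes p}$ from the previous step and $p$-complete descent, this would produce the desired global trivialization.

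The hard part is this last globalization: local triviality of $I^{\otimes p}$ is automatic from local orientability, but gluing requires extracting from the prism condition and the $\delta$-structure the precise fact that the $p$-th power of the Picard class $[I]$ vanishes in $\Pic(A)$, even when $[I]$ itself does not. I expect the careful verification to parallel the classical prism case treated in \cite{prisms}.
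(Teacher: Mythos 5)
Your first two steps are fine: the reduction to a discrete $\delta$-ring via $\pi_0$ is exactly what the paper does, and the identification $\phi^*I \cong I^{\otimes p}$ (via $F^*L \cong L^{\otimes p}$ over $A/p$ and injectivity of $\Pic(A) \to \Pic(A/p)$ from $p$-completeness) is correct. The problem is that the entire content of the lemma is that this common class in $\Pic(A)$ \emph{vanishes}, and that is precisely the step you do not supply. Your third paragraph does not close it: observing that the cocycles $\{u_{ij}^{p}\}$ and $\{\phi(u_{ij})\}$ agree modulo $p$ only re-derives $\phi^*I \cong I^{\otimes p}$, which you already had, and you then explicitly defer ``the precise fact that the $p$-th power of the Picard class $[I]$ vanishes'' to an unspecified parallel with \cite{prisms}. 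That deferred fact \emph{is} the lemma. (A smaller slip: in the animated setting on a discrete ring the structure map $I \to A$ need not be injective, so a local generator need not map to a nonzerodivisor; one should work with the invertible module $I$ throughout rather than with the ideal $\alpha(I)$.)

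The missing ingredient is the $\delta$-theoretic trivialization of \cite[Lemma 3.6]{prisms}, which is exactly what the paper's one-line proof invokes after the $\pi_0$ reduction. Concretely: Zariski-locally where $I$ is generated by $e$ with $d = \alpha(e)$, the prism condition forces $\delta(d)$ to be a unit, and the identity $\delta(ud) = u^{p}\delta(d) + d^{p}\delta(u) + p\,\delta(u)\delta(d) = \phi(u)\delta(d) + \delta(u)d^{p}$ shows that the locally defined assignment $1 \otimes e \mapsto \delta(d)$ transforms by $\phi(u)$ modulo $I^{p}$, hence glues to a surjection $\phi^*I \otimes_A A/I^{p} \twoheadrightarrow A/I^{p}$, which is therefore an isomorphism. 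Since $(p,I)$-completeness makes $\Pic(A) \to \Pic(A/I^{p})$ (split) injective --- it factors the isomorphism $\Pic(A) \simeq \Pic(A/(p,I))$ --- this forces $[\phi^*I] = 0$, and then $[I^{\otimes p}] = 0$ by your step two. Note that some such genuine use of the distinguishedness of $d$ (i.e.\ of $\delta(d)$ being a unit) is unavoidable: for an arbitrary $(p,I)$-complete $\delta$-ring with invertible ideal $I$ one still has $[\phi^*I] = p[I]$ in $\Pic(A)$, but there is no reason for it to vanish without the prism condition. As written, your argument has a genuine gap at its central step.
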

\begin{proof}
Isomorphisms between invertible $A$-modules can be detected after base change to $\pi_0(A)$ by deformation theory. We may therefore assume that $A$ is a discrete $\delta$-ring. In this case, the proof of this result for prisms given in \cite[Lemma 3.6]{prisms} applies equally well in our setting. 
\end{proof}

\begin{corollary}[Perfect animated prisms are prisms]
The functor in Corollary~\ref{PrismAnimPrismFF} identifies perfect prisms with perfect animated prisms.
\end{corollary}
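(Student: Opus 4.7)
The plan is to invoke the essential image characterization in Corollary~\ref{PrismAnimPrismFF}: an animated prism lies in the image of the fully faithful embedding exactly when $A$ and $\overline{A}$ are both discrete. The forward direction is immediate, since the embedding preserves the underlying animated $\delta$-ring, so a classical perfect prism produces a perfect animated prism. For the converse I fix a perfect animated prism $A \to A/I$ and must show that $A$ and $A/I$ are both discrete.

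First I would show $A$ is discrete. Reducing modulo $p$ gives an animated $\mathbf{F}_p$-algebra $A/p$ on which Frobenius is an isomorphism (inherited from $A$). The key input is the standard fact that on an animated $\mathbf{F}_p$-algebra, Frobenius vanishes on $\pi_n$ for $n > 0$; combined with the isomorphism hypothesis, this forces $A/p$ to be discrete. Then, using the fibre sequence $\pi_n(A) \xrightarrow{p} \pi_n(A) \to \pi_n(A/p)$ together with the $(p,I)$-completeness of $A$ (in particular derived $p$-completeness of each $\pi_n(A)$), vanishing of $\pi_{>0}(A/p)$ forces $p$ to act invertibly on $\pi_{>0}(A)$; derived $p$-completeness then forces $\pi_{>0}(A)=0$. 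So $A$ is discrete.

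Knowing that $A$ is a $p$-complete discrete perfect $\delta$-ring, it is canonically identified with $W(A/p)$ and in particular is $p$-torsion-free. To show $A/I$ is discrete, by Lemma~\ref{PrismsToAnimPrism}(2) it suffices to prove that the generalized invertible ideal $\alpha \colon I \to A$ is injective as a map of $A$-modules. By Lemma~\ref{AnimPrismGlobalCrit}, $p \in (\alpha(I),\phi(\alpha(I)))$. Working locally, where $I$ is trivialized by a generator $d \in A$, this becomes $p \in (d, d^p + p\delta(d))$, i.e., $d$ is a distinguished element of the $p$-torsion-free $\delta$-ring $A$. The standard argument (as in the proof of \cite[Lemma 3.1]{prisms}) shows any such $d$ is a non-zero-divisor. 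Hence $\alpha$ is injective, $A/I$ is discrete, and Lemma~\ref{PrismsToAnimPrism}(2) identifies $(A,I)$ with a classical prism, perfect by hypothesis on $A$.

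I expect the main subtlety to be the opening step: rigorously invoking that a perfect animated $\mathbf{F}_p$-algebra is discrete depends on the vanishing of Frobenius on higher homotopy of animated $\mathbf{F}_p$-algebras, a general fact about animated commutative rings rather than prism theory. Once this is granted, the rest is a clean reduction to the classical Bhatt--Scholze theory of perfect prisms via the criteria already established in Lemmas~\ref{AnimPrismGlobalCrit} and \ref{PrismsToAnimPrism}.
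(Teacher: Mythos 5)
Your overall strategy coincides with the paper's: reduce to showing that a perfect animated prism $A \to A/I$ has $A$ and $A/I$ discrete, identify $A$ with $W(R)$ for a perfect $\mathbf{F}_p$-algebra $R$, and then show $\alpha\colon I \to A$ is injective. Your first half is a correct, more self-contained version of what the paper simply cites (Remark~\ref{remark:perfect-animated-delta}): the vanishing of Frobenius on $\pi_{>0}$ of animated $\mathbf{F}_p$-algebras forces $A\otimes^L\mathbf{F}_p$ to be discrete, which simultaneously gives $\pi_{>0}(A)=0$ (via derived $p$-completeness) and $p$-torsion-freeness of $A$ (since $\pi_1(A\otimes^L\mathbf{F}_p)=A[p]$). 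That part is fine.

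The second half has a genuine problem of justification. You write that after localizing to trivialize $I$, the element $d$ is distinguished in the $p$-torsion-free $\delta$-ring $A$ and that ``the standard argument (as in the proof of \cite[Lemma 3.1]{prisms})'' shows such a $d$ is a non-zero-divisor. That is not what Lemma 3.1 of \cite{prisms} proves: it characterizes distinguishedness ideal-theoretically and does not address zero-divisors at all, and there is no general principle that a distinguished element of a $p$-torsion-free $\delta$-ring is a non-zero-divisor --- this is precisely why invertibility of $I$ is a separate axiom in the definition of a prism. The correct input is \cite[Lemma 2.34]{prisms}, which proves the non-zero-divisor property for distinguished elements of \emph{perfect} $p$-complete $\delta$-rings, i.e.\ rings of the form $W(R)$; perfectness is used essentially there. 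Since your $A$ is perfect, the conclusion you want is true, but the reason is the one you did not invoke. Relatedly, your Zariski localization is both unnecessary and slightly hazardous: localizing $A$ destroys $p$-completeness, so Lemma 2.34 would not apply to the localized ring without further care. The paper sidesteps this entirely by observing that $\phi^*I \simeq A$ (Lemma~\ref{AnimPrismTrivializeIp}) together with $\phi$ being an automorphism gives a \emph{global} trivialization $I \simeq A$, after which Lemma 2.34 applies directly to $A=W(R)$ and Lemma~\ref{PrismsToAnimPrism}(2) finishes the argument. With those two repairs --- global orientability via Lemma~\ref{AnimPrismTrivializeIp} and the citation of \cite[Lemma 2.34]{prisms} in place of Lemma 3.1 --- your proof becomes the paper's proof.
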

\begin{proof}
It suffices to show that for any perfect animated prism $A \to A/I$, the pair $(A,I \to A)$ is a  prism. It is standard from deformation theory that for any perfect $p$-complete animated ring $A$ must have the form $W(R)$ for a perfect ring $R$ of characteristic $p$ (in fact, $R=A/p$); see
Remark~\ref{remark:perfect-animated-delta}. By Corollary~\ref{PrismAnimPrismFF}, it remains to check that $I \to A$ is injective. By Lemma~\ref{AnimPrismTrivializeIp} and the perfectness of $A$, the animated prism $A \to A/I$ is orientable. In this case, the claim follows from \cite[Lemma 2.34]{prisms}. 
\end{proof}

We shall need the following animated version of \cite[Proposition 7.11]{prisms}. 

\begin{proposition}[Lifting quasisyntomic covers, animated version]
\label{prop:LiftQSyn}
Let $A \to \overline{A} = A/I$ be an animated prism. Let $\overline{A}  \to R$ be a $p$-quasisyntomic cover. Then there exists a $(p,I)$-completely faithfully flat animated $\delta$-$A$-algebra $B$ and a factorization $\overline{A} \to R \to \overline{B}$ in animated $\overline{A}$-algebras.
\end{proposition}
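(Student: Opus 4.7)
The proof adapts the classical argument in \cite[Prop.~7.11]{prisms} to the animated setting, and proceeds in three steps.

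\textbf{Step 1 (Refine $R$ to a perfectoid cover).} I would first replace $R$ by a further $p$-quasisyntomic cover $R \to R_\infty$ whose underlying ring is perfectoid. Concretely, choose a $p$-completely faithfully flat surjection from a $p$-completed free polynomial $\overline{A}$-algebra on a generating set of $\pi_0(R)/p$ onto $R$, adjoin $p^\infty$-roots of the generators, and $p$-complete; this produces a perfectoid $R_\infty$ together with a $p$-completely faithfully flat map $R \to R_\infty$ over $\overline{A}$. Since any factorization $\overline{A} \to R_\infty \to \overline{B}$ automatically restricts to a factorization $\overline{A} \to R \to \overline{B}$, we may assume $R$ is itself a perfectoid ring.

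\textbf{Step 2 (Lift to a perfect prism).} By \cite[Thm.~3.10]{prisms}, the perfectoid ring $R$ corresponds to a unique perfect prism $(A_\infty, I_\infty)$ with $A_\infty \simeq W(R^\flat)$ and $A_\infty/I_\infty \simeq R$; by the corollary just above (identifying perfect animated prisms with classical perfect prisms), this is automatically a perfect animated prism. I would promote the map $\overline{A} \to R$ to a $\delta$-map $A \to A_\infty$ of animated $\delta$-rings via the universal property of Witt vectors. Denoting by $f: A/p \to \overline{A}/p \to R/p$ the composite modulo $p$, the Frobenius $\phi$ on $A/p$ coming from the $\delta$-structure produces a canonical ring map
\[
A/p \to R^\flat = \lim_{\phi} R/p, \qquad a \mapsto (f(a), f(\phi a), f(\phi^2 a), \ldots),
\]
which is Frobenius-compatible by naturality of Frobenius in characteristic $p$ (so that $\phi_R \circ f = f \circ \phi_A$). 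The universal property of Witt vectors on perfect $\mathbf{F}_p$-algebras then lifts this to a $\delta$-map $A \to W(R^\flat) \simeq A_\infty$; projecting via $\theta: A_\infty \twoheadrightarrow R$ recovers the original $A \to \overline{A} \to R$ by construction. Setting $B := A_\infty$ and invoking Remark~\ref{AnimPrismBC}, we obtain an animated prism over $(A, I)$ with the desired factorization $\overline{A} \to R \simeq \overline{B}$.

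\textbf{Step 3 (Faithful flatness).} $(p, I)$-complete faithful flatness of $A \to B = A_\infty$ can be tested modulo $(p, I)$, reducing to the faithful flatness of $\overline{A}/p \to R/p$. This holds because $\overline{A} \to R$ is a $p$-quasisyntomic cover, hence $p$-completely faithfully flat.

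The main obstacle is Step 2: producing the canonical $\delta$-map $A \to A_\infty$ in the animated context, where $A$ is not necessarily discrete. This requires a universal property of Witt vectors for perfect animated rings, as developed in the appendix (Remark~\ref{remark:perfect-animated-delta}), together with careful functorial tracking of the Frobenius-compatibility argument used above.
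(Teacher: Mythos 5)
Your Step 1 contains a genuine and fatal gap: adjoining $p$-power roots of a set of generators of $R$ produces a ring $R_\infty$ that is (quasiregular) \emph{semiperfectoid} --- semiperfect modulo $p$ --- but not perfectoid, and in general no $p$-quasisyntomic (or even $p$-completely faithfully flat) refinement of $R$ by a perfectoid ring exists. For a concrete obstruction, take $(A,I) = (\mathbf{Z}_p,(p))$ and $R = \mathbf{F}_p[x^{1/p^\infty}]/(x)$, which is a $p$-quasisyntomic cover of $\overline{A} = \mathbf{F}_p$: any $p$-completely faithfully flat discrete $R$-algebra receives $R$ injectively and hence contains the nonzero nilpotent $x^{1/p}$, while a perfectoid $\mathbf{F}_p$-algebra is perfect and in particular reduced. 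Since Steps 2 and 3 use precisely the perfectoidness of $R_\infty$ --- namely that $\bigl(W(R_\infty^\flat), \ker\theta\bigr)$ is a perfect prism with $\overline{W(R_\infty^\flat)} \simeq R_\infty$ --- the strategy does not go through: for a merely semiperfectoid $R_\infty$ the kernel of $\theta: W(R_\infty^\flat) \to R_\infty$ is surjective but not an invertible ideal, so $W(R_\infty^\flat)$ carries no animated prism structure with quotient $R_\infty$, and there is no $\delta$-$A$-algebra $B$ in sight.

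The missing idea is the one the paper's proof supplies at exactly this point. After reducing to the semiperfectoid situation (which is the correct output of your Step 1: one obtains a relatively perfect, $(p,I)$-completely faithfully flat animated $\delta$-$A$-algebra $A'$ with a map $A' \to R$ surjective on $\pi_0$), one must take $B$ to be the animated \emph{prismatic envelope} $\Prism_{R/A'}$ rather than a ring of Witt vectors. The substantive work is then (i) endowing this envelope with an animated $\delta$-$A$-algebra structure, which the paper does by presenting the pair $(A' \to R)$ as a colimit of compact generators in an $\infty$-category of pairs and invoking the uniqueness of $\delta$-structures on classical prismatic envelopes of regular sequences, and (ii) proving that $R \to \overline{B}$ is $p$-completely faithfully flat, which follows from the Hodge--Tate comparison together with the quasisyntomicity of $\overline{A} \to R$ --- not from a reduction modulo $(p,I)$ to $\overline{A}/p \to R/p$ as in your Step 3, since the relevant flatness is that of $\overline{B}$ over $R$, which is invisible before the envelope is constructed. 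None of this machinery is needed in the perfectoid case, which is why your argument appears shorter; but the perfectoid case does not occur after a general quasisyntomic cover.
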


Let us first sketch a proof using the notion of animated prismatic envelopes studied in \cite[\S 5]{MaoDerivedCrys}. Note that we are allowed to replace $R$ by a $p$-quasisyntomic cover. After such a replacement, we can find relatively perfect $(p,I)$-completely faithfully flat animated $\delta$-$A$-algebra $A'$ and an animated $A$-algebra map $A' \to R$ which is surjective on $\pi_0$ (see first paragraph of the proof below). Once we are in this situation, we can take $B$ to be the animated prismatic envelope of $A' \to R$ relative to $A$ in the sense of \cite[Corollary 5.25]{MaoDerivedCrys}; the conjugate filtration from \cite[Theorem 5.46]{MaoDerivedCrys} then proves the proposition. For the convenience of the reader, we try to give a relatively self-contained exposition of this sketch; nonetheless, we use the model categories of animated pairs from \cite{MaoDerivedCrys} at a crucial point to get $\delta$-structures on certain prismatic complexes.

\begin{proof}
It suffices to construct the desired map $A \to B$ after Zariski localization on $\mathrm{Spf}(A)$. We may therefore assume $I \simeq A$ is the trivial line bundle over $A$, with the structure map $I \simeq A \to A$ corresponding to an element $d \in \pi_0(A)$ that is distinguished. We may then choose a transversal prism $(A_0,I_0 = (d))$ and a map $(A_0,I_0) \to (A,I)$ of animated prisms (e.g., using Remark~\APCref{remark:universal-of-A0}).  By replacing $R$ with a $p$-quasisyntomic cover obtained by formally adjoining $p$-power roots of all elements of $\pi_0(R)$ in animated $p$-complete $\overline{A}$-algebras, we can find a map $A' := A[\{X_s^{1/p^\infty}\}_{s \in S}]^{\wedge}_{(p,I)} \to R$, surjective on $\pi_0$, for some set $S$. Note that $A'$ is naturally a $(p,I)$-completely faithfully flat and relatively perfect animated $\delta$-$A$-algebra. We shall check that $B := \Prism_{R/A'}$ (defined via animating the $\widehat{\mathcal{D}}(A_0)$-valued functor $\Prism_{B/(-)^{\wedge}_{(p,d)}}$ on finitely generated free $\delta$-rings over $A_0$) solves the problem, i.e., $B$ has a natural animated $\delta$-$A$-algebra structure,  there exists a factorization $\overline{A} \to R \to \overline{B}$  of the natural map $\overline{A} \to \overline{B}$ of animated $\overline{A}$-algebras, and that the map $R \to \overline{B}$ appearing in this factorization is a $p$-quasisyntomic cover; note that the last condition implies that $A \to B$ is $(p,I)$-completely faithfully flat by assumption on $\overline{A} \to R$.

First, we explain why $B$ admits an animated $\delta$-$A$-algebra structure. For this, consider the $\infty$-category $\mathcal{C}$ of maps $(C \to T)$ where $C$ is an animated $\delta$-$A_0$-algebra, $T$ is an animated $\overline{A_0}$-algebra, and the map $C \to T$ is an animated $A_0$-algebra map which is surjective on $\pi_0$. This $\infty$-category is compactly generated with compact generators as described in \cite[Lemma 5.24]{MaoDerivedCrys}. In particular, we can represent $(A' \to R) \in \mathcal{C}$ as the colimit of a diagram $\{A_i \to R_i\}_{i \in I}$ in $\mathcal{C}$, where each $A_i$ is a $\delta$-$A_0$-algebra of the form $A_0\{\underline{X},\underline{Y}\}^{\wedge}_{(p,d)}$ for suitable sets $\underline{X}$ and $\underline{Y}$ of free $\delta$-variables and $R_i = (A_i/(d,\underline{Y}))^{\wedge}_p$. It is then enough to endow each $\Prism_{R_i/A_i}$ with an animated $\delta$-$A_i$-algebra structure functorially in $i$. But each $\Prism_{R_i/A_i}$ is discrete by the Hodge-Tate comparison and in fact identifies with the prismatic envelope of $\ker(A_i \to R_i) \subset A_i$, which has a unique $\delta$-structure compatible with that of $A_i$ (see \cite[Lemma 7.7]{prisms}), so we win.

Next, observe that $A \to A'$ is relatively perfect, so the map $\Prism_{R/A} \to B := \Prism_{R/A'}$ is an isomorphism by the Hodge-Tate comparison. By functoriality of the Hodge-Tate structure maps for prismatic cohomology relative to $A$, we obtain the desired factorization $\overline{A} \to R \to \overline{B}$ of the natural map $\overline{A} \to \overline{B}$ of animated $\overline{A}$-algebras. 

Finally, we must show that $R \to \overline{B} \simeq \overline{\Prism}_{R/A}$ is $p$-completely faithfully flat. This follows from the Hodge-Tate comparison and the assumption that $\overline{A} \to R$ is $p$-quasisyntomic.
\end{proof}

\newpage
\section{Absolute prismatization}
\label{ss:AbsPrism}

In this section, we introduce and study the Cartier--Witt stack of a $p$-adic formal scheme as well as some important related stacks.

\begin{definition}[The prismatization of a $p$-adic formal scheme]
\label{DefPrismatizeFormal}
Let $X$ be a  bounded $p$-adic formal scheme. Its {\em prismatization} is the groupoid valued functor $\WCart_X$ on $p$-nilpotent rings defined as follows: the groupoid $\WCart_X(R)$ is the groupoid of pairs $(I \xrightarrow{\alpha} W(R), \eta:\mathrm{Spec}(\overline{W(R)}) \to X)$, where $(I \xrightarrow{\alpha} W(R)) \in \WCart(R)$ is a Cartier-Witt divisor, and $\eta$ is a morphism of derived formal schemes. 
\end{definition}

A Cartier-Witt divisor on $\alpha:I \to W(R)$ on a $p$-nilpotent ring $R$ can be viewed as an animated prism structure on $W(R)$ via Example~\ref{CartWitttoAnim}. Experience in prismatic cohomology then suggests that $\mathrm{Spf}(\overline{W(R)})$, where $\overline{W(R)}$ is endowed with the $p$-adic topology, is the natural space to attach to the animated ring $\overline{W(R)}$. On the other hand, Definition~\ref{DefPrismatizeFormal} uses seemingly larger space $\mathrm{Spec}(\overline{W(R)})$. But this is only an apparent discrepancy as the two objects coincide. This follows from the next two lemmas, which will also be used in the sequel. 

\begin{lemma}
\label{WittSquareZero}
Let $R$ be a $p$-nilpotent ring.
\begin{enumerate}
\item If $J \subset R$ is a square-zero ideal, then $W(J) := \ker(W(R) \to W(R/J))$ is a square-zero ideal of $W(R)$.
\item For any $n \geq 1$, the kernel of $W_{n+1}(R) \to W_n(R)$ is a nilpotent ideal. Moreover, the order if nilpotence is bounded in terms of the order of $p$-nilpotence of $R$.
\end{enumerate}
\end{lemma}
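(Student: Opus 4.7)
My plan is to prove both parts by working with the universal polynomial formulas for Witt vector arithmetic, which I would verify on $p$-torsion-free polynomial rings over $\mathbf{Z}$ (where the ghost map is injective) and then specialize to the given ring $R$.

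For part (1), I would observe that the $n$-th component of the product of two Witt vectors is given by a universal polynomial $P_n \in \mathbf{Z}[x_0, \ldots, x_n, y_0, \ldots, y_n]$. Since multiplication by $0$ is $0$ in $W(R)$, the polynomial $P_n$ vanishes whenever all $x_i$ or all $y_j$ are set to zero. Consequently every monomial of $P_n$ contains at least one $x_i$-factor and at least one $y_j$-factor, so for $a, b \in W(J)$ every monomial in $(ab)_n$ is a product of at least two elements of $J$ and hence lies in $J^2 = 0$. Thus $ab = 0$ and $W(J)$ is square-zero.

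For part (2), I would identify $K := \ker(W_{n+1}(R) \to W_n(R))$ with the image of the iterated Verschiebung $V^n \colon R \to W_{n+1}(R)$, $r \mapsto (0, \ldots, 0, r)$ with the entry in position $n$. Using ghost-component calculations on $p$-torsion-free polynomial rings, I would then establish the identities
\[ V^n(\alpha) \cdot V^n(\beta) = V^n(p^n \alpha \beta) \quad \text{and} \quad c \cdot V^n(r) = V^n(w_n(c) \cdot r) \]
in $W_{n+1}(R)$, where $w_n$ is the $n$-th ghost map. Iterating the first identity shows that any $k$-fold product of elements of $K$ lies in $V^n(p^{n(k-1)} R)$; combined with the second identity, I would conclude $K^k \subseteq V^n(p^{n(k-1)} R)$ as an ideal of $W_{n+1}(R)$. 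If $p^N = 0$ in $R$, then $K^k = 0$ whenever $n(k-1) \geq N$; in particular $K^{N+1} = 0$ uniformly in $n \geq 1$, bounding the order of nilpotence purely in terms of the $p$-nilpotence order of $R$.

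The main obstacle is the bookkeeping around the universal identities: any ghost-theoretic equality translates to a genuine Witt-vector identity only after passing through a $p$-torsion-free universal case, and the clean formula $V^n(\alpha) V^n(\beta) = V^n(p^n \alpha \beta)$ is really an identity in $W_{n+1}$---in the full Witt ring $W$ one would pick up additional contributions in positions above $n$ that are killed only by the truncation. Once these universal identities are in place, both assertions reduce to elementary combinatorics on degrees in $J$ and powers of $p$ respectively.
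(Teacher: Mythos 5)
Your proof is correct, but it takes a genuinely different route from the paper on both parts. For (1), the paper writes an element of $W(J)$ as $\sum_{i} V^i([a_i])$ with $a_i \in J$ and inducts on $i+j$ using the identities $V^i([a])[b] = V(V^{i-1}([a]))F([b])$, $F([b]) = [b^p] = 0$, and $xVy = V(Fx\cdot y)$; your argument instead exploits the bihomogeneity of the universal multiplication polynomials $P_n$ (every monomial carries an $x$-factor and a $y$-factor because $P_n(x,0)=P_n(0,y)=0$), which is shorter and avoids the Teichm\"uller/Verschiebung bookkeeping entirely. For (2), the difference is more substantial: the paper first invokes part (1) together with the $p$-adic filtration of $R$ to reduce to the case of an $\mathbf{F}_p$-algebra, and then shows that for such $R$ each truncation $W_{n+1}(R) \to W_n(R)$ is a \emph{square-zero} extension (via $V^n(a)V^n(b) = pV(V^{n-1}(a)V^{n-1}(b))$ and $p = V(1)$), so the resulting bound on the order of nilpotence grows through the length of the filtration. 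Your argument works directly over any $p$-nilpotent $R$: identifying the kernel with $V^n(R)$ and using $V^n(\alpha)V^n(\beta)=V^n(p^n\alpha\beta)$ together with the projection formula $c\cdot V^n(r)=V^n(F^n(c)r)$ gives $K^k \subseteq V^n(p^{n(k-1)}R)$, hence the clean uniform bound $K^{N+1}=0$ when $p^N=0$. This makes part (2) logically independent of part (1) and yields a sharper explicit bound; what you lose relative to the paper is the structural observation that over $\mathbf{F}_p$-algebras the truncation kernels are actually square-zero, which is the form in which the paper's argument is organized. Your cautionary remark that $V^n(\alpha)V^n(\beta)=V^n(p^n\alpha\beta)$ is an identity in $W_{n+1}$ (verified via ghost components in the $p$-torsion-free universal case) is exactly the right care to take, and both identities do check out on ghost components.
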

\begin{proof}
\begin{enumerate}
\item Each element of $W(J)$ can be written uniquely as $\sum_{i \geq 0} V^i([a_i])$ for $a_i \in J$. By continuity, it then suffices to show that for all $a,b \in J$ and $i,j \geq 0$, we have $V^i([a]) V^j([b]) = 0$. We prove this by induction on $i+j$. If $i=j=0$, then we are simply observing that $[a][b] = [ab] = 0$ as $J$ is square-zero. If $i > 0$ and $j=0$, then 
\[ V^i([a]) [b] = V(V^{i-1}([a])) [b] = V(V^{i-1}([a])) F([b]),\] 
which vanishes as $F([b]) = [b^p] = 0$ as $b^p \in J^2=0$. Finally if $i,j > 0$, then a similar manipulation gives
\[ V^i([a]) V^j([b]) =  V(V^{i-1}([a]) FV^j([b])) = V(V^{i-1}([a]) p V^{j-1}([b])) = p V(V^{i-1}([a]) V^{j-1}([b])),\] 
which vanishes as $V^{i-1}([a]) V^{j-1}([b]) = 0$ by the inductive hypothesis.
\item The claim about the order of nilpotence will follow from the proof of nilpotence. For the latter, using (1) and the $p$-adic filtration on $R$, we may assume $R$ is an $\mathbf{F}_p$-algebra. In this case, we claim that $W_{n+1}(R)$ is a square-zero extension of $W_n(R)$. Since $W_m(R) = W(R)/V^m W(R)$ for all $m$, this amounts to checking the following: for an $\mathbf{F}_p$-algebra $R$ and $a,b \in W(R)$, we have $V^n(a) V^n(b) \in V^{n+1}(R)$. We prove this by induction on $n \geq 1$. For  $n=1$, we have 
\[ V(a) V(b) = V(FV(b) a) = V(p ba) = V(1) V(ba) = V(FV(ba)) = V^2 F(ba) \in V^2 W(B).\] 
For $n > 1$, we have 
\[ V^n(a) V^n(b) = V(V^{n-1}(a) FV^n(b)) = V(p V^{n-1}(a) V^{n-1}(b)) = p V(V^{n-1}(a) V^{n-1}(b)).\] 
By induction, this has the form $p V(V^n(c)) = V^{n+1}(pc)$ for some $c \in W(B)$, as wanted.
\end{enumerate}
\end{proof}

\begin{lemma}
Let $\alpha:I \to W(R)$ be a Cartier-Witt divisor on a $p$-nilpotent ring $R$. Then $\pi_0(W(R)/I)$ is a $p$-nilpotent ring.
\end{lemma}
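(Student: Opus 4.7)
The goal is to show $p^N \in \mathfrak{I}$ for some $N \geq 1$, where $\mathfrak{I} := \alpha(I) \cdot W(R)$ is the image ideal; this is equivalent to $p$-nilpotence of $\pi_0(W(R)/I) = W(R)/\mathfrak{I}$, using that $I$ is a flat $W(R)$-module. I would first reduce to a local case by passing to a finite Zariski cover of $\mathrm{Spec}(R)$ (hence of $\mathrm{Spec}(W(R))$ via Teichm\"uller lifts of idempotents) on which $I \simeq W(R)$, so that $\mathfrak{I} = (d)$ for a single element $d \in W(R)$ whose image $\bar{d} \in R$ is nilpotent (by the Cartier-Witt condition).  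Applying Example~\ref{CartWitttoAnim} and Lemma~\ref{AnimPrismGlobalCrit} to the induced animated prism structure on $W(R)$ then yields the relation $p \in (d, \phi(d))$, so we can write $p = ad + b\phi(d)$ for some $a, b \in W(R)$.

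The core algebraic manipulation uses the $\delta$-ring identity $\phi(d) = d^p + p\delta(d)$ to recast this as
\[
p \bigl(1 - b\, \delta(d) \bigr) \;=\; (a + b\, d^{p-1})\, d \;\in\; (d) \;=\; \mathfrak{I}.
\]
It therefore suffices to arrange for $1 - b\delta(d)$ to be a unit in $W(R)$, since then $p \in \mathfrak{I}$ directly --- a conclusion stronger than needed.

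The plan for producing units is to exploit two features: (a) the choice of $(a, b)$ with $p = ad + b\phi(d)$ is not unique, since one may shift $(a, b) \mapsto (a + c\phi(d), b - cd)$ for any $c \in W(R)$; and (b) the ring $W(R)$ is $V$-adically complete, and Lemma~\ref{WittSquareZero}(2) combined with the $p$-nilpotence of $R = W_1(R)$ implies $VW(R) + pW(R)$ consists of topologically nilpotent elements, so $(p, V(1))$ lies in the Jacobson radical of $W(R)$.  The idea is to use the shift freedom, together with a distinguished-element condition on $d$ (namely that $\delta(d)$ is a unit modulo $(d, p)$, which in the animated setting should follow from Definition~\ref{DefAnimPrism}(b) via base change to Witt rings of perfect residue fields), to adjust $b$ so that $b\delta(d)$ lies in $(p, V(1))$, making $1 - b\delta(d)$ a unit.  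I expect the verification of the distinguished-element condition in the animated setting to be the main obstacle. A backup, more computational route bypasses this and uses the Witt vector identities $\phi V = p$ and $V(a)V(b) = pV(ab)$: since $d^m \in VW(R)$, iterated products give $d^{km} \in p^{k-1}VW(R)$, hence $\phi(d)^{km} = \phi(d^{km}) \in p^{k} W(R)$; binomial expansion of $p^{km} = (ad + b\phi(d))^{km}$ then shows $p^{km} \in \mathfrak{I} + p^k W(R)$ for all $k \geq 1$, which combined with Lemma~\ref{WittSquareZero}(2) to bootstrap across the Witt truncations $W_n(R)$ (each $p$-nilpotent) should yield $p^N \in \mathfrak{I}$.
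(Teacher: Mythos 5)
Your reduction to the Zariski-local case where $I$ is free with a generator $d$ whose image in $R$ is nilpotent, and the extraction of $p\in(d,\phi(d))$ from Example~\ref{CartWitttoAnim} and Lemma~\ref{AnimPrismGlobalCrit}, are both sound (the paper localizes the same way, using that $S\mapsto W(S)/I_{W(S)}$ is a Zariski sheaf and that a finite limit of $p$-nilpotent complexes is $p$-nilpotent; note that a Zariski cover is by basic opens $D(f_i)$, not by idempotents). The identity $p(1-b\delta(d))=(a+bd^{p-1})d$ is also correct. Both of your proposed completions fail, however.

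The main route aims to make $1-b\delta(d)$ a unit, which would give $p\in(d)$; but that conclusion is false in general, so no choice of $(a,b)$ can achieve it. Concretely, take $R=\mathbf{F}_p(t)$ and $d=V([t])$: this is a Cartier--Witt divisor ($d\mapsto 0$ in $R$, and $p\in(d,\phi(d))=(V([t]),p[t])$ since $[t]$ is a unit), yet $p=V(1)$ lies in $(V([t]))$ iff $[t^{-1}]\in F(W(R))$, and the image of $F(W(R))$ in $R=W(R)/VW(R)$ is $R^p=\mathbf{F}_p(t^p)\not\ni t^{-1}$. So $p\notin(d)$, although $p^2=V([t^{-1}])\cdot V([t])\in(d)$. (Note also that the shift $(a,b)\mapsto(a+c\phi(d),b-cd)$ does not change $b$ modulo $(d)$, so it cannot help; and distinguishedness of $d$ is not the real obstacle --- it already follows from $p\in(d,\phi(d))$ together with $p,d\in\mathrm{rad}(W(R))$ via \cite[Lemma 3.1]{prisms}.) The backup route is also not a proof: the relation $p^{km}\in\mathfrak{I}+p^kW(R)$ is vacuous, since $p^{km}\in p^kW(R)$ already because $km\geq k$; and the bootstrap over the truncations $W_n(R)$ does not close the gap, because the order of $p$-nilpotence of $W_n(R)$ grows with $n$, so one never obtains a single $N$ with $p^N\in\mathfrak{I}$ in $W(R)=\varprojlim_n W_n(R)$.

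The paper's proof avoids all of this: using the square-zero extensions of Lemma~\ref{WittSquareZero}(1) it first reduces to the case where $R$ is an $\mathbf{F}_p$-algebra and $\alpha$ lies in the Hodge--Tate locus, so that Zariski-locally $d=V(u)$ with $u\in W(R)^\ast$; then $p=V(1)$ and $p^2=V(p)=V(u\cdot pu^{-1})=V(u)\cdot V(u^{-1})\in(d)$, giving the uniform bound $p^2\in\mathfrak{I}$ directly. If you want to salvage your algebraic setup, you should aim at $p^2\in(d)$ rather than $p\in(d)$, and at some point you will need a Witt-vector identity of the above type rather than only the abstract prism condition.
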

\begin{proof}
 Using Lemma~\ref{WittSquareZero} (1) repeatedly, we may assume that $R$ is an $\mathbf{F}_p$-algebra, and that the Cartier-Witt divisor $\alpha:I \to W(R)$ lies in the Hodge-Tate locus, i.e., $\alpha(I) \subset VW(R)$. Next, we can choose a finite Zariski open cover $\{\mathrm{Spec}(R_i)\}$ of $\mathrm{Spec}(R)$ such that $I_{W(R_i)}$ is free over $W(R_i)$. Now the functor $S \mapsto W(S)/I_{W(S)}$ is a Zariski sheaf of groupoids (or even complexes) on $\mathrm{Spec}(R)$. Since a finite limit of $p$-nilpotent complexes is $p$-nilpotent, we then reduce to checking the statement when $I \simeq W(R)$ is free. Write $V(u) \in \alpha(I) \subset VW(R)$ for the image under $\alpha$ of a fixed generator of $I$, so $\alpha(I) = V(u) W(R)$.  Note that $u \in W(R)$ must be a unit by definition of a Cartier-Witt divisor.  We must check that $p$ is nilpotent in the ring $W(R)/V(u)W(R)$; in fact, we claim that $p^2 \in V(u) W(R)$. For this, we must find $x \in W(R)$ with $p^2 = x V(u)$. Noting that $p=V(1)$ as $R$ is an $\mathbf{F}_p$-algebra, this amounts to finding $x \in W(R)$ such that $V(p) = V(Fx u)$ or equivalently that $Fx = p u^{-1}$. But then $x=Vu^{-1}$ does the job: indeed, $Fx = FVu^{-1} = pu^{-1}$.
\end{proof}

\begin{remark}[$\WCart$ as a prismatization]
We have $\WCart_{\mathrm{Spf}(\mathbf{Z}_p)} \simeq \mathrm{WCart}$ as $\mathrm{Spf}(\mathbf{Z}_p)$ is the final object in the category of $p$-adic formal schemes. 
\end{remark}

\begin{remark}[Compatibility of $\WCart_{(-)}$ with limits]
\label{PrismatizeLimits}
For any map $f:X \to Y$ of  bounded $p$-adic formal schemes, there is an induced functorial map $\WCart_f:\WCart_X \to \WCart_Y$ on prismatizations, so we can regard $\WCart_{(-)}$ as a functor from  bounded $p$-adic formal schemes to presheaves of groupoids on $p$-nilpotent rings. In fact, as $\mathrm{Spf}(\mathbf{Z}_p)$ is the final object in the category of $p$-adic formal schemes, the functor $\WCart_{(-)}$ naturally takes valued in presheaves of groupoids over $\WCart = \WCart_{\mathrm{Spf}(\mathbf{Z}_p)}$; when viewed as such, it is immediate from the definition that this functor commutes with limits of Tor-independent diagrams (i.e., those diagrams whose limit in $p$-adic formal schemes agrees with the limit in derived $p$-adic formal schemes).
\end{remark}

\begin{remark}[The Frobenius on $\WCart_X$]
For any  bounded $p$-adic formal scheme $X$, the presheaf $\WCart_X$ carries a natural  (in $X$) endomorphism $F_X:\WCart_X \to \WCart_X$ induced by the Frobenius on the Witt vectors: given a $p$-nilpotent ring $R$ and a point 
\[ (\alpha,\eta) := (I \xrightarrow{\alpha} W(R), \eta:\mathrm{Spec}(\overline{W(R)}) \to X) \in \WCart_X(R),\] we obtain a new point 
\[ F_X(\alpha,\eta) := (F^* I \xrightarrow{F^* \alpha} W(R), \mathrm{Spec}(W(R)/ F^* I ) \xrightarrow{\eta \circ \overline{F}} X) \in \WCart_X(R),\] 
where $\overline{F}$ comes from the map $\overline{W(R)} := W(R)/I \to W(R)/ F^* I$ induced by the Frobenius on $W(R)$. Note that $F_{\mathrm{Spf}(\mathbf{Z}_p)}$ equals the Frobenius on $\WCart$ from Construction~\APCref{construction:Frobenius-on-stack}. Moreover, using the equality  $F = W(\varphi)$ of endomorphisms of $W(R)$ when $R$ is an $\mathbf{F}_p$-algebra with Frobenius $\varphi$, one checks that $F_X$ is a lift of the Frobenius on $\WCart_X \otimes_{\mathbf{Z}_p} \mathbf{F}_p$;  thus, we can regard $F_X$ as a $\delta$-structure on $\WCart_X$ when the latter is known to be $\mathbf{Z}_p$-flat (e.g., when $X=\mathrm{Spf}(\mathbf{Z}_p)$). 
\end{remark}

The following variants of the prismatization construction shall also be useful.

\begin{construction}[The Hodge-Tate stack]
\label{ConsAbsHT}
For a bounded $p$-adic formal scheme $X$. Form a fibre square
\[ \xymatrix{ \WCart_X^{\mathrm{HT}} \ar[r] \ar[d] & \WCart_X \ar[d] \\
\mathrm{WCart}^{\mathrm{HT}} \ar[r] & \mathrm{WCart} }\]
defining the {\em Hodge-Tate stack} $\WCart_X^{\mathrm{HT}}$ of $X$. Given a $p$-nilpotent ring $R$ and a Cartier-Witt divisor $(I \xrightarrow{\alpha} W(R)) \in \WCart^{\mathrm{HT}}(R) \subset \WCart(R)$, the map $\alpha$ factors over $VW(R) \subset W(R)$, so there is an induced map $\overline{W(R)} \to W(R)/VW(R) \simeq R$. Consequently, given a point $( (I \xrightarrow{\alpha} W(R)), \eta:\mathrm{Spec}(\overline{W(R)}) \to X) \in \WCart_X^{\mathrm{HT}}(R)$, one obtains a map $\overline{\eta}:\mathrm{Spec}(R) \to \mathrm{Spec}(\overline{W(R)}) \xrightarrow{\eta} X$. This construction defines a map 
\[ \pi^{\mathrm{HT}}:\WCart_X^{\mathrm{HT}} \to X\] 
that we refer to as {\em the Hodge-Tate structure map}.
\end{construction}

\begin{construction}[The diffracted Hodge stack]
\label{DiffHodgeStack}
Fix a  bounded $p$-adic formal scheme $X$. We have the structure map $\WCart_X^{\mathrm{HT}} \to \WCart^{\mathrm{HT}}$. Form a fibre square
\[ \xymatrix{ X^{\DHod} \ar[r] \ar[d] & \WCart_X^{\mathrm{HT}} \ar[d] \\
   \mathrm{Spf}(\mathbf{Z}_p) \ar[r] & \WCart^{\mathrm{HT}}},\]
  where the map  $\mathrm{Spf}(\mathbf{Z}_p) \xrightarrow{\eta} \WCart^{\mathrm{HT}}$ is the $\mathbf{G}_m^\sharp$-torsor from Theorem~\APCref{theorem:describe-HT}; we call $X^{\DHod}$ the {\em diffracted Hodge stack} of $X$.  Concretely, for any $p$-nilpotent ring $S$, one has a natural identification
\[ X^{\DHod}(S) \simeq \mathrm{Map}(\mathrm{Spec}(\overline{W(S)}), X) \]
of groupoids, where the right side denotes a the space of maps in derived schemes with $\overline{W(S)} := W(S)/ V(1)$ is defined using the Cartier-Witt divisor $(W(S) \xrightarrow{V(1)} W(S))$. Under this identification, the $\mathbf{G}_m^\sharp(S)$-action on $X^{\DHod}(S)$ is induced by the natural $\mathbf{G}_m^\sharp(S)$-action on the animated ring $\overline{W(S)}$. Moreover, by construction, we have
\[ \WCart_X^{\mathrm{HT}} \simeq X^{\DHod}/\mathbf{G}_m^\sharp.\]
\end{construction}

\begin{remark}[Compatibility with \'etale localization]
\label{PrismatizeEtaleCov}
The functors $X \mapsto \WCart_X, \WCart_X^{\mathrm{HT}}, X^{\DHod}$ carry \'etale morphisms to representable \'etale morphisms, and  \'etale covers to \'etale covers. We shall explain this for $\WCart_X$, which implies the rest by base change. Fix a $p$-completely \'etale map $f:X \to Y$ of bounded $p$-adic formal schemes. We must check that $\WCart_f$ is a representable \'etale morphism, and is also an \'etale cover when $f$ is so. We give the argument when $f$ is an affine morphism; the general case is similar.

Fix a point $\mathrm{Spec}(R) \to \WCart_Y$ corresponding to a Cartier-Witt divisor $I \xrightarrow{\alpha} W(R)$ and a map $\eta:\mathrm{Spec}(W(R)/ I) \to X$ of derived formal schemes. To show representatibility, observe that the natural maps give equivalences
\[\mathrm{Spec}(R)_{\et} \gets \mathrm{Spf}(W(R))_{\et} \to \mathrm{Spec}(\overline{W(R)})_{\et},\]
where $W(R)$ is topologized using the $(p,I)$-adic topology. In particular, the pullback of $f:Y \to X$ along $\eta$ then has the form $\mathrm{Spec}(\overline{W(S)}) \to \mathrm{Spec}(\overline{W(R)})$ for a uniquely determined \'etale $R$-algebra $S$. The induced datum $((I_{W(S)} \xrightarrow{\alpha_{W(S)}} W(S)), \eta_f:\mathrm{Spec}(\overline{W(S)}) \to X)$ then yields a map $\mathrm{Spec}(S) \to \WCart_X$ fitting into a commutative square
\[ \xymatrix{ \mathrm{Spec}(S) \ar[r] \ar[d]& \mathrm{Spec}(R) \ar[d] \\
\WCart_X \ar[r] & \WCart_Y. }\]
Using a similar \'etale localization argument with the Witt vectors, one checks that this square is cartesian, proving that $f$ is representable \'etale.

It remains to prove that $\WCart_f$ is an \'etale cover if $f$ is so. This follows from the description of pullbacks given in the previous paragraph together with the observation that the equivalence $\mathrm{Spec}(R)_{\et} \simeq \mathrm{Spec}(\overline{W(R)})_{\et}$ used above preserves \'etale covers.

For future use, we note that the observations in this remark remain valid if we replace the \'etale topology with the Zariski topology. 
\end{remark}

\begin{construction}[Objects of the absolute prismatic site give points of the prismatization]
\label{cons:pointprismatization}
Fix a  bounded $p$-adic formal scheme $X$. Fix an object $(\mathrm{Spf}(A) \gets \mathrm{Spf}(\overline{A}) \to X) \in X_\Prism$ of the absolute prismatic site $X_\Prism$. For any $(p,I)$-nilpotent $A$-algebra $R$, there is a unique $\delta$-$A$-algebra structure on $W(R)$ lifting the given $A$-algebra structure on $R$. Base change along this map gives a Cartier-Witt divisor $(I_{W(R)} \to W(R))$ as in Construction~\APCref{construction:point-of-prismatic-stack} together with a map $\eta:\mathrm{Spec}(\overline{W(R)}) \to \mathrm{Spf}(\overline{A}) \xrightarrow{\eta_0} X$ of derived schemes; thus, we get a point $\mathrm{Spec}(R) \to \WCart_X$ of the prismatization. Letting $R$ vary, this construction yields an $A$-valued point 
\[ \rho_{X,A}:\mathrm{Spf}(A) \to \WCart_X\] 
of the prismatization, where $A$ is endowed with the $(p,I)$-adic topology. If there is no potential for confusion, we shall denote this point simply by $\rho_A$. 
\end{construction}

\begin{construction}[Using $\delta$-structures to probe prismatizations]
\label{DeltaSchemePrismatize}
Let $X$ be a  bounded $p$-adic formal scheme equipped with a $\delta$-structure. Then we shall construct a natural map 
\[ \pi_X:X \times \WCart \to \WCart_X\]
 over $\WCart$. Given a $p$-nilpotent ring $R$, an $R$-valued of the source corresponds to a Cartier-Witt divisor $(I \xrightarrow{\alpha} W(R))$ together with a map $\eta:\mathrm{Spec}(R) \to X$. Using the $\delta$-structure on $X$, the map $\eta$ extends uniquely to a $\delta$-map $\mathrm{Spf}(W(R)) \to X$, where $W(R)$ is endowed with the $(p,I)$-adic topology. Postcomposition with the map $W(R) \to \overline{W(R)}$ yields a map $\overline{\eta}:\mathrm{Spec}(\overline{W(R)}) \to X$. The assignment carrying $(\alpha,\eta)$ to $(\alpha,\overline{\eta})$ yields the desired map $\pi_X$. Note that the pullback of $\pi_X$ over  $\WCart^{\mathrm{HT}} \subset \WCart$ splits the Hodge-Tate structure map $\WCart_X^{\mathrm{HT}} \to X \times \WCart^{\mathrm{HT}}$.
\end{construction}

\begin{example}[The prismatization of a perfectoid]
\label{PerfectoidPrismatize}
Let $R$ be a perfectoid ring, and write $(A,I)$ for the unique perfect prism equipped with an identification $\overline{A} \simeq R$. By Construction~\ref{cons:pointprismatization}, we obtain a map 
\[ \rho_A:\mathrm{Spf}(A) \to \WCart_X.\] 
We claim this map is an isomorphism of functors. To show this, fix a $p$-nilpotent  ring $S$. We then have functorial maps
\[ \WCart_X(S) \xleftarrow{a} \mathrm{Map}(A,W(S)) \xleftarrow{b} \mathrm{Map}_{\delta}(A,W(S)) \xrightarrow{c} \mathrm{Map}(A,S),\]
where the map $c$ is obtained by postcomposition with the restriction map $W(S) \to S$, the map $b$ is the obvious inclusion, and the map $a$ is given by sending a map $A \to W(S)$ to the induced map $R = \overline{A} \to \overline{W(S)}$. Now $c$ is an equivalence as $W(-)$ is right-adjoint to the forgetful functor from $\delta$-rings to rings.  The resulting composition $abc^{-1}$ is exactly $\rho_A(S)$. The map $b$ is also an equivalence: this follows by deformation theory as $A$ is $p$-completely formally \'etale over $\mathbf{Z}$. It therefore suffices to show that $a$ is an equivalence. Given an object $x$ of $\WCart_X(S)$ given by a Cartier-Witt divisor $(J \xrightarrow{\alpha_x} W(S))$ and a map $\eta_x:R \to W(S)/ J$ of animated rings, the induced composition $A \to \overline{A} = R \xrightarrow{\eta_x} W(S)/ J$ refines uniquely (by deformation theory again) to a map $f_x:A \to W(S)$ of $\delta$-rings, giving rise to a map $(A \to A/I) \to (W(S) \to W(S)/J)$ of animated prisms. The induced map $I \otimes_A W(S) \to J$ is then an isomorphism by Remark~\ref{AnimPrismStrict}, so $x$ is the image of $f_x$ under $a$. In fact, as the constructions are functorial, it is easy to see that $x \mapsto f_x$ defines an inverse to $a$, proving $a$ is an equivalence.
\end{example}

\begin{remark}
Using Example~\ref{PerfectoidPrismatize}, prismatizing the natural map $\mathrm{Spec}(\mathbf{F}_p) \to \mathrm{Spf}(\mathbf{Z}_p)$ yields a natural map
\[ \mathrm{Spf}(\mathbf{Z}_p) \simeq \WCart_{\mathrm{Spec}(\mathbf{F}_p)} \to \WCart_{\mathrm{Spf}(\mathbf{Z}_p)} \simeq \WCart.\]
It is immediate from the definitions that this point identifies with the de Rham point from Example~\APCref{example:de-Rham-point}.
\end{remark}

\newpage
\section{Revisiting the prismatic logarithm}
\label{ss:PrismLogStack}

In this section, we reinterpret the prismatic logarithm constructed in \S \APCref{section:twist-and-log} using the Cartier--Witt stack  $\WCart_{\mathbf{G}_m}$; the discussion here is comparable to \cite[Theorem 2.7.7, \S 2.7.8]{DrinfeldFormalGroup}. 

\begin{remark}[$\delta$-structures on geometric objects]
In this section, we shall use the notion of a $\delta$-structures on bounded $p$-adic formal schemes and stacks. As $p$-completely \'etale algebras over a $p$-complete $\delta$-ring carry a unique compatible $\delta$-structure (\cite[Lemma 2.18]{prisms}), there is an evident notion of a $\delta$-structure on a bounded $p$-adic formal scheme $X$  (or even such an algebraic space). When $X$ is $\mathbf{Z}_p$-flat, the data of a $\delta$-structure on $X$ is equivalent to that of a Frobenius lift; in general, this theory is studied in \cite{BorgerWitt2}. In the context of stacks, to avoid developing the general theory here, we only use the notion of $\delta$-structures in the context of stacks which are known to be $\mathbf{Z}_p$-flat (especially $\mathrm{WCart}$ or flat group schemes over $\mathrm{WCart}$); for such stacks, a $\delta$-structure is, by definition, a lift of the Frobenius, and a morphism is one that commutes with the Frobenius lift. 
\end{remark}

\begin{construction}[The group scheme $G_{\WCart}$ of rank $1$ units of a Cartier-Witt divisor]
\label{Rank1GroupSch}
Applying Construction~\ref{DeltaSchemePrismatize} to $X=\mathbf{G}_m$ with the standard $\delta$-structure gives a map 
\[ \pi:\mathbf{G}_m \times \WCart \to \WCart_{\mathbf{G}_m}.\] 
As $\mathbf{G}_m$ has a group structure and $\WCart_{(-)}$ (valued in stacks over $\WCart$) commutes with finite products (Remark~\ref{PrismatizeLimits}), the stack $\WCart_{\mathbf{G}_m}$ is naturally a group stack over $\WCart$. Moreover, as the $\delta$-structure on $\mathbf{G}_m$ respects the group structure, the map $\pi$ is a morphism of group stacks over $\WCart$. Using Andr\'e's lemma (or simply Proposition~\ref{RelativeHT} below), the map $\pi$ can be seen to be a flat affine surjection. Its kernel $G_{\WCart}$ is thus a flat affine group scheme over $\WCart$ that sits in an exact triangle
\[ G_{\WCart} \to \mathbf{G}_m \times \WCart \to \WCart_{\mathbf{G}_m} \]
of abelian group stacks over $\WCart$. Unwinding definitions, one finds that the fibre of this sequence of stacks over a point $(I \xrightarrow{\alpha} W(R)) \in \WCart(R)$ is given by the following sequence of Picard groupoids:
\[ (1+I)_{\mathrm{rk}=1} \to R^* \xrightarrow{[\cdot]} \overline{W(R)}^*,\]
where the leftmost term can be regarded as defined by this fibre sequence. Note that we can explicitly describe this term via
\[ (1+I)_{\mathrm{rk}=1} := \mathrm{Fib}\left(R^* \xrightarrow{[\cdot]} (\overline{W(R)})^*\right) \simeq (1+I) \times_{W(R)^*} R^*\]
where we use the identification  $ W(R)^*/ (1+I) \simeq \overline{W(R)}^*$ (coming from the $I$-completeness of $W(R)$) for the last isomorphism. \
\end{construction}

\begin{remark}[$G_{\WCart}$ as a functor on prisms]
\label{GWCartPrism}
The flat group scheme $G_{\WCart} \to \WCart$ from Construction~\ref{Rank1GroupSch} supports a $\delta$-structure by functoriality, and hence can be regarded as a functor on $p$-torsionfree prisms via Construction~\APCref{construction:point-of-prismatic-stack}. Expliclty, for any bounded prism $(B,J)$ with $B$ $p$-torsionfree, we have a classifying map  $\rho_B:\mathrm{Spf}(B) \to \WCart$ of $\delta$-stacks. One can then contemplate the set  $\mathrm{Hom}_{\WCart,\delta}(\mathrm{Spf}(B),G_{\WCart})$  of $\delta$-maps $\mathrm{Spf}(B) \to G_{\WCart}$ over $\WCart$. We claim that there is a natural identification
\[ \mathrm{Hom}_{\WCart,\delta}(\mathrm{Spf}(B),G_{\WCart}) \simeq (1+J)_{\mathrm{rk}=1}.\] 
To see this, observe that the pullback $\rho_B^* G_{\WCart} \to \mathrm{Spf}(B)$ is naturally identified with $\mathrm{Spf}(C) \to \mathrm{Spf}(B)$, where $C$ is the $\delta$-$B$-algebra defined in Lemma~\APCref{lemma:add-rank-unit}: this can be seen using the description of the functor $\mathrm{Hom}_B(C,-)$ arising by composing the conclusion of Lemma~\APCref{lemma:add-rank-unit} (extended to virtual prisms $(A,I)$ with the same conclusion) with the  adjunction between the forgetful and Witt vector functors relating $\delta$-$B$-algebras to all $B$-algebras. Moreover, the structure of a $\delta$-$B$-group scheme on $\mathrm{Spf}(C)$ comes via the obvious group structure on the right side of the functor of points interpretation given in Lemma~\APCref{lemma:add-rank-unit}. These identifications show that
\[ \mathrm{Hom}_{\WCart,\delta}(\mathrm{Spf}(B),G_{\WCart}) \simeq \mathrm{Hom}_B(C,B) = (1+J)_{\mathrm{rk}=1},\]
as promised.
\end{remark}

\begin{remark}[$G_{\WCart}$ over the Hodge-Tate locus]
Let us explain why the base changed  group scheme 
\[ G_{\WCart^{\mathrm{HT}}} := G_{\WCart} \times_{\WCart} \WCart^{\mathrm{HT}} \to \WCart^{\mathrm{HT}}\] 
is a twisted form of $\mathbf{G}_a^\sharp$. Observe that for any point $(I \xrightarrow{\alpha} W(R)) \in \WCart^{\mathrm{HT}}(R) \subset \WCart(R)$, the composite map $I \xrightarrow{\alpha} W(R) \to R$ is the $0$ map. It follows that the fibre of $G_{\WCart}(R) \to \WCart(R)$ over a point  $(I \xrightarrow{\alpha} W(R)) \in \WCart^{\mathrm{HT}}(R) \subset \WCart(R)$ is identified with the group $1 +  \ker(\alpha)$ under multiplication. Subtracting $1$ identifies $1+\ker(\alpha)$ with the additive $\ker(\alpha)$ as $\ker(\alpha) = \pi_1(W(R)/I)$ inherits a square-zero multiplication. By further pulling back to the Hodge--Tate point $\mathrm{Spf}(\mathbf{Z}_p) \xrightarrow{V(1)} \WCart^{\mathrm{HT}}$, one then checks that the group scheme $G_{\WCart^{\mathrm{HT}}} \to \WCart^{\mathrm{HT}}$ is indeed a twisted form of $\mathbf{G}_a^\sharp$.
\end{remark}

\begin{remark}[The prismatic logarithm through $G_{\WCart}$]
\label{PrismLogGroupSch}
Let us reinterpret the prismatic logarithm as a homomorphism $G_{\WCart} \to \mathbf{V}(\mathcal{O}_{\WCart}\{1\})$ of group schemes over $\WCart$. Given a bounded prism $(B,J)$ with $B$ $p$-torsionfree, consider the map $(B,J) \to (C,K)$ coming from Lemma~\APCref{lemma:add-rank-unit}. The prismatic logarithm from  Proposition~\APCref{proposition:prismatic-logarithm-extension} applied to the tautological generator $w \in C$ gives an element $\log_\Prism(w) \in C\{1\}$. Via Remark~\ref{GWCartPrism}, this can be viewed as a morphism 
\[ \log_{\Prism,B}: \rho_B^* G_{\WCart} \to \mathbf{V}(\mathcal{O}_{\mathrm{Spf}(B)}\{1\}) = \underline{\mathrm{Spec}}_B(\mathrm{Sym}^* B\{-1\}) \]
 of affine formal schemes over $B$. The homomorphism property of the prismatic logarithm (Proposition~\APCref{prop:PrismLogHom}) ensures that $\log_{\Prism,B}$ is a homomorphism of group schemes. Remark~\APCref{remark:Frobenius-on-twist} shows that $\mathbf{V}(\mathcal{O}_{\mathrm{Spf}(B)}\{1\})$ has a natural Frobenius lift lying over the Frobenius lift on $\mathrm{Spf}(B)$; the compatibility of the prismatic logarithm with the Frobenius (Proposition~\APCref{proposition:log-Nygaard}) then ensures that $\log_{\Prism,B}$ respects the Frobenius lifts (and thus $\delta$-structures as $B$ is $p$-torsionfree).  As the formation of $\log_{\Prism,B}$ is evidently compatible with base change, it follows from flat descent for affine maps and Proposition~\APCref{proposition:DWCart-prism-description} that there is a homomorphism 
\[ \log_\Prism:G_{\WCart} \to \mathbf{V}(\mathcal{O}_{\WCart}\{1\})\] 
of group $\delta$-schemes over $\WCart$ characterized by the following property: for every bounded prism $(A,I)$ with $A$ $p$-torsionfree and  classifying $\delta$-map $\rho_A:\mathrm{Spf}(A) \to \WCart$, the diagram
$$ \xymatrix@R=50pt@C=50pt{ 
\mathrm{Hom}_{\WCart,\delta}(\mathrm{Spf}(A), G_{\WCart}) \ar[d]^-{\log_\Prism(\mathrm{Spf}(A))} \ar[r]^-{\simeq} & (1+I)_{\mathrm{rk}=1} \ar[dd]^-{\log_\Prism} \\
 \mathrm{Hom}_{\WCart,\delta}(\mathrm{Spf}(A), \mathbf{V}(\mathcal{O}_{\WCart}\{1\})) \ar[d]^-{\text{inc}} & \\
\mathrm{Hom}_{\WCart}(\mathrm{Spf}(A), \mathbf{V}(\mathcal{O}_{\WCart}\{1\}))\ar[r]^-{\simeq} & A\{1\} }$$
commutes; in fact, it suffices to check this compatibility for transversal prisms. 
\end{remark}

\begin{remark}[The prismatic logarithm at the de Rham point]
Let us describe the pullback $\rho_{\dR}^* \log_\Prism$ of the map $\log_\Prism$ from Remark~\ref{PrismLogGroupSch} to the de Rham point $\rho_{\dR}:\mathrm{Spf}(\mathbf{Z}_p) \to \WCart$. Observe that the  ring $C$ provided by Lemma~\APCref{lemma:add-rank-unit} over the prism $(B,J) = (\mathbf{Z}_p,(p))$ is explicitly given by
\[ C = \mathbf{Z}_p[w]\{\frac{w-1}{p}\}^{\wedge}_{(p)}.\]
Geometrically, via the adjunction between the Witt vector and forgetful functors, this tells us that $\rho_{\dR}^* G_{\WCart}$ is the group scheme over $\mathbf{Z}_p$ determined by the following functor on $p$-nilpotent rings:
\[ \rho_{\dR}^* G_{\WCart}(R) = \{ y \in W(R) \mid 1+py \in R^* \stackrel{[\cdot]}{\subset} W(R)^* \}, \]
where the Witt-vector $y$ is determined by the element $\frac{w-1}{p}$ in the $\delta$-ring $C$. The map $\log_{\Prism,\mathbf{Z}_p} = \rho_{\dR}^* \log_\Prism$ identifies (by construction) with the map
\[ \log_{\Prism,\mathbf{Z_p}}:\mathrm{Spf}(C) \to  \mathbf{V}(\mathcal{O}_{\mathrm{Spf}(\mathbf{Z}_p)}\{1\}),\]
of group schemes over $\mathbf{Z}_p$ determined by the element
\[ \log_\Prism(w)  \in C\{1\} = \mathbf{Z}_p[w]\{\frac{w-1}{p}\}^{\wedge}_{(p)}\{1\}.\]
As $(C,(p))$ is a crystalline prism, we may trivialize the Breuil--Kisin twist as in Remark~\APCref{BKcrystalline}. Under this trivialization, the formula in Corollary~\APCref{corollary:crystalline-formula-for-log} (3) allows us to rewrite the above element classically as
\[ \frac{\log(w)}{p} \in \mathbf{Z}_p[w]\{\frac{w-1}{p}\}^{\wedge}_{(p)}.\]
Using this description of $\rho_{\dR}^* \log_{\Prism}$, one can identify the map $\log_\Prism$ from Remark~\ref{PrismLogGroupSch} with the map $G_\Sigma \to \mathcal{O}_\Sigma\{1\}$ constructed in \cite{DrinfeldFormalGroup}.
\end{remark}

\newpage
\section{The relative prismatization}
\label{ss:RelPrism}

In this section, we introduce the relative Cartier--Witt stack in the setup of relative prismatic cohomology (Variant~\ref{RelativePrismatize}), and study the geometry of its Hodge--Tate locus in the smooth case (Proposition~\ref{RelativeHT}).

\begin{variant}[The relative prismatization]
\label{RelativePrismatize}
Let $(A,I)$ be a bounded prism, and let $X$ be a  $p$-adic formal scheme over $\overline{A}$. Then the relative prismatization $\WCart_{X/A}$ is the groupoid valued functor on $p$-nilpotent rings defined as follows: for any $p$-nilpotent ring $R$, the groupoid $\WCart_{X/A}(R)$ consists of the groupoid of maps $\alpha:A \to R$ carrying $I$ to a nilpotent ideal together with a map $\eta:\mathrm{Spec}(\overline{W(R)} ) \to X$ of derived formal $\overline{A}$-schemes (where $W(R)$ is regarded as a $\delta$-$A$-algebra via adjunction from the given map $A \to W(R)$). Forgetting $\eta$ yields a map $\WCart_{X/A} \to \mathrm{Spf}(A)$ that we refer to as the structure map. We then have a fibre square
\[ \xymatrix{ \WCart_{X/A} \ar[r] \ar[d] & \WCart_X \ar[d] \\
\mathrm{Spf}(A) \ar[r]^-{\rho_A} & \WCart_{\overline{A}}  }\]
where the map $\rho_A:\mathrm{Spf}(A) \to \WCart_{\overline{A}}$ comes from Construction~\ref{cons:pointprismatization} as $(A,I)$ is a prism over $\overline{A}$. Similarly, one has the relative Hodge-Tate stack
\[ \WCart_{X/A}^{\mathrm{HT}} = \WCart_{X/A} \times_{\WCart} \WCart^{\mathrm{HT}} \simeq \WCart_{X/A} \times_{\mathrm{Spf}(A)} \mathrm{Spf}(\overline{A}).\]
Moreover, the constructions $X \mapsto \WCart_{X/A}, \WCart_{X/A}^{\mathrm{HT}}$ are functorial in $X$, and preserve Tor-independent limits. 
\end{variant}

\begin{construction}[From the relative prismatic site to the relative prismatization]
\label{cons:relprismatizecompare}
Let $(A,I)$ be a bounded prism, and let $X$ be a  $p$-adic formal scheme over $\overline{A}$. For any prism $(B,IB) \in (X/A)_\Prism$, Construction~\ref{cons:pointprismatization} gives natural maps $\rho_{X,B}:\mathrm{Spf}(B) \to \WCart_X$ and $\mathrm{Spf}(B) \to \mathrm{Spf}(A)$ inducing canonically isomorphic maps $\rho_{\overline{A},B}:\mathrm{Spf}(B) \to \WCart_{\overline{A}}$. Consequently, we obtain map $\rho_{X/A,B}:\mathrm{Spf}(B) \to \WCart_{X/A}$. It is immediate from the definitions that this map is Frobenius equivariant. Varying through all objects of $(X/A)_\Prism$ and taking a limit, we obtain a natural Frobenius equivariant map
\[ \RGamma( \WCart_{X/A}, \mathcal{O}_{ \WCart_{X/A}}) \to \RGamma_\Prism^{\mathrm{site}}(X/A)\]
of commutative algebras in $\widehat{\mathcal{D}}(A)$, where the right hand side is the site-theoretic relative prismatic cohomology (see Notation~\APCref{notation:comparison-with-site-theoretic}).
\end{construction}

\begin{remark}[Relative and absolute prismatizations coincide over perfect prisms]
\label{RelativeAbsolutePerfect}
Let $(A,I)$ be a perfect prism, and let $X$ be a  $p$-adic formal scheme over $\overline{A}$. By Example~\ref{PerfectoidPrismatize}, the map $\rho_A: \mathrm{Spf}(A) \to \WCart_{\overline{A}} $ is an isomorphism. Consequently, by base change, we also see that the projection $\WCart_{X/A} \to \WCart_X$ is an isomorphism. 
\end{remark}

\begin{remark}[The absolute prismatization via the relative ones]
Let $X$ be a  bounded $p$-adic formal scheme. Write $f:X \to \mathrm{Spf}(\mathbf{Z}_p)$ for the structure map, giving rise to an induced map $\WCart_f:\WCart_X \to  \WCart$ on prismatizations. For any bounded prism $(A,I)$, the compatibility of $\WCart_{(-)}$ with finite limits (Remark~\ref{PrismatizeLimits}) and the base change formula in Variant~\ref{RelativePrismatize} show that
\[ \WCart_X \times_{\WCart,\rho_A} \mathrm{Spf}(A) \simeq \WCart_{X_{\overline{A}}/A}.\]
We shall later use this description in conjunction with Proposition~\APCref{proposition:DWCart-prism-description} to relate the pushforward $R \WCart_{f,*} \mathcal{O}_{\WCart_X}$ to the object $\mathcal{H}_\Prism(X)$ from Variant~\APCref{variant:prismatic-sheaf-globalized}.
\end{remark}

\begin{remark}[The diffracted Hodge stack as a relative Hodge-Tate stack]
\label{DiffHodHT}
Let $X$ be a  bounded $p$-adic formal scheme. By Proposition~\APCref{proposition:old-diagram}, the diffracted Hodge stack $X^{\DHod}$ (Construction~\ref{DiffHodgeStack}) is identified with the relative Hodge-Tate stack $\WCart_{X/A}^{\mathrm{HT}}$ (as stacks over $X$) for the prism $(A,I)  = (\mathbf{Z}_p \llbracket \tilde{p} \rrbracket, (\tilde{p}))$. 
\end{remark}

Our next goal is to provide a deformation-theoretic perspective on the relative Hodge-Tate stack that is useful in calculations.

\begin{notation}[Truncations of the kernel of $F$ on $W$]
Let $W_n$ and $W = \lim_n W_n$ denote the ring scheme of $n$-truncated Witt vectors and all Witt vectors respectively; we shall work in derived algebraic geometry, and refer to Notation~\ref{notation:Witt-vectors-revisited} and Remark~\ref{remark:restriction-maps} for the the derived analogues of these constructions.  For $n \geq 2$, consider the Frobenius map $F:W_n \to W_{n-1}$. This is naturally a map of $W$-algebras, and we define $W_n[F]$ as the kernel
\[ W_n[F] := \ker(W_n \xrightarrow{F} W_{n-1}).\] 
Thus, $W_n[F]$ is a $W$-module scheme. Moreover, the formula $xVy = V(Fx \cdot y)$ shows that $W_n[F]$ is annihilated by the ideal $VW \subset W$, and is thus naturally a module over $\mathbf{G}_a = W/VW$. By Variant~\APCref{Gasharpdef}, we have an identification $\mathbf{G}_a^\sharp = W[F] = \lim_n W_n[F]$ of group schemes. For any of these group schemes $G$, write $BG$ for the classifying stack of $G$-torsors in the $p$-quasisyntomic topology.
\end{notation}

\begin{lemma}[Cohomology of $\mathbf{G}_a^\sharp$ and {$W_n[F]$}]
\label{cohdimpdga}
Fix a $p$-complete animated ring $S$ and a finite projective $W(S)$-module bundle $E$.
\begin{enumerate}
\item For each $n \geq 2$, there is a natural isomorphism
\[ \RGamma(\mathrm{Spf}(S), E \otimes_W W_n[F])[1] \simeq \mathrm{Cone}\left(E \otimes_W W_n(S) \xrightarrow{\mathrm{id}_E \otimes F} E \otimes_W F_* W_{n-1}(S)\right).\]

\item There is a natural isomorphism
\[ \RGamma(\mathrm{Spf}(S), E \otimes_W W[F])[1] \simeq \mathrm{Cone}\left(E \otimes_W W(S) \xrightarrow{\mathrm{id}_E \otimes F} E \otimes_W F_* W(S)\right).\]

\end{enumerate}

In particular, the complexes in (1) and (2) are connective.
\end{lemma}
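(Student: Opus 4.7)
The plan is to deduce both parts from the defining fiber sequence
\[ W_n[F] \to W_n \xrightarrow{F} W_{n-1} \]
of abelian sheaves of $W$-modules in the $p$-quasisyntomic topology, by tensoring with $E$ and applying $\RGamma(\mathrm{Spf}(S), -)$. Since $E$ is finite projective over $W(S)$, the functor $E \otimes_W (-)$ is exact and produces a fiber sequence
\[ E \otimes_W W_n[F] \to E \otimes_W W_n \xrightarrow{\id_E \otimes F} E \otimes_W F_* W_{n-1}\]
of quasi-coherent sheaves on $\mathrm{Spf}(S)$. The Frobenius pushforward $F_*$ appears precisely so that $\id_E \otimes F$ is $W(S)$-linear, since the ring map $F : W_n \to W_{n-1}$ is compatible with the restriction $W \to W_n$ only after twisting the target by $F$.

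The key computation is that the middle and right terms have no higher coherent cohomology on $\mathrm{Spf}(S)$: after trivializing $E$ Zariski-locally on $\mathrm{Spec}(\pi_0 S)$, they reduce to finite sums of the structure sheaves of the affine $S$-schemes $W_n \times \mathrm{Spf}(S)$ and $W_{n-1} \times \mathrm{Spf}(S)$, whose global sections are $W_n(S)$ and $F_* W_{n-1}(S)$ respectively. Rotating the fiber sequence then yields the cone identification of part (1). Connectivity of the right-hand side is automatic from the long exact sequence of homotopy groups: the cone of a map between connective objects (here $E \otimes_{W(S)} W_n(S)$ and $E \otimes_{W(S)} F_* W_{n-1}(S)$) is again connective. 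Part (2) follows from part (1) by passing to the inverse limit in $n$: one has $W[F] \simeq \lim_n W_n[F]$, finite projective tensor commutes with this limit, and $\mathrm{R}\lim$ collapses to $\lim$ on the right-hand terms because the transition maps in $\{W_n(S)\}$ have nilpotent kernel by Lemma~\ref{WittSquareZero}~(2) and are in particular surjective on $\pi_0$.

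The main obstacle I anticipate is the genuine sheaf-theoretic exactness of the tautological sequence, namely the assertion that $F : W_n \to W_{n-1}$ is surjective in the $p$-quasisyntomic topology, so that the scheme-theoretic kernel $W_n[F]$ coincides with the derived fiber of $F$. This reduces, via the $V$-adic filtration on the kernel of $W_n \to W_1$ together with base change to $\mathbf{F}_p$-algebras, to the familiar fact that the absolute Frobenius is a $p$-quasisyntomic cover on $\mathbf{F}_p$-algebras. Once this point is granted, the rest of the argument is essentially formal, and the limit statement in (2) requires no additional input beyond the Mittag--Leffler property already noted.
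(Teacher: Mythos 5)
Your proposal is correct and takes essentially the same route as the paper: tensor the fiber sequence $W_n[F] \to W_n \xrightarrow{F} F_* W_{n-1}$ (or its limit over $n$) with $E$ and use that the middle and right terms have no higher cohomology on the affine $\mathrm{Spf}(S)$, the paper differing only in proving (2) directly (declaring (1) analogous) and in reducing $E$ to the free case by a retract-of-finite-free argument rather than Zariski-local trivialization. The one citation to adjust: the Mittag--Leffler input for the limit in (2) is the surjectivity of the restriction maps $W_{n+1}(S) \to W_n(S)$ on homotopy groups, which is immediate from the Witt-component decomposition (Proposition~\ref{wrost}), not Lemma~\ref{WittSquareZero}~(2), whose nilpotence statement requires $S$ to be $p$-nilpotent rather than merely $p$-complete.
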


\begin{proof}
We explain the proof of (2) as (1) is analogous. Cosndier the exact sequence
\[ 0 \to W[F] \simeq \mathbf{G}_a^\sharp \to W \xrightarrow{F} F_* W \to 0 \]
as a sequence of $W(S)$-module schemes over $\mathrm{Spf}(S)$  via base change. Tensoring with the $W(S)$-module $E$, the claim reduces to showing the following (applied to $G = E$ and $G = F^*E$): we have $\RGamma(\mathrm{Spf}(S), G \otimes_{W(S)} W)\in D^{\leq 0}$ for any finite projective $W(S)$-module $G$. Writing $G$ as a retract of a finite free module, we reduce to the case $G=W(S)$, so we must check that $\RGamma(\mathrm{Spf}(S), W) \in D^{\leq 0}$, which is standard from the vanishing of quasi-coherent sheaf cohomology on the derived formal affine scheme $\mathrm{Spf}(W_n(S))$, the formula $W = \lim_n W_n$ where the inverse limit takes place over the restriction maps, and the fact that the restriction maps $W_{n+1}(S) \to W_n(S)$ are surjective on $\pi_0$ for all $n$. 
\end{proof}

\begin{remark}
It follows from Lemma~\ref{cohdimpdga} that for any $p$-complete animated ring $S$, we can identify
\[ B W[F](S) \simeq \RGamma(\mathrm{Spf}(S), W[F][1]),\]
and similarly after twisting by line bundles or replacing $W[F]$ with $W_n[F]$.
\end{remark}

\begin{remark}
Fix a $p$-complete ring $S$. For any fixed $n \geq 1$, Lemma~\ref{cohdimpdga} (1) shows that the complex $\RGamma(\mathrm{Spf}(S), W_n[F])$ is concentrated in degree $0$ exactly when $W_n(S) \xrightarrow{F} W_{n-1}(S)$ is surjective; the latter condition (imposed for all $n \geq 1$) has been studied by Davis-Kedlaya \cite{DavisKedlayaWittPerfect}, who dub it {\em Witt-perfectness}. Note that Witt-perfectness of $S$ is in general weaker than demanding surjectivity of $F:W(S) \to W(S)$; for instance, $\mathcal{O}_{\mathbf{C}_p}$ is Witt-perfect even though $F:W(\mathcal{O}_{\mathbf{C}_p}) \to W(\mathcal{O}_{\mathbf{C}_p})$ is not surjective (see \cite[Example 4.4]{DavisKedlayaWittPerfect}).
\end{remark}

The following construction is fundamental to the deformation-theoretic perspective on the Hodge-Tate stack:

\begin{construction}[A square-zero extension of $\mathbf{G}_a$ by {$B\mathbf{G}_a^\sharp\{1\}$} attached to any prism]
\label{sqzeroHTprism}
Let $(A,I)$ be a bounded prism. Given a $p$-complete animated $\overline{A}$-algebra $S$, the composition $A \to \overline{A} \to S$ lifts to a unique $\delta$-$A$-algebra map $A \to W(S)$. Thus, we obtain an $\overline{A}$-algebra map $\overline{W(S)} \to S$. We claim this map is a square-zero extension. More precisely:
\begin{itemize}
\item[$(\ast)$]
On the $\infty$-category of $p$-complete  animated $\overline{A}$-algebras, the map $\overline{W(S)} \to S$ admits a natural (in $S$) structure of a square-zero extension of $S$ by the connective $S$-complex $(B\mathbf{G}_a^\sharp\{1\})(S) \simeq \RGamma(\mathrm{Spf}(S), I \otimes_W W[F])[1] \in \mathcal{D}(S)$ (where the last equality comes from Lemma~\ref{cohdimpdga}).
\end{itemize}
To show $(\ast)$, it suffices to prove the analog for $W_n$ instead. In fact, we shall show the following slightly stronger assertion for each $n \geq 2$:
\begin{itemize}
\item[$(\ast_n)$]
On the $\infty$-category of all $p$-complete animated $\overline{A}$-algebras, the map $\overline{W_n(S)} \to S$ admits a natural (in $S$ and $n$) structure of a square-zero extension of $S$ by the connective $S$-complex $(BW_n[F])(S) \simeq \RGamma(\mathrm{Spf}(S), I \otimes_W W_n[F])[1] \in \mathcal{D}(S)$ (where the last equality comes from Lemma~\ref{cohdimpdga}).
\end{itemize}

The proof follows a familiar pattern: using compatibility with sifted colimits and descent, we can reduce to proving $(\ast_n)$ for a particularly small class of discrete rings $S$ where the claim is essentially obvious. 

Let $\mathcal{C}$ be the category of discrete $p$-complete $\overline{A}$-algebras $S$ with $F:W_n(S) \to W_{n-1}(S)$ surjective for all $n \geq 1$; in this paragraph, we prove the restriction of $(\ast_n)$ to the subcategory $\mathcal{C}$ for all $n$. For $S \in \mathcal{C}$, we claim that the map $\alpha:I_{W_n(S)} \to VW_n(S) \simeq F_* W_{n-1}(S)$ linearizes to an isomorphism 
\[ \widetilde{\alpha}:I_{W_n(S)} \otimes_{W_n(S)} F_* W_{n-1}(S) \simeq F_* W_{n-1}(S) \simeq VW_n(S).\] 
Granting this, using the surjectivity of $F:W_n(S) \to W_{n-1}(S)$, we learn that  
\[ \pi_0(\overline{W_n(S)}) \simeq W_n(S)/VW_n(S) \simeq S\]
and
\[ \pi_1(\overline{W_n(S)}) = I \otimes_A \ker(W_n(S) \xrightarrow{F} W_{n-1}(S)) \simeq I \otimes_A W_n(S)[F](S).\] 
The restriction of $(\ast_n)$ to $\mathcal{C}$ then follows from the general fact that any $1$-truncated animated ring is naturally and uniquely a square-zero extension of its $\pi_0$ by $\pi_1[1]$. To prove isomorphy of $\widetilde{\alpha}$, it  is enough to check surjectivity as any surjection of invertible modules is an isomorphism. Moreover, we can work Zariski locally on $\mathrm{Spf}(W_n(S))$, so we may assume $I=(d)$ is oriented; in this case, $\alpha(d) =V(u)$ for a unit $u \in W_n(S)$, so the surjectivity follows by unwinding definitions from the assumed surjectivity of $F:W_n(S) \to W_{n-1}(S)$. 

Next, as the functors $\RGamma(\mathrm{Spf}(-),I \otimes_W W_n[F])[1]$ are $p$-quasisyntomic sheaves, we deduce $(*_n)$ compatibly in $n$ for the category of $p$-completely smooth discrete $\overline{A}$-algebras via descent from the claim in the previous paragraph.

Finally, as the functor  $\RGamma(\mathrm{Spf}(-),I \otimes_W W_n[F])[1]$ from $p$-complete animated $\overline{A}$-algebras to $\widehat{\calD}(\overline{A})$ commutes with sifted colimits (by the description in Lemma~\ref{cohdimpdga}), the claim follows in general.
\end{construction}

\begin{remark}[Relating $\overline{W}$ to the obstruction to lifting modulo $I^2$]
\label{rmk:comparesqzero}
Fix a bounded prism $(A,I)$. In Construction~\ref{sqzeroHTprism}, we showed that presheaf $\overline{W(-)}$ of animated $\overline{A}$-algebras is naturally a square-zero extension of $\mathbf{G}_a$ by $\mathbf{G}_a^\sharp\{1\}[1]$ on the $\infty$-category of all $p$-nilpotent animated $\overline{A}$-algebras. The animated $\overline{A}$-algebra $\overline{W(-)}$ also admits a canonical deformation  to an $A/I^2$-algebra given by $W(-)/ I^2_{W(-)}$. Consequently, if we let $\mathcal{R}(-)$ denote the square-zero extension of $\mathbf{G}_a$ by $\mathbf{G}_a\{1\}[1]$ in $p$-nilpotent animated $\overline{A}$-algebras classifying the obstruction to lifting an $\overline{A}$-algebra to $A/I^2$, we obtain a natural map 
\[ \alpha:\overline{W(-)} \to \mathcal{R}(-)\]
of square-zero extensions of $\mathbf{G}_a$. This map induces a $\mathbf{G}_a$-module map $\mathbf{G}_a^\sharp\{1\}[1] \to \mathbf{G}_a\{1\}[1]$ on the ``ideals'' of the extension that one can check that this is a unit multiple of the standard map\footnote{Indeed, as $\mathrm{Hom}_W(\mathbf{G}_a^\sharp, \mathbf{G}_a) \simeq \mathrm{Hom}_W(\mathbf{G}_a^\sharp, \mathbf{G}_a) \simeq \overline{A}$ (see \cite[Lemma 3.8.1 (ii)]{drinfeld-prismatic}),  the map $\pi_1(\alpha)\{-1\}:\mathbf{G}_a^\sharp \to \mathbf{G}_a$ of $\mathbf{G}_a$-modules has the form $f \cdot \mathrm{can}$ for some $f \in \overline{A}$. We must show that $f$ is a unit. If not, then we can find a closed point $x$ of $\mathrm{Spec}(\overline{A}/p\overline{A})$ where $f$ vanishes. Base changing along the resulting $\delta$-map $A \to W(\kappa(x)_{\perf})$, we may then assume that $(A,I) = (W(k),(p))$ for a perfect field $k$ of characteristic $p$ and that the map $\pi_1(\alpha)$ is $0$. By deformation theory, the $\overline{W(-)}$-algebra map $\pi_0(\overline{W(-)}) \simeq \mathbf{G}_a \xrightarrow{\pi_0(\alpha)} \pi_0(\mathcal{R}) \simeq \mathbf{G}_a$ (which is simply the identity) lifts to a $\overline{W(-)}$-algebra map $\pi_0(\overline{W(-)}) \simeq \mathbf{G}_a \xrightarrow{\beta} \mathcal{R}$: the obstruction is the point of $\mathrm{Map}_{\mathbf{G}_a}(L_{\mathbf{G}_a/\overline{W(-)}}, \mathbf{G}_a\{1\}[2])$ determined by postcomposition of $\pi_1(\alpha)$ with the canonical map 
\[ L_{\mathbf{G}_a/\overline{W(-)}} \to \tau^{\geq 2}L_{\mathbf{G}_a/\overline{W(-)}} \simeq \left(\pi_1(\overline{W(-)})[1]\right)[1] \simeq \mathbf{G}_a^\sharp\{1\}[2].\]
But the existence of $\beta$ means that the square-zero extension $\mathcal{R} \to \mathbf{G}_a$ is split; by the moduli interpretation for $\mathcal{R}$, this implies that any $k$-algebra $S$ has a canonical lift to $W_2(k)$, which is clearly false.}. In other words, the square-zero extension $\mathcal{R} \in \mathrm{SqZeroExt}_{\overline{A}}(\mathbf{G}_a, \mathbf{G}_a\{1\}[1])$ is obtained from $\overline{W(-)} \in \mathrm{SqZeroExt}_{\overline{A}}(\mathbf{G}_a, \mathbf{G}_a^\sharp\{1\}[1])$ via pushout along a unit multiple of the standard map $\mathbf{G}_a^\sharp\{1\}[1] \to \mathbf{G}_a\{1\}[1]$; in fact, we expect that the phrase ``unit multiple of'' is not necessary. 
\end{remark}

Using the square-zero extension from Construction~\ref{sqzeroHTprism}, one can study the relative Hodge-Tate stack fairly explicitly.

\begin{proposition}[The Hodge-Tate gerbe]
\label{RelativeHT}
Fix a bounded prism $(A,I)$ and a smooth $p$-adic formal $\overline{A}$-scheme $X$. Then the Hodge-Tate structure map $\pi^{\mathrm{HT}}:\WCart_{X/A}^{\mathrm{HT}} \to X$ is naturally a gerbe banded by the flat $X$-group scheme $T_{X/\overline{A}}\{1\}^\sharp$. Moreover, this gerbe splits if $X$ is affine (and thus always Zariski locally on $X$). 
\end{proposition}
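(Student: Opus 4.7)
The plan is to combine the square-zero structure on $\overline{W(T)} \to T$ provided by Construction~\ref{sqzeroHTprism} with derived deformation theory for the smooth morphism $X \to \Spf(\overline{A})$.

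First, I would unpack the fibre of $\pi^{\mathrm{HT}}$ over a $T$-valued point $\overline{\eta}\colon \Spec(T) \to X$: by Variant~\ref{RelativePrismatize}, this fibre parametrizes $\overline{A}$-scheme extensions $\eta\colon \Spec(\overline{W(T)}) \to X$ of $\overline{\eta}$ along the canonical quotient $\overline{W(T)} \twoheadrightarrow T$. By Construction~\ref{sqzeroHTprism}, the latter is naturally a square-zero extension of $T$ by the connective $T$-complex
\[
J := (B\mathbf{G}_a^\sharp\{1\})(T) \simeq \RGamma(\Spf(T), I \otimes_A W[F])[1],
\]
whose $\pi_1$ identifies with $\mathbf{G}_a^\sharp\{1\}(T)$ via Lemma~\ref{cohdimpdga}.

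Next, I would apply derived deformation theory. Since $X/\overline{A}$ is smooth, $L_{X/\overline{A}} \simeq \Omega^1_{X/\overline{A}}$ is a finite projective sheaf in degree $0$; the obstruction
\[
\Ext^1_T(\overline{\eta}^* \Omega^1_{X/\overline{A}}, J) \simeq \pi_{-1}(\overline{\eta}^* T_{X/\overline{A}} \otimes_T J)
\]
therefore vanishes by connectivity of $J$, and the space of extensions is naturally a torsor under $\mathrm{Map}_T(\overline{\eta}^* \Omega^1_{X/\overline{A}}, J) \simeq \overline{\eta}^* T_{X/\overline{A}} \otimes_T J$. Combining the identification $T_{X/\overline{A}}\{1\}^\sharp \simeq T_{X/\overline{A}} \otimes_{\mathcal{O}_X} \mathbf{G}_a^\sharp\{1\}$ of flat $X$-group schemes with Lemma~\ref{cohdimpdga} rewrites this torsor as $(B\, \overline{\eta}^* T_{X/\overline{A}}\{1\}^\sharp)(T)$, naturally in $(T, \overline{\eta})$. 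This exhibits $\pi^{\mathrm{HT}}$ as a gerbe banded by $T_{X/\overline{A}}\{1\}^\sharp$.

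For the splitting clause when $X = \Spf(R)$ is affine, I would invoke the standard smooth lifting property along $A \to \overline{A}$ to produce a $(p,I)$-completely smooth $\delta$-$A$-algebra $\widetilde{R}$ with $\widetilde{R}/I \simeq R$. Using $\widetilde{R}$, I would construct a section $s\colon X \to \WCart^{\mathrm{HT}}_{X/A}$ as follows: for each $T$-point $\overline{\eta}\colon R \to T$, the Witt-vector adjunction for $\delta$-rings uniquely lifts the composition $\widetilde{R} \to R \to T$ to a $\delta$-$A$-algebra map $\widetilde{R} \to W(T)$; reducing mod $I$ then yields a functorial (in $T$) lift $R \simeq \widetilde{R}/I \to \overline{W(T)}$ of $\overline{\eta}$, defining $s$. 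The Zariski-local splitting clause follows since smooth $\overline{A}$-schemes admit affine open covers. The main technical obstacle is the clean identification of the deformation-theoretic torsor under $\overline{\eta}^* T_{X/\overline{A}} \otimes_T J$ with the geometric $B T_{X/\overline{A}}\{1\}^\sharp$-gerbe banding; this requires careful tracking of the Breuil--Kisin twist and the PD-hull through the $W[F]$-module structure on $J$. By comparison, the obstruction vanishing and the $\delta$-adjunction argument for splitting are largely formal once Construction~\ref{sqzeroHTprism} and the prismatic smooth lifting property are available.
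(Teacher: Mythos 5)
Your proposal is correct and follows essentially the same route as the paper: the gerbe structure is deduced from the square-zero extension of Construction~\ref{sqzeroHTprism} together with derived deformation theory for the smooth morphism $X \to \mathrm{Spf}(\overline{A})$, and the affine splitting is obtained by lifting $R$ to a $(p,I)$-completely smooth $\delta$-$A$-algebra $\widetilde{R}$ and using the Witt-vector adjunction. Your explicit vanishing of the obstruction group $\Ext^1_T(\overline{\eta}^*\Omega^1_{X/\overline{A}}, J)$ by connectivity of $J$ is a welcome small addition that the paper leaves implicit.
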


\begin{proof}
For the first part,  fix a  $p$-nilpotent $\overline{A}$-algebra $S$ and a point $\eta:\mathrm{Spec}(S) \to X$. Our task is to show that the fibre $F$ of 
\[ \pi^{\mathrm{HT}}(S):\WCart^{\mathrm{HT}}_{X/A}(S) = X(\overline{W(S)}) \to X(S)\]
over the point $\eta$ is a torsor for $B(T_X\{1\}^\sharp)(S)$, i.e,, that $F$  has a natural action of the group object
\[ B(T_X\{1\}^\sharp)(S) \simeq \mathrm{Map}_S(\eta^* \Omega^1_{X/\overline{A}}, B\mathbf{G}_a^\sharp\{1\}(S))\]
in groupoids,  and moreover this action is simply transitive if $F \neq \emptyset$. This follows from Construction~\ref{sqzeroHTprism}: the map $\overline{W(S)} \to S$ is a square-zero extension of $S$ by $B\mathbf{G}_a^\sharp\{1\}(S)$, so the fibre $F$ has the required structure by derived deformation theory.

For the second part, set $X=\mathrm{Spf}(R)$. By deformation theory, we can first choose a $(p,I)$-completely smooth $A$-algebra $\widetilde{R}$ lifting the $\overline{A}$-algebra $R$, and then choose a $\delta$-structure on $\widetilde{R}$ lifting that on $A$. For any $p$-nilpotent $\overline{A}$-algebra $S$, we then obtain  isomorphisms
\[ \mathrm{Map}_{\overline{A}}(R, S) \simeq \mathrm{Map}_A(\widetilde{R},S) \simeq \mathrm{Map}_{A,\delta}(\widetilde{R},W(S)),\]
where the second isomorphism comes from the $\delta$-structure on $\widetilde{R}$. Forgetting the $\delta$-structure and composing with the map $W(S) \to \overline{W(S)}$ then gives a natural map
\[  \mathrm{Spf}(R)(S) \simeq \mathrm{Map}_{\overline{A}}(R, S) \simeq \mathrm{Map}_{A,\delta}(\widetilde{R},W(S)) \to \mathrm{Map}_A(\widetilde{R}, \overline{W(S)}) \simeq \mathrm{Map}_{\overline{A}}(R, \overline{W(S)}) \simeq \WCart_{\mathrm{Spf}(R)/A}^{\mathrm{HT}}(S).\]
It follows from the construction that this map is inverse to $\pi^{\mathrm{HT}}(S)$, which proves (2). 
\end{proof}

\begin{remark}[The class of the relative Hodge-Tate gerbe]
\label{HTGerbeProperties}
Fix $(A,I)$ and $X$  as in Proposition~\ref{RelativeHT}. We then have the class $\alpha_{HT} \in \mathrm{H}^2(X, T_X\{1\}^\sharp)$ of the gerbe $\pi^{\mathrm{HT}}:\WCart_{X/A}^{\mathrm{HT}} \to X$. This class seems to be an interesting invariant of $X$. We record some observations about this class:
\begin{enumerate}
\item If $X$ admits a lift to a $(p,I)$-completely flat formal $\delta$-$A$-scheme, then $\alpha_{HT} = 0$; this follows from the proof of the second part of Proposition~\ref{RelativeHT}. Note that this is the case if $X$ is affine. 

\item The image of $\alpha_{\mathrm{HT}}$ in $\mathrm{H}^2(X, T_X\{-1\})$ (via the canonical map $T_X\{-1\}^\sharp \to T_X\{-1\}$) is, up to a unit multiple, the obstruction to lifting $X$ to $A/I^2$; this follows by the compatibility in Remark~\ref{rmk:comparesqzero}. In particular, if $\alpha_{HT} = 0$, then $X$ has a lift to $A/I^2$.

\item Assume we are in the crystalline case (i.e., so $I=(p)$, and thus the Breuil-Kisin twist can be ignored); we explain a (conjectural) partial converse to (1) in this case. As $A/p$ has characteristic $p$, one has a factorization $\mathbf{G}_a^\sharp \twoheadrightarrow \alpha_p \subset \mathbf{G}_a$ of the natural map of $A/p$-group schemes. Tensoring with $T_X$ and pushing out $\alpha_{HT}$ along this map gives a class $\alpha_F \in \mathrm{H}^2(X, T_X \otimes_{\mathbf{G}_a} \alpha_p)$. To understand this better, we use the identification
\[ \mathrm{H}^2(X, T_X \otimes_{\mathbf{G}_a} \alpha_p) \simeq \mathrm{H}^1(X, T_X \otimes_{\mathcal{O}_X} F_* \mathcal{O}_X/\mathcal{O}_X)\]
obtained by pushing forward the Kummer sequence for $\alpha_p$ to the Zariski site. In this case, the following seems quite plausible:
\begin{conjecture}
\begin{enumerate}
\item The class $\alpha_F \in  \mathrm{H}^1(X, T_X \otimes_{\mathcal{O}_X} F_* \mathcal{O}_X/\mathcal{O}_X)$ measures the failure of $F$-liftability of $X$ relative to $A$ (i.e., the failure to lift $X$ to $A/p^2$ compatibly with the Frobenius).
\item The image of $\alpha_F$ in $\mathrm{H}^2(X, T_X)$ under the map induced by the map $F_* \mathcal{O}_X/\mathcal{O}_X \to \mathcal{O}_X[1]$ classifying the extension $F_* \mathcal{O}_X$ is the obstruction class measuring the failure to lift $X$ to $A/p^2$. 
\item Write $X'$ for the Frobenius twist of $X$. Consider the image $\overline{\alpha}$ of $\alpha_F$ under the natural map
\begin{eqnarray*}
\mathrm{H}^1(X, T_X \otimes_{\mathcal{O}_X} F_* \mathcal{O}_X/\mathcal{O}_X) & \rightarrow &  \mathrm{H}^1(X, T_X \otimes_{\mathcal{O}_X} F_* \mathcal{O}_X/F_{A/p,*} \mathcal{O}_{X'}) \\
& \simeq & \mathrm{H}^1(X', T_{X'} \otimes B^1 \Omega^\bullet) \\
&  = & \mathrm{Ext}^1_{X'}(\Omega^1_{X'}, B^1 \Omega^\bullet),
\end{eqnarray*}
where $\Omega^\bullet = F_{X/(A/p),*} \Omega^\bullet_{X/(A/p)}$ is the de Rham complex of $X$ (regarded as a quasi-coherent complex on $X'$ via restriction of scalars along the relative Frobenius) and $B^1$ denotes the sheaf of $1$-cycles. Then the class $\overline{\alpha}$ coincides with the canonical class $\alpha_{can}$ in $\mathrm{Ext}^1_{X'}(\Omega^1_{X'}, B^1 \Omega^\bullet)$ arising from the complex $\Omega^\bullet$ via the Cartier isomorphism $\Omega^1_{X'} \simeq \mathcal{H}^1(\Omega^\bullet)$.
\end{enumerate}
\end{conjecture}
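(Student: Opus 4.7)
The plan is to address parts (b), (a), (c) in sequence, with (b) being essentially immediate and (a), (c) requiring explicit deformation-theoretic analysis.

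Part (b) follows quickly from Remark~\ref{rmk:comparesqzero}. In the crystalline case, the canonical map $T_X^\sharp \to T_X$ factors as $T_X^\sharp \twoheadrightarrow T_X \otimes_{\mathbf{G}_a} \alpha_p \hookrightarrow T_X$. By functoriality of the pushout of gerbes along morphisms of bands, the further push of $\alpha_F$ along $\alpha_p \hookrightarrow \mathbf{G}_a$ coincides with the push of $\alpha_{HT}$ along $T_X^\sharp \to T_X$, which is the obstruction to lifting $X$ to $A/p^2$ by Remark~\ref{rmk:comparesqzero}.

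For part (a), I would compute $\alpha_F$ via an explicit \v{C}ech representative and match it with a direct obstruction for $F$-liftability. Choose an affine Zariski cover $\{U_i\}$ of $X$; by smoothness and affineness, each $U_i$ admits a lift $\widetilde{U}_i$ to $A/p^2$ together with a Frobenius lift $\widetilde{F}_i$ on $\widetilde{U}_i$. On each double overlap $U_{ij}$, the comparison of $(\widetilde{U}_i, \widetilde{F}_i)$ with $(\widetilde{U}_j, \widetilde{F}_j)$ decomposes as a pair: the comparison of lifts gives a 1-cocycle $c_{ij} \in T_X(U_{ij})$, and (after identifying the lifts) the comparison of Frobenius lifts gives $d_{ij}$ in an appropriate Frobenius-twisted cotangent sheaf. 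Via the connecting map from the Kummer exact sequence $0 \to \alpha_p \to \mathbf{G}_a \xrightarrow{F} \mathbf{G}_a \to 0$ (after tensoring with $T_X$ and pushing to the Zariski site), these data organize into a \v{C}ech 1-cocycle in $T_X \otimes_{\mathcal{O}_X} F_*\mathcal{O}_X/\mathcal{O}_X$ whose vanishing is equivalent to global $F$-liftability. To identify this class with $\alpha_F$, one then matches it with the \v{C}ech representative for $\alpha_{HT}$ coming from the square-zero extension $\overline{W(-)} \to (-)$ of Construction~\ref{sqzeroHTprism}, using the $\delta$-structure on the Witt vectors to translate points $\mathrm{Spec}(\overline{W(S)}) \to X$ into concrete $F$-lift data on suitable Witt truncations $W_n(S)$.

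For part (c), one refines the \v{C}ech computation above. Given local $F$-lifts $(\widetilde{U}_i, \widetilde{F}_i)$, the differential $d\widetilde{F}_i^*$ on $\Omega^1_{\widetilde{U}_i/(A/p^2)}$ is divisible by $p$ (because $\widetilde{F}_i$ reduces to Frobenius); the resulting fractions provide local splittings of a Cartier-type map, and on overlaps they describe a class in $\Ext^1_{X'}(\Omega^1_{X'}, B^1\Omega^\bullet)$. One then compares this class with the canonical class $\alpha_{can}$ realized by the inclusion $\Omega^1_{X'} \simeq \mathcal{H}^1(\Omega^\bullet) \hookrightarrow \Omega^\bullet/B^\bullet$ inside the truncation $\tau^{\leq 1}\Omega^\bullet$. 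The identification is, in essence, the mechanism underlying the Deligne--Illusie decomposition of the de Rham complex translated into prismatic language.

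The principal obstacle is part (a): precisely matching the Hodge--Tate gerbe with the stack of $F$-lifts requires a careful identification of the derived quotient $\overline{W(S)}$ with the classical truncation $W_2(S)$-data equipped with its Frobenius structure, together with a clean geometric realization of the factorization $\mathbf{G}_a^\sharp \twoheadrightarrow \alpha_p \hookrightarrow \mathbf{G}_a$ at the level of gerbes. Once (a) is established, (c) should reduce to a direct comparison of \v{C}ech representatives under the Cartier isomorphism.
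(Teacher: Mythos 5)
The statement you are proving is not proved in the paper: it is explicitly presented as a \emph{conjecture} inside Remark~\ref{HTGerbeProperties}, introduced with ``the following seems quite plausible'' and motivated only as a ``Frobenius descent'' of the known properties of $\alpha_{can}$ (for which the authors cite the appendix of Mehta--Srinivas in the perfect-base case). The only part the paper claims is observation (2) of that remark, which --- via Remark~\ref{rmk:comparesqzero} --- yields part (b) of the conjecture \emph{up to a unit multiple}; the authors explicitly flag that removing the unit ambiguity is itself only expected, not proved. So there is no paper proof to compare against, and your proposal should be judged as a standalone argument.

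As such, it is a reasonable plan but not a proof, and the gap sits exactly where you locate it. Your part (b) reproduces the paper's own observation and inherits the same unit-multiple ambiguity from Remark~\ref{rmk:comparesqzero}, which you do not address. For part (a), the step ``one then matches it with the \v{C}ech representative for $\alpha_{HT}$ coming from the square-zero extension of Construction~\ref{sqzeroHTprism}'' is the entire mathematical content of the conjecture: you must show that the pushforward of the $T_X\{1\}^\sharp$-gerbe $\WCart^{\mathrm{HT}}_{X/A}\to X$ along $\mathbf{G}_a^\sharp\twoheadrightarrow \alpha_p$ is (not merely has the same splitting behaviour as, but is identified with) the gerbe of Frobenius lifts to $A/p^2$. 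The evidence from Proposition~\ref{RelativeHT} --- that a $\delta$-lift of an affine piece splits the full gerbe --- is weaker than what you need, since a $\delta$-structure on a lift to $A$ is a priori more data than an $F$-lift to $A/p^2$, and the passage from the derived square-zero extension $\overline{W(S)}\to S$ with ideal $B\mathbf{G}_a^\sharp\{1\}(S)$ to classical $W_2$-level Frobenius data is asserted rather than carried out. Part (c) then rests on (a) plus a Deligne--Illusie-type comparison that is likewise only sketched. In short: the architecture is sensible and consistent with how the authors expect the conjecture to be resolved, but the proposal does not close the conjecture, and the paper does not either.
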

It is a standard fact that the class $\alpha_{can}$ appearing in part (c) above is known the satisfy the analog of parts (a) and (b) with $X$ replaced by the Frobenius twist $X'$ (see \cite[Appendix, Proposition 1]{MehtaSrinivas} for the case $A/p$ is perfect); from this optic, the conjecture can be regarded as a Frobenius descent of this standard fact. Note that property (2) above implies part (b) up to a unit multiple.
\end{enumerate}

Assuming the conjecture in (3), it follows that demanding the triviality of the Hodge-Tate gerbe $\pi^{\mathrm{HT}}:\WCart_{X/A}^{\mathrm{HT}} \to X$ puts rather strong constants on $X$. For instance, over any bounded prism $(A,I)$, this gerbe is non-trivial for any proper smooth curve of genus $\geq 2$: specializing to a crystalline prism, this follows as no such curve over a perfect field $k$ of characteristic $p$ is $F$-liftable.
\end{remark}

\begin{example}[The Hodge-Tate stack for $\mathbf{Z}/p^n$]
\label{HTZpn}
Fix an integer $n \geq 2$. We shall construct an isomorphism 
\begin{equation}
\label{HTZpnisom}
 \WCart_{\mathrm{Spec}(\mathbf{Z}/p^n)}^{\mathrm{HT}} \otimes \mathbf{F}_p \simeq \mathbf{G}_a^\sharp / \mathbf{G}_m^\sharp \otimes \mathbf{F}_p,
 \end{equation}
 of stacks over $\mathbf{F}_p$, where the quotient on the right is formed for the natural $\mathbf{G}_m^\sharp$-action on $\mathbf{G}_a^\sharp$. In other words, we shall identify the functor of points of $\WCart_{\mathrm{Spec}(\mathbf{Z}/p^n)}^{\mathrm{HT}}$ on $\mathbf{F}_p$-algebras $R$ with $\mathbf{G}_a^\sharp / \mathbf{G}_m^\sharp$. Using the isomorphism
\[ \WCart_{\mathrm{Spec}(\mathbf{Z}/p^n)}^{\mathrm{HT}} \simeq \mathrm{Spec}(\mathbf{Z}/p^n)^{\DHod} / \mathbf{G}_m^\sharp\]
from Construction~\ref{DiffHodgeStack}, it suffices to show that for any $\mathbf{F}_p$-algebra $R$, we have a $\mathbf{G}_m^\sharp(R)$-equivariant natural identification
\[ \mathrm{Spec}(\mathbf{Z}/p^n)^{\DHod}(R) \simeq \mathbf{G}_a^\sharp(R).\]
By definition, the left side is given by
\[\mathrm{Spec}(\mathbf{Z}/p^n)^{\DHod}(R) \simeq \mathrm{Map}(\mathbf{Z}/p^n, W(R)/V(1)),\]
where the mapping space is computed in $p$-complete animated rings. Now the animated rings $\mathbf{Z}/p^n$ and $W(R)/V(1)$ are obtained from $\mathbf{Z}_p$ and $W(R)$ by freely setting $p^n$ and $V(1)$ to be zero respectively. As $\mathbf{Z}_p$ is the initial object in $p$-complete animated rings, the above then simplifies to 
\[\mathrm{Spec}(\mathbf{Z}/p^n)^{\DHod}(R) \simeq \{ x \in W(R) \mid p^n = xV(1)\}.\]
Now $R$ is an $\mathbf{F}_p$-algebra, so $p^n=p^{n-1} V(1) = V(p^{n-1})$. Using $xV(1) = V(Fx)$ and injectivity of $V$, the above then simplifies to 
\[\mathrm{Spec}(\mathbf{Z}/p^n)^{\DHod}(R) \simeq \{ x \in W(R) \mid Fx=p^{n-1}\},\]
with the action of $\mathbf{G}_m^\sharp(R) \simeq W^*[F](R)$ given by scalar multiplication action of $W^*[F](R)$ on $W(R)$. It is easy to see that the right side above is a torsor for $\mathbf{G}_a^\sharp(R) = W[F](R)$ for the additive translation action of $W[F](R)$ on $W(R)$; moreover, $x=p^{n-1}$ solves $Fx=p^{n-1}$. Thus, explicitly, we have an isomorphism
\[ \mathbf{G}_a^\sharp(R) = W[F](R) \stackrel{a \mapsto a+p^{n-1}}{\simeq}  \{ x \in W(R) \mid Fx=p^{n-1}\}.\]
Using the fact that $n-1 \geq 1$, one then checks that the resulting bijection $\mathrm{Spec}(\mathbf{Z}/p^n)^{\DHod}(R) \simeq \mathbf{G}_a^\sharp(R)$ intertwines the natural $\mathbf{G}_m^\sharp(R)$-action on either sides, as wanted. For future reference, we remark that the composite map
\[ B\mathbf{G}_m^\sharp \otimes \mathbf{F}_p \xrightarrow{\text{origin}} \mathbf{G}_a^\sharp/\mathbf{G}_m^\sharp \otimes \mathbf{F}_p \simeq \WCart_{\mathrm{Spec}(\mathbf{Z}/p^n)}^{\mathrm{HT}} \otimes \mathbf{F}_p  \xrightarrow{can} \WCart^{\mathrm{HT}} \otimes \mathbf{F}_p \]
arising from the isomorphism \eqref{HTZpnisom} is simply the isomorphism from Theorem~\APCref{theorem:describe-HT}.
\end{example}

\begin{remark}[Deligne--Illusie via the Sen operator]
Fix a perfect field $k$ of characteristic $p$. In Remark~\APCref{remark:Deligne-Illusie}, we explained why the mod $p$ reduction of the map $\eta:B\mathbf{G}_m^\sharp \simeq \WCart^{\mathrm{HT}} \to \WCart$ gives rise to a Sen operator on the de Rham complex of a smooth $k$-scheme $X$ equipped with a lift to a smooth $p$-adic formal scheme $\mathfrak{X}/W(k)$; this gave a refinement of the Deligne-Illusie decomposition \cite{MR894379} in this case. Example~\ref{HTZpn} shows that the mod $p$ reduction of $\eta$ factors through the mod $p$ reduction of the canonical map $\WCart_{\mathbf{Z}/p^2} \to \WCart$. Using this, one can show that the analysis in Remarks~\APCref{remark:Deligne-Illusie}, \APCref{remark:Achinger}, and \APCref{rmk:Sen-nilpotent} goes through using only the data of a flat lift to $W_2(k)$, as in \cite{MR894379}. 
\end{remark}

\newpage
\section{Comparison with prismatic cohomology}
\label{ss:CompWCartPrism}

In this section, we prove the comparison between the stacky approach and the classical prismatic theory at the level of cohomology (Theorem~\ref{RelativePrismaticCohPrismatizeNonDer}) and crystals (Theorem~\ref{CrystalsCartWitt}) in the lci case; the main ingredients in the proof are the calculation of the relative Cartier--Witt stack in certain semiperfectoid cases (Lemma~\ref{RSPSpfd}) as well as a covering property of the Cartier--Witt stack (Lemma~\ref{PrismatizeCover}).

\begin{lemma}[The prismatization of a regular semiperfectoid]
\label{RSPSpfd}
Let $(A,I)$ be a perfect prism.  Let $R = A/J$ be for some ideal $J$ containing $I$ such that $J/I \subset A/I = \overline{A}$ is generated by a Koszul-regular sequence. Assume $R$ has bounded $p^\infty$-torsion, so $\Prism_R$ is a prism over $R$ by \cite[Example 7.9]{prisms}. Then the natural map
\[ \rho_{\Prism_R}:\mathrm{Spf}(\Prism_R) \to \WCart_{\mathrm{Spf}(R)}\]
from Construction~\ref{cons:pointprismatization} is an isomorphism of functors over $A$.
\end{lemma}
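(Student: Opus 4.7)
The strategy is a three-step reduction to a concrete identification of mapping spaces.

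\emph{Step 1 (Reduce to the relative prismatization).} Since $(A,I)$ is perfect, the ring $\overline{A}$ is perfectoid, and Example~\ref{PerfectoidPrismatize} gives an isomorphism $\Spf(A) \xrightarrow{\sim} \WCart_{\Spf(\overline{A})}$. By Variant~\ref{RelativePrismatize} (see Remark~\ref{RelativeAbsolutePerfect}), base-changing along this isomorphism yields a canonical identification $\WCart_{\Spf(R)} \simeq \WCart_{\Spf(R)/A}$ over $\Spf(A)$, and $\rho_{\Prism_R}$ factors through this identification. So it suffices to show that the induced map $\Spf(\Prism_R) \to \WCart_{\Spf(R)/A}$ is an isomorphism of presheaves on $p$-nilpotent $A$-algebras.

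\emph{Step 2 (Match functors of points).} Fix a $p$-nilpotent $A$-algebra $S$ (with $I$ nilpotent in $S$). Unwinding Variant~\ref{RelativePrismatize}, the groupoid $\WCart_{\Spf(R)/A}(S)$ is the space of animated $\overline{A}$-algebra maps $R \to \overline{W(S)}$, where $W(S)$ is endowed with its unique $\delta$-$A$-algebra structure lifting $A \to S$. On the other hand, by the adjunction between $W$ and the forgetful functor from $\delta$-rings to rings, $\Spf(\Prism_R)(S) = \Map_A(\Prism_R, S) \simeq \Map_{\delta\text{-}A}(\Prism_R, W(S))$; by construction, $\rho_{\Prism_R}(S)$ sends such a $\delta$-map to its reduction modulo $I$ postcomposed with the Hodge--Tate map $R \to \overline{\Prism_R}$.

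\emph{Step 3 (Use Koszul-regularity).} Working Zariski locally on $\Spf(A)$, we may assume $I = (d)$ is oriented and pick lifts $f_1, \dots, f_r \in A$ of a Koszul-regular sequence $\bar{f}_1, \dots, \bar{f}_r \in \overline{A}$ generating $J/I$. Then $\Prism_R$ is the $(p,I)$-completion of the $\delta$-$A$-algebra freely adjoining $\tfrac{f_i}{d}$ (and by Koszul-regularity, the classical quotient by $dx_i - f_i$ agrees with its derived counterpart). Consequently, a $\delta$-$A$-algebra map $\Prism_R \to W(S)$ is precisely the datum, for each $i$, of a lift of $f_i \in W(S)$ to the generalized invertible ideal $I_{W(S)} \subset W(S)$. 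In parallel, since Koszul-regularity makes $R$ equal to its derived quotient $\overline{A}/^{\mathbb{L}}(\bar{f}_i)$, an animated $\overline{A}$-algebra map $R \to \overline{W(S)}$ is the same as the canonical map $\overline{A} \to \overline{W(S)}$ together with null-homotopies of the images of the $\bar{f}_i$. The fiber sequence $I_{W(S)} \to W(S) \to \overline{W(S)}$ then identifies such null-homotopies with lifts of the $f_i$ to $I_{W(S)}$, and this identification matches the two sides compatibly with $\rho_{\Prism_R}(S)$, finishing the proof.

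\emph{Main obstacle.} The delicate point is Step 3: one must verify that the classical prismatic envelope $\Prism_R$ represents the \emph{animated} deformation functor ``lifts of $f_i$ to $I_{W(S)}$'', not just its classical analogue. The Koszul-regular hypothesis is used in two places --- once to ensure that $R$ coincides with its derived quotient (so the animated mapping space out of $R$ is governed by Koszul null-homotopies), and once to ensure that the classical envelope $\Prism_R$ agrees with its animated counterpart. The first uses the standard Koszul resolution; the second follows from \cite[Proposition 3.13]{prisms} combined with the theory of animated prismatic envelopes from \cite[\S 5]{MaoDerivedCrys}, or by direct computation using that animated $\delta$-algebras on Koszul-regular relations are discrete.
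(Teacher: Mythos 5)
Your proposal is correct and follows essentially the same route as the paper's proof: identify $\WCart_{\mathrm{Spf}(R)}(S)$ with $\mathrm{Map}_{\overline{A}}(R,\overline{W(S)})$ via the perfectoid case, describe $\Prism_R$ as the $(p,I)$-complete $\delta$-$A$-algebra freely adjoining the fractions $x_i/d$ so that $\delta$-maps to $W(S)$ become solutions of $hd=x_i$, and match these with null-homotopies of the $\bar{x}_i$ in $\overline{W(S)}$ using the two-term complex $W(S)\xrightarrow{d}W(S)$. The "main obstacle" you flag is handled in the paper exactly as you suggest, by invoking \cite[Proposition 3.13, Example 7.9]{prisms} to see that the envelope description holds even in the animated world.
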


Corollary~\ref{PrismSemiPerf} shall extend the above result to general semiperfectoid rings $R$: for such $R$, the stack $\WCart_{\mathrm{Spf}(R)}$ on $p$-nilpotent rings is corepresented by $\mathrm{H}^0(\Prism_R)$. 

\begin{proof}
Fix a generator $d \in I$. Choose a sequence $\underline{x} := \{x_1,...,x_r\}$ in $J$ whose image in $J/I$ is a Koszul-regular generating set. We shall describe both the source and target of $\rho_{\Prism_R}$ in terms of these choices  and see that the descriptions coincide.

First, recall from \cite[Proposition 3.13, Example 7.9]{prisms} that $\Prism_R$ is the $(p,I)$-complete $\delta$-$A$-algebra obtained from $A$ by freely adjoining $\{\frac{x_1}{d},...,\frac{x_r}{d}\}$ (even in the animated world). Consequently, for any $(p,d)$-nilpotent $A$-algebra $S$, we have a natural identification
\[  \mathrm{Map}_A(\Prism_R,S) \simeq \mathrm{Map}_{A,\delta}(\Prism_R,W(S)) \simeq \prod_{i=1}^r \{ h \in W(S) \mid hd=x_i\}.\]
Next, by Example~\ref{PerfectoidPrismatize}, for any $(p,d)$-nilpotent $A$-algebra $S$, we have a natural identification
\[ \WCart_{\mathrm{Spf}(R)}(S) \simeq \mathrm{Map}_{\overline{A}}(R,\overline{W(S)}).\]
Using the fact that $R$ is obtained from $\overline{A}$ by freely setting $x_i=0$, we can rewrite this as 
\[  \WCart_{\mathrm{Spf}(R)}(S)  \simeq \prod_{i=1}^r \mathrm{Path}(\overline{W(S)}; x_i,0),\]
where the right side is the space of paths connecting $x_i$ to $0$ in the groupoid $\overline{W(S)}$ for each $i$ separately.  Using the description of $\overline{W(S)}$ as the Picard groupoid attached to the complex $\left(W(S) \xrightarrow{d} W(S)\right)$, one sees immediately that 
\[ \mathrm{Path}(\overline{W(S)}; x_i,0) \simeq \{ h \in W(S) \mid hd=x_i\},\]
which gives the lemma.
\end{proof}

\begin{variant}[The prismatization of a relative regular semiperfectoid]
\label{RelativeRSPComp}
Let $(A,I)$ be a bounded prism.  Let $S$ be a $\overline{A}$-algebra that can be written as a quotient $R/J$, where $R$ is $p$-completely flat over $\overline{A}$ with $A/(p,I) \to R/p$ being relatively perfect and $J \subset R$ is generated by a Koszul-regular sequence. Assume $S$ has bounded $p^\infty$-torsion, so $\Prism_{S/A}$ is a prism over $S$ by  \cite[Example 7.9]{prisms}. Then the natural map
\[ \mathrm{Spf}(\Prism_{S/A}) \to \WCart_{\mathrm{Spf}(S)/A}\]
is an isomorphism. To see this, set $B = \Prism_{R/A}$, so $(B,IB)$ is a faithfully flat and relatively perfect prism over $(A,I)$, and $S$ is endowed with the structure of an algebra over $R = B/IB$. As $L_{S/\overline{A}} \simeq L_{S/\overline{B}}$, the Hodge-Tate comparison then shows that $\Prism_{S/A} \simeq \Prism_{S/B}$ via the natural map. Similarly, by deformation theory, it is also clear that the natural map gives an isomorphism $\WCart_{\mathrm{Spf}(S)/A}  \simeq \WCart_{\mathrm{Spf}(S)/B}$ of presheaves of groupoids. We may then replace $(A,I)$ with $(B,IB)$ to assume that $S$ is  quotient of $\overline{A}$ by a Koszul-regular sequence. The proof of Lemma~\ref{RSPSpfd} now applies directly to show the claim.
\end{variant}

\begin{lemma}[Prismatization preserves $p$-quasisyntomic covers]
\label{PrismatizeCover}
Let $f:X \to Y$ be a $p$-quasisyntomic cover of bounded $p$-adic formal schemes. Then $\WCart_f:\WCart_X \to \WCart_Y$ is surjective locally in the flat topology.
\end{lemma}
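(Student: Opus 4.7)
The plan is to reduce to the affine case and then apply Proposition~\ref{prop:LiftQSyn} to manufacture the required flat cover. Using the compatibility of prismatization with Zariski covers from Remark~\ref{PrismatizeEtaleCov}, we first reduce to the case $Y = \mathrm{Spf}(A_0)$ and $X = \mathrm{Spf}(B_0)$ for a $p$-quasisyntomic cover $A_0 \to B_0$ of bounded discrete $p$-adic rings.

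Given an $R$-valued point of $\WCart_Y$ represented by $(\alpha : I \to W(R),\ \eta : A_0 \to \overline{W(R)})$ for a $p$-nilpotent ring $R$, the Cartier--Witt divisor $\alpha$ endows $W(R)$ with an animated prism structure by Example~\ref{CartWitttoAnim}. Base changing the $p$-quasisyntomic cover $A_0 \to B_0$ along $\eta$ yields a $p$-quasisyntomic cover $\overline{W(R)} \to \overline{W(R)} \hat{\otimes}^L_{A_0} B_0$, and applying Proposition~\ref{prop:LiftQSyn} to the animated prism $(W(R), I)$ together with this cover produces a $(p,I)$-completely faithfully flat animated $\delta$-$W(R)$-algebra $C$ equipped with a factorization $\overline{W(R)} \to \overline{W(R)} \hat{\otimes}^L_{A_0} B_0 \to \overline{C}$, and in particular a map $B_0 \to \overline{C}$ of animated rings.

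Finally, we extract a classical faithfully flat cover $R \to R'$ of $p$-nilpotent rings from this animated $\delta$-$W(R)$-algebra. We set $R'$ to be $\pi_0$ of the $p$-completion of $C \otimes^L_{W(R), w_0} R$, where $w_0 : W(R) \to R$ denotes the ghost-at-zero projection, further quotiented by a suitable power of $p$ to guarantee $p$-nilpotence. The adjunction between the forgetful functor and $W(-)$ then produces a $\delta$-map $C \to W(R')$ corresponding to the tautological ring map $C \to R'$, which pulls the Cartier--Witt divisor on $R$ back to one on $R'$ and induces $\overline{C} \to \overline{W(R')}$; composing with $B_0 \to \overline{C}$ furnishes the lift $B_0 \to \overline{W(R')}$, and hence a point of $\WCart_X(R')$ above $(\alpha, \eta)|_{R'}$. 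The main obstacle is verifying that $R \to R'$ is genuinely faithfully flat: the $(p,I)$-complete faithful flatness of $C$ over $W(R)$ does not automatically descend to faithful flatness after base change along $w_0$, since the kernel $VW(R)$ of $w_0$ is not in general contained in $(p,I)$. In the Hodge--Tate locus, where $\alpha(I) \subset VW(R)$, the inclusion $(p,I) \subset (p, VW(R))$ makes the descent go through; the general case should be addressed by Zariski-localizing $\WCart$ into the Hodge--Tate stratum and its complement (where $\alpha(I)$ contains a distinguished element with unit leading term), and handling each piece separately.
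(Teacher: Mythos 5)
Your overall strategy is the same as the paper's: reduce to the affine case via Remark~\ref{PrismatizeEtaleCov}, pull back the cover along $\eta$ to get a $p$-quasisyntomic cover of $\overline{W(R)}$, apply Proposition~\ref{prop:LiftQSyn} to produce a $(p,I)$-completely faithfully flat animated $\delta$-$W(R)$-algebra, and then base change along the restriction map $W(R)\to R$ to obtain the desired flat cover of $R$. However, the step you flag as ``the main obstacle'' is exactly where your argument is incomplete, and the workaround you sketch does not repair it. The correct resolution has nothing to do with whether $VW(R)$ is contained in $(p,I)$: what matters is that the \emph{target} $R$ of the restriction map is a $(p,I)$-nilpotent $W(R)$-algebra. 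This holds because $R$ is $p$-nilpotent by hypothesis and because the composite $I\xrightarrow{\alpha} W(R)\to R$ has nilpotent image --- this is built into the definition of a Cartier--Witt divisor (cf.\ Example~\ref{CartWitttoAnim}). A $(p,I)$-completely faithfully flat map, base changed to a $(p,I)$-nilpotent algebra, yields a discrete faithfully flat algebra by d\'evissage along the finite $(p,I)$-adic filtration. In particular $S := C\otimes^L_{W(R)} R$ is already discrete and faithfully flat over $R$; your extra steps of $p$-completing, passing to $\pi_0$, and quotienting by a power of $p$ are unnecessary, and taking $\pi_0$ of a genuinely non-discrete animated ring would in general destroy flatness, so they cannot substitute for the missing verification.

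The proposed fallback of ``Zariski-localizing $\WCart$ into the Hodge--Tate stratum and its complement'' would not work even if it were needed. A general Cartier--Witt divisor is locally given by a distinguished element $d=\sum V^i[a_i]$ with $a_0$ nilpotent (possibly nonzero) and $a_1$ a unit; such a point lies neither in the Hodge--Tate locus ($a_0=0$) nor in a locus where $\alpha(I)$ has unit leading term ($a_0$ a unit is excluded by nilpotence). Moreover, a closed substack together with its open complement is not a flat cover, so one cannot check flat-local surjectivity stratum by stratum in this way. Once you replace this step with the nilpotence observation above, your argument coincides with the paper's; you should also record, as the paper does, that the $\delta$-map $C\to W(S)$ adjoint to $C\to S$ composes with $W(R)\to C$ to give $W(R\to S)$, which is what guarantees that the constructed point of $\WCart_X(S)$ actually lies over the restriction of $\eta$.
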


\begin{proof}
As the formation of the prismatization commutes with Zariski localization (Remark~\ref{PrismatizeEtaleCov}), we may assume both $X$ and $Y$ are affine. Fix a $p$-nilpotent ring $R$ and a point $\eta \in \WCart_Y(R)$, corresponding to a Cartier-Witt divisor $(I \xrightarrow{\alpha} W(R))$ with a map $g:\mathrm{Spec}(\overline{W(R)}) \to Y$. We must show that there exists a faithfully flat map $R \to S$ and a point $\eta_S \in \WCart_X(S)$ lifting the image of $\eta$ in $\WCart_Y(S)$. Pulling back $f$ along $g$ gives a quasi-syntomic cover $\overline{W(R)}  \to \overline{W(R)}  {\otimes}^L_{\mathcal{O}(Y)} \mathcal{O}(X)$ of  $p$-nilpotent animated rings. By Proposition~\ref{prop:LiftQSyn}, there exists a $(p,I)$-completely faithfully flat $W(R) \to B$ of animated $\delta$-rings and a factorization $\overline{W(R)} \to \overline{W(R)}  {\otimes}^L_{\mathcal{O}(Y)} \mathcal{O}(X) \to \overline{B}$ with both maps being faithfully flat. Write $R \to S := B \widehat{\otimes}_{W(R)}^L R$ for the base change of $W(R) \to B$ along the restriction map $W(R) \to R$, so $R \to S$ is a faithfully flat map of $p$-nilpotent (discrete) rings. The natural $W(R)$-algebra map $B \to S$ refines uniquely to a $\delta$-$W(R)$-algebra map  $B \to W(S)$. These constructions are  summarized in the following diagram
\[ \xymatrix{ A \ar[r] \ar[d] & W(R) \ar[rr] \ar[d] & & B \ar[r] \ar[d] & W(S) \ar[d] \\
\overline{A} \ar[r] & \overline{W(R)} \ar[r] & \overline{W(R)}  {\otimes}^L_{\mathcal{O}(Y)} \mathcal{O}(X) \ar[r] & \overline{B}  \ar[r] & \overline{W(S)} \\
 & \mathcal{O}(Y) \ar[r] \ar[u] & \mathcal{O}(X) \ar[u] & & }\]
where all squares are pushout squares of animated rings, and the maps in the top row are $\delta$-maps. Moreover, the composition $W(R) \to W(S)$ in the top row is obtained by applying $W(-)$ to the map $R \to S$: indeed, the $\delta$-map $W(R) \to W(S)$ appearing in the top row is adjoint to the map $W(R) \to B \to W(S) \to S$, which also factors as $W(R) \to R \to S$ by definition of $S$.  The subdiagram
\[ \mathcal{O}(X) \to \overline{W(S)} \gets W(S) \]
then gives a point of $\WCart_Y(S)$, and the rest of the diagram shows that this point lifts $\eta$, as wanted.
\end{proof}

\begin{theorem}[Comparing relative prismatic cohomology with the relative prismatization]
\label{RelativePrismaticCohPrismatizeNonDer}
Let $(A,I)$ be a bounded prism. Let $X$ be a bounded $p$-adic formal $\overline{A}$-scheme. Assume that $X$ is $p$-completely lci over $\overline{A}$, i.e., for every affine open $\mathrm{Spf}(R) \subset X$, the $\overline{A}$-algebra $R$ can be written as $R_0/J$, where $R_0$ is a $p$-completely smooth $\overline{A}$-algebra, and $J \subset R_0$ is generated by a regular sequence.  Then the comparison map
\[ c_{X/A}:\RGamma(\WCart_{X/A}, \mathcal{O}_{\WCart_{X/A}}) \to \RGamma_{\Prism}^{\mathrm{site}}(X/A)\]
from Construction~\ref{cons:relprismatizecompare} is an isomorphism.
\end{theorem}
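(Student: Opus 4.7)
The plan is to reduce the theorem to the relative regular semiperfectoid case of Variant~\ref{RelativeRSPComp} via $p$-quasi-syntomic descent on both sides. Both functors $\mathrm{Spf}(R) \mapsto \RGamma(\WCart_{\mathrm{Spf}(R)/A}, \mathcal{O})$ and $\mathrm{Spf}(R) \mapsto \RGamma^{\mathrm{site}}_\Prism(\mathrm{Spf}(R)/A)$ are Zariski sheaves on $X$ (for the first, this uses Remark~\ref{PrismatizeEtaleCov}; for the second it is standard), and $c_{X/A}$ is functorial. I may therefore assume $X = \mathrm{Spf}(R)$ is affine with $R = R_0/(\underline{f})$ for a $p$-completely smooth $\overline{A}$-algebra $R_0$ and a $p$-completely regular sequence $\underline{f}$; after further Zariski localization I may even take $R_0 = \overline{A}\langle x_1,\dots,x_n\rangle^{\wedge}_p$ to be the free $p$-completed polynomial algebra.

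Next I would exhibit a $p$-quasisyntomic cover $R \to S$ with $S$ relative regular semiperfectoid in the sense of Variant~\ref{RelativeRSPComp}. Concretely, let $R_\infty = \overline{A}\langle x_1^{1/p^\infty},\dots,x_n^{1/p^\infty}\rangle^{\wedge}_p$ and set $S = R_\infty \widehat{\otimes}_{R_0} R$. Then $R_\infty/p$ is relatively perfect over $\overline{A}/p$, the image of $\underline{f}$ in $R_\infty$ is a regular sequence, and $R \to S$ is $p$-completely faithfully flat, in particular $p$-quasisyntomic. Form the \v{C}ech nerve $S^\bullet$ of $R \to S$ in $p$-complete $\overline{A}$-algebras. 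As in the standard derivation of quasi-syntomic descent for relative prismatic cohomology, the iterated self-products of $R_\infty$ over $R_0$ remain relatively perfect, so each $S^n$ is again a regular quotient (by the sequence $\underline{f}$) of a relatively perfect $\overline{A}$-algebra, i.e.\ satisfies the hypotheses of Variant~\ref{RelativeRSPComp}.

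Now I apply descent on each side. On the stacky side, Lemma~\ref{PrismatizeCover} shows that $\WCart_{\mathrm{Spf}(S)/A} \to \WCart_{\mathrm{Spf}(R)/A}$ is a flat surjection; combined with the Tor-independent limit compatibility of $\WCart_{-/A}$ from Variant~\ref{RelativePrismatize} (applied to the Tor-independent coproducts $S^{\widehat{\otimes}_R n}$), the \v{C}ech nerve of this surjection is identified with $\WCart_{\mathrm{Spf}(S^\bullet)/A}$, and flat descent for the structure sheaf yields
\[ \RGamma(\WCart_{\mathrm{Spf}(R)/A}, \mathcal{O}) \simeq \Tot\, \RGamma(\WCart_{\mathrm{Spf}(S^\bullet)/A}, \mathcal{O}). \]
On the site side, the corresponding $p$-quasisyntomic descent for relative prismatic cohomology (as in the original construction from \cite{prisms}) gives $\RGamma^{\mathrm{site}}_\Prism(R/A) \simeq \Tot\, \RGamma^{\mathrm{site}}_\Prism(S^\bullet/A)$. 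On each term $S^n$, Variant~\ref{RelativeRSPComp} identifies $\WCart_{\mathrm{Spf}(S^n)/A}$ with $\mathrm{Spf}(\Prism_{S^n/A})$, while $\RGamma^{\mathrm{site}}_\Prism(S^n/A) = \Prism_{S^n/A}$ by the vanishing of higher prismatic cohomology in the quasi-regular semiperfectoid case. Functoriality of Construction~\ref{cons:relprismatizecompare} shows that $c_{S^n/A}$ realizes this common identification, so the two totalizations agree and $c_{X/A}$ is an isomorphism.

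The main obstacle I anticipate is the interplay between the two descent steps: on the one hand, one needs the \v{C}ech nerve $S^\bullet$ to remain in the relative regular semiperfectoid class term by term so that Variant~\ref{RelativeRSPComp} keeps applying; on the other hand, the flat-descent argument on the stacky side hinges on Tor-independence of the fibre product, so that $\WCart_{\mathrm{Spf}(S^\bullet)/A}$ really is the \v{C}ech nerve of the cover produced by Lemma~\ref{PrismatizeCover}. Both points are true here because $R \to S$ is already $p$-completely flat with $p$-completely flat iterated tensor products, but they are the subtle input that makes the reduction work.
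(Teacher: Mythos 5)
Your proposal is correct and follows essentially the same route as the paper's proof: reduce to the affine case by Zariski descent, choose a $p$-quasisyntomic cover whose \v{C}ech nerve consists of relative regular semiperfectoids, and then compare the two totalizations using Lemma~\ref{PrismatizeCover}, Tor-independence of the \v{C}ech nerve, flat descent for $\RGamma(-,\mathcal{O})$, quasisyntomic descent for site-theoretic prismatic cohomology, and Variant~\ref{RelativeRSPComp} termwise. The only difference is that you spell out the cover $S = R_\infty \widehat{\otimes}_{R_0} R$ explicitly where the paper merely asserts its existence.
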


Theorem~\ref{PrismaticCohPrismatization} (2) and Remark~\ref{PrismatizeClassical} extend this result to all $p$-quasisyntomic $\overline{A}$-schemes $X$.

\begin{proof}
As the comparison map is defined globally and both sides form Zariski sheaves as $X$ varies, we may assume $X=\mathrm{Spf}(R)$ is affine. As $R$ is assumed to be $p$-completely lci over $\overline{A}$ with bounded $p^\infty$-torsion, we can find a $p$-quasisyntomic cover $R \to S$ such that each term of the \v{C}ech nerve $S^*$ of $R \to S$ satisfies the assumptions in Variant~\ref{RelativeRSPComp}. Functoriality of the comparison map then gives the following commutative diagram in $\mathcal{D}(A)$:
\[ \xymatrix{ \RGamma( \WCart_{\mathrm{Spf}(R)/A}, \mathcal{O}) \ar[r] \ar[d] & \RGamma_\Prism^{\mathrm{site}}(\mathrm{Spf}(R)/A) \simeq \Prism_{R/A} \ar[d] \\
\lim \RGamma( \WCart_{\mathrm{Spf}(S^*)/A}, \mathcal{O}) \ar[r] & \lim \RGamma_\Prism^{\mathrm{site}}(\mathrm{Spf}(S^*)/A) \simeq \lim \Prism_{S^*/A}, }\]
 where we have used  Theorem~\APCref{theorem:site-theoretic-equivalence} for the isomorphisms in the right column. Now the bottom horizontal map is an equivalence by  Variant~\ref{RelativeRSPComp}, while the right vertical map is an equivalence by $p$-quasisyntomic descent (Lemma~\APCref{lemma:qs-descent-relative}).  It is therefore enough to check that the left vertical map is an equivalence. For this, observe that $\WCart_{\mathrm{Spf}(S)/A} \to \WCart_{\mathrm{Spf}(R)/A}$ is a flat cover by Lemma~\ref{PrismatizeCover}. Moreover, the \v{C}ech nerve of this cover is exactly $\WCart_{\mathrm{Spf}(S^*)/A}$ by the compatibility of $\WCart_{-/A}$ with Tor-independent limits. The isomorphy of the left vertical then follows as $\RGamma(-,\mathcal{O})$ is a sheaf for the flat topology on $(p,I)$-nilpotent $A$-algebras.
\end{proof}

\begin{theorem}[Crystals via the Cartier-Witt stack]
\label{CrystalsCartWitt}
Fix a bounded prism $(A,I)$. Let $X$ be a bounded $p$-adic formal scheme over $\overline{A}$ satisfying the same assumptions as Theorem~\ref{RelativePrismaticCohPrismatizeNonDer}. Pullback along the maps in Construction~\ref{cons:relprismatizecompare} yields equivalences 
\[ \mathcal{D}_{qc}(\WCart_{X/A}) \simeq \widehat{\mathcal{D}}_{\mathrm{crys}}( (X/A)_\Prism, \mathcal{O}_\Prism) := \lim_{(B,IB) \in (X/A)_\Prism} \widehat{\mathcal{D}}(B) \]
and
\[ \mathcal{D}_{qc}(\WCart_{X/A}^{\mathrm{HT}}) \simeq \widehat{\mathcal{D}}_{\mathrm{crys}}( (X/A)_\Prism, \overline{\mathcal{O}}_\Prism) := \lim_{(B,IB) \in (X/A)_\Prism} \widehat{\mathcal{D}}(B/IB) \]
of $\infty$-categories, where the completions appearing on the right are with respect to $(p,I)$. 
\end{theorem}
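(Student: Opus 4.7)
The plan is to categorify Theorem~\ref{RelativePrismaticCohPrismatizeNonDer}, replacing the cohomology functor $\RGamma(-,\mathcal{O})$ by the full quasi-coherent sheaf theory but using the same two key ingredients: the explicit description of $\WCart_{-/A}$ in the relative regular semiperfectoid case (Variant~\ref{RelativeRSPComp}) and the covering property of Lemma~\ref{PrismatizeCover}. First I would reduce to the affine case $X = \Spf(R)$: both $\mathcal{D}_{qc}(\WCart_{-/A})$ (by Remark~\ref{PrismatizeEtaleCov}) and $\widehat{\mathcal{D}}_{\mathrm{crys}}((-/A)_\Prism, \mathcal{O}_\Prism)$ satisfy Zariski descent in $X$, so it suffices to prove the equivalence for affine $X$.

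Next, the $p$-completely lci hypothesis lets me choose a $p$-quasisyntomic cover $R \to S$ in $\overline{A}$-algebras whose \v{C}ech nerve $S^\bullet$ is termwise of the form treated in Variant~\ref{RelativeRSPComp}. For each $n \geq 0$ this gives
\[ \WCart_{\Spf(S^n)/A} \;\simeq\; \Spf(\Prism_{S^n/A}), \]
so that $\mathcal{D}_{qc}(\WCart_{\Spf(S^n)/A}) \simeq \widehat{\mathcal{D}}(\Prism_{S^n/A})$, with respect to $(p,I)$. By Lemma~\ref{PrismatizeCover} combined with the Tor-independent limit compatibility of $\WCart_{-/A}$ (Variant~\ref{RelativePrismatize}), the induced map $\WCart_{\Spf(S)/A} \to \WCart_{\Spf(R)/A}$ is a faithfully flat cover whose \v{C}ech nerve is $\WCart_{\Spf(S^\bullet)/A}$; applying faithfully flat descent for quasi-coherent sheaves yields
\[ \mathcal{D}_{qc}(\WCart_{\Spf(R)/A}) \;\simeq\; \lim_{[n] \in \Delta} \widehat{\mathcal{D}}(\Prism_{S^n/A}). \]

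On the prismatic side, for each $n$ the prism $(\Prism_{S^n/A}, I\Prism_{S^n/A})$ is a weakly final object of $(S^n/A)_\Prism$ by the universal property of $\Prism_{S^n/A}$ (since $S^n$ is semiperfectoid), so the crystal $\infty$-category over $S^n$ reduces to $\widehat{\mathcal{D}}(\Prism_{S^n/A})$. Combining this with the categorical version of $p$-quasisyntomic descent used in Lemma~\APCref{lemma:qs-descent-relative}, applied to the sheaf of $\infty$-categories $T \mapsto \widehat{\mathcal{D}}_{\mathrm{crys}}((T/A)_\Prism, \mathcal{O}_\Prism)$, gives
\[ \widehat{\mathcal{D}}_{\mathrm{crys}}((R/A)_\Prism, \mathcal{O}_\Prism) \;\simeq\; \lim_{[n] \in \Delta} \widehat{\mathcal{D}}(\Prism_{S^n/A}). \]
Unwinding Construction~\ref{cons:relprismatizecompare}, the tautological object $\Prism_{S^n/A}$ of $(S^n/A)_\Prism$ maps to the chosen coordinates on $\WCart_{\Spf(S^n)/A}$ under our identification, so the two descent presentations are intertwined by pullback along the comparison map of Construction~\ref{cons:relprismatizecompare}; the first equivalence of the theorem follows. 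The Hodge--Tate statement is then obtained by base changing the first equivalence along $\Spf(\overline{A}) \to \Spf(A)$ and using $\Prism_{S^n/A}/I\Prism_{S^n/A} \simeq \overline{\Prism}_{S^n/A}$, or equivalently by repeating the argument with $\WCart^{\mathrm{HT}}_{-/A}$ in place of $\WCart_{-/A}$.

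The main obstacle is the faithfully flat descent statement for $\mathcal{D}_{qc}$ on the Cartier--Witt stacks: one needs effective descent for $(p,I)$-complete quasi-coherent sheaves along the cover $\WCart_{\Spf(S)/A} \to \WCart_{\Spf(R)/A}$ of presheaves of groupoids on $p$-nilpotent $A$-algebras, with the correct derived behaviour of the \v{C}ech nerve. This can be handled by using the affine $A$-valued points $\rho_{\Prism_{S^n/A}}$ from Construction~\ref{cons:pointprismatization} to reduce to ordinary faithfully flat descent for $(p,I)$-complete modules over the cosimplicial ring $\Prism_{S^\bullet/A}$; once this point is settled, the rest of the argument is a formal identification of two \v{C}ech descent decompositions.
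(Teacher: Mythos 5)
Your overall architecture --- reduce to affine $X$, choose a $p$-quasisyntomic cover $R \to S$ whose \v{C}ech nerve is termwise regular semiperfectoid, identify both columns term-by-term via Variant~\ref{RelativeRSPComp}, and match the two \v{C}ech decompositions --- is exactly the paper's, and the stack-side descent is handled the same way (Lemma~\ref{PrismatizeCover}, Tor-independence of the \v{C}ech nerve, flat descent for quasi-coherent complexes). A minor point: with the paper's variance, $(\Prism_{S^n/A}, I\Prism_{S^n/A})$ is an \emph{initial} object of $(\mathrm{Spf}(S^n)/A)_\Prism$, not a weakly final one; the conclusion you draw from it is nevertheless the right one.

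The genuine gap is on the prismatic-site side. You assert the equivalence $\widehat{\mathcal{D}}_{\mathrm{crys}}((R/A)_\Prism,\mathcal{O}_\Prism) \simeq \lim_{\Delta} \widehat{\mathcal{D}}(\Prism_{S^\bullet/A})$ by appealing to ``the categorical version of $p$-quasisyntomic descent used in Lemma~\APCref{lemma:qs-descent-relative}.'' That lemma is a statement about the cohomology $\Prism_{-/A}$, not about categories of crystals, and the categorical descent you invoke is precisely the nontrivial step that the second half of the paper's proof supplies. Concretely, one must show: (i) the cosimplicial prism $\Prism_{S^\bullet/A}$ is the \v{C}ech nerve of the object $(\Prism_{S^0/A}, I\Prism_{S^0/A})$ \emph{inside the site} $(\mathrm{Spf}(R)/A)_\Prism$, which holds because $\Prism_{-/A}$ commutes with colimits by the Hodge--Tate comparison; and (ii) this object covers the final object of the site for the flat topology --- given any $(B,IB)$ in the site, \cite[Proposition 7.11]{prisms} produces a faithfully flat map $(B,IB)\to(C,IC)$ such that $R \to \overline{C}$ factors through $S^0$, whence a map from $(\Prism_{S^0/A},I\Prism_{S^0/A})$ to $(C,IC)$ by its universal property. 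Only then does flat descent for crystals identify the crystal category with the limit over $\Prism_{S^\bullet/A}$. Without (ii) your two \v{C}ech towers are not known to compute the same limit, so you should supply this argument rather than cite the cohomological descent lemma.
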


\begin{proof}
This follows by similar argument to the one proving Theorem~\ref{RelativePrismaticCohPrismatizeNonDer}; we explain it for the first equivalence. Using the Zariski sheaf property for both sides, we reduce to $X=\mathrm{Spf}(R)$ with $R$ $p$-completely lci over $\overline{A}$ with bounded $p^\infty$-torsion. Choose a cover $R \to S^*$ as in the proof of Theorem~\ref{RelativePrismaticCohPrismatizeNonDer} to obtain the diagram
\[ \xymatrix{ \mathcal{D}_{qc}(\WCart_{\mathrm{Spf}(R)/A}) \ar[r] \ar[d] & \widehat{\mathcal{D}}_{\mathrm{crys}}( (\mathrm{Spf}(R)/A), \mathcal{O}_\Prism) \ar[d] \\
\varprojlim \mathcal{D}_{qc}(\WCart_{\mathrm{Spf}(S^*)/A}) \ar[r] &   \varprojlim  \widehat{\mathcal{D}}_{\mathrm{crys}}( (\mathrm{Spf}(S^*)/A), \mathcal{O}_\Prism) \simeq \varprojlim \widehat{\mathcal{D}}(\Prism_{S^*/A}) }\]
The isomorphism on the bottom right arises as $(\Prism_{S^i/A}, I\Prism_{S^i/A})$ provides an initial object of the relative prismatic site $(\mathrm{Spf}(S^i)/A)_\Prism$ for each $i$; Variant~\ref{RelativeRSPComp} then implies that the bottom horizontal map is an equivalence. Also, the left vertical map is an equivalence by flat descent for quasi-coherent complexes using the same reasoning as in Theorem~\ref{RelativePrismaticCohPrismatizeNonDer}, so it suffices to show the right vertical map is an equivalence. For this, we first observe that $\Prism_{S^*/A}$ is obtained by evaluating $\mathcal{O}_\Prism$ on the \v{C}ech nerve of the object $(\Prism_{S^0/A},I\Prism_{S^0/A}) \in (\mathrm{Spf}(R)/A)_\Prism$:  this follows as $\Prism_{-/A}$ commutes with colimits when regarded as a functor from $p$-complete animated $\overline{A}$-algebras to $(p,I)$-complete $E_\infty$-$A$-algebras by the Hodge-Tate comparison. By flat descent for crystals, it remains to check that $(\Prism_{S^0/A},I\Prism_{S^0/A}) \in (\mathrm{Spf}(R)/A)_\Prism$ covers the final object for the flat topology on $(\mathrm{Spf}(R/A)_\Prism$. Fix $(B,IB) \in (\mathrm{Spf}(R)/A)_\Prism)$. By \cite[Proposition 7.11]{prisms} (which is generalized in Proposition~\ref{prop:LiftQSyn}), there is a faithfully flat map $(B,IB) \to (C,IC)$ of prisms such that  $R \to \overline{C}$ factors as $R \to S^0 \xrightarrow{\alpha} \overline{C}$. As $(\Prism_{S^0/A},I\Prism_{S^0/A})$ is the final object of $(\mathrm{Spf}(S^0)/A)_\Prism$, the map $\alpha$ refines to a map $(\Prism_{S^0/A},I\Prism_{S^0/A}) \to (C,IC)$ in $(\mathrm{Spf}(S^0)/A)_\Prism$ and  thus also in $(\mathrm{Spf}(R)/A)_\Prism$; this shows that $\left(\mathrm{Spf}(\Prism_{S^0/A}) \to \ast\right)$ is indeed a cover of the final object for the flat topology on $(\mathrm{Spf}(R/A)_\Prism$.
\end{proof}

Let us use the above theorem to explicitly relate Hodge-Tate crystals to Higgs bundles; this relationship justifies the choice of name for the ``Higgs specialization'' in \cite[Remark 4.13]{prisms}.

\begin{corollary}[Hodge-Tate crystals and Higgs bundles]
\label{cor:HiggsVBHT}
Fix a bounded prism $(A,I)$, and let $X/\overline{A}$ be a smooth $p$-adic formal scheme. Assume that the relative Hodge-Tate stack $\WCart^{\mathrm{HT}}_{X/A}$ is identified with $BT_{X/\overline{A}}\{1\}^\sharp$  (possible if $X$ is affine by Proposition~\ref{RelativeHT}). Then we have a natural equivalence
\[ \mathrm{Vect}((X/A)_\Prism, \overline{\mathcal{O}}_\Prism) \simeq \mathrm{Vect}_{\mathrm{Higgs}}(X/\overline{A}; \Omega^1_{X/\overline{A}\{-1\}})\] 
of categories, where the target denotes the category of pairs $\{ E, \theta:E \to E \otimes \Omega^1_{X/\overline{A}}\{-1\} \}$, where $E$ is a vector bundle on $X$ and $\theta$ is Higgs field (i.e., an $\mathcal{O}_X$-linear map such that $\theta \wedge \theta = 0$) which is nilpotent modulo $p$. 
\end{corollary}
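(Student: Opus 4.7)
The plan is to string together two equivalences: first identify Hodge--Tate crystals with quasi-coherent sheaves on the Hodge--Tate stack, then identify such sheaves with representations of the banding group scheme.

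\textbf{Step 1 (crystals to sheaves on the stack).} By Theorem~\ref{CrystalsCartWitt}, pullback gives an equivalence $\mathcal{D}_{qc}(\WCart^{\mathrm{HT}}_{X/A}) \simeq \widehat{\mathcal{D}}_{\mathrm{crys}}((X/A)_\Prism, \overline{\mathcal{O}}_\Prism)$; this equivalence is symmetric monoidal and preserves dualizability in the sense of finite locally free modules, so it restricts to an equivalence on vector bundles. In particular, to prove the corollary it suffices to identify vector bundles on $BT_{X/\overline{A}}\{1\}^\sharp$ with nilpotent Higgs bundles on $X$.

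\textbf{Step 2 (vector bundles on $BV^{\sharp}$ as nilpotent representations).} Let $V$ be a finite locally free $\mathcal{O}_X$-module (viewed as an abelian group scheme over $X$), and let $V^{\sharp}$ be the PD-hull of the zero section. By descent for vector bundles along the faithfully flat cover $X \to BV^{\sharp}$, vector bundles on $BV^{\sharp}$ are equivalent to pairs $(E, \rho)$, where $E$ is a vector bundle on $X$ and $\rho$ is a $V^{\sharp}$-coaction on $E$, i.e., an $\mathcal{O}_X$-linear map $\rho : E \to E \otimes_{\mathcal{O}_X} \Gamma_{\bullet}(V^*)$ to the divided-power polynomial algebra on the dual $V^{*}$, satisfying the usual coaction axioms. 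Writing $\rho(e) = \sum_{n \geq 0} \theta_n(e)$ with $\theta_n(e) \in E \otimes \Gamma_n(V^*)$, the counitality and coassociativity axioms force $\theta_n = \theta^{[n]}$ to be the PD-iterates of a single $\mathcal{O}_X$-linear operator $\theta := \theta_1 \colon E \to E \otimes V^*$ satisfying $\theta \wedge \theta = 0$ (i.e., a Higgs field valued in $V^*$). For the $p$-adically completed ``coaction'' to converge on the vector bundle $E$, one furthermore needs $\theta$ to be topologically nilpotent in a PD-sense; reducing modulo $p$ and using that $\Gamma_{\bullet}$ agrees with the symmetric algebra away from divided powers, this convergence condition is equivalent to demanding that $\theta \bmod p$ be locally nilpotent. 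Putting this together, vector bundles on $BV^{\sharp}$ are identified with pairs $(E, \theta \colon E \to E \otimes V^*)$ with $\theta \wedge \theta = 0$ and $\theta$ nilpotent modulo $p$.

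\textbf{Step 3 (specialization).} Applying Step 2 with $V = T_{X/\overline{A}}\{1\}$ gives $V^* = \Omega^1_{X/\overline{A}}\{-1\}$ and identifies $\mathrm{Vect}(BT_{X/\overline{A}}\{1\}^{\sharp})$ with $\mathrm{Vect}_{\mathrm{Higgs}}(X/\overline{A}; \Omega^1_{X/\overline{A}}\{-1\})$. Combining with the equivalence from Step 1 yields the corollary.

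\textbf{Main obstacle.} The conceptually nontrivial step is Step 2: carefully unraveling the Hopf-algebraic structure on $V^{\sharp}$ so that coactions on a vector bundle $E$ become honest Higgs fields, and matching the $p$-adic completeness built into $\widehat{\mathcal{D}}_{\mathrm{crys}}$ with the ``nilpotent modulo $p$'' condition. Once this is done, everything else is a direct translation.
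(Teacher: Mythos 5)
Your proposal is correct and follows essentially the same route as the paper: restrict the second equivalence of Theorem~\ref{CrystalsCartWitt} to vector bundles, use the chosen splitting to reduce to $\mathrm{Vect}(BT_{X/\overline{A}}\{1\}^\sharp)$, and then identify representations of $T_{X/\overline{A}}\{1\}^\sharp$ with nilpotent Higgs modules. The only difference is in the last step, where the paper appeals to Cartier duality in the form of Lemma~\ref{RelativeCartDual} (modules over truncated symmetric algebras versus comodules over the dual divided-power coalgebra) while you unpack the $\Gamma_\bullet(V^*)$-comodule axioms directly; your version is a bit more explicit about matching the $p$-adic convergence of the coaction with the ``nilpotent modulo $p$'' condition, a point the paper leaves implicit.
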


The equivalence constructed in the proof below depends on the chosen splitting of the Hodge-Tate gerbe, so the equivalence is only natural for morphisms that respect this splitting. The proof below also adapts to a similar statement for perfect complexes. In particular, for $X$ affine, one can compute the cohomology of a Hodge-Tate crystal of vector bundles via the de Rham complex of the associated Higgs bundle. This equivalence as well as the cohomological comparison was also recently observed by Tian  \cite{TianFiniteness}.

\begin{proof}
The second equivalence in Theorem~\ref{CrystalsCartWitt} restricts to an equivalence 
\[ \mathrm{Vect}((X/A)_\Prism, \overline{\mathcal{O}}_\Prism) \simeq \mathrm{Vect}(\WCart_{X/A}^{\mathrm{HT}}).\]
Using the chosen trivialization $\WCart_{X/A}^{\mathrm{HT}} \simeq BT_X\{1\}^\sharp$ of the Hodge-Tate stack, this then gives an equivalence
 \[ \mathrm{Vect}((X/A)_\Prism, \overline{\mathcal{O}}_\Prism) \simeq \mathrm{Vect}(BT_X\{1\}^\sharp).\]
To finish, it suffices to show that
\[ \mathrm{Vect}(BT_X\{1\}^\sharp) \simeq \mathrm{Vect}_{\mathrm{Higgs}}(X/\overline{A}; \Omega^1_{X/\overline{A}\{-1\}}).\] 
But this is simply an instance of the general that representations of a commutative group scheme can be identified with coherent sheaves on the Cartier dual, see Lemma~\ref{RelativeCartDual} for the explicit version we need here.
\end{proof}

\begin{lemma}
\label{RelativeCartDual}
Let $R$ be a commutative ring and let $E$ be a finite projective $R$-module. The identity functor on $R$-modules lifts to an equivalence of the following categories:
\begin{enumerate}
\item Finite projective $R$-modules $M$ equipped with an action of $\mathrm{Sym}^*(E)$ that is $0$ on $\mathrm{Sym}^{\geq n}(E)$ for $n \gg 0$.
\item Finite projective $R$-modules $M$ equipped with a coaction of $\Gamma_R^*(E^\vee)$.
\end{enumerate}
\end{lemma}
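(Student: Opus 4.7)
The plan is to exploit the standard duality between symmetric and divided power algebras on finite projective modules: for each $k \geq 0$, there is a canonical isomorphism $\mathrm{Sym}^k(E)^\vee \simeq \Gamma^k(E^\vee)$ of $R$-modules, and this isomorphism intertwines the multiplication maps $\mathrm{Sym}^i(E) \otimes \mathrm{Sym}^j(E) \to \mathrm{Sym}^{i+j}(E)$ with the divided-power comultiplications $\Gamma^{i+j}(E^\vee) \to \Gamma^i(E^\vee) \otimes \Gamma^j(E^\vee)$ (upon passing to duals). Granting this, the lemma is essentially an instance of the equivalence between actions of a finite projective $R$-algebra and coactions of its dual coalgebra.

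More concretely, I would proceed in three steps. First, observe that an action of $\mathrm{Sym}^*(E)$ on $M$ that annihilates $\mathrm{Sym}^{\geq n}(E)$ is the same datum as a module structure on $M$ over the finite projective $R$-algebra $A_n := \mathrm{Sym}^*(E)/\mathrm{Sym}^{\geq n}(E) = \bigoplus_{k=0}^{n-1} \mathrm{Sym}^k(E)$. Since $A_n$ and $M$ are both finite projective over $R$, such a structure is equivalent, by dualization, to a coaction of the dual coalgebra $A_n^\vee = \bigoplus_{k=0}^{n-1} \mathrm{Sym}^k(E)^\vee$ on $M$.

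Second, I would use the duality $\mathrm{Sym}^k(E)^\vee \simeq \Gamma^k(E^\vee)$ to identify $A_n^\vee$ with $\bigoplus_{k=0}^{n-1} \Gamma^k(E^\vee) \subset \Gamma_R^*(E^\vee)$, and check that the coalgebra structure on $A_n^\vee$ induced from the algebra structure on $A_n$ agrees with the restriction of the standard divided-power coalgebra structure on $\Gamma_R^*(E^\vee)$. The verification is local on $\mathrm{Spec}(R)$: one may assume $E$ is free with basis $\{e_i\}$, in which case the claim reduces to the familiar fact that the monomial basis of $\mathrm{Sym}^*(E)$ and the divided-power-monomial basis of $\Gamma^*(E^\vee)$ are dual, and the comultiplication $\Gamma^{i+j}(E^\vee) \to \Gamma^i(E^\vee) \otimes \Gamma^j(E^\vee)$ is exactly the transpose of the symmetric-algebra product.

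Third, I would show that any coaction of $\Gamma_R^*(E^\vee)$ on a finite projective $M$ automatically factors through $\bigoplus_{k=0}^{n-1} \Gamma^k(E^\vee)$ for some $n$, so that the equivalences from step one assemble into the desired equivalence. This is because a coaction, by definition, lands in $M \otimes \bigoplus_{k \geq 0} \Gamma^k(E^\vee)$, so each individual element of $M$ has image involving only finitely many graded components; choosing $n$ large enough to handle a finite generating set of $M$ then gives a uniform bound. The most delicate (though still routine) step is the coalgebra compatibility verification in the middle paragraph; everything else is formal manipulation, and taken together this is nothing more than Cartier duality between the additive group scheme $\mathbf{V}(E^\vee)$ truncated at order $n$ and its Cartier dual.
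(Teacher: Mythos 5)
Your argument is correct and follows essentially the same route as the paper: truncate $\mathrm{Sym}^*(E)$ to the finite projective algebras $A_n$, dualize to pass from $A_n$-modules to $A_n^\vee$-comodules, identify $\colim_n A_n^\vee$ with $\Gamma_R^*(E^\vee)$ via the standard symmetric/divided-power duality, and use finiteness of $M$ to see that every $\Gamma_R^*(E^\vee)$-coaction factors through a finite stage. The paper cites the appendix of Berthelot--Ogus for the coalgebra identification rather than checking it locally on a basis, but this is the same proof.
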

\begin{proof}
Let $A_n = \mathrm{Sym}^*(E)/\mathrm{Sym}^{\geq n}(E)$ be the truncated symmetric algebra, so $\{A_n\}_{n \geq 1}$ is a projective system of algebras in $\mathrm{Vect}(R)$. Write $A_n^\vee$ for the $R$-linear dual, so $\{A_n^\vee\}_{n \geq 1}$ is an inductive system of coalgebras in $\mathrm{Vect}(R)$. For formal reasons, the identity functor on $R$-modules gives an equivalence between $A_n$-modules in $\mathrm{Vect}(R)$ and $A_n^\vee$-comodules in $\mathrm{Vect}(R)$. Varying $n$, this gives 
\[ \colim_n \mathrm{Mod}_{A_n}(\mathrm{Vect}(R)) \simeq \colim_n \mathrm{CoMod}_{A_n^\vee}(\mathrm{Vect}(R)) = \mathrm{CoMod}_{\colim_n A_n^\vee}(\mathrm{Vect}(R)),\]
where the last equivalence follows from the $R$-finiteness of the comodules involved. The lemma now follows as the category in (1) is the left side above by definition, while the category in (2) is the right side above: we have $\colim_n A_n^\vee = \Gamma^*_R(E^\vee)$ as coalgebras since graded $R$-linear duality carries symmetric algebras to divided power coalgebras (see \cite[Appendix A]{BOgusCrys}).
\end{proof}

\newpage
\section{Derived relative prismatization}
\label{ss:DerRelPrism}

Fix a bounded prism $(A,I)$. In this section, we discuss the relative prismatization functor for derived schemes, and in the process describe the (derived, and thus classical) prismatization of a qrsp ring. The main comparison results (Theorem~\ref{PrismaticCohPrismatization} and Remark~\ref{PrismatizeClassical}) extend those in \S \ref{ss:CompWCartPrism} to their natural generality; as a byproduct, we also obtain a simple perspective on the Hodge--Tate comparison for prismatic cohomology (Remark~\ref{HTDerivedPrismatize}).

In this section all operations (quotients, tensor products, limits, colimits, etc.) are intepreted in the derived sense.

\subsection{Definitions}

\begin{construction}[Derived relative prismatization]
\label{RelPrismDerived}
For any derived  $p$-adic formal $\overline{A}$-scheme $X$, we define a presheaf $\WCart_{X/A}$ on $(p,I)$-nilpotent animated $A$-algebras as follows:
\[ \WCart_{X/A}(B) = X(\overline{W(B)}),\]
where $W(B)$ is viewed as an animated $\delta$-$A$-algebra via adjunction from the structure map $A \to B$, and the mapping space on the right is computed in derived formal $\overline{A}$-schemes. Note that $\WCart_{X/A}(-)$ is a sheaf for the \'etale topology: this follows from the fact that the base change maps give equivalences
\[ \mathrm{Spec}(B)_{\et} \gets \mathrm{Spf}(W(B))_{\et} \to \mathrm{Spec}(\overline{W(B)})_{\et} \]
of \'etale sites for an $(p,I)$-nilpotent animated $A$-algebra $B$ together with the \'etale sheaf property of $X$ itself; see Lemma~\ref{SheafFlatWCart} for a stronger statement when $X$ is qcqs. Moreover, the functor $X \mapsto \WCart_{X/A}$ preserves arbitrary limits for formal reasons.
\end{construction}

\begin{example}[Derived relative prismatizations in the \'etale case]
\label{DerivedPrismatizeEtale}
Let $R/\overline{A}$ be a $p$-complete animated $\overline{A}$ which is $p$-completely \'etale, i.e., the $p$-completion $L_{R/\overline{A}}^{\wedge}$ is $0$. By deformation theory, $R$ admits a unique lift to a $(p,I)$-complete animated $A$-algebra $\widetilde{R}$ which is $(p,I)$-completely \'etale, i.e., the $(p,I)$-completion  $L_{\widetilde{R}/A}^{\wedge}$ vanishes. We claim that $\WCart_{\mathrm{Spf}(R)/A}$ identifies with $\mathrm{Spf}(\widetilde{R})$, where $\widetilde{R}$ is endowed with the $(p,I)$-adic topology. For this, observe that for any $(p,I)$-nilpotent animated $A$-algebra $B$, the restriction map
\[ W_{n+1}(B) \to W_n(B) \]
has a nilpotent kernel: as $W_n(-)$ commutes with sifted colimits, this follows from Lemma~\ref{WittSquareZero} (2) as one can write $B$ as the geometric realization of a simplicial $A/(I,p^m)$-algebras (for some $m$). In particular, each $W_n(B)$ is a $(p,I)$-nilpotent animated $A$-algebra as well, and the fibre of the above the map is a $W_n(B)$-module and thus also $(p,I)$-nilpotent. Thanks to this observation and the $(p,I)$-complete \'etaleness of $A \to \widetilde{R}$, we learn that for any $(p,I)$-nilpotent animated $A$-algebra $B$, the natural maps
\[ \mathrm{Map}_A(\widetilde{R}, B) \gets \lim_n \mathrm{Map}_A(\widetilde{R}, W_n(B)) \to \lim_n \mathrm{Map}_A(\widetilde{R}, \overline{W_n(B)}) \gets \mathrm{Map}_A(\widetilde{R}, \overline{W(B)}) \simeq \mathrm{Map}_{\overline{A}}(R, \overline{W(B)})\]
are all equivalences. Comparing the first and last term then gives the claim. 
\end{example}

\begin{lemma}[Sheafyness of $\WCart_{X/A}$]
\label{SheafFlatWCart}
For any derived qcqs $p$-adic formal $\overline{A}$-scheme $X$, the presheaf $\WCart_{X/A}$ is a sheaf for the flat topology on $(p,I)$-nilpotent animated $A$-algebras. 
\end{lemma}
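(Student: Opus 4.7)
The plan is to combine flat descent for the qcqs derived $p$-adic formal $\overline{A}$-scheme $X$ with an analysis of how $\overline{W(-)}$ interacts with flat covers, using the square-zero extension of Construction~\ref{sqzeroHTprism} together with Lemma~\ref{cohdimpdga}. Fix a $(p,I)$-completely faithfully flat cover $B \to B'$ in $(p,I)$-nilpotent animated $A$-algebras, with \v{C}ech nerve $B^\bullet$. Writing $\overline{C} := C \otimes_A^L \overline{A}$ for any animated $A$-algebra $C$, the base-changed cosimplicial object $\overline{B^\bullet}$ is the \v{C}ech nerve of the $p$-completely faithfully flat cover $\overline{B} \to \overline{B'}$ in animated $\overline{A}$-algebras. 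The task is to prove that $X(\overline{W(B)}) \simeq \mathrm{Tot}\, X(\overline{W(B^\bullet)})$.

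First I would extend Construction~\ref{sqzeroHTprism} from animated $\overline{A}$-algebras to all $(p,I)$-nilpotent animated $A$-algebras: using the identification $\overline{W(B)} = W(B) \otimes_A^L \overline{A}$, the natural map $\overline{W(B)} \to \overline{B}$ is, functorially in such $B$, a square-zero extension in animated $\overline{A}$-algebras with connective ideal $J(B) := \RGamma(\mathrm{Spf}(\overline{B}), I \otimes_W W[F])[1]$ (the formula coming from Lemma~\ref{cohdimpdga}). Applying this simplicially to $B^\bullet$ and invoking derived deformation theory for the qcqs derived $\overline{A}$-scheme $X$, I obtain a cosimplicial fiber sequence
\[ X(\overline{W(B^\bullet)}) \to X(\overline{B^\bullet}) \to \mathrm{obs}(B^\bullet), \]
where $\mathrm{obs}(B^\bullet)$ is the obstruction space assembled, over each point $\eta \in X(\overline{B^\bullet})$, from $\mathrm{Map}_{\overline{B^\bullet}}(\eta^* L_X, J(B^\bullet)[1])$. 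It then suffices to check that the totalization of this cosimplicial fiber sequence recovers the corresponding undecorated fiber sequence attached to $B$ itself.

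For the middle term, flat descent for the qcqs derived formal scheme $X$ applied to the cover $\overline{B} \to \overline{B'}$ gives $X(\overline{B}) \xrightarrow{\sim} \mathrm{Tot}\, X(\overline{B^\bullet})$. For the right-hand term, the formula of Lemma~\ref{cohdimpdga} presents $J(\overline{C})$ as a cone built from $W(\overline{C})$-modules via the Frobenius, so descent for $J$ reduces to flat descent for each Witt vector truncation $W_n(-)$ viewed as a presheaf on $(p,I)$-nilpotent animated $A$-algebras; one argues inductively using the short exact sequences $0 \to F^n_* \mathbf{G}_a \to W_{n+1} \to W_n \to 0$ (with base case $W_1 = \mathbf{G}_a$ being trivial, and the inductive step using that animated square-zero extensions preserve flat descent). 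Since fibers commute with totalizations, combining these two descents through the fiber sequence yields the desired equivalence and hence the flat sheaf property. The main obstacle will be the inductive descent for the Witt vector functors $W_n(-)$ in the animated category, together with carefully propagating it through the derived-sections formula for $J$; the rest of the argument is formal manipulation of fiber sequences and base changes.
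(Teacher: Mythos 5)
Your argument hinges on the claim that, functorially in a $(p,I)$-nilpotent animated $A$-algebra $B$, the map $\overline{W(B)} \to \overline{B}$ is a square-zero extension with ideal $\RGamma(\mathrm{Spf}(\overline{B}), I \otimes_W W[F])[1]$. This is the fatal gap. Construction~\ref{sqzeroHTprism} produces such a square-zero extension only for test objects $S$ that are animated $\overline{A}$-algebras, i.e.\ over the Hodge--Tate locus, where the Cartier--Witt divisor $I_{W(S)} \to W(S)$ factors through $VW(S)$; moreover the ideal you quote is built from $W(\overline{B})$, whereas the fiber of the map you actually need to control is $\mathrm{fib}(\overline{W(B)} \to \overline{B}) \simeq VW(B) \otimes_A^L \overline{A}$, built from $W(B)$ (you are implicitly conflating $W(B)/^L I$ with $W(B/^L I)/^L I$). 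Worse, off the Hodge--Tate locus the map is genuinely not a square-zero extension. Take $(A,I) = (\mathbf{Z}_p\llbracket u\rrbracket, (u-p))$ with $\delta(u)=0$ and $B = \mathbf{F}_p[u]/(u^{p+1})$, a $(p,I)$-nilpotent discrete $A$-algebra that is not an $\overline{A}$-algebra. The $\delta$-lift $A \to W(B)$ carries the generator of $I$ to $d = [u] - V(1)$ (recall $p = V(1)$ in the Witt vectors of an $\mathbf{F}_p$-algebra). The image of $\pi_0(\mathrm{fib})$ in $\pi_0(\overline{W(B)}) = W(B)/dW(B)$ contains the class of $V(1)$, and $V(1)^2 = V^2(1)$. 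An equation $V^2(1) = d\cdot([c]+V(w))$ forces $uc=0$, hence $[c^p]=0$, hence $([u^p]-V(1))w = V(1)$; comparing zeroth Witt components gives $u^p w_0 = 0$, and then the next comparison gives $w_0^p = -1$ with $w_0 \in (u)$ nilpotent --- a contradiction. So $V^2(1) \notin dW(B)$, the ideal does not square to zero, and the cosimplicial fiber sequence you propose to totalize does not exist.

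The pieces of your plan that do survive are present in the paper's proof, but assembled differently. Descent for the layers of the $V$-filtration on $W_n(-)$ yields $W(B) \simeq \varprojlim W(C^\bullet)$, which the paper feeds into a Tannaka-duality argument (for full faithfulness) together with a gluing argument over quasi-compact opens (for essential surjectivity) to handle discrete $B$ and $C$; the deformation-theoretic d\'evissage is then run along the Postnikov tower of $B$ itself, where $W(\tau_{\leq n}B) \to W(\tau_{\leq n-1}B)$ really is a square-zero extension whose fiber $\prod_{\mathbf{N}} \pi_n(B)[n]$ visibly satisfies flat descent. If you want to keep the strategy of reducing to $X(\overline{B^\bullet})$, you would have to replace your single square-zero extension by the whole tower $\overline{W(B)} \simeq \varprojlim \overline{W_n(B)}$ and show that each stage is a nilpotent (not square-zero) thickening with descendable graded pieces, which amounts to redoing the paper's induction in a different order. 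A secondary point: your middle term requires fpqc descent for the functor of points of a qcqs derived formal scheme; this is true but must be cited or proved, and it is of the same nature as (though easier than) the statement at hand --- the whole difficulty being that $\overline{W(C^\bullet)}$ is not itself the \v{C}ech nerve of a flat cover.
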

\begin{proof}
Fix a faithfully flat map $B \to C$ of  $(p,I)$-nilpotent animated $A$-algebras. Write $B \to C^*$ for its \v{C}ech nerve.  We must show that
\[ \alpha:X(\overline{W(B)}) \to \varprojlim X(\overline{W(C^*)}) \]
is an equivalence. We shall prove this statement when $B$ (and hence $C$) is discrete, and then deduce the general statement by deformation theory. Thus, assume until further notice that $B$ and $C$ are discrete.

As $B$ and $C$ are discrete, we may also assume by adjunction that $X$ is discrete. In particular, both the source and target of $\alpha$ denote mapping spaces computed in ordinary schemes.  The full faithfulness of $\alpha$ then follows by Tannaka duality\footnote{We are using the spectral version \cite[\S 9]{LurieSAG} of Tannaka duality here. A priori, this only describes mapping spaces in spectral algebraic geometry in terms of functors on perfect complexes. However, the functor from derived schemes to spectral schemes is fully faithful on derived schemes with a $1$-truncated structure sheaf, so a posteriori we obtain descriptions of mapping spaces in derived algebraic geometry as well.} once we know  the full faithfulness of
\[ D_{\perf}(\overline{W(B)}) \to \varprojlim D_{\perf}(\overline{W(C^*)}).\]
The latter can be checked on unit objects, so it would follow by reduction modulo $I$ if we knew that $W(B) \simeq \varprojlim W(C^*)$. But the latter follows by reduction to the analogous statement for $W_n(-)$, which can be proven  by induction on $n$, using the standard exact triangles building the $W_n$ functor from the $W_{n-1}$ functor and the identity functor. 

For essential surjectivity of $\alpha$, fix some map $\pi:\mathrm{Spec}(\overline{W(C^*)}) \to X$. We must show that $\pi$ is induced by a map $\mathrm{Spec}(\overline{W(B)}) \to X$. This statement is obvious when $X$ is affine since $W(B) \simeq \varprojlim W(C^*)$ and similarly after passing to (animated) quotients by $I$; we shall explain how to reduce to this case.  Fix a qc open $V \subset X$. Its preimage along $\pi$ is a Cartesian qc open $V^* \subset \mathrm{Spec}(\overline{W(C^*)})$. Now for any $(p,I)$-nilpotent animated $A$-algebra $D$, base change gives  equivalences 
\[ \mathrm{Spec}(D)_{\et} \gets \mathrm{Spf}(W(D))_{\et} \to \mathrm{Spec}(\overline{W(D)})_{\et} \]
of \'etale sites, and similarly for Zariski sites. Using this equivalence for the cosimplicial $(p,I)$-nilpotent  $A$-algebra $C^*$, we can transport $V^*$ across the equivalence to obtain a Cartesian qc open $U^* \subset \mathrm{Spec}(C^*)$. By faithful flatness of $B \to C$ and flat descent for qc open subsets, any such qc open is the pullback of a qc open $U' \subset \mathrm{Spec}(B)$. Using the above equivalence of sites for $B$ now, we learn that $V^*$ is the preimage of a qc open $V' \subset \mathrm{Spec}(\overline{W(B)})$. By the affine case of the theorem, we obtain a unique map $V' \to V$ inducing $\pi$. Glueing these constructions together for variable $V \subset X$ then gives the desired map $\mathrm{Spec}(\overline{W(B)}) \to X$ inducing $\pi$.

It remains to explain how to pass from the discrete case to the general case via deformation theory, so assume now that $B$ is an arbitrary $(p,I)$-nilpotent animated $A$-algebra. Then for each integer $n \geq 1$, the natural animated $A$-algebra map $W(\tau_{\leq n} B) \to W(\tau_{\leq n-1} B)$ has fibre $F_n(B)$ concentrated in homological degree $n$: the fibre identifies with $\prod_{\mathbf{N}} \pi_n(B)[n]$ as a space via the Witt component maps. In particular, this map naturally a square-zero extension of $W(\tau_{\leq n-1} B)$ by $F_n(B)$ in animated $A$-algebras. Reducing modulo $I$ shows that the map $\overline{W(\tau_{\leq n} B)}  \to \overline{W(\tau_{\leq n-1} B)}$ is naturally a square-zero extension of $\overline{W(\tau_{\leq n-1} B)}$ by $F_n(B) \otimes_A \overline{A}$ in animated $\overline{A}$-algebras. Applying a similar analysis to the Postnikov tower for $C^*$, the claim follows by deformation theory once we observe that that the map $W(\tau_{\leq n}(B \to C))$ yields an isomorphism $F_n(B) \simeq \varprojlim F_n(C^*)$ by the flatness assumption on $B \to C$.
\end{proof}

\begin{warning}[Derived prismatization might differ the classsical one]
\label{Warn:DerNotClass}
Assume $X$ is an ordinary  $p$-adic formal $\overline{A}$-scheme; for the purpose of this warning, write $X^{\mathrm{der}}$ for the result of viewing $X$ as a derived formal $\overline{A}$-scheme. Construction~\ref{RelPrismDerived} gives a presheaf $\WCart_{X^{\mathrm{der}}/A}$ on all $(p,I)$-nilpotent animated $A$-algebras whose restriction to the discrete ones coincides with the relative prismatization $\WCart_{X/A}$ from Variant~\ref{RelativePrismatize}. Write  $\mathcal{L}( \WCart_{X/A})$ for the extension of  $\WCart_{X/A}$ to a presheaf on all $(p,I)$-nilpotent animated $A$-algebras (via left Kan extension of the usual inclusion on affine objects), so there is a natural map $\mathcal{L}(\WCart_{X/A}) \to \WCart_{X^{\mathrm{der}}/A}$. In general, this map fails to be an isomorphism or even identify the target with the flat sheafification of the source; for instance, one can show\footnote{To see this, observe that $\RGamma(\mathcal{L}(\WCart_{X/A}), \mathcal{O}) \simeq \RGamma(\WCart_{X/A}, \mathcal{O})$ is cococonnective as it is the cohomology complex of a sheaf of discrete rings in a topos. On the other hand, by Theorem~\ref{PrismaticCohPrismatization} (1)), $\RGamma( \WCart_{X^{\mathrm{der}}/A}, \mathcal{O})$ is identified with the derived crystalline cohomology of $\mathbf{F}_p[x,y]/(x,y)^2$, and thus has cohomology in (infinitely many) negative degrees by the derived Cartier isomorphism.} this disagreement happens over $(A,I) = (\mathbf{Z}_p,(p))$ with $X=\mathrm{Spec}(\mathbf{F}_p[x,y]/(x,y)^2)$. In particular, the results we shall soon prove on $\RGamma( \WCart_{X^{\mathrm{der}}/A}, \mathcal{O})$ (such as the comparison with prismatic cohomology in Theorem~\ref{PrismaticCohPrismatization} (1)) do not apply to $\RGamma( \WCart_{X/A}, \mathcal{O})$ in general. 

On the positive side, if the affine opens in $X$ satisfy condition $(\ast)$ from Remark~\APCref{remark:hypothesis-for-discreteness}, then the natural map $\mathcal{L} (\WCart_{X/A}) \to \WCart_{X^{\mathrm{der}}/A}$ exhibits the target as the flat sheafification of source  (see Remark~\ref{PrismatizeClassical}); in particular, this  extends Theorem~\ref{RelativePrismaticCohPrismatizeNonDer} to all such $X$'s. 
\end{warning}

We shall need the following variant of Lemma~\ref{PrismatizeCover}:

\begin{proposition}[Prismatization preserves $p$-quasisyntomic covers]
\label{PrismatizeCover2}
Let $f:X \to Y$ be a $p$-quasisyntomic cover of derived $p$-adic formal $\overline{A}$-schemes. Then $\WCart_f:\WCart_{X/A} \to \WCart_{Y/A}$ is surjective locally in the flat topology.
\end{proposition}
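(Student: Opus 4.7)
The plan is to adapt the argument for Lemma~\ref{PrismatizeCover} to the derived setting, where the two essential inputs are available: Proposition~\ref{prop:LiftQSyn} on lifting $p$-quasisyntomic covers to animated $\delta$-$A$-algebras, and Lemma~\ref{SheafFlatWCart} which ensures that $\WCart_{-/A}$ satisfies Zariski (indeed flat) descent, allowing us to localize.

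First I would use Lemma~\ref{SheafFlatWCart} to reduce to the case that $X$ and $Y$ are derived affine. Then, given a $(p,I)$-nilpotent animated $A$-algebra $R$ and a point
\[ \eta \in \WCart_{Y/A}(R) = \mathrm{Map}_{\overline{A}}(\mathcal{O}(Y), \overline{W(R)}), \]
I pull back $f$ along the corresponding map $g:\mathrm{Spec}(\overline{W(R)}) \to Y$ to get a $p$-quasisyntomic cover $\overline{W(R)} \to \overline{W(R)} \otimes_{\mathcal{O}(Y)}^L \mathcal{O}(X)$ of animated $\overline{A}$-algebras. Since $W(R)$ is a $(p,I)$-complete animated $\delta$-$A$-algebra equipped with the Cartier--Witt divisor $I_{W(R)} \to W(R)$ coming from the animated prism $(A,I)$, the pair $(W(R), I_{W(R)})$ is an animated prism with $\overline{W(R)} = W(R)/I_{W(R)}$, so Proposition~\ref{prop:LiftQSyn} applies.

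Applying that proposition produces a $(p,I)$-completely faithfully flat animated $\delta$-$W(R)$-algebra $C$ together with a factorization $\overline{W(R)} \to \overline{W(R)} \otimes_{\mathcal{O}(Y)}^L \mathcal{O}(X) \to \overline{C}$ in animated $\overline{W(R)}$-algebras. I then set $S := C \widehat{\otimes}^L_{W(R)} R$, the $(p,I)$-completed base change along the restriction map $W(R) \to R$; by flatness of $W(R) \to C$, this $S$ is a $(p,I)$-nilpotent animated $A$-algebra and $R \to S$ is faithfully flat. The $W(R)$-algebra map $C \to S$ lifts uniquely to a $\delta$-$W(R)$-algebra map $C \to W(S)$ by the adjunction between the forgetful and Witt-vector functors, and reducing modulo $I$ yields a natural $\overline{W(R)}$-algebra map $\overline{C} \to \overline{W(S)}$. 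Composing the above factorization with this map gives
\[ \mathcal{O}(X) \to \overline{W(R)} \otimes_{\mathcal{O}(Y)}^L \mathcal{O}(X) \to \overline{C} \to \overline{W(S)}, \]
which is the desired point $\eta_S \in \WCart_{X/A}(S)$ lifting $\eta|_S$ by construction.

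The most delicate ingredient is the application of Proposition~\ref{prop:LiftQSyn}; once one knows that animated prismatic envelopes exist and behave well for $p$-quasisyntomic maps out of $\overline{W(R)}$, the rest of the argument is entirely formal. The only other subtlety is verifying that $S$ truly lies in the site of $(p,I)$-nilpotent animated $A$-algebras and that $R \to S$ is faithfully flat there; both follow directly from the fact that $R$ is $(p,I)$-nilpotent and that $W(R) \to C$ is $(p,I)$-completely faithfully flat, so the $(p,I)$-completion in the definition of $S$ is harmless.
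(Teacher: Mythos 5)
Your proposal is correct and follows essentially the same route as the paper: the paper's proof of Proposition~\ref{PrismatizeCover2} simply says it is the same argument as Lemma~\ref{PrismatizeCover}, and your write-up is exactly that argument transported to the derived relative setting (localize via Lemma~\ref{SheafFlatWCart}, lift the pulled-back cover via Proposition~\ref{prop:LiftQSyn}, base change along $W(R)\to R$, and use the Witt-vector adjunction to produce the lifted point).
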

\begin{proof}
This follows by the same proof as the one given for Lemma~\ref{PrismatizeCover}.
\end{proof}

\subsection{The derived Hodge-Tate stack}

\begin{construction}[The derived Hodge-Tate stack]
\label{DerivedHTStack}
For any derived  $p$-adic formal $\overline{A}$-scheme $X$, we define 
\[ \WCart_{X/A}^{\mathrm{HT}} := \WCart_{X/A} \times_{\mathrm{Spf}(A)} \mathrm{Spf}(\overline{A}).\] 
Since we are working with derived schemes, we have\footnote{Given a presheaf $F$ on $(p,I)$-nilpotent animated $A$-algebras, we can write $F = \colim_{R \in  \mathcal{C}_F} \mathrm{Spec}(R)$ tautologically as the colimit of its $\infty$-category $\mathcal{C}_F$ of points, so we have $\RGamma(F,\mathcal{O}) \simeq \varprojlim_{R \in \mathcal{C}_F} R$. Its pullback $F \times_{\mathrm{Spf}(A)} \mathrm{Spf}(\overline{A})$ to $p$-nilpotent $\overline{A}$-algebras can then be described as $\colim_{R \in  \mathcal{C}_F} \mathrm{Spec}(R/t)$, yielding 
\[ \RGamma(F \times_{\mathrm{Spf}(A)} \mathrm{Spf}(\overline{A}), \mathcal{O}) \simeq \varprojlim_{R \in \mathcal{C}_F} R/t \simeq (\varprojlim_{R \in \mathcal{C}_F} R)/t \simeq \RGamma(F,\mathcal{O})/t.\]
Note that it is critical here that the operation of reduction mod $t$ is intepreted everywhere in the animated sense, and thus commutes with (derved) limits. In particular, this argument does not work in ordinary algebraic geometry.} the base change formula
\begin{equation}
\label{DerHTBC}
 \RGamma(\WCart_{X/A}^{\mathrm{HT}}, \mathcal{O}) \simeq \RGamma(\WCart_{X/A}, \mathcal{O}) \otimes^L_A \overline{A}
 \end{equation}
for derived global sections. Moreover, the proof of Proposition~\ref{RelativeHT} applies in the animated context as Construction~\ref{sqzeroHTprism} was formulated and proven for all $p$-nilpotent animated $\overline{A}$-algebras. Thus, we learn: for any smooth $p$-adic formal $\overline{A}$-scheme $X$, the structure map $\WCart_{X/A}^{\mathrm{HT}} \to X$ is a gerbe banded by $T_{X/\overline{A}}\{1\}^\sharp$ that splits if $X$ comes endowed with a lift to a smooth $(p,I)$-adic formal $A$-scheme equipped with a $\delta$-structure. 
\end{construction}

\begin{remark}[A direct proof of the Hodge-Tate comparison via the prismatization]
\label{HTDerivedPrismatize}
Fix a $p$-completely smooth $\overline{A}$-algebra $R$. Using results from prismatic cohomology (especially the Hodge-Tate comparison and its consequences such as Andr\'e's lemma), we shall prove in the forthcoming Theorem~\ref{PrismaticCohPrismatization} that $\Prism'_{R/A} := \RGamma(\WCart_{\mathrm{Spf}(R)/A}, \mathcal{O})$ is naturally identified with $\Prism_{R/A}$. On the other hand, it is possible to prove the basic results on prismatic cohomology directly for $\Prism'_{R/A}$ using the stacky perspective, with arguably more conceptual proofs. Let us sketch how to do so for the Hodge-Tate comparison. 

First,  using \eqref{DerHTBC}, we construct the comparison map
\[ c_R:(\Omega^*_{R/\overline{A}}, d) \to (\mathrm{H}^*(\Prism'_{R/A} \otimes_A I^*/I^{*+1}),\beta_I)\]
of commutative differential graded algebras as in \cite[Construction 4.9]{prisms}; when $p=2$, one needs to know that the target is a strict cdga (i.e., odd degree elements square to $0$), which will actually follow from the subsequent discussion. To prove this map is an isomorphism, we are allowed to make choices, so choose a lift of $R$ to $(p,I)$-completely smooth $A$-algebra $\widetilde{R}$ endowed with a $\delta$-structure. As mentioned in Construction~\ref{DerivedHTStack}, the map $\WCart_{\mathrm{Spf}(R)/A}^{\mathrm{HT}} \to \mathrm{Spf}(R)$ is the trivial gerbe banded by $T_{\mathrm{Spf}(R)/\overline{A}}\{1\}^\sharp$ (with trivialization determined by the choice $\widetilde{R}$ with its $\delta$-structure), so we get a natural isomorphism
\[ \Prism_{R'/A} \otimes_A \overline{A} \simeq \RGamma(\WCart_{\mathrm{Spf}(R)/A}^{\mathrm{HT}},\mathcal{O}) \simeq \RGamma(BT_{\mathrm{Spf}(R)/\overline{A}}\{1\}^\sharp, \mathcal{O})\]
of commutative algebras in $\mathcal{D}(R)$. By Lemma~\ref{CohRingBEsharp}, we have an  isomorphism
\[ \Prism'_{R/A} \otimes^L_A \overline{A} \simeq \bigoplus_i \Omega^i_{R/\overline{A}}\{-i\}[-i]\]
of $E_\infty$-$R$-algebras. One can then check that the induced isomorphism on cohomology rings agrees (after Breuil-Kisin untwisting) with the map $c_R$, thus showing the latter is an isomorphism, and thus proving the Hodge-Tate comparison. 
\end{remark}

\begin{lemma}
\label{CohRingBEsharp}
Given a commutative ring $R$ and a finite projective $R$-module $E$, there is a natural isomorphism
\[ \RGamma(BE^\sharp, \mathcal{O}) \simeq \bigoplus_i \wedge^i E^\vee[-i]\]
of $E_\infty$-$R$-algebras.
\end{lemma}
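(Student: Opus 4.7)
The plan is to compute $\RGamma(BE^\sharp, \mathcal{O})$ by Koszul duality, after suitable reductions. Both sides of the claimed isomorphism are Zariski sheaves in $R$ and compatibly functorial in $E$, so I would first Zariski-localize on $\mathrm{Spec}(R)$ to reduce to the case that $E \simeq R^{\oplus n}$ is free. In this case $E^\sharp \simeq (\mathbf{G}_a^\sharp)^n$ and hence $BE^\sharp \simeq (B\mathbf{G}_a^\sharp)^n$, so by the K\"unneth formula for structure-sheaf cohomology of classifying stacks of flat affine abelian group schemes the problem reduces to the rank-one statement
\[
\RGamma(B\mathbf{G}_a^\sharp, \mathcal{O}) \simeq R \oplus R[-1]
\]
of $E_\infty$-$R$-algebras.

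For this rank-one case, I would use faithfully flat descent along the atlas $\mathrm{Spec}(R) \to B\mathbf{G}_a^\sharp$ (flat because the divided-power Hopf algebra $\mathcal{O}(\mathbf{G}_a^\sharp) = \Gamma^*_R(R)$ is $R$-free) to identify the left-hand side with the cobar $E_\infty$-algebra of the counit-augmented Hopf algebra $\Gamma^*_R(R)$. Invoking a derived enhancement of Lemma~\ref{RelativeCartDual} (translating comodules over $\Gamma^*_R(R)$ into locally-nilpotent $R[t]$-modules) then identifies this cobar with the derived endomorphism algebra $\RHom_{R[t]}(R,R)$, where $R = R[t]/(t)$ is the residue module and $R[t] = \mathrm{Sym}^*_R(R)$. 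The classical length-two Koszul resolution $R[t] \xrightarrow{\cdot t} R[t] \twoheadrightarrow R$ then computes
\[
\RHom_{R[t]}(R,R) \simeq R \oplus R[-1],
\]
with multiplicative structure inherited from the cdga structure on the Koszul complex.

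The main obstacle will be carrying the $E_\infty$-algebra structure compatibly through each step, so as to match the geometric multiplication on $\RGamma(BE^\sharp, \mathcal{O})$ induced by the group law $BE^\sharp \times BE^\sharp \to BE^\sharp$ with the exterior algebra multiplication on $\bigoplus_i \wedge^i(E^\vee)[-i]$. The commutative Hopf algebra structure on $\Gamma^*_R(E^\vee)$ upgrades the cobar construction to $E_\infty$, the Koszul complex has a natural cdga refinement, and the K\"unneth reduction is multiplicative; piecing these together produces the desired identification of $E_\infty$-algebras, with the Yoneda/exterior-algebra product on $\wedge^*(E^\vee)$ appearing directly from the cdga structure on $\mathrm{Sym}^*_R(E) \otimes_R \wedge^*_R(E)$.
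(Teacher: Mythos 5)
Your route (faithfully flat descent to the cobar complex of $\Gamma^*_R(E^\vee)$, Cartier duality, Koszul resolution) is a reasonable way to compute the underlying object, and it correctly recovers the cohomology ring $\wedge^* E^\vee$ --- which is the part of the statement the paper declares ``standard'' and omits. But the actual content of the lemma, and the entirety of the paper's proof, is the passage from the cohomology ring to an identification of $E_\infty$-$R$-algebras, i.e.\ formality, stated \emph{naturally} in $E$. This is precisely the step your proposal defers to ``piecing these together,'' and it does not follow from the ingredients you list. The endomorphism/cobar dga you produce from the Koszul resolution is an $E_\infty$-algebra whose cohomology is $\wedge^* E^\vee$, but nothing in the construction identifies it with the trivial (split, square-zero in each exterior degree) $E_\infty$-structure $\bigoplus_i \wedge^i E^\vee[-i]$: a coconnective $E_\infty$-algebra with exterior cohomology can a priori carry nontrivial Massey products and higher coherences, so formality requires an argument. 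The paper supplies exactly this via a weight trick: $E$ carries a $\mathbf{G}_m$-action of weight $1$, which makes $\RGamma(BE^\sharp,\mathcal{O})$ a $\mathbf{G}_m$-equivariant $E_\infty$-$R$-algebra with $H^i$ flat of weight $-i$; the $\infty$-category of such equivariant complexes is an ordinary category equivalent to graded flat $R$-modules, which forces a \emph{unique} equivariant formality isomorphism. If you want to salvage your approach, you should observe that the cobar complex, the Koszul complex, and your K\"unneth decomposition all carry compatible $\mathbf{G}_m$-actions and invoke the same weight argument.

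A second, related gap is the reduction to free $E$. Your rank-one splitting $BE^\sharp \simeq (B\mathbf{G}_a^\sharp)^n$ and the resulting K\"unneth identification depend on a choice of basis, so Zariski-localizing only produces local isomorphisms; to glue them into the claimed natural isomorphism you must show they are independent of the basis (equivalently, $\GL_n$-equivariant), which again comes down to the uniqueness of the formality isomorphism --- the very point at issue. Note also that the detour through rank one is unnecessary for the module-level computation: Koszul duality $\RHom_{\mathrm{Sym}_R^*(E)}(R,R)\simeq \wedge^* E^\vee$ via the Koszul resolution $\mathrm{Sym}^*_R(E)\otimes_R \wedge^*_R E \to R$ is already basis-free for any finite projective $E$, so working directly with $\mathrm{Sym}^*_R(E)$ and then applying the weight argument would give a cleaner and genuinely complete proof along your lines.
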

\begin{proof}
The induced statement on cohomology rings is standard and we omit to the proof. To obtain formality (i.e., pass from cohomology rings to $E_\infty$-$R$-algebras), observe that there is a natural $\mathbf{G}_m$-action on $E$ (giving $E$ weight $1$), which induces an action on all objects encountered above, so $\RGamma(BE^\sharp, \mathcal{O})$ is naturally a $\mathbf{G}_m$-equivariant $E_\infty$-$R$-algebra. It is therefore enough to show that for any $\mathbf{G}_m$-equivariant $E_\infty$-$R$-algebra $C$ with the property that $H^i(C)$ is a flat $R$-module with $\mathbf{G}_m$-weight $-i$, there is a unique $\mathbf{G}_m$-equivariant isomorphism $C \simeq \oplus_i H^i(C)[-i]$ of $E_\infty$-$R$-algebras inducing the identity on cohomology. This follows by weight considerations: the $\infty$-category $\mathcal{C}_1$ of $\mathbf{G}_m$-equivariant $R$-complexes $K$ with $H^i(K)$ being $R$-flat with $\mathbf{G}_m$-weight $-i$ is an ordinary category,  the symmetric monoidal cohomology functor identifies $\mathcal{C}_1$ with the ordinary category $\mathcal{C}_2$ of graded $R$-modules $M$ where $M_i$ is $R$-flat with $\mathbf{G}_m$-weight $-i$, and the inverse $\mathcal{C}_2 \to \mathcal{C}_1$ is given by sending a graded $R$-module $\oplus_i M_i \in \mathcal{C}_2$ to the complex $\oplus_i M_i[-i] \in \mathcal{C}_1$. 
\end{proof}

\begin{remark}[Understanding crystalline cohomology via ring stacks]
The ring stack perspective on $p$-adic cohomology theories emphasized by Drinfeld \cite{drinfeld-stacky, drinfeld-prismatic, DrinfeldFormalGroup} makes certain structures easy to see. We have already seen one example in our analysis of the Hodge-Tate gerbe (Proposition~\ref{RelativeHT}), which relies crucially on the square-zero extension from Construction~\ref{sqzeroHTprism}. Another basic structural result in this direction is the isomorphism 
\begin{equation}
\label{RingStackCrys} \mathbf{G}_a/ \mathbf{G}_a^\sharp \simeq   W/ p
\end{equation}
of ring stacks on $p$-nilpotent rings. This follows from the sequence of identifications of ring stacks
\[ \mathbf{G}_a/ \mathbf{G}_a^\sharp \simeq W/ (VW \oplus \mathbf{G}_a^\sharp) \stackrel{F}{\simeq} W / FVW \simeq W/ p\]
where the first identification comes from writing $\mathbf{G}_a = W/VW$, the second is induced by the Frobenius on $W$ and the observation that $F_* W = W/\mathbf{G}_a^\sharp$, and the last uses $FV=p$ (see \cite[Proposition 3.5.1]{drinfeld-prismatic}). Let us briefly summarize why \eqref{RingStackCrys} encodes some important features of crystalline cohomology. 
\begin{enumerate}
\item Given a $\mathbf{Z}_p$-flat $p$-adic formal scheme $X$, one may define its crystallization $X^{\crys}$ as the presheaf on all $p$-nilpotent rings defined by the formula
\[ X^{\crys}(S) = X\left((\mathbf{G}_a/^L \mathbf{G}_a^\sharp)(R)\right).\] 
There is a natural map $\epsilon:X \to X^{\crys}$ induced by the map $\mathbf{G}_a \to  \mathbf{G}_a/ \mathbf{G}_a^\sharp$ of ring stacks. When $X$ is a smooth formal $W(k)$-scheme for a perfect field $k$, one can show that $\epsilon$ is a quasisyntomic surjection, and the $n$-th term of its \v{C}ech nerve identifies with the PD-envelope of $X \subset X^n$. By \v{C}ech-Alexander theory, it follows that 
\begin{equation}
\label{CrysCompStack}
 \RGamma(X^{\crys}, \mathcal{O}) \simeq \RGamma_{\crys}(X/W(k)),
 \end{equation}
justifying the notation. To proceed further, we use the crystalline-de Rham comparison to rewrite the previous isomorphism as 
\begin{equation}
\label{dRCompStack}
 \RGamma(X^{\crys}, \mathcal{O}) \simeq \RGamma_{dR}(X/W(k)).
 \end{equation}
Now the ring stack $\mathbf{G}_a/\mathbf{G}_a^\sharp$ is annihilated by $p$: this is clear from \eqref{RingStackCrys} and may also be seen directly by observing that $p \in \mathbf{Z}_p = \mathbf{G}_a(\mathbf{Z}_p)$ lies in $\mathbf{G}_a^\sharp(\mathbf{Z}_p) \subset \mathbf{G}_a(\mathbf{Z}_p)$. It follows that the stack $X^{\crys}$ only depends on the mod $p$ reduction $X_{p=0}$ of $X$. Via \eqref{dRCompStack}, this provides a stacky explanation for the fact that the de Rham cohomology complex $\RGamma_{dR}(X/W(k))$ only depends on $X_{p=0}$ and not its lift $X$. 

\item Fix a perfect field $k$ of characteristic $p$ and a smooth $k$-scheme $Y$. Define the presheaf $Y^{\crys}$ on $k$-algebras via
\[ Y^\crys(R) = Y((\mathbf{G}_a/^L \mathbf{G}_a^\sharp)(R)).\] 
The isomorphism \eqref{RingStackCrys} yields an identification
\[ Y^{\crys} \simeq \WCart_{Y/W(k)}^{\mathrm{HT}}\]
of presheaves on $k$-algebras. Thus, the quasi-coherent derived $\infty$-category $\mathcal{D}_{qc}(Y^{\crys})$ of $Y^{\crys}$ identifies with the $\infty$-category $\mathcal{D}( (Y/W(k))_\Prism, \overline{\mathcal{O}}_\Prism)$ of Hodge-Tate crystals on $(Y/W(k))_\Prism$ from Theorem~\ref{CrystalsCartWitt}. In particular, as in Corollary~\ref{cor:HiggsVBHT}, the category $\mathrm{Vect}(Y^{\crys})$ of vector bundles on $Y^{\crys}$ may be described via Higgs bundles. 

On the other hand, the \v{C}ech-Alexander argument given to justify \eqref{CrysCompStack} in (1) above shows that the category $\mathrm{Vect}(Y^{\crys})$ can identified with the category of crystals on $(Y/k)_\crys$, i.e., with the category $\mathrm{Vect}^{\nabla}(Y/k)$ of vector bundles on $Y/k$ equipped with a flat connection relative to $k$ whose $p$-curvature is nilpotent. 

Combining the previous two paragraphs gives the promised relationship between flat vector bundles and Higgs bundles. This relationship has been studied in depth in upcoming work of Ogus.

\end{enumerate}
\end{remark}

\subsection{Affineness of $\WCart_{X/A}$}

Our next goal is to explain why the prismatization $\WCart_{X/A}$ is an ``affine stack'' (see \cite{ToenAffine} for the non-derived analog) when $X$ is affine; to even formulate the statement, we need the notion of a ``non-connective animated ring'' that was recently discovered by Mathew.

\begin{notation}[Derived rings, following Mathew]
\label{not:DerivedRings}
For any (animated) ring $R$, write $\mathrm{DAlg}_R$ for Mathew's $\infty$-category of derived $R$-algebras (see the exposition in \cite{RaskitdR}). Recall that this notion extends the theory of animated rings to the non-connective setting. In particular, any derived $R$-algebra has an underlying $E_\infty$-$R$-algebra, the connective derived $R$-algebras exactly the animated ones, and the $\infty$-category $\mathrm{DAlg}_R$ has all limits and colimits whose formation commutes with the forgetful functor to $E_\infty$-$R$-algebras. In particular, given a $p$-complete animated $\overline{A}$-algebra $S$, the $E_\infty$-$A$-algebra $\Prism_{S/A}$ is naturally a derived $A$-algebra.
\end{notation}

\begin{construction}[Relating classical and animated prismatic sites]
\label{AnimPrismaticSite}
Let $R$ be a $p$-complete animated $\overline{A}$-algebra. Consider the $\infty$-category $(R/A)_{\Prism}^{\mathrm{an}}$ of pairs $(B,v)$ where $B$ is a $(p,I)$-complete animated $\delta$-$A$-algebra and  $v:R \to \overline{B}$ is a map of animated $\overline{A}$-algebras; this $\infty$-category contains the relative prismatic site $(\pi_0(R)/A)_\Prism$ of $\pi_0(R)$ as the full subcategory spanned by $(B,v)$ with $B$ being discrete and $I$-torsionfree. Consequently, there is a comparison map
\[ c_R:\varprojlim_{(B,v) \in (R/A)_{\Prism}^{\mathrm{an}}} B \to \varprojlim_{(B,v) \in (\pi_0(R)/A)_\Prism} B\]
in $\mathrm{DAlg}_A$.
\end{construction}

\begin{proposition}
\label{PrismaticCohomologyAnimatedLimit}
The comparison map $c_R$ from Construction~\ref{AnimPrismaticSite} is an isomorphism if $R$ is $p$-completely smooth over $\overline{A}$. 
\end{proposition}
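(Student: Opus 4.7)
The plan is to identify both the source and target of $c_R$ with the derived prismatic cohomology $\Prism_{R/A}$, which for $p$-completely smooth $R$ agrees with its classical and site-theoretic incarnations. The target of $c_R$ is, by unwinding the definitions, exactly $\RGamma^{\mathrm{site}}_\Prism(R/A)$, and Theorem~\APCref{theorem:site-theoretic-equivalence} identifies this with $\Prism_{R/A}$ under our hypotheses on $R$. Hence the real content of the proposition is establishing the corresponding identification for the source, i.e., showing that the ``animated'' prismatic site also computes $\Prism_{R/A}$.

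For this, I would proceed by $p$-quasi-syntomic descent. Write $T(R) := \lim_{(B,v) \in (R/A)^{\an}_\Prism} B$ for the source of $c_R$, regarded as a functor from $p$-complete animated $\overline{A}$-algebras to $\mathrm{DAlg}_A$. The first step is to show that $T$ is a sheaf for the $p$-quasi-syntomic topology. The key inputs are that animated prism structures base change along arbitrary maps of animated $\delta$-rings (Remark~\ref{AnimPrismBC}), that $p$-quasi-syntomic covers lift (Proposition~\ref{prop:LiftQSyn}), and that $\WCart_f$ is surjective in the flat topology for $p$-quasi-syntomic $f$ (Proposition~\ref{PrismatizeCover2}); together these allow one to run the Čech-descent argument used to establish classical descent (Lemma~\APCref{lemma:qs-descent-relative}) in the animated setting.

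Next, I would reduce to the case of a quasi-regular semiperfectoid (qrsp) $\overline{A}$-algebra $S$. For such $S$, I would argue that the animated site $(S/A)^{\an}_\Prism$ admits an initial object, namely Mao's animated prismatic envelope $\Prism_{S/A}$ (whose existence and universal property were used in the proof of Proposition~\ref{prop:LiftQSyn}). By the Hodge-Tate comparison, this envelope is discrete in the qrsp case, and it coincides with the classical prismatic envelope, which is itself an initial object of $(\pi_0(S)/A)_\Prism$ by \cite[Example~7.9]{prisms}. Hence on such $S$, both limits collapse to $\Prism_{S/A}$ and $c_S$ is trivially an isomorphism.

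To finish, choose a $p$-quasi-syntomic cover $R \to S$ with $S$ qrsp (exactly as in the proof of Theorem~\ref{RelativePrismaticCohPrismatizeNonDer}; e.g., adjoining $p$-power roots of a smooth set of generators and quotienting), and take the Čech nerve $S^\bullet$ in $p$-complete animated $\overline{A}$-algebras, with each $S^n$ again qrsp. By the sheaf property of $T$ from the second paragraph, the sheaf property of $\RGamma^{\mathrm{site}}_\Prism(-/A)$, and the termwise identification $c_{S^n} = \mathrm{id}_{\Prism_{S^n/A}}$ from the third paragraph, the map $c_R$ is an equivalence, with both sides computing $\lim_{[n] \in \Delta} \Prism_{S^n/A} \simeq \Prism_{R/A}$. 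I expect the main obstacle to be the sheaf property for $T$: while the classical analog is standard, carrying it through in the animated setting requires careful manipulation of animated prismatic envelopes and of the derived Čech nerve of an animated quasi-syntomic cover, so as to see that the inclusion $(S^\bullet/A)^{\an}_\Prism \to (R/A)^{\an}_\Prism$ actually witnesses descent at the level of animated sites rather than merely at the level of $\WCart_{-/A}$.
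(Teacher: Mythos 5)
Your route is genuinely different from the paper's, and it has a real gap at its central step. The paper proves this proposition by a short \v{C}ech--Alexander argument: choose a surjection $B_0 \to R$ from a polynomial $A$-algebra, pass to the free $\delta$-$A$-algebra $B'$ on $B_0$ and the $(p,I)$-completed prismatic envelope $B$ of $JB' \subset B'$; then $(B,IB)$ is weakly initial in the classical site $(R/A)_\Prism$, and the point is simply that it is \emph{also} weakly initial in $(R/A)^{\mathrm{an}}_\Prism$ and that its \v{C}ech nerve in the two sites coincides, because free $\delta$-algebras on polynomial rings agree with their animated versions and the relevant prismatic envelope agrees with its animated version (\cite[Corollary 3.14]{prisms}). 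Both limits are then computed by the same cosimplicial totalization, and no descent is needed. Your proposal instead tries to identify the source of $c_R$ with $\Prism_{R/A}$ by quasi-syntomic descent to the qrsp case.

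The gap is exactly the step you flag but do not carry out: the $p$-quasi-syntomic sheaf property of $T(-) = \varprojlim_{(B,v)\in (-/A)^{\mathrm{an}}_\Prism} B$. None of the ingredients you cite establishes it. Proposition~\ref{PrismatizeCover2} is a statement about the stack $\WCart_{-/A}$, not about the limit over the animated site, and there is no a priori comparison between the two at this point in the paper (that comparison is Theorem~\ref{PrismAffineStack}, whose proof in turn relies on the present proposition via Corollary~\ref{CorPrismaticCohomologyAnimatedLimit}, so invoking it here would be circular). What one would actually have to do is: (i) show, using Proposition~\ref{prop:LiftQSyn}, that for a quasi-syntomic cover $R \to S$ the sieve of objects $(B,v)$ with $v$ factoring through $S$ is covering for the flat topology on $(R/A)^{\mathrm{an}}_\Prism$; (ii) prove flat descent for the structure presheaf on the animated site; and (iii) carry out the cofinality bookkeeping identifying the limit over that sieve with $\varprojlim_{[n]} T(S^n)$. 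Step (iii) is the real content of the classical Lemma~\APCref{lemma:qs-descent-relative} and is not automatic in the animated setting. Likewise, your qrsp step needs the animated prismatic envelope $\Prism_{S/A}$ to be genuinely \emph{initial} in $(S/A)^{\mathrm{an}}_\Prism$ (not just in the classical site), which requires invoking the universal property from \cite{MaoDerivedCrys} rather than \cite[Example 7.9]{prisms}. Your strategy could likely be completed along these lines, but as written the argument stops precisely where the difficulty begins; the paper's choice of a single explicit weakly initial object common to both sites is what lets it avoid all of this.
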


\begin{proof}
We first recall the \v{C}ech-Alexander approach to prismatic cohomology in \cite[Construction 4.17]{prisms}. Take a surjection $B_0 \to B_0/J = R$ with $B_0$ a polynomial ring over $A$, let $B'$ be the free $\delta$-$A$-algebra on $B_0$, and let $B$ to be the $(p,I)$-completed prismatic envelope of $JB' \subset B'$. Then $(B,IB)$ is weakly initial in $(R/A)_\Prism$ and  its \v{C}ech nerve $(C^*,IC^*)$ in $(R/A)_\Prism$ satisfies that the derived $A$-algebra $C^*$ computes the limit $\varprojlim_{(B,v) \in (R/A)_\Prism} B$. 

We wish to apply the same analysis to conclude that $C^*$ also computes $\varprojlim_{(B,v) \in (R/A)_{\Prism}^{\mathrm{an}}} B$. For this, we need to verify that $(B,IB)$ is weakly initial in $(R/A)_{\Prism}^{\anim}$, and that its \v{C}ech nerve $(C^*,IC^*)$ in $(R/A)_\Prism$ coincides with the animated \v{C}ech nerve in $ (R/A)_{\Prism}^{\mathrm{an}}$. The first holds true as free $\delta$-algebras on polynomial rings are also free in the animated sense, and because the prismatic envelope appearing in the formation of $B$ from $B'$ coincides with its animated version by the construction in \cite[Corollary 3.14]{prisms}. The second property follows again from the same observation concerning the formation of prismatic envelopes. 
\end{proof}

\begin{corollary}
\label{CorPrismaticCohomologyAnimatedLimit}
Given a $p$-complete animated $\overline{A}$-algebra $R$ and some $(C,w) \in (R/A)_{\Prism}^{\mathrm{an}}$ as in Construction~\ref{AnimPrismaticSite}, there is a natural comparison map $\Prism_{R/A} \to C$ of derived $A$-algebras, uniquely characterized by the requirement that it agrees for $p$-completely smooth $\overline{A}$-algebra $R$ with the map 
\[ \Prism_{R/A} \simeq  \varprojlim_{(B,v) \in (R/A)_\Prism} B \stackrel{c_R}{\simeq} \varprojlim_{(B,v) \in (R/A)_{\Prism}^{\mathrm{an}}} B  \xrightarrow{\text{project}} C,\] 
where the first isomorphism is the agreement of derived and site-theoretic prismatic cohomology for such $R$.
\end{corollary}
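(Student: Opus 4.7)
The plan is to leverage the universal property of derived prismatic cohomology $\Prism_{-/A}$ as a $(p,I)$-completed left Kan extension from $p$-completions of polynomial $\overline{A}$-algebras, combined with Proposition~\ref{PrismaticCohomologyAnimatedLimit}.

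First, I would define a functor $G' \colon R \mapsto \varprojlim_{(B,v) \in (R/A)_\Prism^{\anim}} B$ on $p$-complete animated $\overline{A}$-algebras with values in $\mathrm{DAlg}_A$; the limit exists and is computed after forgetting to $E_\infty$-$A$-algebras by Notation~\ref{not:DerivedRings}, and it is automatically $(p,I)$-complete because every $B$ in the indexing category is. The functor $G'$ is covariant in $R$: a map $R \to R'$ yields a restriction functor $(R'/A)_\Prism^{\anim} \to (R/A)_\Prism^{\anim}$ sending $(B',v')$ to the pair $(B', v' \circ (R \to R'))$, and this induces the required map $G'(R) \to G'(R')$ on limits. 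Moreover, for each object $(C,w) \in (R/A)_\Prism^{\anim}$, projection out of the limit yields a canonical map $G'(R) \to C$ in $\mathrm{DAlg}_A$.

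Second, recall that $\Prism_{-/A}$ is, by definition, the $(p,I)$-adic completion of the left Kan extension of classical prismatic cohomology from the category $\mathcal{P}$ of $p$-completed polynomial $\overline{A}$-algebras to all $p$-complete animated $\overline{A}$-algebras. On this generating subcategory $\mathcal{P}$, Proposition~\ref{PrismaticCohomologyAnimatedLimit} (together with the agreement of derived and site-theoretic prismatic cohomology in the smooth case) supplies a natural isomorphism $c_R \colon G'(R) \xrightarrow{\sim} \Prism_{R/A}$. Because $G'$ takes values in $(p,I)$-complete derived $A$-algebras, the universal property of $(p,I)$-completed left Kan extensions allows the inverse natural transformation $c_R^{-1}$ to extend uniquely from $\mathcal{P}$ to a natural transformation $\eta \colon \Prism_{-/A} \to G'$ of functors on all $p$-complete animated $\overline{A}$-algebras.

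Finally, for any $(C,w) \in (R/A)_\Prism^{\anim}$, I would define the comparison map $\Prism_{R/A} \to C$ as the composite of $\eta_R$ with the projection $G'(R) \to C$. Agreement with the stated formula for $p$-completely smooth $R$ is immediate from the construction, and the uniqueness characterization follows from the same universal property of the left Kan extension: any natural transformation out of $\Prism_{-/A}$ is determined by its restriction to $\mathcal{P}$. The main technical delicacy is verifying the functoriality of $G'$ carefully at the level of derived $A$-algebras rather than merely $E_\infty$-$A$-algebras, which amounts to keeping track of the $\infty$-categorical structure of the indexing categories $(R/A)_\Prism^{\anim}$ under pullback along $R \to R'$; this should be routine given that limits in $\mathrm{DAlg}_A$ are computed underlying.
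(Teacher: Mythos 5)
Your argument is correct and matches the paper's intent: the corollary is stated without an explicit proof precisely because it is meant as the formal consequence of Proposition~\ref{PrismaticCohomologyAnimatedLimit} via the $(p,I)$-completed left Kan extension defining $\Prism_{-/A}$ from $p$-completed polynomial $\overline{A}$-algebras, which is exactly the route you take (including the observation that the target $\varprojlim_{(B,v)} B$ is $(p,I)$-complete, so the completed Kan extension property applies). The one point worth spelling out, rather than calling "immediate," is the agreement with the displayed composite for \emph{all} $p$-completely smooth $R$ rather than just polynomial ones: this follows from one more application of the same rigidity, since the composite of your $\eta_R$ with $c_R$ and the standard derived-to-site-theoretic comparison map are both natural transformations out of a left Kan extension that coincide on polynomial algebras, hence coincide on all smooth $R$, where the latter is an isomorphism.
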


\begin{remark}
Remark~\ref{PrismatizeClassical} below extends Proposition~\ref{PrismaticCohomologyAnimatedLimit} extends to a larger class of $\overline{A}$-algebras $R$: we only need to assume that $R$ satisfies condition $(\ast)$ from Remark~\APCref{remark:hypothesis-for-discreteness}. On the other hand, using  Theorem~\ref{PrismaticCohPrismatization} (1) as well as the failure of derived prismatic cohomology to coincide with its site-theoretic version in general shows that the proposition fails, e.g., when $(A,I)= (\mathbf{Z}_p,(p))$ and $R=\mathbf{F}_p[x,y]/(x,y)^2$.
\end{remark}

\begin{definition}[The spectrum  of $\Prism_{R/A}$]
For any $p$-complete animated $\overline{A}$-algebra $R$, write 
\[ \mathrm{Spf}(\Prism_{R/A}) = \mathrm{Map}_{\mathrm{DAlg}_{A}}(\Prism_{R/A},-) \]
for the $\mathcal{S}$-valued functor corepresented by $\Prism_{R/A}$ on $(p,I)$-nilpotent animated $A$-algebras. 
\end{definition}

\begin{construction}[Relating $\mathrm{Spf}(\Prism_{R/A})$ to the prismatization]
\label{RelatePrismatizeAffine}
Fix a $p$-complete animated $\overline{A}$-algebra with formal spectrum $X=\mathrm{Spf}(R)$. Given a point of $\WCart_{X/A}$ corresponding to a $(p,I)$-nilpotent animated $A$-algebra $B$ and an $\overline{A}$-algebra map $v:R \to \overline{W(B)}$, we obtain an induced map 
\[\Prism(v):\Prism_{R/A} \to \Prism_{\overline{W(B)}/A}\]
 of derived $A$-algebras. On the other hand,  regarding $W(B)$ as an animated $(p,I)$-complete $\delta$-$A$-algebra equipped with the identity map $\overline{W(B)} \xrightarrow{id} \overline{W(B)}$, Corollary~\ref{CorPrismaticCohomologyAnimatedLimit} yields a natural map 
 \[\mathrm{can}_B:\Prism_{\overline{W(B)}/A} \to W(B)\] 
 of derived $A$-algebras. Postcomposing with the restriction map gives a natural (in $v$) composition
\[ \alpha(v):\Prism_{R/A} \xrightarrow{\Prism(v)} \Prism_{\overline{W(B)}/A} \xrightarrow{\mathrm{can}_B} W(B) \xrightarrow{\text{restrict}} B\]
of maps in $\mathrm{DAlg}_A$. The construction carrying $v$ to $\alpha(v)$ yields a natural transformation
\[ \alpha_R:\WCart_{\mathrm{Spf}(R)/A} \to \mathrm{Spf}(\Prism_{R/A})\]
of presheaves on  $(p,I)$-nilpotent animated $A$-algebras.
\end{construction}

\begin{theorem}[Affineness of the prismatization]
\label{PrismAffineStack}
The map $\alpha_R$ from Construction~\ref{RelatePrismatizeAffine} is an isomorphism for all $p$-completed animated $\overline{A}$-algebras $R$.
\end{theorem}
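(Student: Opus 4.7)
The plan is to reduce to the case when $R$ is $p$-completely smooth over $\overline{A}$ and then combine a universal property of $\Prism_{R/A}$ with the Witt-vector adjunction to identify both sides. For fixed $B$, both $R \mapsto \WCart_{\mathrm{Spf}(R)/A}(B)$ and $R \mapsto \mathrm{Spf}(\Prism_{R/A})(B)$ are contravariant functors on $p$-complete animated $\overline{A}$-algebras that carry sifted colimits to limits of spaces: the first because it is a mapping space out of $R$, and the second because $R \mapsto \Prism_{R/A}$ preserves sifted colimits as a functor into $\mathrm{DAlg}_A$ (by construction of derived prismatic cohomology via left Kan extension from $p$-completed polynomial $\overline{A}$-algebras, together with the compatibility of colimits in $\mathrm{DAlg}_A$ and $E_\infty$-$A$-algebras noted in Notation~\ref{not:DerivedRings}). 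Since $p$-completed polynomial algebras (which are in particular $p$-completely smooth) generate all $p$-complete animated $\overline{A}$-algebras under sifted colimits, and $\alpha_R$ is natural in $R$, it suffices to prove the theorem when $R$ is $p$-completely smooth.

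For such $R$, I would combine two ingredients. First, Mathew's derived-algebra version of the Witt-vector adjunction: $W(B)$ is naturally a $(p,I)$-complete derived $\delta$-$A$-algebra (using $W(B) = \lim_n W_n(B)$ with each $W_n(B)$ being $(p,I)$-nilpotent by Lemma~\ref{WittSquareZero}), which gives a natural equivalence
\[
\mathrm{Map}_{\mathrm{DAlg}_A}(\Prism_{R/A}, B) \;\simeq\; \mathrm{Map}_{\delta\text{-}\mathrm{DAlg}_A,\,(p,I)\text{-comp}}(\Prism_{R/A}, W(B)).
\]
Second, I would upgrade Proposition~\ref{PrismaticCohomologyAnimatedLimit} to the assertion that $\Prism_{R/A}$ is initial in the animated prismatic site in a homotopical sense: for any $(p,I)$-complete derived $\delta$-$A$-algebra $C$, the natural map
\[
\mathrm{Map}_{\delta\text{-}\mathrm{DAlg}_A,\,(p,I)\text{-comp}}(\Prism_{R/A}, C) \;\longrightarrow\; \mathrm{Map}_{\overline{A}}(R, \overline{C}),
\]
sending $f$ to the composition $R \to \overline{\Prism_{R/A}} \xrightarrow{\overline{f}} \overline{C}$ (via the Hodge-Tate structure map), is an equivalence of spaces. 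Chaining these identifications yields $\mathrm{Spf}(\Prism_{R/A})(B) \simeq \WCart_{\mathrm{Spf}(R)/A}(B)$. To see the composite is inverse to $\alpha_R$, I would trace through: given $v\colon R \to \overline{W(B)}$, the corresponding $\delta$-map $\Prism_{R/A} \to W(B)$ must, by naturality of $\Prism_{-/A}$ and the universal property, factor as $\Prism_{R/A} \xrightarrow{\Prism(v)} \Prism_{\overline{W(B)}/A} \xrightarrow{\mathrm{can}_B} W(B)$, so its $\mathrm{DAlg}_A$-adjoint obtained by restriction to $B$ agrees with $\alpha_R(v)$.

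The main obstacle will be establishing the universal property of $\Prism_{R/A}$ in the derived $\delta$-algebra setting, i.e., as an equivalence of mapping \emph{spaces} rather than merely $\pi_0$'s. In the classical discrete setting this is essentially formal from the \v{C}ech-Alexander presentation used in Proposition~\ref{PrismaticCohomologyAnimatedLimit}, where the prismatic envelope appearing there realizes $\Prism_{R/A}$ as the universal $(p,I)$-complete $\delta$-$A$-algebra equipped with the tautological map from $R$ modulo $I$. Promoting this to Mathew's $\mathrm{DAlg}_A$, and verifying that $\delta$-structures rigidify mapping spaces compatibly with $(p,I)$-completion, is the technical heart of the argument; it likely needs supplementary input on how $\delta$-structures interact with sifted colimits and limits in $\mathrm{DAlg}_A$, so that the universal property first established for free $R$ extends to all $p$-completely smooth $R$.
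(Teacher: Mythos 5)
Your overall skeleton (reduce by sifted colimits to a polynomial ring, then treat that case directly) matches the paper's first step, but the core of your argument for smooth $R$ has a genuine gap, and it is precisely the point the paper's proof is designed to avoid.

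The gap is the claimed universal property
\[
\mathrm{Map}_{\delta\text{-}\mathrm{DAlg}_A,\,(p,I)\text{-comp}}(\Prism_{R/A}, C) \;\simeq\; \mathrm{Map}_{\overline{A}}(R, \overline{C})
\]
for $p$-completely smooth $R$. You say this should be ``essentially formal from the \v{C}ech--Alexander presentation,'' but that presentation realizes $\Prism_{R/A}$ as a \emph{totalization} of prismatic envelopes, i.e.\ as a limit in $\mathrm{DAlg}_A$; mapping \emph{out} of a limit is not computed termwise, so the universal properties of the individual envelopes give you nothing about maps out of $\Prism_{R/A}$. In fact this universal property is essentially equivalent (via your first adjunction) to the theorem you are trying to prove, so invoking it begs the question. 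A second, related problem: for smooth $R$ of positive relative dimension, $\Prism_{R/A}$ is coconnective and not connective (Hodge--Tate comparison), so it is not an object of the animated prismatic site of Construction~\ref{AnimPrismaticSite}, and the $\delta$-ring formalism of Appendix~\ref{ss:CAlgDeltaAnim} --- including the Witt-vector adjunction $\mathrm{Map}_\delta(C,W(B))\simeq \mathrm{Map}(C,B)$ of Proposition~\ref{proposition:other-delta-adjunction} --- is only developed for \emph{animated} (connective) rings. Your first displayed equivalence therefore also rests on an undeveloped theory of $\delta$-structures on non-connective derived rings.

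The paper's proof circumvents both issues by never invoking a universal property of $\Prism_{R/A}$ for smooth $R$. It reduces to $R=\overline{A}[x]^{\wedge}$, passes to the $p$-quasisyntomic cover $S=\overline{A}[x^{1/p^\infty}]^{\wedge}$, and uses that for $S$ and the terms of its \v{C}ech nerve the statement is already known (Variant~\ref{RelativeRSPComp}): there $\Prism$ is connective and a genuine prismatic envelope, where the classical universal property \emph{is} available. One then descends: the left-hand side descends by Proposition~\ref{PrismatizeCover2}, and the substantive new input is that $\mathrm{Spf}(\Prism_{S/A})\to\mathrm{Spf}(\Prism_{R/A})$ is a flat cover, proved by exhibiting an explicit exhaustive filtration of $\overline{\Prism}_{S/A}/p$ with free graded pieces over $\overline{\Prism}_{R/A}/p$, using the splitting of the Hodge--Tate gerbe by a $\delta$-lift. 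If you want to salvage your approach, you would have to supply exactly this kind of descent input; as written, the two central equivalences in your argument are unsupported.
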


\begin{proof}
We first observe that the proof of Variant~\ref{RelativeRSPComp} applies directly in this setting to solve the problem when $R$ is a relative regular semiperfectoid as in Variant~\ref{RelativeRSPComp}. We now begin proving the theorem in general by reducing to this case. As both the source and target of $\alpha_R$ carry colimits in $R$ to limits,  we may reduce to checking the statement for $R=\overline{A}[x]^{\wedge}$ being the $p$-completed polynomial ring. Write $S =\overline{A}[x^{1/p^\infty}]^{\wedge}$ for the naive perfection of $R$. Note that the map $R \to S$ is a $p$-quasisyntomic cover; write $S^*$ for its \v{C}ech nerve. We then have a map of augmented simplicial objects:
\[ \xymatrix{ \WCart_{\mathrm{Spf}(S^*)/A} \ar[r]^{\alpha_{S^*}} \ar[d] & \mathrm{Spf}(\Prism_{S^*/A}) \ar[d] \\
\WCart_{\mathrm{Spf}(R)/A} \ar[r]^-{\alpha_R} & \mathrm{Spf}(\Prism_{R/A}). }\]
Now the augmented simplicial object given by each column is a \v{C}ech nerve: this follows for the left side as $\WCart_{-/A}$ commutes with limits, and for the right side as $\Prism_{-/A}$ commutes with colimits. Moreover, the augmented simplicial object on the left identifies the target with the colimit of the source after flat sheafification by Proposition~\ref{PrismatizeCover2}. If we can show the same for the augmented simplicial object on the right, we will be done as the map $\alpha_{S^*}$ is an isomorphism of simplicial objects by the first sentence of the proof. Thus, we are reduced to checking that the map
\[ \mathrm{Spf}(\Prism_{S/A}) \to \mathrm{Spf}(\Prism_{R/A}) \]
is surjective for the flat  topology. To show this statement, it is enough to show the following:

\begin{itemize}
\item[$(\ast)$] For any $(p,I)$-complete animated $A$-algebra $B$ equipped with a map $\Prism_{R/A} \to B$ of derived $A$-algebras, the base change $B' := B \widehat{\otimes}_{\Prism_{R/A}} \Prism_{S/A}$ is a connective derived $A$-algebra (and thus an animated $A$-algebra) which is a $(p,I)$-completely faithfully flat over $B$. 
\end{itemize}

Note that to prove $(\ast)$, we may ignore the derived algebra structures and simply work at the level of modules. In this case, it suffices to prove the following stronger assertion purely about the map $\Prism_{R/A} \to \Prism_{S/A}$: the $\overline{\Prism}_{R/A}/p$-module $\overline{\Prism}_{S/A}/p$ admits an increasing exhaustive $\mathbf{N}$-indexed filtration whose graded pieces are  free over $\overline{\Prism}_{R/A}/p$. For this, observe that the $\overline{A}$-algebra map $R \to S$ lifts to a $\delta$-map $\widetilde{R} := A[x]^{\wedge} \to \widetilde{S} := A[x^{1/p^\infty}]^{\wedge}$ of $(p,I)$-completely flat $\delta$-$A$-algebras (where $\delta(x)=0$). It follows that the Hodge-Tate gerbe $\WCart^{\mathrm{HT}}_{R/A} \to \mathrm{Spf}(R)$ for $\mathrm{Spf}(R)$ (Proposition~\ref{RelativeHT}) admits a trivialization (Remark~\ref{HTGerbeProperties} (1)) compatibly with the isomorphism $\WCart^{\mathrm{HT}}_{S/A} \to \mathrm{Spf}(S)$ (Example~\ref{DerivedPrismatizeEtale}), i.e., we obtain a commutative diagram
\[ \xymatrix{ \mathrm{Spf}(S) \ar[r] \ar[d]^-{s_{\widetilde{S}}} & \mathrm{Spf}(R) \ar[d]^-{s_{\widetilde{R}}}  \\
\WCart^{\mathrm{HT}}_{\mathrm{Spf}(S)/A} \ar[r] \ar[d]^{\pi^{\mathrm{HT}}_S} & \WCart^{\mathrm{HT}}_{\mathrm{Spf}(R)/A} \ar[d]^{\pi^{\mathrm{HT}}_R} \\
\mathrm{Spf}(S) \ar[r] & \mathrm{Spf}(R) }\]
where the maps labelled $\pi^{\mathrm{HT}}$ are the Hodge-Tate structure maps and are gerbes for $0$ and $T_{\mathrm{Spf}(R)/A}\{1\}$ respectively, the maps labelled $s_{\widetilde{S}}$ and $s_{\widetilde{R}}$ are are sections to $\pi_S$ and $\pi_R$ arising from the choices of the $\delta$-lifts $\widetilde{S}$ and $\widetilde{R}$ respectively, the left vertical maps are all isomorphisms, the horizontal maps are the structure maps induced by the $\overline{A}$-algebra map $R \to S$, and the commutativity of the top square in the diagram arises the existence of the $\delta$-$A$-algebra map $\widetilde{R} \to \widetilde{S}$ lifting $R \to S$. Using $s_{\widetilde{R}}$ to trivialize the gerbe $\pi^{\mathrm{HT}}_R$ gives an isomorphism
\[ \overline{\Prism}_{R/A} \simeq \bigoplus_i \Omega^i_{R/\overline{A}}\{-i\}[-i]\]
$E_\infty$-$R$-algebras (see second half of Remark~\ref{HTDerivedPrismatize}). By the commutative diagram above, this isomorphism  carries  the map $\overline{\Prism}_{R/A} \to \overline{\Prism}_{S/A} \simeq S$ to the composition
\[ \bigoplus \Omega^i_{R/\overline{A}}\{-i\}[-i] \to R \to S \]
where first map is the canonical augmentation and the second map is the structure map. As $S$ is clearly $p$-completely free over $R$, it suffices to show that the $\left(\bigoplus_i \Omega^i_{R/\overline{A}}\{-i\}[-i]\right)/p$-module $R/p$ admits an increasing exhaustive $\mathbf{N}$-indexed filtration whose graded pieces are  free; this is a standard calculation, see \cite[Proof of Lemma 8.6]{prisms}.
\end{proof}

\begin{corollary}[The prismatization of a semiperfectoid]
\label{PrismSemiPerf}
Say $R$ is a $p$-complete $\overline{A}$-algebra such that $\Omega^1_{(\pi_0(R)/p)/(\overline{A})} = 0$; for instance, this holds true if $\pi_0(R)/p$ is a semiperfect ring. Then $\Prism_{R/A}$ is an animated $A$-algebra and corepresents $\WCart_{\mathrm{Spf}(R)/A}$.
\end{corollary}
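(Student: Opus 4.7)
The plan is to split the statement into two parts: (a) an affineness input from Theorem~\ref{PrismAffineStack}, and (b) connectivity of $\Prism_{R/A}$. For (a), Theorem~\ref{PrismAffineStack} already supplies an equivalence $\alpha_R\colon \WCart_{\mathrm{Spf}(R)/A} \simeq \mathrm{Spf}(\Prism_{R/A})$ of presheaves on $(p,I)$-nilpotent animated $A$-algebras, where the right-hand side is defined via mapping out of $\Prism_{R/A}$ in $\mathrm{DAlg}_A$. Once $\Prism_{R/A}$ is known to be connective, the fully faithful inclusion of animated $A$-algebras into $\mathrm{DAlg}_A$ identifies this right-hand side with the functor corepresented by $\Prism_{R/A}$ in $\CAlg_A^{\anim}$, giving the corepresentability part of the claim. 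So the only real content is the connectivity assertion.

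For connectivity, the idea is to reduce modulo $(p,I)$ via derived Nakayama and then apply the conjugate Hodge--Tate filtration. Since $\Prism_{R/A}$ is $(p,I)$-complete, it is connective iff $\overline{\Prism}_{R/A}/p$ is. The derived Hodge--Tate comparison (obtained by left Kan extension of \cite[Theorem~4.11]{prisms} to animated $\overline{A}$-algebras) supplies an exhaustive increasing filtration on $\overline{\Prism}_{R/A}$ with $i$-th graded piece $\bigl(\wedge^i_R L_{R/\overline{A}}^\wedge\bigr)\{-i\}[-i]$; reducing modulo $p$ and commuting $\wedge^i$ with base change gives the corresponding filtration on $\overline{\Prism}_{R/A}/p$ with $i$-th graded piece $\bigl(\wedge^i_{R/p}(L_{R/\overline{A}}^\wedge \otimes_R R/p)\bigr)\{-i\}[-i]$.

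The hypothesis enters by showing that $L_{R/\overline{A}}^\wedge \otimes_R R/p$ is $1$-connective. Here one uses that the de Rham differential $pR/p^2R \to \Omega^1_{R/\overline{A}}/p$ is the zero map, giving the identification $\Omega^1_{\pi_0(R)/\overline{A}}/p \simeq \Omega^1_{(\pi_0(R)/p)/\overline{A}}$, which vanishes by assumption; this computes $\pi_0(L_{R/\overline{A}}^\wedge \otimes_R R/p) = 0$. A $1$-connective complex has $i$-connective $i$-th wedge power, and the cohomological shift by $[-i]$ then places each graded piece in cohomological degrees $\leq 0$. Since connectivity passes to extensions and filtered colimits, $\overline{\Prism}_{R/A}/p$ is connective, which by the reduction above completes the argument.

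The main subtlety I expect to address is the compatibility of the conjugate filtration for derived prismatic cohomology in the animated setting with the $(p,I)$-completion, so that derived Nakayama faithfully transports connectivity from the mod-$(p,I)$ reduction back to $\Prism_{R/A}$ itself. Once this bookkeeping is confirmed, the degree-counting above gives the corollary.
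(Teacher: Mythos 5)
Your proposal is correct and follows essentially the same route as the paper: the hypothesis forces the graded pieces $\bigl(\wedge^i L_{R/\overline{A}}\bigr)\{-i\}[-i]$ of the Hodge--Tate filtration to be connective, hence $\Prism_{R/A}$ is a connective derived $A$-algebra (i.e.\ animated), and corepresentability then follows from Theorem~\ref{PrismAffineStack}. Your version merely spells out the derived Nakayama reduction and the identification $\Omega^1_{\pi_0(R)/\overline{A}}/p \simeq \Omega^1_{(\pi_0(R)/p)/\overline{A}}$, which the paper leaves implicit.
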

\begin{proof}
The assumption on $R$ ensures that the $p$-completion $\left(\wedge^i L_{R/(\overline{A})}\right)[-i]^{\wedge}$ is connective. The Hodge-Tate comparison then shows that $\Prism_{R/A}$ is a connective derived $A$-algebra, whence it is an animated $A$-algebra. The rest follows from Theorem~\ref{PrismAffineStack}.
\end{proof}

\subsection{Comparison with derived prismatic cohomology}

\begin{construction}[Relating $\Prism_{R/A}$ with {$\RGamma( \WCart_{\mathrm{Spf}(R)/A}, \mathcal{O})$}]
Let $X$  be a  $p$-adic formal derived $\overline{A}$-scheme. Following Construction~\APCref{construction:prismatic-complex-of-scheme}, define
\[ \RGamma_\Prism(X/A) = \varprojlim_{\mathrm{Spf}(R) \to X} \Prism_{R/A}, \]
where the inverse limit is indexed by the $\infty$-category of points of $X$; by Zariski descent for $\Prism_{-/A}$, we could also take the inverse limit over the poset of affine opens in $X$ without changing the result. (Note that when $X$ is classical, this object coincides with the one from Construction~\APCref{construction:prismatic-complex-of-scheme}, so there is no conflict of notation: this is clear when $X$ is affine, and the rest follows from the Zariski descent property of both constructions.)

When $X=\mathrm{Spf}(S)$ is affine, the construction $v \mapsto \alpha(v)$ in Construction~\ref{RelatePrismatizeAffine} yields on passage to limits a natural comparison map
\[ \beta_X: \Prism_{S/A} \to  \RGamma( \WCart_{X/A}, \mathcal{O}).\]
As both sides are sheaves in the Zariski topology, this globalizes to a comparison map 
\[ \beta_X:\RGamma_\Prism(X/A) \to \RGamma( \WCart_{X/A}, \mathcal{O})\]
for all $X$.
\end{construction}

\begin{theorem}[Relating prismatic cohomology to the prismatization]
\label{PrismaticCohPrismatization}
Let $X$ be a $p$-adic formal derived $\overline{A}$-scheme. Assume that one the following conditions holds true: 
\begin{enumerate}
\item (Finite type) For every affine open $\mathrm{Spf}(R) \subset X$, the $\pi_0(R)/p$-module $\Omega^1_{(\pi_0(R)/p)/(\overline{A})}$ is finitely generated. 
\item (Quasi-lci) The derived formal scheme $X$ is classical,  bounded, and $L_{X/\overline{A}} \in \mathcal{D}(X)$ has $p$-complete Tor amplitude  $\geq -1$ (i.e., for each affine open $\mathrm{Spf}(R) \subset X$, the $\overline{A}$-algebra $R$ satisfies condition $(\ast)$ from Remark~\APCref{remark:hypothesis-for-discreteness}).
\end{enumerate}
Then the  map
\[ \beta_X:\RGamma_\Prism(X/A) \to  \RGamma( \WCart_{X/A}, \mathcal{O})\]
is an isomorphism. 
\end{theorem}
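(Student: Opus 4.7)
The plan is to reduce to the affine case by Zariski descent, invoke Theorem~\ref{PrismAffineStack} to identify $\WCart_{\mathrm{Spf}(R)/A}$ with $\mathrm{Spf}(\Prism_{R/A})$, and then use flat descent along a $p$-quasisyntomic cover by suitable semiperfectoid rings to reduce to the setting of Corollary~\ref{PrismSemiPerf}, where the statement is essentially tautological.

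First I would note that both $\RGamma_\Prism(X/A)$ and $\RGamma(\WCart_{X/A},\mathcal{O})$ are Zariski sheaves on $X$: the source by definition (via Zariski descent for derived prismatic cohomology), and the target since $\WCart_{-/A}$ sends open immersions to representable open immersions by the animated analogue of Remark~\ref{PrismatizeEtaleCov}, combined with the sheaf property of $\RGamma(-,\mathcal{O})$. So we may assume $X=\mathrm{Spf}(R)$ is affine, and by Theorem~\ref{PrismAffineStack} the problem becomes showing that the comparison $\Prism_{R/A}\to\RGamma(\mathrm{Spf}(\Prism_{R/A}),\mathcal{O})$ from Construction~\ref{RelatePrismatizeAffine} is an isomorphism.

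Next I would choose a $p$-quasisyntomic cover $R\to S^0$ whose \v Cech nerve $S^\bullet$ has the property that each $\pi_0(S^n)/p$ is semiperfect over $\overline{A}/p$; equivalently, each $S^n$ satisfies the hypothesis of Corollary~\ref{PrismSemiPerf}. Under hypothesis~(1), such a cover can be produced by adjoining $p$-power roots to a finite generating set of $\pi_0(R)/p$ over $\overline{A}/p$; under hypothesis~(2), Proposition~\ref{prop:LiftQSyn} together with the quasi-lci hypothesis on $R$ enables the analogous construction while controlling the derived tensor products in the \v Cech nerve. Now descent finishes the proof. Proposition~\ref{PrismatizeCover2} shows that $\WCart_{\mathrm{Spf}(S^0)/A}\to\WCart_{\mathrm{Spf}(R)/A}$ is a flat cover, and the limit-preservation of $\WCart_{-/A}$ (Construction~\ref{RelPrismDerived}) identifies its \v Cech nerve with $\WCart_{\mathrm{Spf}(S^\bullet)/A}$. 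Since $\RGamma(-,\mathcal{O})$ is a sheaf for the flat topology on $(p,I)$-nilpotent animated $A$-algebras (Lemma~\ref{SheafFlatWCart}), one obtains
\[
 \RGamma(\WCart_{\mathrm{Spf}(R)/A},\mathcal{O})\;\simeq\;\varprojlim_\bullet \RGamma(\WCart_{\mathrm{Spf}(S^\bullet)/A},\mathcal{O})\;\simeq\;\varprojlim_\bullet \Prism_{S^\bullet/A},
\]
the second equivalence coming from Theorem~\ref{PrismAffineStack} together with Corollary~\ref{PrismSemiPerf} applied termwise. Finally, $p$-quasisyntomic descent for derived prismatic cohomology (the animated analogue of Lemma~\APCref{lemma:qs-descent-relative}) identifies the right-hand side with $\Prism_{R/A}$, and a routine diagram chase matches the resulting isomorphism with $\beta_R$.

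The main obstacle is producing the \v Cech nerve $R\to S^\bullet$ with the semiperfectoid hypothesis preserved at every level, and in particular ensuring that each $S^n$ is a well-defined $p$-complete animated $\overline{A}$-algebra for which the derived and (when applicable) classical prismatic cohomologies agree. Hypothesis~(1) is used to guarantee that finitely many root adjunctions suffice, whereas hypothesis~(2), via Remark~\APCref{remark:hypothesis-for-discreteness}, is precisely what prevents the disagreement between classical and derived prismatization illustrated in Warning~\ref{Warn:DerNotClass} from obstructing the identification at any level of the \v Cech nerve.
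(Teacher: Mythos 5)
Your overall architecture matches the paper's: reduce to the affine case, use Theorem~\ref{PrismAffineStack} plus Corollary~\ref{PrismSemiPerf} on the terms of a semiperfectoid \v{C}ech nerve, and use flat descent on the stack side via Proposition~\ref{PrismatizeCover2}. But there is a genuine gap at the final step, where you invoke ``$p$-quasisyntomic descent for derived prismatic cohomology (the animated analogue of Lemma~\APCref{lemma:qs-descent-relative})'' to identify $\varprojlim_\bullet \Prism_{S^\bullet/A}$ with $\Prism_{R/A}$. In case (1) no such descent statement is available: $\Prism_{-/A}$ is defined by left Kan extension from polynomial algebras and commutes with \emph{colimits}, so there is no a priori reason it should turn a quasisyntomic \v{C}ech nerve into a limit diagram for rings like $\mathbf{F}_p[x,y]/(x,y)^2$ --- indeed the divergence between derived and site-theoretic prismatic cohomology for such rings is exactly the phenomenon behind Warning~\ref{Warn:DerNotClass}, and the site-theoretic theory is the one that satisfies descent tautologically. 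Asserting descent for the derived theory here is essentially assuming the thing to be proved.

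The paper closes this gap by exploiting the specific shape of the cover: in case (1) the cover $R \to R_\infty$ is the base change of $S := \overline{A}[x_1,\dots,x_n]^{\wedge} \to S_\infty := \overline{A}[x_1^{1/p^\infty},\dots,x_n^{1/p^\infty}]^{\wedge}$ along a map $S \to R$ hitting generators of $\Omega^1_{(\pi_0(R)/p)/\overline{A}}$. Since $\Prism_{-/A}$ commutes with colimits, $\Prism_{R/A} \to \Prism_{R_\infty^\bullet/A}$ is obtained from the \v{C}ech nerve of $\Prism_{S/A} \to \Prism_{S_\infty/A}$ by applying $-\widehat{\otimes}_{\Prism_{S/A}}\Prism_{R/A}$, and the latter map is \emph{descendable} modulo $(p,I)$ (this is established in the proof of Theorem~\ref{PrismAffineStack} via the explicit filtration of $\overline{\Prism}_{S_\infty/A}/p$ over $\overline{\Prism}_{S/A}/p$). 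Descendability is stable under base change, so the totalization survives. Only in case (2), where the quasi-lci hypothesis forces derived and site-theoretic prismatic cohomology to agree, can one legitimately quote site-theoretic quasisyntomic descent as you do. Your proof of case (2) is therefore essentially fine (modulo the unnecessary detour through Proposition~\ref{prop:LiftQSyn}), but case (1) needs the descendability argument, not a general descent theorem.
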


Note that the first condition is satisfied as soon as the mod $p$ reduction of the classical truncation of $X$ has finite type over $A/(I,p)$.

\begin{proof}
We may assume $X=\mathrm{Spf}(R)$ is affine. Thus, we want to show that the map 
\[ \beta_R: \Prism_{R/A} \to  \RGamma( \WCart_{\mathrm{Spf}(R)/A}, \mathcal{O})\]
is an isomorphism. We do so via descent in both cases (1) and (2). 

Assume we are in case (1).  Fix $f_1,...,f_n \in \pi_0(R)/p$ whose elements generate the module of K\"{a}hler differentials $\Omega^1_{(\pi_0(R)/p)/(\overline{A})}$. We can then choose a map $S := \overline{A}[x_1,...,x_n]^{\wedge} \to R$ carrying $x_i$ to $f_i$ in $\pi_0(R)/p$. Write $S_\infty = \overline{A}[x_1^{1/p^\infty},...,x_n^{1/p^\infty}]^{\wedge}$ for the mock perfection of $S$,  write $R_\infty = S_\infty \widehat{\otimes}_S R$ for its base change to $R$, and let $S \to S_\infty^*$ and $R \to R_\infty^*$ be the \v{C}ech nerves of $S \to S_\infty$ and  $R \to R_\infty$ respectively. We then have the commutative diagram
\[ \xymatrix{ \Prism_{R/A} \ar[r] \ar[d] &  \RGamma( \WCart_{\mathrm{Spf}(R)/A)}, \mathcal{O}) \ar[d] \\
\Prism_{R_\infty/A}^* \ar[r] &  \RGamma( \WCart_{\mathrm{Spf}(R_\infty^*)/A}, \mathcal{O}) }\]
of augmented cosimplicial objects. Note that the map $R \to R_\infty$ is a $p$-quasisyntomic cover. Since prismatization commutes with limits and carries $p$-quasisyntomic covers to flat covers (Proposition~\ref{PrismatizeCover2}), the map on the right is a limit diagram, i.e., identifies the source with the limit of the target. As each $R_\infty^*$ is semiperfect modulo $p$ on $\pi_0$, Corollary~\ref{PrismSemiPerf} shows that the bottom arrow is an isomorphism of cosimplicial objects. It is therefore enough to show that the map on the left gives a limit diagram. Now, by base change for prismatic cohomology, the map on the left is obtained from
\[ \Prism_{S/A} \to \Prism_{S_\infty^*/A}\]
by applying $- \widehat{\otimes}_{\Prism_{S/A}} \Prism_{R/A}$. As $\Prism_{-/A}$ commutes with colimits, the map $\Prism_{S/A} \to \Prism_{S^*/A}$ identifies with the \v{C}ech nerve of $\Prism_{S/A} \to \Prism_{S_\infty/A}$. But this last map is descendable modulo $(p,I)$, as explained in the proof of Theorem~\ref{PrismAffineStack}. Consequently, the $(p,I)$-completed base change of its \v{C}ech nerve to any $\Prism_{S/A}$-algebra gives a limit diagram, so we win.

Assume we are in case (2). Let $R \to R_\infty$ be the $p$-quasisyntomic cover of $R$ obtained by formally extracting $p$-power roots of all elements of $R$; let $R \to R_\infty^*$ be its \v{C}ech nerve. The map $R \to R_\infty$ is a $p$-quasisyntomic cover, and each $R_\infty^*$ is semiperfect modulo $p$. Running the same argument as the one above, it remains to verify that $\Prism_{R/A} \simeq \varprojlim \Prism_{R_\infty^*/A}$. But this follows as functor $\Prism_{-/A}$ is a sheaf for the relevant map (Theorem~\APCref{theorem:site-theoretic-equivalence}).
\end{proof}

Next, we discuss a criterion for the $\mathrm{WCart}_{X/A}$ to be a classical stack; let us first introduce the necessary language for the latter notation.

\begin{notation}[Classical sheaves on derived formal affine $\overline{A}$-schemes]
\label{ClassicalRelative}
Write $\mathrm{DAff}_{\mathrm{Spf}(A)}$ for the opposite of the $\infty$-category of $(p,I)$-nilpotent animated $A$-algebras; let $\mathrm{Aff}_{\mathrm{Spf}(A)} \subset \mathrm{DAff}_{\mathrm{Spf}(A)}$ be the full subcategory spanned by the discrete rings. We give these two $\infty$-categories the flat topology\footnote{We use the flat topology for maximal flexibility. The $p$-quasisyntomic topology (or even the pro-syntomic topology) would suffice for our purposes.}. Let us call a sheaf $\mathcal{F}$ on $\mathrm{DAff}_{\mathrm{Spf}(A)}$ {\em classical} if it can be written as a colimit of objects lying in the essential image of $\mathrm{Aff}_{\mathrm{Spf}(A)}$ under the Yoneda embedding; this is equivalent to requiring that $\mathcal{F}$ lies in the essential image of the fully faithful\footnote{To see full faithfulness, one uses the following: any faithfully flat map $U \to V$ in $\mathrm{DAff}_{\mathrm{Spf}(A)}$ with $V \in \mathrm{Aff}_{\mathrm{Spf}(A)}$ comes from a faithfully flat map  in $\mathrm{Aff}_{\mathrm{Spf}(A)}$ (i.e., $U  \in \mathrm{Aff}_{\mathrm{Spf}(A)}$) by definition of flatness.} colimit preserving functor  $\mathrm{Shv}(\mathrm{Aff}_{\mathrm{Spf}(A)}) \to  \mathrm{Shv}(\mathrm{DAff}_{\mathrm{Spf}(A)})$ extending the Yoneda embedding.
\end{notation}

\begin{proposition}[Classicality of the prismatization]
\label{QSynWCartClassical}
 Let $X$ be a bounded $p$-adic formal $\overline{A}$-scheme, regarded as derived $\overline{A}$-scheme. Assume that $L_{X/\overline{A}} \in \mathcal{D}(X)$ has $p$-complete Tor amplitude $\geq -1$  (i.e., for each affine open $\mathrm{Spf}(R) \subset X$, the $\overline{A}$-algebra $R$ satisfies condition $(\ast)$ from Remark~\APCref{remark:hypothesis-for-discreteness}). Then $\WCart_{X/A}$ is classical. 
\end{proposition}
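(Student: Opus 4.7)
The plan is to reduce to the affine setting by Zariski localization and then combine the affineness theorem (Theorem~\ref{PrismAffineStack}) with the discreteness of derived prismatic cohomology furnished by condition $(\ast)$.

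First I would Zariski-localize on $X$. The functor $X \mapsto \WCart_{X/A}$ sends Zariski covers of derived formal $\overline{A}$-schemes to Zariski covers of sheaves on $\mathrm{DAff}_{\mathrm{Spf}(A)}$ (by the same argument as Remark~\ref{PrismatizeEtaleCov}, adapted to the relative setting), while classicality is stable under colimits in $\mathrm{Shv}(\mathrm{DAff}_{\mathrm{Spf}(A)})$ since the fully faithful colimit-preserving embedding $\iota : \mathrm{Shv}(\mathrm{Aff}_{\mathrm{Spf}(A)}) \to \mathrm{Shv}(\mathrm{DAff}_{\mathrm{Spf}(A)})$ has essential image closed under colimits. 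Hence it suffices to treat $X = \mathrm{Spf}(R)$ with $R$ a discrete $p$-complete $\overline{A}$-algebra satisfying $(\ast)$.

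In this affine case, Theorem~\ref{PrismAffineStack} identifies $\WCart_{\mathrm{Spf}(R)/A}$ with $\mathrm{Spf}(\Prism_{R/A})$. Condition $(\ast)$, via Remark~\APCref{remark:hypothesis-for-discreteness}, ensures that $\Prism_{R/A}$ is concentrated in cohomological degree zero, i.e.\ is a discrete $(p,I)$-complete $A$-algebra, with enough regularity to match classical and derived $(p,I)$-adic quotients (for instance, $(p,I)$-completely flat over $A$ in the $p$-completely smooth case by the Hodge--Tate comparison). Consequently $\mathrm{Spf}(\Prism_{R/A})$ can be written as the sheafified colimit $\varinjlim_n \mathrm{Spec}(\Prism_{R/A}/(p,I)^n)$ of representables by objects of $\mathrm{Aff}_{\mathrm{Spf}(A)}$, exhibiting $\WCart_{\mathrm{Spf}(R)/A}$ as a classical sheaf.

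The main obstacle lies in this last identification. A priori, $\mathrm{Spf}(\Prism_{R/A})$ evaluated on an animated test ring $B$ sees the derived quotients $\Prism_{R/A}/^L(p,I)^n$, which can carry higher homotopy whenever $\Prism_{R/A}$ has $(p,I)$-torsion; the classical colimit $\varinjlim_n \mathrm{Spec}(\Prism_{R/A}/(p,I)^n)$ extended to $\mathrm{DAff}_{\mathrm{Spf}(A)}$ via $\iota$ does not directly see this torsion. The regularity provided by $(\ast)$ — together with flat sheafification, which smooths out any residual discrepancies — is what guarantees that the two sheaves in question actually coincide on $\mathrm{DAff}_{\mathrm{Spf}(A)}$. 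Carrying out this comparison of the derived and classical $(p,I)$-adic quotient towers, and concluding that the resulting sheaf lies in the essential image of $\iota$, is the step requiring the most care.
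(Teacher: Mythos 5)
There is a genuine gap, and it occurs exactly at the step you flag as ``the main obstacle'': the claim that condition $(\ast)$ forces $\Prism_{R/A}$ to be concentrated in cohomological degree zero is false. Condition $(\ast)$ controls the Tor-amplitude of $L_{R/\overline{A}}$ (so it guarantees coconnectivity and agreement with the site-theoretic theory), but it does not kill the higher cohomology of $\Prism_{R/A}$: already for $R = \overline{A}[x]^{\wedge}_p$, which is $p$-completely smooth and certainly satisfies $(\ast)$, the Hodge--Tate comparison gives $H^1(\overline{\Prism}_{R/A}) \simeq \Omega^1_{R/\overline{A}}\{-1\} \neq 0$. So $\Prism_{R/A}$ is in general a genuinely non-connective derived ring, and your plan to present $\mathrm{Spf}(\Prism_{R/A})$ as $\varinjlim_n \mathrm{Spec}(\Prism_{R/A}/(p,I)^n)$ with classical terms collapses at the start; the subsequent discussion of derived versus classical $(p,I)$-adic towers cannot repair this, since the object being completed is not a ring in the first place. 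Note also that classicality of $\mathrm{Spf}$ of such a non-connective object is precisely the subtle point: Warning~\ref{Warn:DerNotClass} shows it can fail when $(\ast)$ is dropped.

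The paper's proof uses $(\ast)$ in a different place. After reducing to $X = \mathrm{Spf}(R)$ affine (your localization step is fine), one takes the $p$-quasisyntomic cover $R \to R_\infty$ obtained by formally adjoining $p$-power roots of all elements, with \v{C}ech nerve $R_\infty^*$. Each $R_\infty^n$ is semiperfect modulo $p$, so $\Omega^1_{(R_\infty^n/p)/\overline{A}} = 0$ and hence $\wedge^i L_{R_\infty^n/\overline{A}}[-i]$ is connective; combined with the Tor-amplitude bound coming from $(\ast)$ and the Hodge--Tate comparison, this makes each $\Prism_{R_\infty^n/A}$ an honest discrete $(p,I)$-complete ring. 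Theorem~\ref{PrismAffineStack} (via Corollary~\ref{PrismSemiPerf}) then identifies $\WCart_{\mathrm{Spf}(R_\infty^n)/A}$ with the classical formal scheme $\mathrm{Spf}(\Prism_{R_\infty^n/A})$, and since $\WCart_{-/A}$ preserves limits and carries $p$-quasisyntomic covers to flat covers (Proposition~\ref{PrismatizeCover2}), $\WCart_{\mathrm{Spf}(R)/A}$ is the colimit of this simplicial diagram of classical affines, hence classical. In short: you must descend along a cover that renders $\Prism$ discrete term-by-term, rather than trying to establish classicality of $\mathrm{Spf}(\Prism_{R/A})$ directly.
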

\begin{proof}
By compatibility with \'etale localization, we may assume $X=\mathrm{Spf}(R)$ is affine.  It suffices to show that $\WCart_{X/A}$ can be written as a colimit of a simplicial object in the full subcategory $\mathrm{Aff}_{\mathrm{Spf}(A)} \subset \mathrm{DAff}_{\mathrm{Spf}(A)}$. Take $R \to R_\infty^*$ as in the second paragraph of the proof of Theorem~\ref{PrismaticCohPrismatization}. By construction, each $\Prism_{R_\infty^*/A}$ is discrete, and thus $\WCart_{\mathrm{Spf}(R_\infty^*)/A} \simeq \mathrm{Spf}(\Prism_{R_\infty^*/A})$ is classical by Theorem~\ref{PrismAffineStack}. As the prismatization functor $\WCart_{-/A}$ commutes with limits and carries $p$-quasisyntomic surjections to flat covers (Proposition~\ref{PrismatizeCover2}), it follows that $\WCart_{\mathrm{Spf}(R)/A}$ is the the colimit of $\mathrm{Spf}(\Prism_{R_\infty^*/A})$ as flat sheaves on  $\mathrm{DAff}_{\mathrm{Spf}(A)}$, so we win.
\end{proof}

\begin{remark}
\label{PrismatizeClassical}
Using  Proposition~\ref{QSynWCartClassical} and Corollary~\ref{PrismSemiPerf} instead of Variant~\ref{RelativeRSPComp}, one checks that the conclusions of Theorems~\ref{RelativePrismaticCohPrismatizeNonDer} and \ref{CrystalsCartWitt} hold true for any $X/\overline{A}$ as in Proposition~\ref{QSynWCartClassical}. 
\end{remark}

\newpage
\section{Derived absolute prismatization}
\label{ss:DerAbsPrism}

In this section, we construct a Cartier--Witt stack $\WCart_X$ for any derived $p$-adic formal scheme $X$ using the notion of a Cartier--Witt divisor in the animated context (Definition~\ref{CartWittAnimDef}). The main results are that under $p$-quasisyntomicity assumptions, the theory of quasi-coherent sheaves on these stacks geometrizes the theory of absolute prismatic crystals (Proposition~\ref{QSynAbsCrys}), and that this geometrization is compatible with the notion of pushforwards in a reasonably wide variety of situations (Corollary~\ref{AbsPushWCart}). As all the results are proven by reducing to the relative case treated in \S \ref{ss:DerRelPrism}, our proofs will be brief.

\begin{notation}[Classical sheaves on $p$-adic formal affine schemes]
Write $\mathrm{DAff}$ for the opposite of the $\infty$-category of $p$-nilpotent animated rings; let $\mathrm{Aff} \subset \mathrm{DAff}$ be the full subcategory spanned by the discrete rings. We give these two $\infty$-categories the flat topology. As in Notation~\ref{ClassicalRelative}, let us call a sheaf $\mathcal{F}$ on $\mathrm{DAff}$ {\em classical} if it  can be written as a colimit of objects lying in the essential image of $\mathrm{Aff}$ under the Yoneda embedding $\gamma:\mathrm{DAff} \hookrightarrow \mathrm{Shv}(\mathrm{DAff})$ or equivalently if $\mathcal{F}$ lies in the essential image of the fully faithful functor $\tilde{\gamma}:\mathrm{Shv}(\mathrm{Aff}) \to  \mathrm{Shv}(\mathrm{DAff})$. For $\mathcal{F}$ classical, there is a unique object $\mathcal{G} \in \mathrm{Shv}(\mathrm{Aff})$ equipped with an identification $\tilde{\gamma}(G) \simeq \mathcal{F}$; in this case, we often abuse notation and say that $\mathcal{G}$ represents $\mathcal{F}$.
\end{notation}

Recall from Construction~\ref{GenCartDivAnim} that for an animated ring $A$, the notion of a generalized Cartier divisors $A \to A/I$ on $A$ is equivalent to the notion of generalized invertible ideals $I \to A$ in $A$. Using this equivalence, we arrive at the main object of study of this section:

\begin{definition}[Cartier--Witt divisors on animated rings]
\label{CartWittAnimDef}
For a $p$-nilpotent animated ring $R$, a {\em Cartier--Witt divisor on $R$} is given by a generalized Cartier divisor $W(R) \to W(R)/I$ whose corresponding generalized invertible ideal $I \to W(R)$ induces a Cartier--Witt divisor on $\pi_0(R)$ (as in \APCref{definition:generalized-Cartier-divisor}) after base change along $W(R) \to W(\pi_0(R)) \simeq \pi_0(W(R))$. Thus, the space of Cartier-Witt divisors on $R$ is discreted as the fibre product
\[ [\mathbf{A}^1/\mathbf{G}_m](W(R)) \times_{[\mathbf{A}^1/\mathbf{G}_m](W(\pi_0(R)))} \mathrm{WCart}(\pi_0(R)). \]
\end{definition}

\begin{remark}[Cartier--Witt divisors as animated prisms]
Given a $p$-nilpotent animated ring $R$, one can also define a Cartier-Witt divisor on $R$ to be an animated prism $W(R) \to W(R)/I$ (as in Definition~\ref{DefAnimPrism}, for the usual animated $\delta$-structure on $W(R)$) such that the induced map $\pi_0(I) \to \pi_0(W(R)) \xrightarrow{\text{restriction}} R$ has nilpotent image; this follows from Remark~\ref{CartWitttoAnim}.
\end{remark}

\begin{proposition}[The Cartier--Witt stack as a derived stack]
\label{DerivedWCart}
The presheaf $\mathcal{F}$ carrying a $p$-nilpotent animated ring $R$ to the space of Cartier--Witt divisors on $R$ is a sheaf, is classical, and represented by $\WCart$.
\end{proposition}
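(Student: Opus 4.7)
The plan is to verify in turn the three assertions of the proposition: the sheaf property, the identification of the restriction to $\mathrm{Aff}$ with $\mathrm{WCart}$, and classicality.

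First, the sheaf property. The presheaf $\mathcal{F}$ is a fibre product of three functors, each of which is already a flat sheaf: $[\mathbf{A}^1/\mathbf{G}_m]$ is an Artin stack in derived algebraic geometry, the functor $R \mapsto W(R)$ on $p$-nilpotent animated rings carries faithfully flat \v{C}ech nerves to limit diagrams (by the argument of Lemma~\ref{SheafFlatWCart}, reducing via square-zero extensions to the analogous statement for $W_n$), the functor $R \mapsto \pi_0(R)$ manifestly sends \v{C}ech nerves of faithfully flat maps of animated rings to \v{C}ech nerves of faithfully flat maps of discrete rings, and $\mathrm{WCart}$ is a flat sheaf on discrete $p$-nilpotent rings by its construction in \cite{BhattLurieAPC}. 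A finite limit of flat sheaves is a flat sheaf, so this handles the sheaf property.

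Second, the restriction $\mathcal{F}|_{\mathrm{Aff}}$ identifies with $\mathrm{WCart}$ immediately: for $R$ discrete, $\pi_0(R) = R$, and hence the structure map in the defining fibre product is the identity, so the fibre product collapses canonically onto $\mathrm{WCart}(R)$. This identification, combined with the universal property of the left Kan extension $\tilde\gamma$, produces a natural comparison morphism
\[
c\colon \tilde\gamma(\mathrm{WCart}) \longrightarrow \mathcal{F}
\]
of flat sheaves on $\mathrm{DAff}$ which is an equivalence over $\mathrm{Aff}$.

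Third, and most substantively, classicality: the goal is to promote $c$ to an equivalence on all of $\mathrm{DAff}$. Both sides are flat sheaves, so it suffices to check this after a faithfully flat cover. Following the template of Proposition~\ref{QSynWCartClassical}, for each $R \in \mathrm{DAff}$ I would choose a $p$-quasisyntomic cover $R \to R_\infty$ in $p$-nilpotent animated rings — concretely, adjoin $p^\infty$-th roots of each element of $\pi_0(R)$ and derived $p$-complete — and arrange that every term $R_\infty^i$ of the \v{C}ech nerve has $\pi_0(R_\infty^i)/p$ semiperfect. On such a term, $\mathcal{F}(R_\infty^i)$ ought to collapse to $\mathrm{WCart}(\pi_0(R_\infty^i))$ by the absolute analog of the semiperfect/perfectoid identification that underlies Example~\ref{PerfectoidPrismatize} and Corollary~\ref{PrismSemiPerf}; hence $\mathcal{F}(R_\infty^i)$ is classical and represented by a discrete formal scheme. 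Since $\tilde\gamma(\mathrm{WCart})(R_\infty^i)$ is classical by construction and is identified with $\mathrm{WCart}(\pi_0(R_\infty^i))$ under the same reduction, $c$ is an equivalence on each \v{C}ech term, and then on $R$ by descent.

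The main obstacle is the collapse $\mathcal{F}(R_\infty^i) \simeq \mathrm{WCart}(\pi_0(R_\infty^i))$ when $\pi_0(R_\infty^i)/p$ is semiperfect. Equivalently, one must check that the fibre of $[\mathbf{A}^1/\mathbf{G}_m](W(R_\infty^i)) \to [\mathbf{A}^1/\mathbf{G}_m](W(\pi_0(R_\infty^i)))$, restricted to those divisors satisfying the Cartier--Witt condition on $\pi_0$, is contractible. This requires a deformation-theoretic analysis of the map $W(R_\infty^i) \to W(\pi_0(R_\infty^i))$, which is a sifted limit of square-zero extensions by modules concentrated in positive degrees: under the semiperfectness assumption, the obstruction theory for lifting an invertible module equipped with a Cartier--Witt section should trivialize, uniquely and functorially. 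This step mirrors in the absolute setting how animated and classical prismatizations are forced to coincide in the relative semiperfectoid case (Corollary~\ref{PrismSemiPerf}), and is where the technical content resides.
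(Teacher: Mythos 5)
Your treatment of the sheaf property is essentially the paper's (reduce to $W_n$, then use deformation theory together with flat descent for line bundles and their sections, as in Lemma~\ref{SheafFlatWCart}), and the identification of $\mathcal{F}|_{\mathrm{Aff}}$ with $\WCart$ is correct. The classicality step, however, has a genuine gap. Your central claim --- that for $S$ with $\pi_0(S)/p$ semiperfect the space $\mathcal{F}(S)$ collapses to $\WCart(\pi_0(S))$, and likewise that $\tilde\gamma(\WCart)(S)$ is identified with $\WCart(\pi_0(S))$ --- is false, and it conflates two different things: a functor being represented by a \emph{classical} (discrete) object, versus a functor factoring through $\pi_0$ of the \emph{test} object. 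Even for a discrete ring $A$, the functor $R \mapsto \Hom_{\CAlg^{\anim}}(A,R)$ on animated rings does not factor through $\pi_0(R)$; already $\Hom(\mathbf{Z}[x],R) = [R]$ sees all the homotopy of $R$. Concretely, $[W(S)] \simeq \prod_{n \geq 0}[S]$ by Proposition~\ref{wrost}, so after trivializing the invertible ideal a point of $\mathcal{F}(S)$ is a point of $[W(S)]$ whose image in $W(\pi_0(S))$ is distinguished, and the fibres of $\mathcal{F}(S) \to \WCart(\pi_0(S))$ are built from the higher homotopy of $S$: they are not contractible for nondiscrete $S$, semiperfect mod $p$ or not. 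The analogy with Corollary~\ref{PrismSemiPerf} is misapplied for the same reason: there the semiperfectoid cover is a cover of the formal scheme whose prismatization is being taken (so that each \v{C}ech term becomes an affine \emph{classical} formal scheme, which still corepresents a functor sensitive to derived test objects), not a cover of the test ring.

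What makes the proposition work, and what the paper does, is the presentation $\WCart = [\WCart_0/W^*]$ from Proposition~\APCref{proposition:presentation-as-quotient}: both $W^*$ and $\WCart_0$ are classical affine objects, and the only substantive point is that the derived fibre product
\[ W(R) \times_{W(\pi_0(R))} \WCart_0(\pi_0(R)) \]
computing (the $W^*$-cover of) $\mathcal{F}$ is canonically identified with $\WCart_0(R)$ itself. This holds because $\WCart_0 \to W$ is a formal completion in derived algebraic geometry: whether a derived point of $W$ lands in $\WCart_0$ is a nilpotence condition detected on $\pi_0$. Classicality and representability then follow simultaneously, since $[\WCart_0/W^*]$ is a geometric realization of classical affines. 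To repair your argument you would need to replace the collapse claim by a statement of exactly this kind; no semiperfectoid cover is needed.
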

\begin{proof}
For any $p$-nilpotent animated ring $R$, we have a natural identification of  spaces $\mathrm{Pic}(W(R)) \simeq \lim_n \mathrm{Pic}(W_n(R))$. Using this identification and deformation theory, it follows from flat descent for line bundles that the functor $\mathrm{Pic}(W(-))$ on $p$-nilpotent animated rings is a sheaf for the flat topology, is classical, and represented by $BW^*$. A similar argument with maps shows that the functor carrying a $p$-nilpotent animated ring $R$ to the space of generalized Cartier divisors on $W(R)$ is a sheaf and in fact represented by $[W/W^*](-) \simeq [\mathbf{A}^1/\mathbf{G}_m](W(-))$. The sheafyness of the functor in the proposition then follows from the corresponding statement for $p$-nilpotent discrete rings. 

For the rest, we shall check that $\mathcal{F}$ is represented by $\mathrm{WCart}$ using the presentation $\mathrm{WCart} = [\WCart_0 / W^*]$ from Proposition~\APCref{proposition:presentation-as-quotient}. Using this presentation and unwinding definitions,  it is enough to check that the functor
\[ R \mapsto \mathbf{A}^1(W(R)) \times_{\mathbf{A}^1(W(\pi_0(R)))} \mathrm{WCart}_0(\pi_0(R)) \simeq W(R) \times_{W(\pi_0(R))} \mathrm{WCart}_0(\pi_0(R)).\]
on $p$-nilpotent animated rings is represented by $\mathrm{WCart}_0$. This follows from the fact that the natural map $\mathrm{WCart}_0 \to W$ also  exhibits the former as a formal completion of the latter in derived algebraic geometry. 
\end{proof}

In the rest of this section, we abuse notation and identify the quotient stack $\WCart$ with the functor $\mathcal{F}$ on $\mathrm{DAff}$ from Proposition~\ref{DerivedWCart}. 

\begin{remark}
\label{rmk:WCartColimit}
Proposition~\ref{DerivedWCart} implies our previous results \cite{BhattLurieAPC} on $\WCart$ translate over to the derived setting. For instance, for any bounded prism $(A,I)$, we have a natural classifying map $\rho_A:\mathrm{Spf}(A) \to \WCart$ of sheaves on $\mathrm{DAff}$ by Construction~\APCref{construction:point-of-prismatic-stack}. Moreover, if  $(A^\bullet, I^\bullet)$ is the cosimplicial prism from Notation~\APCref{notation:simplicial-prism} obtained by taking global sections of the standard simplicial presentation of the quotient stack $[\WCart_0/W^*]$, then the maps $\rho_{A^\bullet}$ yield an equivalence
\[ \colim \mathrm{Spf}(A^\bullet) \simeq \WCart\]
in $\mathrm{Shv}(\mathrm{DAff})$.
\end{remark}

\begin{definition}[The Cartier--Witt stack of a derived scheme]
\label{DerCartWittAbs}
Let $X$ be a derived $p$-adic formal scheme. Its Cartier--Witt stack $\WCart_X$ is the presheaf on $p$-nilpotent animated rings as follows: $\WCart_X(R)$ is the $\infty$-groupoid of pairs $(I \xrightarrow{\alpha} W(R), \eta:\mathrm{Spec}(\overline{W(R)}) \to X)$, where $(I \xrightarrow{\alpha} W(R)) \in \WCart(R)$ is a Cartier-Witt divisor and $\eta$ is a morphism of derived formal schemes.
\end{definition}

\begin{warning}
\label{Warn:DerNotClassAbs}
Say $X$ is a bounded $p$-adic formal scheme; write $Y$ for the result of viewing $X$ as a derived $p$-adic formal scheme. Then the Cartier--Witt stack $\WCart_Y$ need not be classical (and thus does not agree with the image of $\WCart_X$ under the fully faithful embedding $\mathrm{Shv}(\mathrm{Aff}) \to  \mathrm{Shv}(\mathrm{DAff})$): this follows from the example in Warning~\ref{Warn:DerNotClass} observing that $\WCart_Y \simeq \WCart_{Y/A}$ for a derived qcqs $p$-adic formal scheme $X$ equipped with a map to $\mathrm{Spf}(\overline{A})$ for a perfect prism $(A,I)$, and similarly in the non-derived case. 
\end{warning}

\begin{example}
For the final object $X=\mathrm{Spf}(\mathbf{Z}_p)$ in $\mathrm{DAff}$, the stack $\WCart_X$ is represented by $\WCart$. By functoriality, there is an induced structure map $\WCart_Y \to \WCart$ for any qcqs derived $p$-adic formal scheme $Y$. 
\end{example}

\begin{remark}
The functor $X \mapsto \WCart_X$ on derived $p$-adic formal schemes naturally takes values in presheaves over $\WCart$, and commutes with all limits when viewed as such: this compatibility is immediate from the definition.
\end{remark}

\begin{remark}
Fix a derived $p$-adic formal scheme $X$. Then $\WCart_X$ is a sheaf for the \'etale topology as in Construction~\ref{RelPrismDerived}. Moreover, if $X$ is qcqs, then the argument in Lemma~\ref{SheafFlatWCart} applies mutatis mutandis to show that $\WCart_X$ is a sheaf for the flat topology.
\end{remark}

\begin{remark}
\label{DescentAbsWCart}
Fix a derived $p$-adic formal scheme $X$. If $(A,I)$ is a bounded prism, then we have a base change isomorphism
\[ \WCart_X \times_{\WCart,\rho_A} \mathrm{Spf}(A) \simeq \WCart_{X_{\overline{A}}/A}\]
of sheaves on $(p,I)$-nilpotent animated $A$-algebras, where the target is the derived relative prismatization studied in \S \ref{ss:DerRelPrism}. Applying this observation to the cosimplicial bounded prism  $(A^\bullet, I^\bullet)$ from Notation~\APCref{notation:simplicial-prism}  and using the observation in Remark~\ref{rmk:WCartColimit}, we learn that the maps $\rho_{A^\bullet}$ induce an equivalence
\[ \colim \WCart_{X_{\overline{A}^\bullet}/A} \simeq \WCart_X,\]
and hence that pullback along $\rho_{A^\bullet}$ induces an equivalence
\[ \mathcal{D}_{qc}(\WCart_X) \simeq \lim \mathcal{D}_{qc}(\WCart_{X_{\overline{A}^\bullet}/A})\]
of symmetric monoidal stable $\infty$-categories. 
\end{remark}

In the rest of this section, we focus on relating the theory of quasi-coherent sheaves on the Cartier--Witt stacks defined above with the absolute prismatic site. This comparison works best for the following class of schemes:

\begin{definition}
A derived $p$-adic formal scheme $X$ is called {\em $p$-quasisyntomic} if for every affine open $\mathrm{Spf}(R) \subset X$, the animated ring $R$ is classical and $p$-quasisyntomic. In particular, $X$ is a classical bounded $p$-adic formal scheme.
\end{definition}

\begin{corollary}
\label{AbsWCartClassical}
Let $X$ be a  derived $p$-adic formal scheme which is $p$-quasisyntomic. Then $\WCart_X$ is classical.
\end{corollary}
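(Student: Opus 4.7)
The plan is to reduce to the relative case handled by Proposition~\ref{QSynWCartClassical} via the cosimplicial presentation of $\WCart_X$ recorded in Remark~\ref{DescentAbsWCart}. Applying that remark with the cosimplicial bounded prism $(A^\bullet, I^\bullet)$ from Notation~\APCref{notation:simplicial-prism}, I obtain an equivalence
\[ \WCart_X \;\simeq\; \colim_\bullet \WCart_{X_{\overline{A}^\bullet}/A^\bullet} \]
in $\mathrm{Shv}(\mathrm{DAff})$. Since the fully faithful embedding $\mathrm{Shv}(\mathrm{Aff}) \hookrightarrow \mathrm{Shv}(\mathrm{DAff})$ is (by construction) a left Kan extension and hence preserves colimits, it suffices to prove that each term $\WCart_{X_{\overline{A}^n}/A^n}$ is classical as an object of $\mathrm{Shv}(\mathrm{DAff})$.

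For each fixed $n$, I would apply Proposition~\ref{QSynWCartClassical} to the derived formal $\overline{A}^n$-scheme $X_{\overline{A}^n}$. The hypothesis to check is that $L_{X_{\overline{A}^n}/\overline{A}^n}$ has $p$-complete Tor amplitude $\geq -1$. This follows from the $p$-quasisyntomicity of $X$ by the base change formula for cotangent complexes: locally on $X$ one has $L_{R/\mathbf{Z}_p}$ with $p$-complete Tor amplitude in $[-1,0]$, and by the transitivity triangle for $\mathbf{Z}_p \to \overline{A}^n \to R \widehat{\otimes}^L_{\mathbf{Z}_p} \overline{A}^n$ the relative cotangent complex over $\overline{A}^n$ retains this lower Tor amplitude bound. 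Proposition~\ref{QSynWCartClassical} then gives that $\WCart_{X_{\overline{A}^n}/A^n}$ is classical as a sheaf on $\mathrm{DAff}_{\mathrm{Spf}(A^n)}$.

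Finally, I would transfer the classicality across the forgetful functor $\mathrm{Shv}(\mathrm{DAff}_{\mathrm{Spf}(A^n)}) \to \mathrm{Shv}(\mathrm{DAff})$ induced by the structure map $\mathrm{Spf}(A^n) \to \mathrm{Spf}(\mathbf{Z}_p)$. Since each $A^n$ is a classical discrete ring, any classical $(p,I^n)$-nilpotent $A^n$-algebra is in particular a classical $p$-nilpotent ring, so the forgetful functor sends classical sheaves to classical sheaves. Assembling everything exhibits $\WCart_X$ as a colimit in $\mathrm{Shv}(\mathrm{DAff})$ of objects in the image of $\mathrm{Shv}(\mathrm{Aff})$, hence classical. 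I do not expect a genuine obstacle here: all of the substantive content lives in Proposition~\ref{QSynWCartClassical} and Remark~\ref{DescentAbsWCart}, and the corollary is a formal consequence. The only point requiring mild care is the cotangent complex base change, but this is standard.
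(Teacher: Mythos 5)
Your proof follows the paper's argument exactly: decompose $\WCart_X$ as $\colim \WCart_{X_{\overline{A}^\bullet}/A^\bullet}$ via Remark~\ref{DescentAbsWCart}, note that classicality is stable under colimits, and invoke Proposition~\ref{QSynWCartClassical} termwise. One point in your hypothesis check is incomplete, however. Proposition~\ref{QSynWCartClassical} is stated for a \emph{bounded $p$-adic formal} $\overline{A}$-scheme regarded as derived, so before checking the Tor-amplitude condition on the cotangent complex you must also verify that the derived base change $X_{\overline{A}^n}$ is itself classical and bounded. This is not automatic for a derived fiber product; it is here that the paper uses that the prisms $(A^n, I^n)$ from the simplicial presentation are \emph{transversal}, so that $\overline{A}^n$ is $p$-torsionfree (hence $\mathbf{Z}_p$-flat) and the base change of a $p$-quasisyntomic ring along $\mathbf{Z}_p \to \overline{A}^n$ remains discrete with bounded $p^\infty$-torsion. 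Your proposal never mentions transversality, so this step is missing. A second, more cosmetic issue: the Tor-amplitude bound on $L_{X_{\overline{A}^n}/\overline{A}^n}$ follows from the base change formula $L_{(R\widehat{\otimes}_{\mathbf{Z}_p}\overline{A}^n)/\overline{A}^n} \simeq L_{R/\mathbf{Z}_p}\widehat{\otimes}_R(R\widehat{\otimes}_{\mathbf{Z}_p}\overline{A}^n)$ directly, not from the transitivity triangle you cite (the transitivity triangle for $\mathbf{Z}_p \to \overline{A}^n \to R\widehat{\otimes}\overline{A}^n$ controls the wrong relative cotangent complex). Both points are easily repaired and the overall structure is sound.
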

\begin{proof}
Using the equivalence $\colim \WCart_{X_{\overline{A}^\bullet}/A} \simeq \WCart_X$ from Remark~\ref{DescentAbsWCart}, it suffices to show that $\WCart_{X_{\overline{A}}/A}$ is classical for every transversal prism $(A,I)$. The hypothesis on $X$ and the transversality of $(A,I)$ ensure that the structure map $X_{\overline{A}} \to \mathrm{Spf}(\overline{A})$ satisfies the hypothesis of Proposition~\ref{QSynWCartClassical},  so the claim follows from the conclusion of that proposition. 
\end{proof}

\begin{construction}[From the absolute prismatic site to the derived Cartier--Witt stack]
\label{cons:AbsPrismDerived}
Fix a  derived $p$-adic formal scheme $X$. The absolute prismatic site $X_\Prism$ is defined to be the (ordinary) category of bounded prisms $(A,I)$ equipped with a map $\eta:\mathrm{Spf}(A/I) \to X$ of derived $p$-adic formal schemes; when $X$ is a classical bounded $p$-adic formal scheme, this definition agrees with the classical notion of the absolute prismatic site of $X$. For any object $((A,I),\eta) \in X_\Prism$, one has an induced map $\rho_{X,A}:\mathrm{Spf}(A) \to \WCart_X$ by a variant of Construction~\ref{cons:pointprismatization}.
\end{construction}

\begin{proposition}
\label{QSynAbsCrys}
Let $X$ be a  derived $p$-adic formal scheme which is $p$-quasisyntomic. Then pullback along the maps from Construction~\ref{cons:AbsPrismDerived} yields an equivalence
\[ \mathcal{D}_{qc}(\WCart_X) \simeq \lim_{(A,I) \in X_\Prism} \widehat{\mathcal{D}}(A) =: \widehat{\mathcal{D}}_{crys}(X_\Prism, \mathcal{O}_\Prism)\]
of symmetric monoidal stable $\infty$-categories.
\end{proposition}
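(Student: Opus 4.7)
The plan is to reduce the absolute statement to the relative comparison of Theorem~\ref{CrystalsCartWitt} via descent along the cosimplicial prism $(A^\bullet, I^\bullet)$ from Notation~\APCref{notation:simplicial-prism}.

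First, by Remark~\ref{DescentAbsWCart}, pullback along the classifying maps $\rho_{A^\bullet}:\mathrm{Spf}(A^\bullet) \to \WCart$ yields an equivalence
\[ \mathcal{D}_{qc}(\WCart_X) \;\simeq\; \lim_\bullet \mathcal{D}_{qc}(\WCart_{X_{\overline{A}^\bullet}/A^\bullet}) \]
of symmetric monoidal stable $\infty$-categories. The $p$-quasisyntomicity hypothesis on $X$ passes to each base change $X_{\overline{A}^n}/\overline{A}^n$ (since the $(A^n, I^n)$ are bounded prisms and base change preserves the relevant tor-amplitude bound on the cotangent complex), so the hypotheses of Theorem~\ref{CrystalsCartWitt} in the form extended by Remark~\ref{PrismatizeClassical} (which invokes the classicality statement of Proposition~\ref{QSynWCartClassical}) are satisfied in each simplicial degree. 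Applying that theorem termwise identifies the right-hand side with $\lim_\bullet \widehat{\mathcal{D}}_{\mathrm{crys}}((X_{\overline{A}^\bullet}/A^\bullet)_\Prism, \mathcal{O}_\Prism)$.

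The remaining task, and the main obstacle, is to prove that this cosimplicial limit recovers the absolute crystal category $\widehat{\mathcal{D}}_{\mathrm{crys}}(X_\Prism, \mathcal{O}_\Prism)$. The argument mirrors the proof of Proposition~\APCref{proposition:DWCart-prism-description}: the cosimplicial prism $(A^\bullet, I^\bullet)$ provides a flat hypercover of the final object in the flat topos on the absolute prismatic site of $\mathrm{Spf}(\mathbf{Z}_p)$, a property that is preserved upon slicing along $X_\Prism \to \mathrm{Spf}(\mathbf{Z}_p)_\Prism$. Concretely, any $(B, J) \in X_\Prism$ admits a faithfully flat cover by a bounded prism through which some $(A^n, I^n)$ factors, and the products $(B, J) \otimes (A^\bullet, I^\bullet)$ —formed in bounded prisms over $\mathrm{Spf}(\mathbf{Z}_p)_\Prism$— compute the slices of $(X_{\overline{A}^\bullet}/A^\bullet)_\Prism$ lying over $(B, J)$. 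Standard flat descent for crystals of $(p, I)$-complete complexes then converts the hypercover statement into the desired equivalence
\[ \lim_\bullet \widehat{\mathcal{D}}_{\mathrm{crys}}((X_{\overline{A}^\bullet}/A^\bullet)_\Prism, \mathcal{O}_\Prism) \;\simeq\; \widehat{\mathcal{D}}_{\mathrm{crys}}(X_\Prism, \mathcal{O}_\Prism), \]
completing the chain of equivalences and with it the proof.
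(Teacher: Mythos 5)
Your proposal is correct and follows essentially the same route as the paper's (very terse) proof: descent along the cosimplicial prism $(A^\bullet, I^\bullet)$ via Remark~\ref{DescentAbsWCart} on the stack side, termwise application of the relative comparison from Theorem~\ref{CrystalsCartWitt} as extended by Remark~\ref{PrismatizeClassical}, and flat descent for prismatic crystals on the site-theoretic side. The only point worth tightening is your justification that the hypotheses pass to the base changes $X_{\overline{A}^\bullet}$: as in the proof of Corollary~\ref{AbsWCartClassical}, one should invoke the transversality of the prisms $(A^n, I^n)$ to ensure the base-changed schemes remain classical and bounded, not just the stability of the Tor-amplitude bound on the cotangent complex.
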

\begin{proof}
The relative version of this was the subject of Theorem~\ref{CrystalsCartWitt} and Remark~\ref{PrismatizeClassical}. The general case can be reduced by descent to the relative case using Remark~\ref{DescentAbsWCart} on the $\WCart$ side, and flat descent for prismatic crystals on the site-theoretic side.
\end{proof}

\begin{proposition}
\label{AbsPushWCart}
Let $Y$ be a  derived $p$-adic formal scheme which is $p$-quasisyntomic. Let $f:X \to Y$ be a map of qcqs derived $p$-adic formal schemes that satisfies one of the following conditions:
\begin{enumerate}
\item Finite type: for every affine open $\mathrm{Spf}(R) \subset Y$ and $\mathrm{Spf}(S) \subset f^{-1}(\mathrm{Spf}(R)) \subset X$, the $\pi_0(R)/p$-module $\Omega^1_{\pi_0(S)/\pi_0(R)}/p$ is finitely generated.
\item $p$-quasisyntomic:  for every affine open $\mathrm{Spf}(R) \subset Y$ and $\mathrm{Spf}(S) \subset f^{-1}(\mathrm{Spf}(R)) \subset X$, the map $R \to S$ is $p$-quasisyntomic after base change to any discrete $p$-nilpotent $R$-algebra. 
\end{enumerate}
Then the right adjoint $R\WCart_{f,*}$ to the pullback $f^*:\mathcal{D}_{qc}(\WCart_Y) \to \mathcal{D}_{qc}(\WCart_X)$ carries $\mathcal{O}_{\WCart_X}$ to an object of $\mathcal{D}_{qc}(\WCart_Y)$ that identifies naturally with the prismatic crystal $Rf_{\Prism,*} \mathcal{O}_\Prism \in \mathcal{D}_{crys}(Y_\Prism, \mathcal{O}_\Prism)$ under the equivalence in Proposition~\ref{QSynAbsCrys}. 
\end{proposition}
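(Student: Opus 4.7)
The plan is to verify the identification pointwise over the absolute prismatic site of $Y$. Under the equivalence of Proposition~\ref{QSynAbsCrys}, a quasi-coherent sheaf on $\WCart_Y$ is determined by its pullback along $\rho_{Y,A}:\mathrm{Spf}(A) \to \WCart_Y$ for each $((A,I),\eta) \in Y_\Prism$, and a prismatic crystal by its value at each such prism. So I would reduce the proposition to producing a natural equivalence
\[ \rho_{Y,A}^* \, R\WCart_{f,*}\mathcal{O}_{\WCart_X} \;\simeq\; \Prism_{X_{\overline{A}}/A} \]
compatible with the functoriality of both sides in $(A,I) \in Y_\Prism$.

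The main input for this is the base change identity $\WCart_X \times_{\WCart_Y, \rho_{Y,A}} \mathrm{Spf}(A) \simeq \WCart_{X_{\overline{A}}/A}$, which follows from the limit-compatibility of $\WCart_{(-)}$ over $\WCart$ combined with Remark~\ref{DescentAbsWCart}. Base change for the pushforward of the structure sheaf along the representable map $\rho_{Y,A}$ then yields $\rho_{Y,A}^* R\WCart_{f,*}\mathcal{O}_{\WCart_X} \simeq \RGamma(\WCart_{X_{\overline{A}}/A}, \mathcal{O})$. Next, apply Theorem~\ref{PrismaticCohPrismatization} to $X_{\overline{A}}$ as a derived $\overline{A}$-scheme to rewrite the right-hand side as the derived relative prismatic cohomology $\RGamma_\Prism(X_{\overline{A}}/A)$. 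The hypotheses of Theorem~\ref{PrismaticCohPrismatization} are inherited from those of Proposition~\ref{AbsPushWCart}: finite-generation of $\Omega^1_{\pi_0(S)/\pi_0(R)}/p$ in case (1) is preserved by base change of $R \to S$ along $R \to \overline{A}$; in case (2), the $p$-quasisyntomicity of $f$, combined with the $p$-quasisyntomicity of $Y$, implies that $X_{\overline{A}}$ is classical, bounded, and has cotangent complex of $p$-complete Tor amplitude $\geq -1$ locally.

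On the other side, the value of the site-theoretic pushforward $Rf_{\Prism,*}\mathcal{O}_\Prism$ at $((A,I),\eta)$ is, by definition, the inverse limit $\lim_{(B,IB) \in (X_{\overline{A}}/A)_\Prism} B$, i.e.\ the site-theoretic relative prismatic cohomology $\RGamma^{\mathrm{site}}_\Prism(X_{\overline{A}}/A)$. Under our hypotheses this agrees with $\RGamma_\Prism(X_{\overline{A}}/A)$: in case (2), this is quasisyntomic descent (Theorem~\APCref{theorem:site-theoretic-equivalence} together with Lemma~\APCref{lemma:qs-descent-relative}); in case (1), one reduces by the same \v{C}ech descent argument as in the proof of Theorem~\ref{PrismaticCohPrismatization}, using a $p$-quasisyntomic cover of $X_{\overline{A}}$ by a regular semiperfectoid. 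Finally, one checks that the resulting composite equivalence agrees with the natural comparison map built from the functorial collection $\{\rho_{X,B}:\mathrm{Spf}(B) \to \WCart_X\}_{(B,IB) \in (X_{\overline{A}}/A)_\Prism}$, i.e.\ that the identification is natural in the prism $(A,I)$; this naturality is what upgrades pointwise isomorphisms of values to an isomorphism of objects in $\widehat{\mathcal{D}}_{\mathrm{crys}}(Y_\Prism, \mathcal{O}_\Prism)$. The main obstacle will be this final naturality/compatibility check, since one must simultaneously track several universal-property-based constructions; I expect it to be routine once one expresses everything as a cofinal limit over a common system of \v{C}ech resolutions along a $p$-quasisyntomic cover of $X_{\overline{A}}$ by (relative) regular semiperfectoids.
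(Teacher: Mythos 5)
Your overall route is the same as the paper's: the proof in the text is a one-line reduction, via the descent equivalence of Proposition~\ref{QSynAbsCrys} (equivalently, probing $\WCart_Y$ by the points $\rho_{Y,A}$ and using the base-change identity $\WCart_X \times_{\WCart_Y,\rho_{Y,A}} \mathrm{Spf}(A) \simeq \WCart_{X_{\overline{A}}/A}$), to the relative comparison of Theorem~\ref{PrismaticCohPrismatization}. Your fleshing-out of that reduction --- the pointwise identification $\rho_{Y,A}^*\,R\WCart_{f,*}\mathcal{O} \simeq \RGamma(\WCart_{X_{\overline{A}}/A},\mathcal{O})$, the verification that the hypotheses (1) and (2) pass to the fibres $X_{\overline{A}}$, and the naturality check over $Y_\Prism$ --- is exactly what the paper leaves implicit.

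There is, however, one step that does not go through as written. You identify the value of $Rf_{\Prism,*}\mathcal{O}_\Prism$ at $((A,I),\eta)$ with the \emph{site-theoretic} limit $\lim_{(B,IB)\in (X_{\overline{A}}/A)_\Prism} B$ and then assert that under hypothesis (1) this agrees with the derived cohomology $\RGamma_\Prism(X_{\overline{A}}/A)$ ``by the same \v{C}ech descent argument.'' In the finite-type but non-lci case this agreement is false: the paper's own Warning~\ref{Warn:DerNotClass} (with $X=\mathrm{Spec}(\mathbf{F}_p[x,y]/(x,y)^2)$ over $(\mathbf{Z}_p,(p))$, which satisfies hypothesis (1)) exhibits a situation where the site-theoretic cohomology is coconnective while the derived cohomology has cohomology in infinitely many negative degrees, so no descent argument can reconcile them. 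The resolution is that $Rf_{\Prism,*}\mathcal{O}_\Prism$ in the statement must be read as the crystal whose value at $(A,I)\in Y_\Prism$ is the \emph{derived} relative prismatic cohomology $\RGamma_\Prism(X_{\overline{A}}/A)$ (this is forced by Corollary~\ref{AbsPushWCartZ}, which identifies the case $Y=\mathrm{Spf}(\mathbf{Z}_p)$ with $\mathcal{H}_\Prism(X)$ of Variant~\APCref{variant:prismatic-sheaf-globalized}); with that reading, Theorem~\ref{PrismaticCohPrismatization} supplies the identification directly and your extra bridge to the site-theoretic limit is unnecessary in case (1) and valid (via Theorem~\APCref{theorem:site-theoretic-equivalence}) only in case (2).
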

\begin{proof}
This follows via the descent equivalence in Proposition~\ref{QSynAbsCrys} from  the corresponding result in the relative case (Theorem~\ref{PrismaticCohPrismatization}). 
\end{proof}

\begin{corollary}
\label{AbsPushWCartZ}
Let $X$ be a qcqs derived $p$-adic formal scheme. Assume that the structure map $f:X \to \mathrm{Spf}(\mathbf{Z}_p)$ satisfies one of the conditions in Proposition~\ref{AbsPushWCart}. Then $R\WCart_{f,*} \mathcal{O}_{\WCart_X} \in \mathcal{D}_{qc}(\WCart)$ is identified with $\mathcal{H}_\Prism(X)$ (Variant~\APCref{variant:prismatic-sheaf-globalized}). 
\end{corollary}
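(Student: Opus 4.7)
The plan is to deduce the result directly by specializing Proposition~\ref{AbsPushWCart} to the structure map $f:X \to Y := \mathrm{Spf}(\mathbf{Z}_p)$. The base $Y$ is trivially $p$-quasisyntomic (since $\mathbf{Z}_p$ is classical and flat over itself), and the hypothesis on $f$ is given by assumption; so the proposition applies and identifies $R\WCart_{f,*}\mathcal{O}_{\WCart_X}$ with the pushforward prismatic crystal $Rf_{\Prism,*}\mathcal{O}_\Prism \in \widehat{\mathcal{D}}_{\mathrm{crys}}(Y_\Prism, \mathcal{O}_\Prism)$ under the equivalence of Proposition~\ref{QSynAbsCrys}.

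For $Y = \mathrm{Spf}(\mathbf{Z}_p)$ we have $\WCart_Y \simeq \WCart$, and the absolute prismatic site $Y_\Prism$ is simply the site $\mathrm{Spf}(\mathbf{Z}_p)_\Prism$ of all bounded prisms. Hence the equivalence of Proposition~\ref{QSynAbsCrys} specializes to $\mathcal{D}_{qc}(\WCart) \simeq \widehat{\mathcal{D}}_{\mathrm{crys}}(\mathrm{Spf}(\mathbf{Z}_p)_\Prism, \mathcal{O}_\Prism)$, recovering Proposition~\APCref{proposition:DWCart-prism-description}. It then remains to match the crystal $Rf_{\Prism,*}\mathcal{O}_\Prism$ with the globalized prismatic sheaf $\mathcal{H}_\Prism(X)$ of Variant~\APCref{variant:prismatic-sheaf-globalized}. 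By construction, the latter assigns to each bounded prism $(A,I) \in \mathrm{Spf}(\mathbf{Z}_p)_\Prism$ the derived relative prismatic cohomology of $X_{\overline{A}}$ over $A$, functorially in $(A,I)$; this coincides tautologically with the value $\RGamma((X_{\overline{A}}/A)_\Prism, \mathcal{O}_\Prism)$ of $Rf_{\Prism,*}\mathcal{O}_\Prism$ at $(A,I)$.

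The only substantive check is that these identifications are canonical, i.e., compatible with the transition maps making both sides into crystals on $\mathrm{Spf}(\mathbf{Z}_p)_\Prism$. This reduces to bookkeeping using the base-change description $\WCart_X \times_{\WCart,\rho_A} \mathrm{Spf}(A) \simeq \WCart_{X_{\overline{A}}/A}$ from Remark~\ref{DescentAbsWCart}, together with the base-change compatibility of relative prismatic cohomology underlying Theorem~\ref{PrismaticCohPrismatization}. I do not expect any genuine obstacle beyond this routine bookkeeping, so the proof is essentially immediate once the previous machinery is in place.
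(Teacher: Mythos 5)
Your proposal is correct and matches the paper's argument, which is exactly "apply Proposition~\ref{AbsPushWCart} with $Y=\mathrm{Spf}(\mathbf{Z}_p)$ and unwind the definition of $\mathcal{H}_\Prism(X)$." The extra bookkeeping you flag via Remark~\ref{DescentAbsWCart} is the right way to see the compatibility, and nothing further is needed.
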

\begin{proof}
Apply Proposition~\ref{AbsPushWCart} with $Y=\mathbf{Z}_p$ and use the definition in Variant~\APCref{variant:prismatic-sheaf-globalized}. 
\end{proof}

\newpage
\section{Examples: The Hodge-Tate stack of some regular schemes}
\label{ss:HTReg}

One philoshophical perspective on prismatic cohomology is that it yields a de Rham style cohomology theory ``over $\mathbf{F}_1$''. While nonsensical, this sometimes can lead to useful predictions, e.g., one expects that regular rings of dimension $d$ are formally smooth of dimension $d$ ``over $\mathbf{F}_1$'', so their absolute prismatic cohomology should be well-behaved. In this subsection, we provide some evidence for this heuristic. The main computation is Example~\ref{ExHTOK}, showing that the Hodge-Tate stack of a possibly ramified extension of $\mathbf{Z}_p$ has a simple description. 

\begin{example}[The Hodge-Tate stack of a smooth $\mathbf{Z}_p$-scheme]
\label{ex:AbsHTSmooth}
Let $k$ be a perfect field of characteristic $p$. Let $X$ be a smooth $p$-adic formal scheme over $W(k)$. We shall describe the derived Hodge-Tate stack $\WCart_X^{\mathrm{HT}}$ explicitly. Using Corollary~\ref{AbsWCartClassical}, it in fact suffices to work with classical stacks. The diffracted Hodge stack $X^{\DHod}$ (Construction~\ref{DiffHodgeStack}) can be identified with the functor on $p$-nilpotent $W(k)$-algebras $R$ given by
\[ X^{\DHod}(S) = X(W(S)/V(1)).\]
There is a natural map $W(S)/V(1) \to S$ given by restriction on the Witt vectors. This map realies $W(S)/V(1)$ as a square-zero extension of $S$ by $(B\mathbf{G}_a^\sharp)(S)$; this can be checked directly, and also follows from Construction~\ref{sqzeroHTprism} applied to the prism $(W(k)\llbracket \tilde{p} \rrbracket, (\tilde{p}))$ by Remark~\ref{DiffHodHT}.  Consequently, we learn as in Proposition~\ref{RelativeHT} that $X^{\DHod} \to X$ is a $T_{X/W(k)}^{\sharp}$-gerbe. In this description, the $\mathbf{G}_m^\sharp$ on $X^{\DHod}$ is given by evident the $\mathbf{G}_m^\sharp$-action on $W(S)/V(1)$.  Passing to quotients and using Construction~\ref{ConsAbsHT}, we learn that $\WCart_X^{\mathrm{HT}} \to X$ is a gerbe for the group scheme $T_{X/W(k)}^{\sharp} \rtimes \mathbf{G}_m^\sharp$, where $\mathbf{G}_m^\sharp$ acts on $T_{X/W(k)}^{\sharp}$ via its natural linear action. If $X$ is additionally endowed with a $\delta$-structure, then this gerbe splits via Construction~\ref{DeltaSchemePrismatize}. Consequently, for a smooth $p$-adic formal $W(k)$-scheme $X$ equipped with a $\delta$-structure, we learn that
\begin{equation}
\label{AbsHTSmoothFormula}
\WCart_X^{\mathrm{HT}} \simeq B(T_{X/W(k)}^\sharp \rtimes \mathbf{G}_m^\sharp).
\end{equation}
In particular, we remark that the group scheme appearing here is not commutative unless $X$ has relative dimension $0$ over $W(k)$. 
\end{example}

\begin{remark}
In the context of Example~\ref{ex:AbsHTSmooth}, one can use the isomorphism \eqref{AbsHTSmoothFormula} in conjunction with Proposition~\ref{QSynAbsCrys} and linear algebra to obtain an explicit algebraic description of Hodge-Tate crystals of quasicoherent complexes on $X$. For instance, a vector bundle on $B(T_{X/W(k)}^\sharp \rtimes \mathbf{G}_m^\sharp)$ is given by a triple $(E,\psi, \Theta)$ where
\begin{itemize}
\item $E$ is a vector bundle on $X$.
\item $\Theta:E \to E$ is an $\mathcal{O}_X$-linear Sen operator (i.e., a map such that $\Theta^p-\Theta$ is nilpotent on $E/p$, c.f. Theorem~\APCref{theorem:compute-with-HT}).
\item $\psi:E \to E \otimes_{\mathcal{O}_X} \Omega^1_{X/W(k)}\{-1\}$ is a $\Theta$-linear Higgs field which is nilpotent modulo $p$, with the twist $\{-1\}$ indicating that $\Theta$ acts by $-1$ on $\Omega^1_{X/W(k)}\{-1\}$. 
\end{itemize}
This description was recently also discovered in \cite{MinWeng1, MinWeng2} under the name of ``enhanced Higgs bundles'' in the more general case where $W(k)[1/p]$ is replaced by a possibly ramified discretely valued extension $K/\mathbf{Q}_p$ with perfect residue field; we expect that this stronger statement can also be proven using the geometric approach used above by replacing $\mathbf{G}_m^\sharp$ with the group scheme $G$ (which equals $\mathbf{G}_a^\sharp$ when $K$ is ramified) arising from the calculation in Example~\ref{ExHTOK}, but we do not pursue this idea further here. 
\end{remark}

For the rest of this section, we fix a complete noetherian regular local ring $R$ with perfect residue field $k$. Let $X = \mathrm{Spf}(R)$. Our goal in this section is to describe the stack $\WCart_X^{\mathrm{HT}} \to X$ explicitly.  To formulate the result, we need the following choice.

\begin{notation}
Choose a prism $(A,I)$ and an isomorphism $\overline{A} \simeq R$ with $R$ as chosen above. Such a choice always exists by the Cohen structure theorem (see \cite[Remark 3.11]{prisms}). Note any such $A$ is necessarily formally smooth over $W := W(k)$. Indeed, $A$ is a $p$-complete regular local ring with residue field $k$ since $\overline{A}$ is so. To conclude formal smoothness, it suffices to show that $p \notin \mathfrak{m}^2_A$, where $\mathfrak{m}_A \subset A$ is the maximal ideal, which is clear as $\delta(p) \in A^*$ while $\delta(\mathfrak{m}_A^2) \subset \mathfrak{m}_A$.
\end{notation}

Using this choice, we shall describe $\WCart_X^{\mathrm{HT}}$ explicitly as a quotient of the following:

\begin{construction}[Parametrizing all Hodge-Tate points]
\label{HTParam}
Let $\mathcal{F}$ be the presheaf on $R$-algebras given by the following: for any $R$-algebra $S$, let $\mathcal{F}(S)$ be the set of all $A$-module maps $\tau:I \to W(S)$ rendering the following diagram commutative
\begin{equation}
\label{eq:HTParam}
 \xymatrix{ I \ar[r]^-{\tau} \ar[d] & I \otimes_A W(S) \ar[d] \\ A \ar[r] & W(S).}
 \end{equation}
Here all maps except $\tau$ are the fixed standard ones. Moreover, there is a preferred point $\tau_0 \in \mathcal{F}(S)$ corresponding to the map $I \to I \otimes_A W(S)$ obtained by tensoring the standard map $A \to W(S)$ with $I$. The presheaf $\mathcal{F}$ has the following structures:
\begin{enumerate}
\item There is a natural action of $\mathbf{G}_m^\sharp = W^*[F]$ on $\mathcal{F}$ by acting via scalar multiplication  on the Cartier--Witt divisor $I \otimes_W W(S) \to W(S)$ appearing in the right column of the square above.

\item There is a natural action on $\mathrm{Hom}_A(I, I \otimes_A W[F]) \simeq \mathrm{Hom}_R(I/I^2, \mathbf{G}_a^\sharp\{1\}) \simeq \mathbf{G}_a^\sharp$ on $\mathcal{F}$: given an $A$-module map $a:I \to I \otimes_A W[F](S)$ and a point $\tau:I \to I \otimes_A W(S)$ in $\mathcal{F}(S)$, the sum $\tau + a:I \to I \otimes_A W(S)$ is the point $a \cdot \tau$ of $\mathcal{F}(S)$. Moreover, acting through this action on the point $\tau_0$ can be easily seen to give an isomorphism $\mathbf{G}_a^\sharp \simeq \mathcal{F}$ of presheaves. Under this isomorphism, the action in (1) is given as follows:
\[ (t \in \mathbf{G}_m^\sharp, a \in \mathbf{G}_a^\sharp) \mapsto t(a+1)-1 \in \mathbf{G}_a^\sharp.\]
In particular, it follows that the action in (1) turns $\mathcal{F}$ into a $\mathbf{G}_m^\sharp$-torsor.

\item There is a natural action of $\mathrm{Der}(A, I \otimes_A W[F]) \simeq \mathrm{Hom}_R(\Omega^1_A \otimes_A R, \mathbf{G}_a^\sharp\{1\}) \simeq T_A^\sharp\{1\} \otimes_A R$ on $\mathcal{F}$ determined by the action in (2) via composition with the map
\[  \mathrm{Hom}_R(\Omega^1_A \otimes_A R, \mathbf{G}_a^\sharp\{1\}) \to \mathrm{Hom}_R(I/I^2, \mathbf{G}_a^\sharp\{1\}) \]
obtained from $I/I^2 \xrightarrow{d} \Omega^1_A \otimes_A R$. Explicitly, given a derivation $D:A \to I \otimes_A W[F](S)$ and a point $\tau:I \to I \otimes_A W(S)$ of $\mathcal{F}(S)$, the sum $\tau + (D|_I):I \to I \otimes_A W(S)$ is the point $D \cdot \tau$ of $\mathcal{F}(S)$.

\end{enumerate}

One then checks that the actions in (1) and (3) assemble to an action of $(T_A^\sharp\{1\} \otimes_A R) \rtimes \mathbf{G}_m^\sharp$ on $\mathcal{F}$, where the semidirect product is formed with respect to the standard multiplication action of $\mathbf{G}_m^\sharp$ on $(T_A^\sharp\{1\} \otimes_A R)$.

\end{construction}

\begin{proposition}
\label{HTStackRegularLocal}
There is a natural isomorphism $\WCart_X^{\mathrm{HT}} \simeq \mathcal{F}/( (T_A^\sharp\{1\} \otimes_A R) \rtimes \mathbf{G}_m^\sharp)$ of stacks over $X$. Moreover, we have $\WCart_X^{\mathrm{HT}} \simeq BG$, where
\[ G = \mathrm{Stab}_{ (T_A^\sharp\{1\} \otimes_A R) \rtimes \mathbf{G}_m^\sharp}(\tau_0 \in \mathcal{F}).\]
\end{proposition}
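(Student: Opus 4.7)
The plan is to construct a natural transformation $\Phi\colon \mathcal{F} \to \WCart_X^{\mathrm{HT}}$ of stacks over $X$, show it is $(T_A^\sharp\{1\} \otimes_A R) \rtimes \mathbf{G}_m^\sharp$-invariant for the trivial action on the target, and deduce the quotient isomorphism; the $BG$ description will then follow from the $\mathbf{G}_m^\sharp$-torsor structure on $\mathcal{F}$.

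For the construction of $\Phi$, given an $R$-algebra $S$ and $\tau \in \mathcal{F}(S)$, the adjunction between $\delta$-rings and the Witt functor yields a canonical $\delta$-lift $\tilde\alpha_0\colon A \to W(S)$ of the composite $A \to R \to S$; this produces a ``standard'' Hodge--Tate Cartier--Witt divisor $\alpha_0\colon I \otimes_A W(S) \to W(S)$ and a standard lift $\eta_0\colon R \to \overline{W(S)} := W(S)/\alpha_0(I)$. Under the identification $\mathrm{Hom}_A(I, I \otimes_A W(S)) \simeq W(S)$, the element $\tau$ corresponds to a scalar $c_\tau \in W(S)$, and the commutativity of the square \eqref{eq:HTParam} reduces, using $xV(y) = V(Fx \cdot y)$ after locally orienting $\alpha_0$, to $F(c_\tau) = 1$, placing $c_\tau$ in $\mathbf{G}_m^\sharp(S)$. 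I will send $\tau$ to the Hodge--Tate point with rescaled Cartier--Witt divisor $c_\tau \cdot \alpha_0$ and $R$-algebra structure $\eta_0$; this is well-defined since $c_\tau \alpha_0(I) = \alpha_0(I)$ as ideals (as $c_\tau$ is a unit).

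Next I would check invariance of $\Phi$ under the group action. The $\mathbf{G}_m^\sharp$-factor rescales $\alpha_0$ by a unit, yielding an isomorphic object in $\WCart^{\mathrm{HT}}$; the $T_A^\sharp\{1\} \otimes_A R$-factor corresponds, via the conormal map $I/I^2 \hookrightarrow \Omega^1_A \otimes_A R$, to modifying $\tilde\alpha_0$ by a derivation valued in $W[F](S)$, giving an isomorphic Hodge--Tate point. For essential surjectivity, given $(J,\eta) \in \WCart_X^{\mathrm{HT}}(S)$ over $R \to S$, the CW divisor $J$ is isomorphic to $\alpha_0$ via a $\mathbf{G}_m^\sharp$-twist (since $\WCart^{\mathrm{HT}} \simeq B\mathbf{G}_m^\sharp$), and $\eta$ differs from $\eta_0$ by a class in $\mathrm{Map}_S(L_{R/W} \otimes_R S, B\mathbf{G}_a^\sharp\{1\}(S))$ computed via the square-zero extension structure of Construction~\ref{sqzeroHTprism}; the transitivity triangle $R \otimes_A L_{A/W} \to L_{R/W} \to L_{R/A}$ combined with the formal smoothness of $A$ over $W$ identifies such classes with derivations on $A$, hence with elements of $T_A^\sharp\{1\} \otimes_A R$.

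The $BG$ statement then follows because Construction~\ref{HTParam}(2) exhibits $\mathcal{F}$ as a $\mathbf{G}_m^\sharp$-torsor with distinguished point $\tau_0$, so $\mathcal{F}$ is a single orbit under $(T_A^\sharp\{1\} \otimes_A R) \rtimes \mathbf{G}_m^\sharp$ and the quotient stack identifies with $[\mathrm{pt}/G] = BG$ where $G$ is the stabilizer of $\tau_0$. The main technical obstacle I expect is the essential surjectivity step: carefully identifying the space of animated lifts $\eta\colon R \to \overline{W(S)}$ of $R \to S$ with the $T_A^\sharp\{1\} \otimes_A R$-orbit of $\eta_0$, which requires working at the level of derived mapping spaces (since $\overline{W(S)}$ need not be discrete) and fitting the conormal sequence for $R = A/I$ compatibly with the square-zero extension structure of Construction~\ref{sqzeroHTprism}.
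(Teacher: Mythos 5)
Your construction of the map and the equivariance check are essentially the paper's (the paper packages the square \eqref{eq:HTParam} as a map of quasi-ideals in Drinfeld's sense to produce a lift $f_\tau\colon R \to \overline{W(S)}$ with the \emph{fixed} divisor $\alpha_0$, whereas you fix the lift and rescale the divisor by $c_\tau$; these differ only by transporting along the automorphism $c_\tau$ of the divisor, though "well-defined since $c_\tau\alpha_0(I)=\alpha_0(I)$ as ideals" needs to be upgraded to an isomorphism of generalized Cartier divisors, since in the animated setting the quotient depends on the map and not just its image). The genuine gap is in your essential surjectivity step. The classes measuring the difference between two lifts of $R \to S$ along the square-zero extension $\overline{W(S)} \to S$ live in $\pi_0\mathrm{Map}_S(L_{R/W}\otimes_R S,\, \mathbf{G}_a^\sharp\{1\}(S)[1])$, and the transitivity triangle $\Omega^1_{A/W}\otimes_A R \to L_{R/W} \to L_{R/A}\simeq (I/I^2)[1]$ does \emph{not} identify this with derivations on $A$: since $\Omega^1_{A/W}$ is projective, the long exact sequence gives
\[
\pi_0\mathrm{Map}_S(L_{R/W}\otimes_R S,\, \mathbf{G}_a^\sharp\{1\}(S)[1]) \;\simeq\; \mathrm{coker}\Bigl( (T_A^\sharp\{1\}\otimes_A S) \xrightarrow{\ d^\vee\ } \mathrm{Hom}(I/I^2\otimes_R S,\ \mathbf{G}_a^\sharp\{1\}(S))\Bigr),
\]
so the derivations act on the set of lifts but generally \emph{not} transitively; the conormal direction contributes a $\mathbf{G}_a^\sharp$-factor that survives. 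This cokernel is nonzero exactly in the interesting cases (e.g.\ for ramified $\mathcal{O}_K$ the map $d^\vee$ is multiplication by the non-unit $E'$, which is why $G$ comes out as $\mathbf{G}_a^\sharp$ rather than $\mathbf{G}_m^\sharp$ in Example~\ref{ExHTOK}); if your identification were correct, $G$ would always be $\mathbf{G}_m^\sharp$.

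To close the gap you must show that the leftover cokernel is exactly swept out by the $\mathbf{G}_m^\sharp$-automorphisms of the Cartier--Witt divisor acting on lifts (equivalently, by the variation of $\tau$ in $\mathcal{F}$), which is precisely the interplay your setup obscures by freezing the lift at $\eta_0$. The paper's proof handles this by factoring the comparison through the intermediate ($A$-twisted) diffracted Hodge stack: it first proves $\mathcal{F}/(T_A^\sharp\{1\}\otimes_A R) \simeq X^{\DHod,A}$ by deformation theory against the square-zero extension of Construction~\ref{sqzeroHTprism} --- here $\mathcal{F}$ itself supplies the missing conormal direction of lifts --- and only then quotients by $\mathbf{G}_m^\sharp$ and descends. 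Your final reduction of the $BG$ statement to the simple transitivity of $\mathbf{G}_m^\sharp$ on $\mathcal{F}$ is fine once the first statement is in place.
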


In the proof below, we use the explicit model of $1$-truncated animated rings given by \cite{drinfeld-ring-groupoid}. Specifically, we use the notion of ``quasi-ideals'' from \cite[\S 3]{drinfeld-ring-groupoid}

\begin{proof}
The commutative square in \eqref{eq:HTParam} appearing in the definition of a point $\tau \in \mathcal{F}(S)$ can be regarded as a map of quasi-ideals, and thus induces a map $f_\tau:R \simeq \overline{A} \to \overline{W(S)}$ of animated rings. The assignment carrying $\tau$ to $( (I \otimes_A W(S) \xrightarrow{std} W(S)), f_\tau)$ for varying $S$ and $\tau$ then yields a map $\mathcal{F} \to \WCart_X^{\mathrm{HT}}$. Moreover, one checks that this map is naturally equivariant for the action of $(T_A^\sharp\{1\} \otimes_A R) \rtimes \mathbf{G}_m^\sharp$ on $\mathcal{F}$; the key point is that a derivation $D:A \to I \otimes_A W[F](S)$ can be regarded as a homotopy between $f_{\tau}$ and $f_{D \cdot \tau}$. Thus, we obtain an induced map 
\[  \mathcal{F}/( (T_A^\sharp\{1\} \otimes_A R) \rtimes \mathbf{G}_m^\sharp) \to \WCart_X^{\mathrm{HT}} \]
of stacks over $X$. This map fits into the following commutative diagram of stacks over $X$:
\[ \xymatrix{ 
 \mathcal{F}/ (T_A^\sharp\{1\} \otimes_A R) \ar[r] \ar[d] & X^{\DHod,A} \ar[r] \ar[d] & X \ar[d]^-{(\mathrm{id},\overline{\rho_{A}})} \\
 \mathcal{F}/( (T_A^\sharp\{1\} \otimes_A R) \rtimes \mathbf{G}_m^\sharp) \ar[r]  & \WCart_X^{\mathrm{HT}} \ar[r]^-{\mathrm{can}}  & X \times \WCart^{\mathrm{HT}} = X \times B\mathbf{G}_m^\sharp;}\]
Here all squares are Cartesian, which defines the $A$-twisted version  $X^{\DHod,A}$ of the diffracted Hodge stack appearing in the top row. Explicitly, for any $p$-nilpotent $R$-algebra $S$, one has a square-zero extension $W(S)/I_{W(S)} \to S$ of $S$ by $B\mathbf{G}_a^\sharp\{1\}(S)$ determined by the Hodge-Tate Cartier--Witt divisor $I_{W(S)} \to W(S)$ induced by $I \to A$ via base change along the $\delta$-map $A \to W(S)$ adjoint to the composition $A \to A/I \simeq R \to S$; the fibre of $X^{\DHod,A}(S) \to X(S)$ over the structure map $R \to S$ (regarded as a point of $X(S)$) is given by space of lifts $R \to W(S)/I_{W(S)}$ of $R \to S$. 
Using deformation theory as in Remark~\ref{rmk:comparesqzero} as well as the explicit description of $\mathcal{F}$, we see that first horizontal map $\mathcal{F}/(T_A^\sharp\{1\} \otimes_A R) \to X^{\DHod,A}$ in the top row is an isomorphism. By descent, the same holds true for the first horizontal map in the bottom row, giving the first statement in the proposition. The second statement follows readily since we already know that $(T_A^\sharp\{1\} \otimes_A R) \rtimes \mathbf{G}_m^\sharp$ (or even just the subgroup $1 \rtimes \mathbf{G}_m^\sharp$) acts transitively on $\mathcal{F}$.
\end{proof}

\begin{example}[The Hodge-Tate stack of $\mathrm{Spf}(\mathcal{O}_K)$]
\label{ExHTOK}
Assume further $R$ is a $p$-torsionfree dvr, so $R=\mathcal{O}_K$ for a finite extension $K/W(k)[1/p]$. We shall describe the objects appearing in Proposition~\ref{HTStackRegularLocal} explicitly to conclude that $\WCart_X^{\mathrm{HT}}$ identifies with $B\mathbf{G}_m^\sharp$ (resp. $B \mathbf{G}_a^\sharp$) as a stack over $X$ if $K$ is unramified (resp. not unramified). Recalling that the Cartier dual of $\mathbf{G}_a^\sharp$ is the formal completion $\widehat{\mathbf{G}_a}$ of $\mathbf{G}_a$ at $0$, we obtain the following ramified version of Theorem~\APCref{theorem:compute-with-HT}: for $K$ ramified, the $\infty$-category $\widehat{\mathcal{D}}_{crys}(X_\Prism, \overline{\mathcal{O}}_\Prism)$ of Hodge-Tate crystals of quasicoherent complexes on $\mathcal{O}_K$ identifies with the full subcategory of $\mathcal{D}(\mathcal{O}_K[\Theta])$ spanned by those complexes $K$ which are $p$-complete and such that $\Theta$ acts locally nilpotently on $H^*(K/p)$.

Choosing a uniformizer $\pi \in R$, we obtain a Breuil-Kisin prism $(A,I) = (W(k)\llbracket u \rrbracket , E(u))$ over $R$. We already have $\mathcal{F} = \mathbf{G}_a^\sharp$ as explained in Construction~\ref{HTParam}. Moreover, we can identify $T_A^\sharp\{1\} \simeq \mathbf{G}_a^\sharp$ by trivializing $\Omega^1_A \simeq A \cdot du$ and $I/I^2 \simeq \overline{A} \cdot E(u)$; the action of $D \in \mathbf{G}_a^\sharp \simeq T_A^\sharp\{1\}$ on $x \in \mathcal{F} \simeq \mathbf{G}_a^\sharp$ is then given by $D \cdot x = x + E' D$, where $E' = \frac{dE}{du} \in A$. Using the explicit formula for the $\mathbf{G}_m^\sharp$-action discussed in Construction~\ref{HTParam}, it follows that
\[ \WCart_{\mathrm{Spf}(R)}^{\mathrm{HT}} = BG\]
where
\[ G = \{ (a,t) \in \mathbf{G}_a^\sharp \rtimes \mathbf{G}_m^\sharp \mid t-1 = E'a\}.\]
We now have two cases:
\begin{itemize}
\item If $K$ is absolutely unramified (i.e., $E'$ is a unit), the projection to the second component identifies $G$ with $\mathbf{G}_m^\sharp$, so $\WCart_X^{\mathrm{HT}}=X \times B\mathbf{G}_m^\sharp$. 

\item Via projection to the first component, one sees that $G$ identifies as a scheme with $\mathbf{G}_a^\sharp$, with group structure $\ast$ given by
\[ a \ast b = a + b  + E' ab.\]
If $K$ is ramified, then $E'$ is not a unit, so $(E')^n \to 0$ $p$-adically; one then checks that the maps
\[ \mathbf{G}_a^\sharp \to G \quad \text{given by} \quad  x \mapsto \frac{e^{E'x}-1}{E'} = \sum_{n \geq 1} (E')^{n-1} \frac{x^n}{n!} \]
and
\[ G \to \mathbf{G}_a^\sharp \quad \text{given by} \quad a \mapsto \frac{\log(E'a+1)}{E'} = \sum_{n \geq 1} (-E')^{n-1} \frac{x^n}{n}\]
give mutually inverse isomorphisms of group schemes, whence $\WCart_X^{\mathrm{HT}}=X \times B\mathbf{G}_a^\sharp$.
\end{itemize}
\end{example}

\begin{remark}
Take $R=\mathcal{O}_K$ as in Example~\ref{ExHTOK}; assume that $K$ is not unramified. We have seen that $\WCart_X^{\mathrm{HT}} = BG$ for a group scheme $G$ that is isomorphic to $\mathbf{G}_a^\sharp$ and comes equipped with a projection $G \to \mathbf{G}_m^\sharp$. Unwrapping the construction, the resulting map $\WCart_X^{\mathrm{HT}} = BG \to B\mathbf{G}_m^\sharp \times X$ can be identified with the canonical map $\WCart_X^{\mathrm{HT}} \to \WCart^{\mathrm{HT}} \times X$, provided we identify $\WCart^{\mathrm{HT}} \times X$ with $B\mathbf{G}_m^\sharp \times X$ using the map $X \to \WCart^{\mathrm{HT}}$ arising from (Hodge-Tate) Cartier-Witt divisor $\left(W(-) \xrightarrow{E} W(-)\right)$ on $\mathcal{O}_K$-algebras\footnote{Beware that this might not agree with the standard identification $\WCart^{\mathrm{HT}} \times X = B\mathbf{G}_m^\sharp \times X$ coming from $\left(W(-) \xrightarrow{V(1)} W(-)\right)$ if the image of $E \in W(\mathcal{O}_K)$ does not have the form $u V(1)  = VFu$ for a unit $u \in W(\mathcal{O}_K)$. This problem disappears if we base change to $\mathcal{O}_C$ as we can then replace $E$ with $\tilde{p}$.}. In explicit terms, the map $\WCart_X^{\mathrm{HT}} \to \WCart^{\mathrm{HT}} \times X$ is now identified as the map on classifying stacks induced by the homomorphism 
\[ \mathbf{G}_a^\sharp \to \mathbf{G}_m^\sharp \quad \text{given by} \quad x \mapsto 1 + E' \cdot \frac{e^{E'x}-1}{E'} = e^{E'x}.\]
Using this observation, one can show that $\overline{\Prism}_{\mathcal{O}_K}\{n\} \simeq \mathcal{O}_K/nE'[-1]$ for all $n \geq 0$. 
\end{remark}

%

\newpage
\section{Some questions on the Hodge-Tate stack and regularity}
\label{ss:HTRegQ}

In this section, we record two questions motivated by the heuristic idea of regarding prismatic cohomology as being related to de Rham cohomology ``over $\mathbf{F}_1$''. More precisely, they are motivated by regarding Hodge-Tate cohomology as  a version of Hodge cohomology ``over $\mathbf{F}_1$''.

The first conjecture, which ought to be accessible through a better understanding of the group scheme $G$ from Proposition~\ref{HTStackRegularLocal}, tries to capture the heuristic that regular rings are formally smooth ``over $\mathbf{F}_1$''.

\begin{conjecture}[Cohomological dimension of the Hodge-Tate stack of a regular ring]
\label{conj:regularHT}
Let $R$ be a $p$-complete noetherian regular local ring with perfect residue field. The functor $\RGamma(\WCart_{\mathrm{Spf}(R)}^{\mathrm{HT}},-)$ (or equivalently $\RGamma(BG,-)$ for $G$ as in  Proposition~\ref{HTStackRegularLocal}) carries $\mathcal{D}^{\leq 0}$ to $\mathcal{D}^{\leq \dim(R)}$.
\end{conjecture}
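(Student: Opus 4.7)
By Proposition~\ref{HTStackRegularLocal}, $\WCart_X^{\mathrm{HT}} \simeq BG$ as stacks over $X = \mathrm{Spf}(R)$, where $G = \mathrm{Stab}_{(T_A^\sharp\{1\}\otimes_A R) \rtimes \mathbf{G}_m^\sharp}(\tau_0)$ for a prism $(A, I)$ with $\overline{A} = R$. Since $R$ is a $p$-complete regular local ring of dimension $d := \dim(R)$ with perfect residue field, such an $A$ is automatically formally smooth of relative dimension $d$ over $W(k)$, so $T_A\otimes_A R$ is free of rank $d$. My strategy is to realize $G$ as a codimension-one closed subgroup of the ambient ``$(d+1)$-dimensional'' group, decompose $G$ as an extension of $\sharp$-group schemes whose total $\sharp$-dimension is $d$, and sum the amplitude contributions.

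Unwinding Construction~\ref{HTParam}, a pair $(t, D)$ stabilizes $\tau_0$ exactly when $t(1+\beta(D)) = 1$, where $\beta \colon T_A^\sharp\{1\}\otimes_A R \to \mathbf{G}_a^\sharp$ is dual to the natural map $I/I^2 \hookrightarrow \Omega^1_A \otimes_A R$. Solving for $t$ exhibits $G$, scheme-theoretically, as a closed copy of $T_A^\sharp\{1\}\otimes_A R$; projection to $\mathbf{G}_m^\sharp$ then gives a short exact sequence
\[
1 \to K \to G \to Q \to 1,
\]
where $K := \ker(\beta) \simeq E^\sharp\{1\}$ for a summand $E \subseteq T_A\otimes_A R$ of rank $d$ (when $\beta = 0$) or $d-1$ (when $\beta \neq 0$), and $Q \subseteq \mathbf{G}_m^\sharp$ is either trivial or a $1$-dimensional subgroup obtained from the exponential embedding $\mathbf{G}_a^\sharp \hookrightarrow \mathbf{G}_m^\sharp$ (as in the ramified case of Example~\ref{ExHTOK}, coming ultimately from the prismatic logarithm of \S\ref{ss:PrismLogStack}). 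In all cases, $\mathrm{rk}(E) + \dim(Q) = d$.

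Lemma~\ref{CohRingBEsharp} together with a standard base-change argument (trivializing $E$ flat-locally and resolving via the bar complex) shows that $\RGamma(BK, -)$ has cohomological amplitude $\leq \mathrm{rk}(E)$. For $Q$, the amplitude of $\RGamma(BQ, -)$ is $\leq \dim(Q)$: it is zero when $Q$ is trivial, is given again by Lemma~\ref{CohRingBEsharp} when $Q \simeq \mathbf{G}_a^\sharp$, and follows from the identification $\mathcal{D}_{qc}(B\mathbf{G}_m^\sharp)\simeq \widehat{\mathcal{D}}(R[\Theta])$ of Theorem~\APCref{theorem:compute-with-HT} when $Q = \mathbf{G}_m^\sharp$, under which $\RGamma(B\mathbf{G}_m^\sharp,-)$ becomes the homotopy fibre of the Sen operator $\Theta$. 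The Leray spectral sequence for the fibration $BK \to BG \to BQ$ then bounds the cohomological amplitude of $\RGamma(BG, -)$ by $\mathrm{rk}(E) + \dim(Q) = d$, as required. The main obstacle will be a careful justification of the structure of $Q$ in higher-dimensional and mixed-characteristic situations, where the prismatic exponential may take a subtler form than in the $1$-dimensional Example~\ref{ExHTOK}; one should also verify that the Leray argument applies in the PD/$\sharp$-setting of classifying stacks considered here, which should reduce, via descent along a transversal prism (cf.\ Remark~\ref{DescentAbsWCart}), to a computation over a Witt-perfect base where the Sen operator picture is fully available.
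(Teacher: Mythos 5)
The statement you are trying to prove is stated in the paper as Conjecture~\ref{conj:regularHT} and is left open there: the authors only record (in the remark following it) that the case $\dim(R)=1$ follows from Example~\ref{ExHTOK} and from Proposition~\ref{RelativeHT} via Remark~\ref{RelativeAbsolutePerfect}, and they explicitly say that the general case ``ought to be accessible through a better understanding of the group scheme $G$.'' Your proposal founders on exactly that missing understanding. The key structural claim --- that $G$ sits in a short exact sequence $1 \to K \to G \to Q \to 1$ with $K \simeq E^\sharp\{1\}$ for a \emph{direct summand} $E \subseteq T_A \otimes_A R$ of rank $d$ or $d-1$, and $Q \subseteq \mathbf{G}_m^\sharp$ either trivial or a one-dimensional exponential copy of $\mathbf{G}_a^\sharp$ --- is false once $\dim(R) \geq 2$. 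Take $A = \mathbf{Z}_p\llbracket x,y\rrbracket$ with $g = xy - p$ (distinguished, since $\delta(g) \equiv -1 \bmod \mathfrak{m}_A$) and $R = A/(g) = \mathbf{Z}_p\llbracket x,y\rrbracket/(xy-p)$, a $2$-dimensional regular local ring. Then $dg = y\,dx + x\,dy$, so the $R$-linear map underlying $\beta$ is $(a,b) \mapsto ya + xb \colon R^2 \to R$, whose image is the proper nonzero ideal $(x,y)$. Its module-theoretic kernel is free of rank $1$ but is \emph{not} a direct summand of $R^2$ (the quotient $(x,y)$ is not projective), and the dichotomy ``$\beta = 0$ of rank $d$'' versus ``$\beta \neq 0$ of rank $d-1$'' does not apply. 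Worse, the group-scheme kernel of $\beta^\sharp$ is not $(\ker_R\beta)^\sharp$ --- formation of $(-)^\sharp$ does not commute with kernels, and e.g.\ multiplication by a non-unit nonzerodivisor on $\mathbf{G}_a^\sharp$ has nontrivial kernel --- and the image of $D \mapsto (1+\beta(D))^{-1}$ in $\mathbf{G}_m^\sharp$ is not one of your two listed subgroups. So the fibration $BK \to BG \to BQ$ on which your Leray argument rests does not exist in the form you describe.

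The parts of your proposal that do work are precisely the parts the paper already establishes: the identification $\WCart_X^{\mathrm{HT}} \simeq BG$ with the stabilizer equation $t(1+\beta(D))=1$ (Proposition~\ref{HTStackRegularLocal} and Construction~\ref{HTParam}), the amplitude bound $\leq \operatorname{rk}(E)$ for $BE^\sharp$ with $E$ free (Lemma~\ref{CohRingBEsharp} together with Cartier duality as in Lemma~\ref{RelativeCartDual}), and the complete analysis when $d=1$, where the ideal generated by $E'$ is either the unit ideal or topologically nilpotent and the exponential/logarithm untwists $G$ into $\mathbf{G}_m^\sharp$ or $\mathbf{G}_a^\sharp$ (Example~\ref{ExHTOK}). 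What is genuinely missing --- and what would constitute a proof of the conjecture --- is a cohomological analysis of $BG$ when the ``Jacobian ideal'' $\beta(T_A \otimes_A R) \subseteq R$ is a proper nonzero ideal, so that $G$ is a rank-$d$ $\sharp$-type group scheme with a multiplication twisted by a non-invertible, non-nilpotent element, admitting no evident splitting into unipotent and multiplicative pieces. You should treat your last paragraph not as a technical point to be checked but as the actual open problem.
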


\begin{remark}
If $\dim(R)=1$, Conjecture~\ref{conj:regularHT} follows from from our previous results. Indeed, if $R$ is $p$-torsionfree, one uses Example~\ref{ExHTOK}. On the other hand, if $R$ is a regular $\mathbf{F}_p$-algebra, then one can use the identification $\WCart_{\mathrm{Spf}(R)}^{\mathrm{HT}} \simeq \WCart_{\mathrm{Spf}(R)/\mathbf{Z}_p}^{\mathrm{HT}}$ from Remark~\ref{RelativeAbsolutePerfect} together with Proposition~\ref{RelativeHT} to settle the case where $R$ is smooth over $\mathbf{F}_p$; the general case can be deduced from this one by a limit argument.
\end{remark}

\begin{remark}
For $R$ as in Conjecture~\ref{conj:regularHT}, it would be interesting to recover the vector bundles studied in the recent works \cite{SaitoFW, DKRZBpder, JeffriesHochsterMixedChar} explicitly from $\WCart_{\mathrm{Spf}(R)}^{\mathrm{HT}}$.
\end{remark}

The following (somewhat optimistic) conjecture gives  a strong converse to the previous one:

\begin{conjecture}[A regularity criterion  via the Hodge-Tate stack]
\label{conj:HTregular}
Let $X$ be a noetherian excellent $p$-adic formal scheme. Write $\WCart_X^{\mathrm{HT}}$ for the derived Hodge-Tate stack of $X$ (i.e., the Hodge--Tate locus in $\WCart_X$ as defined in Definition~\ref{ss:DerRelPrism}).  The following are equivalent:
\begin{enumerate}
\item $X$ is regular.
\item The Hodge-Tate structure map $\pi:\WCart_X^{\mathrm{HT}} \to X$ is a gerbe for a $p$-completely flat $X$-group scheme (and thus $\WCart_X^{\mathrm{HT}}$ and $\WCart_X$ are both classical). 
\item There exists an integer $N$  such that the bifunctor $\mathrm{Ext}^{>N}_{\WCart_X^{\mathrm{HT}}}(-,-)$ vanishes on $p$-torsion quasi-coherent sheaves on $\WCart_X^{\mathrm{HT}}$.
\end{enumerate}
\end{conjecture}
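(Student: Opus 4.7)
The plan is to establish the chain $(1) \Rightarrow (2) \Rightarrow (3) \Rightarrow (1)$, with the bulk of the work going into $(3) \Rightarrow (1)$, which will be the main obstacle. The easy directions exploit the local structural results already proved in the paper; the hard direction will require a new idea for extracting regularity from a cohomological finiteness bound.

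For $(1) \Rightarrow (2)$, I would first reduce to the local case: since regularity is an \'etale local condition on noetherian excellent schemes and the formation of $\WCart_X^{\mathrm{HT}}$ is compatible with \'etale localization on $X$ (Remark~\ref{PrismatizeEtaleCov}), it suffices to handle $X = \mathrm{Spf}(R)$ for $R$ a complete noetherian regular local ring, and after a faithfully flat base change to arrange a perfect residue field one can apply Proposition~\ref{HTStackRegularLocal} to obtain $\WCart_X^{\mathrm{HT}} \simeq BG$ for the explicit stabilizer $G$ constructed there. The remaining task is to verify $p$-complete flatness of $G$ over $R$. In the known cases (Examples~\ref{ex:AbsHTSmooth} and \ref{ExHTOK}), $G$ is a successive extension of subgroups of $\mathbf{G}_m^\sharp$ and twisted forms of $\mathbf{G}_a^\sharp$, all of which are $p$-completely flat; regularity of $R$ enters crucially through the assertion that the conormal map $I/I^2 \hookrightarrow \Omega^1_A \otimes_A R$ is the inclusion of a free direct summand of the expected rank, so the explicit presentation of $G$ in Construction~\ref{HTParam} yields a flat group scheme. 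Globalizing via \'etale and $p$-quasisyntomic descent then handles arbitrary regular $X$.

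For $(2) \Rightarrow (3)$, the factorization $\RGamma(\WCart_X^{\mathrm{HT}}, -) = \RGamma(X, \pi_*(-))$ reduces the problem to bounding the cohomological amplitude of $\pi_*$ on $p$-torsion quasi-coherent sheaves. Since $X$ is noetherian and quasi-compact, $\RGamma(X, -)$ has finite cohomological dimension on quasi-coherent sheaves; and a gerbe banded by a $p$-completely flat affine group scheme built locally from $\mathbf{G}_a^\sharp$ and $\mathbf{G}_m^\sharp$ has bounded cohomological dimension on $p$-torsion coefficients, as one checks by devissage along the $p$-adic filtration and reduction to the known computation of $\RGamma(B\mathbf{G}_a^\sharp, -)$ and $\RGamma(B\mathbf{G}_m^\sharp, -)$ over $\mathbf{F}_p$.

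The hard part will be $(3) \Rightarrow (1)$, which I would attack by contrapositive: assume $X$ is non-regular at some closed point $x$ and manufacture $p$-torsion quasi-coherent sheaves on $\WCart_X^{\mathrm{HT}}$ with unbounded $\Ext$. Passing to the completion $R = \widehat{\mathcal{O}}_{X,x}$, non-regularity forces the residue field $k(x)$ to have infinite $\mathrm{Tor}^R$-dimension, and the plan is to translate this via the connection between Hodge--Tate cohomology and derived de Rham/Hodge cohomology of $R$: test $\Ext^*_{\WCart_X^{\mathrm{HT}}}(-,-)$ against a Hodge--Tate analogue of the skyscraper at $x$, and use the expected formula involving the derived cotangent complex $L_{R/\mathbf{Z}_p}$, whose Tor-amplitude is controlled precisely by regularity. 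The main technical obstacle is that $R$ is not $p$-quasisyntomic in general, so Proposition~\ref{QSynAbsCrys} does not directly apply and one must confront the subtleties highlighted by Warning~\ref{Warn:DerNotClassAbs}: the derived and classical versions of $\WCart_X^{\mathrm{HT}}$ may diverge, and the argument must be carried out on the derived stack (where the unbounded behavior actually lives) while still producing $\Ext$-classes visible on the classical stack forced by condition (3). An alternative route, which seems promising if one can establish Conjecture~\ref{conj:regularHT} in a sharp quantitative form, is to compare the dimension-theoretic bound with the embedding dimension and thereby directly obstruct (3) when the two disagree.
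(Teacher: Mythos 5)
There is no proof of this statement in the paper for your proposal to be measured against: the statement is explicitly labelled a \emph{conjecture}, described by the authors as ``somewhat optimistic,'' and the only evidence the paper supplies is the worked example immediately following it (which verifies the implication $(2)\Rightarrow(1)$ for hypersurface quotients $R=A/(g)$ of a $\delta$-lift $A=W\llbracket x_1,\dots,x_n\rrbracket$) together with the low-dimensional cases of the companion Conjecture~\ref{conj:regularHT}. So the honest assessment is whether your proposal closes the conjecture, and it does not.

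The concrete gaps are these. For $(1)\Rightarrow(2)$, Proposition~\ref{HTStackRegularLocal} does give $\WCart_X^{\mathrm{HT}}\simeq BG$ for a complete regular local $R$ with perfect residue field, but the $p$-complete flatness of the stabilizer $G\subset (T_A^\sharp\{1\}\otimes_A R)\rtimes\mathbf{G}_m^\sharp$ is exactly the part the paper leaves open (it is the ``better understanding of the group scheme $G$'' the authors say Conjecture~\ref{conj:regularHT} should follow from). Your proposed mechanism --- that regularity makes $I/I^2\hookrightarrow\Omega^1_A\otimes_A R$ a split injection of the expected rank --- is false already in the paper's own Example~\ref{ExHTOK}: for ramified $\mathcal{O}_K$ the map is multiplication by $E'(u)$, a non-unit, and for unramified $W(k)$ it is zero; flatness of $G$ in those cases is established by a separate explicit computation (an exponential change of coordinates), not by splitness of the conormal map. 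For $(2)\Rightarrow(3)$, bounding $\mathrm{Ext}^{*}_{\WCart_X^{\mathrm{HT}}}(-,-)$ requires not only finite cohomological dimension of the gerbe fibers but also finite Tor/Ext-amplitude over $\mathcal{O}_X$ itself, i.e.\ regularity of $X$; since in the cyclic scheme you may only assume $(2)$ at that stage, you would first need $(2)\Rightarrow(1)$, which the paper proves only for hypersurfaces. Moreover $(2)$ posits an arbitrary $p$-completely flat banding group, not one ``built from $\mathbf{G}_a^\sharp$ and $\mathbf{G}_m^\sharp$,'' so the devissage you invoke is not available without further argument. Finally, you state yourself that $(3)\Rightarrow(1)$ is only a plan with its main technical obstacle (the failure of $p$-quasisyntomicity and the divergence of the derived and classical stacks flagged in Warning~\ref{Warn:DerNotClassAbs}) unresolved. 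If you want to make genuine progress here, the place to start is the paper's hypersurface computation: try to extend the explicit torsor-theoretic analysis of $\mathcal{F}$ and its $(\mathbf{G}_a^\sharp)^n\rtimes\mathbf{G}_m^\sharp$-action from $J=(g)$ to ideals $J$ of higher codimension, which is where $(2)\Rightarrow(1)$ currently stops.
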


\begin{example}[Part of conjecture~\ref{conj:HTregular} for hypersurface singularities]
Let $W$ be a  $p$-complete and $p$-torsionfree dvr with perfect residue field. Let $A=W\llbracket x_1,...,x_n \rrbracket$, and let $R = A/J$ where $J=(g)$ for some $0 \neq g \in A$. Write $X=\mathrm{Spf}(R)$, and assume that $\WCart_X^{\mathrm{HT}} \to X$ is a gerbe for a flat $X$-group scheme. We shall check that $R$ is regular.

Choose a $\delta$-structure on $A$; using this choice, for any $R$-algebra $S$, we can regard $W(S)$ as a $\delta$-$A$-algebra compatibly with the $A$-algebra structure on $A$. Write $k$ for the residue field of $R$. Let $\mathcal{F}$ be the presheaf on $k$-algebras given by the following: for any $R$-algebra $S$, let $\mathcal{F}(S)$ be the set of all $A$-module maps $\tau:J \to W(S)$ rendering the following diagram commutative
\begin{equation}
\label{eq:HTParam2}
 \xymatrix{ J = (g) \ar[r]^-{\tau} \ar[d]^{\mathrm{std}} & W(S) \ar[d]^{V(1) (=p)} \\ A \ar[r]^-{\xi} & W(S).}
 \end{equation}
Here $\xi$ is the $\delta$-lift of the standard map $A \to R \to S$. Moreover, there is a preferred point $\tau_0 \in \mathcal{F}(k)$ corresponding to the map $J \to W(k)$ induced by the map $R \to k \simeq W(k)/^L V(1)$ of $A$-algebras. Following arguments used to prove Proposition~\ref{HTStackRegularLocal},  one then checks the folowing:
\begin{enumerate}
\item  There is a natural action of $\mathrm{Hom}_A(J,\mathbf{G}_a^\sharp)$ on $\mathcal{F}$ via additive translations (using the embedding $\mathbf{G}_a^\sharp = W[F] \subset W(-)$). Via this action, the presheaf $\mathcal{F}$ is a torsor for $\mathrm{Hom}_A(J,\mathbf{G}_a^\sharp) \simeq \mathbf{G}_a^\sharp$, and is trivlalized by $\tau_0$.

\item There is a natural map $\mathcal{F} \to X^{\DHod} \times_X \mathrm{Spec}(k)$: each square as in \eqref{eq:HTParam2} induces a map $A/J \to W(S)/^L V(1)$ of animated rings.

\item The group scheme $T_A^\sharp \simeq \mathrm{Hom}_A(\Omega^1_{A/W},\mathbf{G}_a^\sharp) \simeq (\mathbf{G}_a^\sharp)^n$ acts on $\mathcal{F}$ through the action in (1) using the homomorphism $\mathrm{Hom}_A(\Omega^1_{A/W},\mathbf{G}_a^\sharp) \xrightarrow{d^\vee} \mathrm{Hom}_A(J,\mathbf{G}_a^\sharp)$. 

\item The standard $\mathbf{G}_m^\sharp$-action on the quasi-ideal $\left(W(S) \xrightarrow{V(1)} W(S)\right)$ induces an action on $\mathcal{F}$.

\item The actions in (3) and (4) amalgamate to an action of $T_A^\sharp \rtimes \mathbf{G}_m^\sharp$ on $\mathcal{F}$. Moreover, the map in (2) yields an isomorphism
\[ \mathcal{F}/(T_A^\sharp \rtimes \mathbf{G}_m^\sharp) \simeq \left(X^{\DHod} \times_X \mathrm{Spec}(k)\right)/\mathbf{G}_m^\sharp \simeq \WCart_X^{\mathrm{HT}} \times_X \mathrm{Spec}(k).\]

\end{enumerate}

Using this discussion and unwinding definitions, we learn that the $k$-stack $\WCart_X^{\mathrm{HT}} \times_X \mathrm{Spec}(k)$ is the quotient of $\mathbf{G}_a^\sharp$ by an action of $(\mathbf{G}_a^\sharp)^n \rtimes \mathbf{G}_m^\sharp$, where the action of the $i$-th basis vector in $(\mathbf{G}_a^\sharp)^n$ is additive translation by $\frac{\partial g}{\partial x_i}$, while the action of the $t \in \mathbf{G}_m^\sharp(S)$ on the point $0 \in \mathbf{G}_a^\sharp(S)$ is given via
\[ t \cdot 0 = (t-1) \tau_0(g) \in W[F](S) = \mathbf{G}_a^\sharp(S) \subset W(S),\]
so the $\mathbf{G}_m^\sharp$-orbit of $0$ coincides with the subideal $\mathbf{G}_a^\sharp \cdot \tau_0(g) \subset \mathbf{G}_a^\sharp$ (inside $W(-)$). We are assuming that $\WCart_X^{\mathrm{HT}} \to X$ is a gerbe, so the action of $(\mathbf{G}_a^\sharp)^n \rtimes \mathbf{G}_m^\sharp$ on $\mathbf{G}_a^\sharp$ is assumed to be transitive. If one of the derivatives $\frac{\partial g}{\partial x_i} \in k$ is nonzero, then this action is clearly transitive, and also $R$ is formally smooth over $W$ by the Jacobian criterion, so we are done. If $\frac{\partial g}{\partial x_i} = 0$ in $k$ for all $i$, then the $(\mathbf{G}_a^\sharp)^n$ factor is acting trivially, so the transitivity assumption translates to the statement that the inclusion 
\[ \mathbf{G}_a^\sharp \cdot \tau_0(g) \subset \mathbf{G}_a^\sharp\]
of ideals of $W(-)$ is an equality. But this can only happen if $\tau_0(g)$ is a unit: scalar multiplication gives an isomorphism of rings $\mathbf{G}_a \simeq \mathrm{End}_W(\mathbf{G}_a^\sharp)$ (see \cite[Lemma 3.8.1 (ii)]{drinfeld-prismatic}) and thus identifies units in $\mathbf{G}_a$ with automorphisms of $\mathbf{G}_a^\sharp$.  Now $\tau_0(g)$ is a unit exactly when $\xi(g) = \tau_0(g) V(1) \in W(k)$ is a unit multiple of $V(1)$, which in turn is equivalent to $g \in A$ being distinguished. But it is easy to see then that $R = A/(g)$ must be regular: if $g \in (p,x_1...,x_n)^2$, then its image in $W(k)$ would lie in $(p^2)$, which violates the distinguishedness.
\end{example}

\begin{remark}
It seems plausible that the equivalence of (1) and (2) in Conjecture~\ref{conj:HTregular} is related to the mixed characteristic Jacobian criterion for regularity proven in \cite{JeffriesHochsterMixedChar}.
\end{remark}

\newpage
\appendix

\section{Animated \texorpdfstring{$\delta$}{delta}-rings}\label{ss:CAlgDeltaAnim}

In this appendix, we introduce a notion of {\it animated $\delta$-ring}, which plays an important background role in several parts of this paper. Motivated by the relationship between Joyal's classical theory of $\delta$-rings \cite{JoyalDelta} and Frobenius lists, we define an animated $\delta$-ring to be an animated ring $R$ equipped with a lift of the Frobenius endomorphism on $R \otimes_{\mathbf{Z}}^L \mathbf{F}_p$ (Definition \ref{definition:animated-delta-ring}). Using this definition,  we check that the resulting $\infty$-category is obtained by ``animating'' the classical notion of a $\delta$-ring (Remark~\ref{UnivPropAnimDelta}), justifying the name ``animated $\delta$-ring''; the latter approach is the one adopted in other places, such as \cite{MaoDerivedCrys} and also implicitly in various arguments of \cite{prisms}. We begin with a brief digression concerning Witt vectors.

\begin{notation}[Derived Functors of Witt Vectors]\label{notation:animated-Witt-vectors}
Fix an integer $n \geq 0$. Let us regard the functor $R \mapsto W_{n}(R)$ as a functor from the
ordinary category of commutative rings to itself. Applying Proposition \APCref{proposition:universal-of-animated},
we deduce that there is an essentially unique endofunctor of the $\infty$-category $\CAlg^{\anim}$
which commutes with sifted colimits and which coincides with $W_{n}$ on the category $\Poly_{\Z}$
of finitely generated polynomial algebras over $\Z$. We will (temporarily) denote this endofunctor
by $LW_{n}: \CAlg^{\anim} \rightarrow \CAlg^{\anim}$ (see Notation \ref{notation:Witt-vectors-revisited} below).
\end{notation}

Let $R$ be an animated commutative ring. We let $[R]$ denote the mapping space 
$\Hom_{ \CAlg^{\anim} }( \Z[x], R)$, which we refer to as the {\it underlying space of $R$}.
In particular, if $R$ is a commutative ring, then $[R]$ denotes its underlying set. 
The construction $R \mapsto [R]$ determines a functor from $\CAlg^{\anim}$ to the
$\infty$-category $\SSet$ of spaces which commutes with sifted colimits.

\begin{remark}[Witt Components]\label{remark:witt-components}
Let $n \geq 0$ be an integer. Let us regard the constructions $R \mapsto [R]$ and $R \mapsto [ LW_n(R) ]$ as functors
from the $\infty$-category of animated commutative rings to the $\infty$-category of spaces. Both of these functors commute
with sifted colimits, and are therefore left Kan extensions of their restriction to the category of finitely generated polynomial algebras over $\Z$
(see Remark \APCref{rmk:LKECAlgAnim}). It follows that, for $0 \leq m < n$, there is an essentially unique natural transformation
$$ \mathrm{comp}_{m}: [ LW_n(R) ] \rightarrow [R]$$
with the following property:
\begin{itemize} 
\item When $R$ is a finitely generated polynomial algebra over $\Z$, the morphism $\mathrm{comp}_{m}: W_n(R) \rightarrow R$
carries a Witt vector $\sum_{i=0}^{n-1} V^{i}[ a_i]$ to its $m$th Witt component $a_m$.
\end{itemize}
We will refer to the morphism $\mathrm{comp}_{m}$ as the {\it $m$th Witt component}. 
\end{remark}

\begin{proposition}\label{wrost}
Let $R$ be an animated commutative ring. For every integer $n \geq 0$, the Witt component maps
$\{ \mathrm{comp}_{m} \}_{0 \leq m < n}$ induce a homotopy equivalence $[ LW_n(R) ] \rightarrow [ R ]^{n}$.
\end{proposition}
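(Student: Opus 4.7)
The plan is to reduce the statement to the case of finitely generated polynomial algebras over $\Z$, where it is the classical bijection between $n$-truncated Witt vectors and $n$-tuples given by Witt components.

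First I would verify that both sides of the proposed map define functors $\CAlg^{\anim} \to \SSet$ which commute with sifted colimits. For the source, this follows by composing two such functors: the functor $LW_n$ commutes with sifted colimits by its construction in Notation~\ref{notation:animated-Witt-vectors}, and the underlying-space functor $[-]$ commutes with sifted colimits (as observed in the paragraph preceding Remark~\ref{remark:witt-components}). For the target, $[-]^n$ is a finite product of functors commuting with sifted colimits, hence again commutes with sifted colimits. By Remark~\APCref{rmk:LKECAlgAnim}, both functors are therefore left Kan extensions of their restrictions to $\Poly_{\Z}$, so any natural transformation between them is an equivalence as soon as it is an equivalence on $\Poly_{\Z}$.

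It then remains to check the statement for $R \in \Poly_{\Z}$. On such $R$, the animated functor $LW_n$ agrees tautologically with the classical ring $W_n(R)$ of $n$-truncated Witt vectors, and the natural transformation $[LW_n(R)] \to [R]^n$ from Remark~\ref{remark:witt-components} is, by construction, the classical Witt-component map $W_n(R) \to R^n$ carrying $\sum_{i=0}^{n-1} V^i[a_i]$ to $(a_0, \ldots, a_{n-1})$; this is a bijection of sets by the classical theory of Witt vectors, proving the proposition. Since the whole argument is formal once one knows that $LW_n$ and $[-]$ preserve sifted colimits, I do not anticipate any real obstacle; the only minor point to be careful about is that the Witt-component natural transformation is uniquely characterized by its behavior on $\Poly_{\Z}$ (which is already baked into Remark~\ref{remark:witt-components}), so there is no ambiguity in comparing it with the classical Witt-component map.
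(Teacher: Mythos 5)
Your proposal is correct and follows the same route as the paper: reduce to finitely generated polynomial algebras over $\Z$ using that both $[LW_n(-)]$ and $[-]^n$ commute with sifted colimits, then invoke the classical bijection between $n$-truncated Witt vectors and $n$-tuples of Witt components (the paper simply exhibits the explicit inverse $(a_0,\ldots,a_{n-1}) \mapsto \sum V^i[a_i]$).
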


\begin{proof}
Since the functors $R \mapsto [ LW_n(R) ]$ and $R \mapsto [R]^{n}$ commute with sifted colimits, we
may assume without loss of generality that $R$ is a finitely generated polynomial algebra over $\Z$. In this case, the
relevant map has an explicit inverse, given by the function
$$ R^{n} \rightarrow W_n(R) \quad \quad (a_0, a_1, \cdots, a_{n-1}) \mapsto \sum V^{i} [a_i ].$$
\end{proof}

\begin{corollary}\label{corollary:usual-Witt-vectors}
Let $n \geq 0$ be an integer. For every commutative ring $R$, there is a canonical isomorphism $LW_n(R) \simeq W_n(R)$,
which is determined by the requirement that it depends functorially on $R$ and coincides with the identity map when
$R$ is a finitely generated polynomial algebra over $\Z$.
\end{corollary}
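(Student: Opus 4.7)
The plan is to build the canonical isomorphism via the universal property of animation (Proposition~\APCref{proposition:universal-of-animated}) and to verify it is an equivalence using the Witt component maps of Remark~\ref{remark:witt-components}.

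First, I would observe that $LW_n$ automatically preserves discreteness. Indeed, Proposition~\ref{wrost} identifies $[LW_n(R)] \simeq [R]^{n}$, and a finite product of discrete spaces is discrete. Hence $LW_n$ restricts to a functor $LW_n|_{\mathrm{CAlg}}: \mathrm{CAlg} \rightarrow \mathrm{CAlg}$, and for any commutative ring $R$ the animated ring $LW_n(R)$ may be compared with $W_n(R)$ in $\mathrm{CAlg}$.

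Second, I would construct the comparison transformation by universal property. Set $F := \iota \circ W_n \circ \pi_0 : \CAlg^{\anim} \to \CAlg^{\anim}$, where $\iota : \mathrm{CAlg} \hookrightarrow \CAlg^{\anim}$ is the inclusion and $\pi_0$ is its left adjoint. I claim $F$ preserves sifted colimits: $\pi_0$ is a left adjoint; the classical Witt functor $W_n : \mathrm{CAlg} \to \mathrm{CAlg}$ preserves sifted colimits, since sifted colimits in $\mathrm{CAlg}$ are computed at the level of underlying sets, sifted colimits of sets commute with finite products, and the ring structure on $W_n(R)$ is given by universal polynomials in finitely many Witt coordinates; and $\iota$ preserves sifted colimits by the standard fact that a sifted colimit of discrete spaces in $\SSet$ is discrete. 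Since $LW_n|_{\Poly_{\Z}} = W_n|_{\Poly_{\Z}} = F|_{\Poly_{\Z}}$, Proposition~\APCref{proposition:universal-of-animated} produces an essentially unique natural transformation $\alpha : LW_n \Rightarrow F$ restricting to the identity on $\Poly_{\Z}$.

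Third, I would show that $\alpha_R$ is an isomorphism for $R \in \mathrm{CAlg}$ (so that $\pi_0(R) = R$ and $\alpha_R: LW_n(R) \to W_n(R)$ is a morphism of commutative rings) by a parallel uniqueness argument applied to Witt components. For each $0 \leq m < n$, the composite natural transformation $\mathrm{comp}_m^{W} \circ [\alpha_{-}] : [LW_n(-)] \to [-]$ of functors $\CAlg^{\anim} \to \SSet$ preserves sifted colimits and agrees on $\Poly_{\Z}$ with the classical $m$th Witt component. By the same uniqueness as in Remark~\ref{remark:witt-components}, it must coincide with the $m$th Witt component $\mathrm{comp}_m^{L}$ of $LW_n$. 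Under the identifications $[LW_n(R)] \simeq R^{n} \simeq [W_n(R)]$ given by the Witt components (Proposition~\ref{wrost} and its classical analog), this means $[\alpha_R]$ is the identity map. Hence $\alpha_R$ is a bijective ring homomorphism, so an isomorphism; uniqueness of the identification is built in through the universal property.

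The main obstacle is only bookkeeping: checking that $F$ preserves sifted colimits, which amounts to admitting the standard fact that sifted $\infty$-colimits of discrete objects in $\SSet$ (or in $\CAlg^{\anim}$) remain discrete. Everything else is a formal application of the animation universal property applied twice—once to produce $\alpha$, once to identify the induced map on underlying spaces with the identity.
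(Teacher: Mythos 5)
Your overall route is the same as the paper's: produce the comparison map by the universal property of animation and then check it is an equivalence on underlying spaces using the Witt components and Proposition~\ref{wrost}. Step three, where you identify $\mathrm{comp}^{W}_m\circ[\alpha_{-}]$ with the derived Witt component by uniqueness of natural transformations out of a left Kan extension, is in fact a more careful justification of the commutative triangle that the paper simply asserts.

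There is, however, one genuinely false claim in your second step: the inclusion $\iota:\mathrm{CAlg}\hookrightarrow\CAlg^{\anim}$ does \emph{not} preserve sifted colimits, and ``a sifted colimit of discrete spaces in $\SSet$ is discrete'' is not a standard fact but a false one. Sifted colimits include geometric realizations, and every space is the geometric realization of a simplicial (hence levelwise discrete) diagram; likewise the geometric realization in $\CAlg^{\anim}$ of the bar construction $B(\Z,\Z[x],\Z)$, a simplicial diagram of discrete rings, is $\Z\otimes^{L}_{\Z[x]}\Z$, which is not discrete. (Only \emph{filtered} colimits of discrete objects stay discrete.) Indeed your claim cannot be repaired: if $F=\iota\circ W_n\circ\pi_0$ preserved sifted colimits and agreed with $W_n$ on $\Poly_{\Z}$, the uniqueness in Proposition~\APCref{proposition:universal-of-animated} would force $F\simeq LW_n$, contradicting Proposition~\ref{wrost}, which shows $[LW_n(R)]\simeq[R]^n$ is non-discrete for non-discrete $R$. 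Fortunately this claim is not needed: to produce $\alpha$ you only need the left Kan extension \emph{mapping} property --- natural transformations out of $LW_n$ into an arbitrary functor are determined by their restriction to $\Poly_{\Z}$ --- which places no condition on the target functor and is exactly what the paper invokes (Remark~\APCref{rmk:LKECAlgAnim}). With the sifted-colimit assertion deleted and the construction of $\alpha$ justified this way, the remainder of your argument is correct and coincides with the paper's proof.
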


\begin{proof}
By virtue of Remark \APCref{rmk:LKECAlgAnim}, there is an essentially unique natural transformation $\alpha_{R}: LW_n(R) \rightarrow W_n(R)$
which coincides with the identity when $R$ is a finitely generated polynomial algebra over $\Z$. We have a commutative diagram of sets
$$ \xymatrix@R=50pt@C=50pt{ [LW_{n}(R)] \ar[dr]_{ \{ \mathrm{comp}_{m} \}_{0 \leq m < n}} \ar[rr]^{ [\alpha_R] } & & [W_n(R)] \ar[dl]^{ \sum V^i [a_i]  \mapsto (a_0, a_1, \cdots, a_{n-1} ) } \\
& [R]^{n}. & }$$
Proposition \ref{wrost} guarantees that the left vertical map is an isomorphism. Since the right vertical map is an isomorphism,
it follows that $\alpha_{R}$ is also an isomorphism.
\end{proof}

\begin{notation}\label{notation:Witt-vectors-revisited}
Let $R$ be an animated commutative ring. For $n \geq 0$, we write $W_{n}(R)$ for the animated commutative ring
$LW_n(R)$ introduced in Notation \ref{notation:animated-Witt-vectors}. We will refer to $W_{n}(R)$ as the
{\it Witt vectors of $R$ of length $n$}. By virtue of Corollary \ref{corollary:usual-Witt-vectors}, this agrees
with the usual definition in the case where $R$ is a commutative ring.
\end{notation}

\begin{example}
For every animated commutative ring $R$, we have a canonical isomorphism $W_{1}(R) \simeq R$.
\end{example}

\begin{remark}[Restriction Maps]\label{remark:restriction-maps}
Let $R$ be an animated commutative ring. For every $n \geq 0$, there is a restriction map
$\mathrm{Res}: W_{n+1}(R) \rightarrow W_{n}(R)$, which is determined (up to homotopy)
by the requirement that it depends functorially on $R$ and coincides with the usual restriction map
$$ \sum_{i=0}^{n} V^{i} [a_i ] \mapsto  \sum_{i=0}^{n-1} V^{i} [a_i ]$$
when $R$ is a commutative ring. We therefore obtain a tower of animated commutative rings
$$ \cdots \rightarrow W_{3}(R) \rightarrow W_{2}(R) \rightarrow W_{1}(R) \simeq R.$$
We let $W(R)$ denote the limit of this tower (formed in the $\infty$-category $\CAlg^{\anim}$). We will refer to $W(R)$
as the {\it Witt vectors of $R$}. 
\end{remark}

\begin{warning}\label{warning:left-Kan-Witt}
The construction $R \mapsto W(R)$ determines an endofunctor of the $\infty$-category $\CAlg^{\anim}$ of animated
commutative rings. This endofunctor cannot be obtained directly from Proposition \APCref{proposition:universal-of-animated},
since it does not commute with filtered colimits. However, the Witt components of Remark \ref{remark:witt-components} supply 
a natural homotopy equivalence of $[W(R)]$ with the infinite product $\prod_{n \geq 0} [R]$. It follows that the functor
$R \mapsto W(R)$ commutes with the formation of geometric realizations, and is therefore a left Kan extension of
its restriction to the ordinary category of commutative rings.
\end{warning}

\begin{remark}[Ghost Components]
Let $R$ be an animated commutative ring. For every pair of integers $0 \leq m < n$, there is a morphism of animated
commutative rings $\gamma_{m}: W_{n}(R) \rightarrow R$, which is determined (up to homotopy) by the requirement
that it depends functorially on $R$ and that it coincides with the homomorphism
$$ \sum_{i=0}^{n-1} V^{i} [a_i ] \mapsto \sum_{i=0}^{m} p^{i} a_{i}^{p^{m-i}}$$
when $R$ is a commutative ring. We will refer to $\gamma_{m}$ as the {\it $m$th ghost component}.
\end{remark}

\begin{remark}\label{remark:fundamental-pullback-square}
Let $R$ be a commutative ring. For each element $x \in R$, let $\overline{x}$ denote its image in the quotient ring
$R/pR$. We then have a commutative diagram $\sigma_{R}:$
$$ \xymatrix@R=50pt@C=50pt{ W_2(R) \ar[r]^-{ \gamma_1 } \ar[d]^{\mathrm{Res}} & R \ar[d]^{ x \mapsto \overline{x} } \\
R \ar[r]^-{ x \mapsto \overline{x}^{p} } & R/pR. }$$
When $R$ is $p$-torsion-free, this diagram is a pullback square.

Applying Proposition \APCref{proposition:universal-of-animated}, we see that the functor $R \mapsto \sigma_{R}$
has a nonabelian left derived functor, which carries each animated commutative ring $R$ to a diagram of animated commutative rings $L\sigma_{R}$:
$$ \xymatrix@R=50pt@C=50pt{ W_2(R) \ar[r]^-{ \gamma_1 } \ar[d]^{\mathrm{Res}} & R \ar[d]^{\mathrm{can}} \\
R \ar[r]^-{ \mathrm{Frob} } & \F_p \otimes^{L} R. }$$
This construction has the following features:
\begin{itemize}
\item The animated commutative ring $\F_p \otimes^{L} R$ is the coproduct of $\F_p$ with $R$ in the $\infty$-category
$\CAlg^{\anim}$. When $R$ is an ordinary commutative ring, there is a canonical map
$\F_p \otimes^{L} R \rightarrow R/pR$, which is an isomorphism if and only if $R$ is $p$-torsion-free.

\item The morphism $\mathrm{can}: R \rightarrow \F_p \otimes^{L} R$ is given by the inclusion of the second factor.

\item The morphism $\mathrm{Frob}: R \rightarrow \F_p \otimes^{L} R$ is uniquely determined by the requirement that it depends functorially
on $R$ and coincides with the homomorphism $x \mapsto \overline{x}^{p}$ when $R$ is a $p$-torsion-free commutative ring.

\item When $R$ is a $p$-torsion-free commutative ring, the diagram $L\sigma_{R}$ agrees with $\sigma_{R}$ (up to canonical isomorphism).

\item The diagram $L\sigma_{R}$ is always a pullback square (in the $\infty$-category $\CAlg^{\anim}$).
\end{itemize}
\end{remark}

\begin{definition}[Animated $\delta$-Rings]\label{definition:animated-delta-ring}
Let $A$ be an animated commutative ring. A {\it $\delta$-structure on $A$} is a section of the restriction map
$\mathrm{Res}: W_2(A) \rightarrow A$. An {\it animated $\delta$-ring} is an animated commutative ring $A$ together with a $\delta$-structure on $A$.
More precisely, an animated $\delta$-ring is a commutative diagram
$$ \xymatrix@R=50pt@C=50pt{ & W_2(A) \ar[dr]^{ \mathrm{Res} } &  \\
A \ar[ur] \ar[rr]_{ \id_{A}  } & & A }$$
in the $\infty$-category $\CAlg^{\anim}$, where the horizontal morphism is an identity morphism
and the right vertical map is given by the construction of Remark \ref{remark:restriction-maps}. 
\end{definition}

\begin{example}[$\delta$-Structures on Commutative Rings]\label{example:wroth}
Let $A$ be a commutative ring. In the category of sets, every section of the restriction map
$\mathrm{Res}: W_2(A) \rightarrow A$ is given by $x \mapsto [x] + V[ \delta_{A}(x) ]$ for a unique function $\delta_{A}: A \rightarrow A$.
The function $\delta_{A}$ determines a $\delta$-structure on $A$ (in the sense of Definition \ref{definition:animated-delta-ring}) if
and only if the construction $x \mapsto [x] + V[ \delta_{A}(x) ]$ is a ring homomorphism. Concretely, this is
equivalent to the requirement that $\delta_{A}$ satisfies the identities 
\begin{equation}
\begin{gathered}\label{equation:delta-ring-identities}
\delta_A(x+y) = \delta_A(x) + \delta_A(y) - (p-1)! \sum_{0 < i < p} \frac{ x^{i} }{i!} \frac{y^{p-i}}{ (p-i)!} \\
\delta_A(1) = 0 \quad \quad \delta_A(xy) = x^{p} \delta_A(y) + y^{p} \delta_A(x) + p \delta_A(x) \delta_A(y).
\end{gathered}
\end{equation}
Consequently, when restricted to commutative rings, Definition \ref{definition:animated-delta-ring} recovers the usual notion
of $\delta$-structure (originally introduced by Joyal in ***).
\end{example}

\begin{remark}
Let $A$ be an animated $\delta$-ring, so that the restriction map $\mathrm{Res}: W_2(A) \rightarrow A$
is equipped with a section $s: A \rightarrow W_{2}(A)$. We write $\delta_{A}$ for the composite map
$$ [A] \xrightarrow{ [s] } [ W_2(A) ] \xrightarrow{ \mathrm{comp}_{1} } [A].$$
When $A$ is a commutative ring, this agrees with the function described in Example \ref{example:wroth},
which completely determines the $\delta$-structure on $A$. Beware that, in general, this is no longer true.
\end{remark}

\begin{remark}
\label{AnimDeltapi0}
Let $A$ be an animated $\delta$-ring, so that the restriction map $\mathrm{Res}: W_2(A) \rightarrow A$ is equipped with a section $s: A \rightarrow W_{2}(A)$.  By Proposition~\ref{wrost}, the formation of $W_2(A)$ commutes with passing to connected components: the natural map $\pi_0( W_2(A)) \to W_2 (\pi_0(A))$ is an isomorphism. Applying $\pi_0(-)$ to the $\delta$-structure on $A$ then shows that $\pi_0(A)$ is also naturally an animated $\delta$-ring, and that the natural map $A \to \pi_0(A)$ is a map of $\delta$-rings.
\end{remark}

\begin{remark}[$\delta$-Structures as Frobenius Lifts]\label{remark:delta-Frobenius-lift}
Let $A$ be an animated commutative ring. Using the pullback diagram described in Remark \ref{remark:fundamental-pullback-square},
we see that $\delta$-structures on $A$ can be identified with commutative diagrams
$$\xymatrix@R=50pt@C=50pt{  & A \ar[dr]^{ \mathrm{can} } & \\
A \ar[ur]^{\varphi_A} \ar[rr]^{ \mathrm{Frob} } & & \F_p \otimes^{L} A }$$
in the $\infty$-category $\CAlg^{\anim}$, where $\mathrm{can}: A \rightarrow \F_p \otimes^{L} A$ is given by the inclusion of the second factor
and $\mathrm{Frob}$ is the Frobenius morphism of Remark \ref{remark:fundamental-pullback-square}. Stated more informally,
$\delta$-structures on $A$ can be identified with pairs $(\varphi_A, h)$, where $\varphi_{A}: A \rightarrow A$ is a morphism
of animated commutative rings and $h$ is a homotopy from $\mathrm{can} \circ \varphi_{A}$ to $\mathrm{Frob}$.
In other words, a $\delta$-structure on $A$ is given by ``lift'' of the Frobenius morphism $\mathrm{Frob}: A \rightarrow \F_p \otimes^{L} A$ to an endomorphism of $A$ (which we denote by $\varphi_{A}$ and also refer to as the Frobenius morphism of $A$).
\end{remark}

\begin{example}[The Torsion-Free Case]\label{example:torsion-free}
Let $A$ be a commutative ring which is $p$-torsion-free. Then a $\delta$-structure on $A$ can be identified with a ring homomorphism $\varphi_{A}: A \rightarrow A$ satisfying $\varphi_{A}(x) \equiv x^{p} \pmod{p}$ for each $x \in A$ (in this case, the homotopy $h$ of Remark \ref{remark:delta-Frobenius-lift} is essentially unique if it exists).
\end{example}

\begin{remark}[Perfect Animated $\delta$-Rings]\label{remark:perfect-animated-delta}
Let $A$ be an animated $\delta$-ring. The following conditions are equivalent:
\begin{itemize}
\item The underlying animated commutative ring $A$ is $p$-complete and the morphism $\varphi_{A}: A \rightarrow A$ of Remark \ref{remark:delta-Frobenius-lift}
is an isomorphism.
\item There exists an isomorphism of animated $\delta$-rings $A \simeq W(k)$, where $k$ is a perfect $\F_p$-algebra.
\end{itemize}
\end{remark}

The collection of animated $\delta$-rings forms an $\infty$-category which we will denote by $\dCAlg^{\anim}$ and refer to as the {\it $\infty$-category of animated $\delta$-rings}. It is characterized formally by the existence of a pullback diagram
\begin{equation}
\begin{gathered}\label{equation:delta-ring-pullback}
\xymatrix@R=50pt@C=50pt{ \dCAlg^{\anim} \ar[r] \ar[d] & \CAlg^{\anim} \ar[d]^{ A \mapsto ( \id_{A}, \mathrm{Res} ) } \\
\Fun( \{ 0 < 1 < 2 \}, \CAlg^{\anim} ) \ar[r] & \Fun( \{ 0 < 2 \}, \CAlg^{\anim} ) \times_{ \CAlg^{\anim} } \Fun( \{ 1 < 2 \}, \CAlg^{\anim} ). }
\end{gathered}
\end{equation}
It follows from Example \ref{example:wroth} that the ordinary category of $\delta$-rings can be identified with a full subcategory of $\dCAlg^{\anim}$,
spanned by those animated $\delta$-rings for which the underlying animated commutative ring is discrete.

\begin{example}[Polynomial $\delta$-Rings]\label{example:delta-structure-on-polynomial}
Let $A = \Z[ x_i ]_{i \in I}$ be a polynomial ring on a set of generators $\{ x_i \}_{i \in I}$. For every collection of elements $\{ a_i \}_{i \in I}$ of $A$,
there is a unique $\delta$-structure on $A$ satisfying $\delta_{A}(x_i) = a_{i}$. This follows from Example \ref{example:torsion-free}, taking $\varphi_{A}$ to be the endomorphism of $A$ given by the formula $\varphi_{A}(x_i) = x_i^{p} + p a_i$. For any animated $\delta$-ring $B$, the mapping space
$\Hom_{ \dCAlg^{\anim} }(A,B)$ can be identified with the equalizer of the diagram
$$\xymatrix@R=50pt@C=100pt{ \Hom_{\CAlg^{\anim}}( A, B) \ar@<.6ex>[r]^-{ f \mapsto \{ \delta_{B}( f(x_i)) \}_{i \in I} } 
\ar@<-.6ex>[r]_-{f \mapsto \{ f( \delta_{A}(x_i) ) \}} & [B]^{I}.}$$
\end{example}

\begin{example}[The Free $\delta$-Ring]\label{example:free-delta-ring}
Let $A = \Z[ x_0, x_1, x_2, \ldots ]$ denote the polynomial ring on a countably infinite set of variables. Then $A$ is a $p$-torsion-free commutative ring equipped with a lift of Frobenius $\varphi_{A}$, given by the construction $\varphi_{A}( x_i ) = x_{i}^{p} + p x_{i+1}$. For any animated $\delta$-ring $B$,
Example \ref{example:delta-structure-on-polynomial} identifies $\Hom_{ \dCAlg^{\anim} }(A,B)$ with the equalizer of a diagram
$$\xymatrix@R=50pt@C=50pt{ \prod_{n \geq 0} [B] \ar@<.6ex>[r] 
\ar@<-.6ex>[r] & \prod_{n \geq 0} [B],}$$
where the upper map is given by applying $\delta_{B}$ to each factor and the lower vertical map is given by 
the shift $(b_0, b_1, \cdots ) \mapsto (b_1, b_2, b_3, \cdots )$. It follows that the tautological map
$$ \Hom_{ \dCAlg^{\anim} }(A,B) \rightarrow \Hom_{ \CAlg^{\anim} }( \Z[x_0], B )  = [B]$$
is a homotopy equivalence. Stated more informally, $A$ is the free animated $\delta$-ring on the generator $x_0$.
\end{example}

\begin{proposition}\label{proposition:delta-adjunction}
\begin{itemize}
\item[$(1)$] The $\infty$-category $\dCAlg^{\anim}$ admits small limits and colimits.
\item[$(2)$] The forgetful functor $\mathrm{Forget}: \dCAlg^{\anim} \rightarrow \CAlg^{\anim}$ preserves small limits and colimits.
\item[$(3)$] The functor $\mathrm{Forget}$ admits a left adjoint $\mathrm{Free}: \CAlg^{\anim} \rightarrow \dCAlg^{\anim}$.
\item[$(4)$] The $\infty$-category $\dCAlg^{\anim}$ can be identified with the $\infty$-category of algebras
over the monad $\mathrm{Forget} \circ \mathrm{Free}: \CAlg^{\anim} \rightarrow \CAlg^{\anim}$.
\end{itemize}
\end{proposition}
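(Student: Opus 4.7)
The plan is to exhibit $\mathrm{Forget}: \dCAlg^{\anim} \to \CAlg^{\anim}$ as a monadic adjunction and deduce (1)--(4) via standard formal arguments. The starting point is the pullback description (\ref{equation:delta-ring-pullback}), which presents $\dCAlg^{\anim}$ as a fiber product of presentable $\infty$-categories. The key technical input is that the structure map $A \mapsto (\id_A, \mathrm{Res})$ involves the functor $W_2$, which preserves limits (checked on underlying spaces via the equivalence $[W_2(R)] \simeq [R]^2$ from Proposition \ref{wrost}) and sifted colimits (by definition in Notation \ref{notation:Witt-vectors-revisited}); the other structure map, being a restriction of functor categories, preserves both limits and colimits. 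Hence $\dCAlg^{\anim}$ is presentable, giving (1), and limits are computed componentwise so $\mathrm{Forget}$ preserves them, giving half of (2).

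For (3), the adjoint functor theorem applies to $\mathrm{Forget}$ (a limit-preserving accessible functor between presentable $\infty$-categories), yielding a left adjoint $\mathrm{Free}$; on polynomial rings the adjunction is verified explicitly by Example \ref{example:free-delta-ring}. For (4), I would apply the Barr--Beck--Lurie monadicity theorem. Conservativity of $\mathrm{Forget}$ is immediate from the pullback description, since an equivalence in a pullback is detected on the projections. Preservation of $\mathrm{Forget}$-split geometric realizations follows from the stronger fact that $\mathrm{Forget}$ preserves all sifted colimits; this in turn is most cleanly seen via the Frobenius-lift description (Remark \ref{remark:delta-Frobenius-lift}), since a sifted colimit $A = \colim A_i$ carries the Frobenius lift $\varphi_A := \colim \varphi_{A_i}$ together with a homotopy $\mathrm{can} \circ \varphi_A \simeq \mathrm{Frob}$ obtained from the colimit of the given homotopies (using naturality of $\mathrm{can}, \mathrm{Frob}$ in $A$).

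The remaining half of (2), preservation of all colimits by $\mathrm{Forget}$, reduces by monadicity to showing that the monad $T := \mathrm{Forget} \circ \mathrm{Free}$ preserves colimits on $\CAlg^{\anim}$. On $\Poly_\Z$, Example \ref{example:free-delta-ring} identifies $T(\Z[x_1,\ldots,x_n]) \simeq \Z[\{x_{i,k}\}_{1 \leq i \leq n,\, k \geq 0}]$, which manifestly preserves finite coproducts; combined with sifted-colimit preservation (coming from animation) and the fact that every animated ring is a sifted colimit of polynomial rings, $T$ preserves all small colimits. It follows that colimits in $\dCAlg^{\anim} \simeq \mathrm{Mod}_T(\CAlg^{\anim})$ are computed underlyingly, so $\mathrm{Forget}$ preserves them. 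The main obstacle will be making rigorous the sifted-colimit step in the monadicity verification: one must carefully check that the homotopy data in a diagram of Frobenius lifts can be assembled into a Frobenius lift for the colimit, which amounts to showing that sifted colimits commute with the homotopy equalizer that cuts out $\dCAlg^{\anim}$ inside the $\infty$-category of endomorphisms. The remaining steps are essentially formal.
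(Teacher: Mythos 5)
Your proposal is correct, but it takes a noticeably longer route than the paper, whose proof declares (1) and (2) ``essentially formal,'' constructs $\mathrm{Free}$ by observing that the class of $R$ admitting a corepresenting object is closed under colimits (reducing to $R=\Z[x]$ and Example \ref{example:free-delta-ring}), and then invokes Barr--Beck for (4). The main divergence is in how you obtain colimit-preservation of $\mathrm{Forget}$: you first prove preservation of sifted colimits directly, use that to run Barr--Beck, and only then recover preservation of all colimits from the fact that the monad $T$ preserves colimits (checked on finite coproducts of polynomial rings plus sifted colimits). This works, but the detour is unnecessary: the step you flag as the ``main obstacle'' in fact admits a direct argument covering \emph{all} small colimits at once. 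Given any diagram $(A_i,s_i)$ with $s_i\colon A_i\to W_2(A_i)$ a section of $\mathrm{Res}$, set $A=\colim A_i$ in $\CAlg^{\anim}$ and let $s\colon A\to W_2(A)$ be the colimit of the composites $A_i\xrightarrow{s_i} W_2(A_i)\to W_2(A)$; the relation $\mathrm{Res}\circ s\simeq \id_A$ follows from naturality of $\mathrm{Res}$, and one checks that $(A,s)$ has the universal property of the colimit in $\dCAlg^{\anim}$. Crucially this uses only the canonical maps $\colim W_2(A_i)\to W_2(\colim A_i)$, not that $W_2$ preserves colimits --- which it does not for general (non-sifted) diagrams, so a purely componentwise reading of the pullback square could not have given colimit-preservation anyway. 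Two small cautions if you keep your route: (i) conservativity of $\mathrm{Forget}$ requires the extra observation that an equivalence on underlying rings forces an equivalence on the $W_2$-component by functoriality, since equivalences in a pullback are detected on both projections jointly, not on one; (ii) your identification of $T(\Z[x_1,\dots,x_n])$ should rest on the $n$-variable extension of Example \ref{example:free-delta-ring} (via Example \ref{example:delta-structure-on-polynomial}), not on Example \ref{example:rouch}, whose statement in the paper is itself deduced from the proposition you are proving.
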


\begin{proof}
Assertions $(1)$ and $(2)$ are essentially formal. To prove $(3)$, we must show that for every
animated commutative ring $R$, there exists an animated commutative $\delta$-ring $\mathrm{Free}(R)$
which corepresents the functor
$$ \CAlg^{\anim} \rightarrow \SSet \quad \quad A \mapsto \Hom_{ \CAlg^{\anim} }( R, \mathrm{Forget}(A) ).$$
By virtue of $(1)$, the collection of objects $R \in \CAlg^{\anim}$ which satisfy this condition is closed under the formation
of colimits. It will therefore suffice to treat the case where $R = \Z[x]$ is a polynomial ring on a single generator.
In this case, the desired result follows from Example \ref{example:free-delta-ring}.
Assertion $(4)$ now follows by combining $(1)$, $(2)$, and $(3)$ with the $\infty$-categorical Barr-Beck theorem.
\end{proof}

\begin{example}\label{example:rouch}
Let $\{ x_i \}_{i \in I}$ be a set of variables and let $\Z[x_i]_{i \in I}$ denote the associated polynomial ring.
Then the functor $\mathrm{Free}: \CAlg^{\anim} \rightarrow \dCAlg^{\anim}$ of Proposition \ref{proposition:delta-adjunction}
carries $\Z[x_i]_{i \in I}$ to the free $\delta$-ring $\Z\{x_i\}_{i \in I}$ generated by the variables $\{ x_i \}_{i \in I}$,
given (as a commutative ring) by the polynomial ring on generators $\{ \delta^{n}(x_i) \}_{i \in I, n \geq 0}$.
To prove this, we can assume without loss of generality that the set $I$ is a singleton, in which case
the desired result follows from Example \ref{example:free-delta-ring}.

For any animated $\delta$-ring $B$, we have homotopy equivalences
$$ \Hom_{ \dCAlg^{\anim} }( \Z\{x_i\}_{i \in I}, B) \simeq
\Hom_{ \CAlg^{\anim} }( \Z[x_i]_{i \in I}, B) \simeq [B]^{I}.$$
In particular, if the set $I$ is finite, then the functor $B \mapsto  \Hom_{ \dCAlg^{\anim} }( \Z\{x_i\}_{i \in I}, B)$
commutes with sifted colimits: that is, $\Z\{x_i\}_{i \in I}$ is a compact projective object of the
$\infty$-category $\dCAlg^{\anim}$.
\end{example}

\begin{remark}[The Universal Property of $\dCAlg^{\anim}$]
\label{UnivPropAnimDelta}
Let $\dPoly$ denote the full subcategory of $\dCAlg^{\anim}$ spanned by the finitely generated
free $\delta$-rings (that is, $\delta$-rings of the form $\Z\{x_1, x_2, \cdots, x_n\}$ for some $n \geq 0$).
Proposition \ref{proposition:delta-adjunction} implies that $\dCAlg^{\anim}$ is generated
by $\dPoly$ under sifted colimits. Since each object of $\dPoly$ is a compact projective object of $\dCAlg^{\anim}$ (Example \ref{example:rouch}),
it follows that $\dCAlg^{\anim}$ is {\em freely} generated by $\dPoly$ under sifted colimits. That is, if $\calC$
is an $\infty$-category which admits sifted colimits, then every functor $F: \dPoly \rightarrow \calC$ admits an essentially unique extension
$LF: \dCAlg^{\anim} \rightarrow \calC$ which commutes with sifted colimits (see Proposition~5.5.8.15 of \cite{HTT}).
\end{remark}

The forgetful functor of Proposition \ref{proposition:delta-adjunction} also has a right adjoint.

\begin{proposition}\label{proposition:other-delta-adjunction}
\begin{itemize}
\item[$(1)$] The forgetful functor $\mathrm{Forget}: \dCAlg^{\anim} \rightarrow \CAlg^{\anim}$ has a right
adjoint $\mathrm{Cofree}: \CAlg^{\anim} \rightarrow \dCAlg^{\anim}$.

\item[$(2)$] The composite functor
$$ \CAlg^{\anim} \xrightarrow{ \mathrm{Cofree} } \dCAlg^{\anim} \xrightarrow{ \mathrm{Forget} } \CAlg^{\anim}$$
is isomorphic to the Witt vector functor $R \mapsto W(R)$ of Remark \ref{remark:restriction-maps}.

\item[$(3)$] The $\infty$-category $\dCAlg^{\anim}$ can be identified with the $\infty$-category of algebras
over the comonad $\mathrm{Forget} \circ \mathrm{Cofree}: \CAlg^{\anim} \rightarrow \CAlg^{\anim}$.
\end{itemize}
\end{proposition}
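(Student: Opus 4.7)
The plan is to prove (1) by the presentable adjoint functor theorem, to identify the right adjoint with the animated Witt vector functor for (2) via a direct comparison using the classical Joyal structure, and to deduce (3) from the $\infty$-categorical comonadic Barr--Beck theorem. For (1), the $\infty$-category $\dCAlg^{\anim}$ is presentable: by Proposition~\ref{proposition:delta-adjunction}(4) it is the $\infty$-category of algebras over an accessible monad (the monad preserves sifted colimits since both $\mathrm{Free}$ and $\mathrm{Forget}$ do, by adjointness and by Proposition~\ref{proposition:delta-adjunction}(2) respectively) on the presentable $\infty$-category $\CAlg^{\anim}$. Since $\mathrm{Forget}$ preserves all small colimits (Proposition~\ref{proposition:delta-adjunction}(2)), the presentable adjoint functor theorem produces the right adjoint $\mathrm{Cofree}$.

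For (2), I would first construct an animated lift $\widetilde{W}^{\anim}: \CAlg^{\anim} \to \dCAlg^{\anim}$ of Joyal's classical functor $\widetilde{W}: \CAlg \to \dCAlg \hookrightarrow \dCAlg^{\anim}$ by left Kan extending along geometric realizations. By the characterization of the animated Witt vector functor as the unique geometric-realization-preserving extension of its classical restriction (Warning~\ref{warning:left-Kan-Witt}), the composite $\mathrm{Forget} \circ \widetilde{W}^{\anim}$ is then canonically identified with $W$. Concretely, this amounts to producing, naturally in $R \in \CAlg^{\anim}$, a section $s_R: W(R) \to W_2(W(R))$ of the restriction map extending Joyal's classical construction. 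The classical Joyal counit then assembles into a natural comparison map $\widetilde{W}^{\anim}(R) \to \mathrm{Cofree}(R)$ in $\dCAlg^{\anim}$ via the universal property of $\mathrm{Cofree}$. To verify this is an equivalence, I reduce to underlying animated rings using conservativity of $\mathrm{Forget}$. For $R$ classical, $\mathrm{Map}_{\CAlg^{\anim}}(-, R)$ is set-valued, so $\mathrm{Cofree}(R)$ is $0$-truncated, hence a classical $\delta$-ring (by Example~\ref{example:wroth} and Corollary~\ref{corollary:usual-Witt-vectors}); both sides of the comparison then represent the same functor on classical $\delta$-rings via the classical Joyal adjunction, so they agree. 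For general $R$, testing the adjunction isomorphism against $A = \mathrm{Free}(\mathbf{Z}[x])$ and using Examples~\ref{example:free-delta-ring} and \ref{example:rouch} yields
\[
[\mathrm{Forget}(\mathrm{Cofree}(R))] \simeq \mathrm{Map}_{\CAlg^{\anim}}(\mathbf{Z}\{x\}, R) \simeq [R]^{\mathbf{N}} \simeq [W(R)],
\]
naturally in $R$, the last equivalence coming from Warning~\ref{warning:left-Kan-Witt}. Combined with the classical case, this identification of underlying spaces forces the comparison map to be an equivalence of animated $\delta$-rings.

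For (3), I apply the $\infty$-categorical comonadic Barr--Beck theorem to the adjunction $\mathrm{Forget} \dashv \mathrm{Cofree}$. The forgetful functor is conservative (immediate from the pullback description~\eqref{equation:delta-ring-pullback}: an equivalence in $\dCAlg^{\anim}$ is detected on underlying animated rings) and preserves all small limits by Proposition~\ref{proposition:delta-adjunction}(2), in particular all $\mathrm{Forget}$-split totalizations. Hence $\dCAlg^{\anim}$ is equivalent to the $\infty$-category of coalgebras over the comonad $\mathrm{Forget} \circ \mathrm{Cofree} \simeq W$ on $\CAlg^{\anim}$.

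The main obstacle lies in step (2): rigorously constructing $\widetilde{W}^{\anim}$ together with its adjunction data. Because $W: \CAlg^{\anim} \to \CAlg^{\anim}$ does not preserve arbitrary sifted colimits—only geometric realizations (Warning~\ref{warning:left-Kan-Witt})—the standard animation procedure via left Kan extension from $\Poly_{\Z}$ (Proposition~\APCref{proposition:universal-of-animated}) is unavailable, and one must work with a finer left Kan extension from all of $\CAlg$ along geometric realizations, coherently tracking the $\delta$-structure. An alternative, perhaps more robust route would be to build the section $s_R$ level by level from lifts of each truncated restriction $W_{n+1} \to W_n$ (all of which do preserve sifted colimits) and pass to the inverse limit at the end.
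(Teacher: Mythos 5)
Your parts (1) and (3) coincide with the paper's (adjoint functor theorem; comonadic Barr--Beck, for which you helpfully spell out conservativity and preservation of split totalizations), and the real content of part (2) — the computation $[\mathrm{Forget}(\mathrm{Cofree}(R))] \simeq \mathrm{Map}_{\CAlg^{\anim}}(\mathbf{Z}\{x\},R) \simeq [R]^{\mathbf{N}}$, the resulting reduction to discrete $R$ via the left-Kan-extension property of Warning~\ref{warning:left-Kan-Witt}, and the appeal to Joyal's classical universal property — is exactly the paper's argument. Where you diverge is in how the comparison map is built, and this is precisely the step you flag as your ``main obstacle.'' You propose to first lift $W$ to a $\dCAlg^{\anim}$-valued functor $\widetilde{W}^{\anim}$ and then map it to $\mathrm{Cofree}$; the paper instead never lifts $W$ at all. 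It observes that for \emph{discrete} $R$ the classical $\delta$-structure on $W(R)$ already produces a map $\alpha_R: W(R) \to \mathrm{Cofree}(R)$ in $\dCAlg^{\anim}$, hence a map $\beta_R: W(R) \to W'(R)$ of underlying animated rings, and then uses that $W$ is the left Kan extension of its restriction to $\CAlg$ to extend $R \mapsto \beta_R$ uniquely to a natural transformation $\beta: W \to W'$ on all of $\CAlg^{\anim}$ — no coherent $\delta$-structure on $W(R)$ for animated $R$ is ever needed as input (it is obtained as \emph{output}, since $\beta$ is shown to be an isomorphism and $W'$ canonically lifts through $\mathrm{Forget}$). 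That said, your worry about $\widetilde{W}^{\anim}$ is also somewhat overstated: you are not animating from $\Poly_{\mathbf{Z}}$ but left Kan extending the classical functor $\widetilde{W}: \CAlg \to \dCAlg^{\anim}$ along the fully faithful inclusion $\CAlg \hookrightarrow \CAlg^{\anim}$, which exists for formal reasons with all coherences supplied by the formalism; since $\mathrm{Forget}$ preserves colimits (Proposition~\ref{proposition:delta-adjunction}), the composite $\mathrm{Forget}\circ\widetilde{W}^{\anim}$ is the left Kan extension of $W|_{\CAlg}$ and hence is $W$ by Warning~\ref{warning:left-Kan-Witt}. So your route closes, but the paper's is leaner: it trades the construction of a lift for the (purely formal) extension of a natural transformation out of a left Kan extension. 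One last small point: in the classical case your conclusion that $\mathrm{Cofree}(R)$ is a discrete $\delta$-ring should be routed through the underlying space $[\mathrm{Cofree}(R)] = \mathrm{Hom}_{\dCAlg^{\anim}}(\mathbf{Z}\{x\}, \mathrm{Cofree}(R)) \simeq \mathrm{Hom}_{\CAlg^{\anim}}(\mathbf{Z}\{x\}, R)$ being discrete, and the identification with $W(R)$ then only requires testing against discrete $\delta$-rings because mapping spaces into discrete objects factor through $\pi_0$ of the source (Remark~\ref{AnimDeltapi0}) — which is how the paper phrases the same reduction.
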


\begin{remark}
We can summarize Proposition \ref{proposition:other-delta-adjunction} more informally as follows:
\begin{itemize}
\item For every animated commutative ring $R$, the animated commutative ring $W(R)$ 
carries a natural $\delta$-structure.
\item If $A$ is an animated $\delta$-ring, then there is a canonical homotopy equivalence
$$ \theta: \Hom_{ \dCAlg^{\anim} }( A, W(R) ) \simeq \Hom_{ \CAlg^{\anim} }( A, R ).$$
\end{itemize}
Concretely, the homotopy equivalence $\theta$ is given by composition with the restriction map $\mathrm{Res}: W(R) \rightarrow R$.
\end{remark}

\begin{proof}[Proof of Proposition \ref{proposition:other-delta-adjunction}]
Assertion $(1)$ follows from the adjoint functor theorem, and assertion $(3)$ from the $\infty$-categorical Barr-Beck theorem.
We will prove $(2)$. Let us denote the composite functor $\mathrm{Forget} \circ \mathrm{Cofree}$ by $W': \CAlg^{\anim} \rightarrow \CAlg^{\anim}$.
For every commutative ring $R$, the ring of Witt vectors $W(R)$ is equipped with
a $\delta$-structure, so the restriction map $\mathrm{Res}: W(R) \rightarrow R$ is classified by a morphism
of animated $\delta$-rings $\alpha_{R}: W(R) \rightarrow \mathrm{Cofree}(R)$ which we
can identify with a morphism of animated commutative rings $\beta_{R}: W(R) \rightarrow W'(R)$.
Since the functor $W$ is a left Kan extension of its restriction to the ordinary category of commutative
rings (Warning \ref{warning:left-Kan-Witt}), the construction $R \mapsto \beta_{R}$ admits an essentially unique extension to a natural
transformation $\beta: W \rightarrow W'$ (defined on the $\infty$-category $\CAlg^{\anim}$).
To complete the proof, it will suffice to show that $\beta$ is an isomorphism.

For every animated commutative ring $R$, we have canonical homotopy equivalences
\begin{eqnarray*}
[W'(R)] & = & \Hom_{ \CAlg^{\anim} }( \Z[x], W'(R) ) \\
& = & \Hom_{ \CAlg^{\anim} }( \Z[x], \mathrm{Forget}( \mathrm{Cofree}(R) ) ) \\
& \simeq & \Hom_{ \dCAlg^{\anim} }( \Z\{x\}, \mathrm{Cofree}(R) ) \\
& \simeq & \Hom_{ \CAlg^{\anim} }( \Z\{x\}, R ) \\
& \simeq & \prod_{n \geq 0} [R].
\end{eqnarray*}
It follows that the functor $W'$ commutes with geometric realizations of simplicial objects,
and is therefore a left Kan extension of its restriction to the ordinary category of commutative rings.
Consequently, to show that $\beta_{R}: W(R) \rightarrow W'(R)$ is an isomorphism, we may assume
without loss of generality that $R$ is an ordinary commutative ring. In this case, we wish to show
that $\alpha_{R}: W(R) \rightarrow \mathrm{Cofree}(R)$ is an isomorphism of animated $\delta$-rings.
Equivalently, we wish to show that for every animated $\delta$-ring $A$, composition with
$\alpha_{R}$ induces a homotopy equivalence.
$$ \Hom_{ \dCAlg^{\anim} }( A, W(R) ) \rightarrow \Hom_{\dCAlg^{\anim} }( A, \mathrm{Cofree}(R) ) \simeq
\Hom_{ \CAlg^{\anim} }( A, R ).$$
Since $R$ and $W(R)$ are discrete, we can assume without loss of generality that $A$ is discrete. In this case, the desired result follows from
the universal property of $\delta$-structure on $W(R)$ (see \cite[\S 2]{JoyalDelta}). 
\end{proof}

\newpage

\bibliography{bibliography}{}
\bibliographystyle{plain}

\end{document}